\documentclass[a4paper]{article}

\usepackage[T1]{fontenc}            
\usepackage[utf8]{inputenc}         
\usepackage{lmodern}                

\usepackage{lipsum}                 
\usepackage{todonotes}              
\usepackage{appendix}               
\usepackage{makeidx}                
\usepackage{comment}                

\usepackage{titlesec}               
\titleformat{\section}{\centering\Large\bfseries}{\thesection}{1em}{}
\titleformat{\subsection}{\centering\large\bfseries}{\thesubsection}{1em}{}
\titleformat{\subsubsection}{\centering\normalsize\bfseries}{\thesubsubsection}{1em}{}
\usepackage{changepage}             
\usepackage{enumerate}              
\usepackage{array}                  
\usepackage{multirow}               
\usepackage{arydshln}               
\usepackage{rotating}               
\usepackage{booktabs}               
\usepackage{nth}                    
\usepackage{setspace}               
\usepackage{pifont,mdframed}        
\usepackage{framed}                 
\usepackage{authblk}                

\usepackage{scalerel,stackengine}
\stackMath
\newcommand\reallywidehat[1]{%
  \savestack{\tmpbox}{\stretchto{%
    \scaleto{%
      \scalerel*[\widthof{\ensuremath{#1}}]{\kern-.6pt\bigwedge\kern-.6pt}%
      {\rule[-\textheight/2]{1ex}{\textheight}}%
    }{\textheight}%
  }{0.5ex}}%
  \stackon[1pt]{#1}{\tmpbox}%
}
\parskip 1ex

\usepackage{color}                  
\usepackage{graphicx}               
\usepackage{caption}                
\usepackage{subcaption}             
\usepackage{wrapfig}                

\usepackage{tikz}                   
\usepackage{pgfplots}               
\pgfplotsset{compat=1.18}    

\usetikzlibrary{shapes.geometric,arrows}        
\usetikzlibrary{calc,decorations.pathreplacing} 


\usepackage{tikz-cd}                
\usepackage{tikz-qtree}             
\usetikzlibrary{backgrounds}        

\usepackage{amsmath}                
\usepackage{amssymb}                
\usepackage{amsthm}                 
\usepackage{mathtools}              
\usepackage{mathrsfs}               
\usepackage{bbm}                    
\usepackage{nicefrac}               
\usepackage{bm}                     
\usepackage{stmaryrd}               
\usepackage{thmtools}               
\usepackage{dsfont}                 
\usepackage{accents}                



\def\XXint#1#2#3{%
  {\setbox0=\hbox{$#1{#2#3}{\int}$}%
   \vcenter{\hbox{$#2#3$}}\kern-.6\wd0}%
}




\usepackage[pdftoolbar=true,pdfmenubar=true,pdfstartview=FitV,pdfdisplaydoctitle=true,pdfencoding=auto,final]{hyperref}
\hypersetup{
	colorlinks=true,
	allcolors=blue
}

\usepackage[backend=biber,style=numeric,uniquename=init,useprefix=true,hyperref=true]{biblatex}
\ExecuteBibliographyOptions{giveninits=true,isbn=false,maxbibnames=12}

\AtEveryBibitem{%
	\iffieldundef{journaltitle}{}{\clearfield{eprint}}%
	\iffieldundef{doi}{}{\clearfield{url}\clearfield{urlyear}\clearfield{urlmonth}\clearfield{urlday}}%
	\ifentrytype{online}{}{\clearfield{url}\clearfield{urlyear}\clearfield{urlmonth}\clearfield{urlday}}%
	\iffieldequalstr{eprinttype}{arXiv}{\clearfield{doi}}{}%
	\clearfield{issue}%
	\clearfield{month}\clearfield{day}%
	\clearfield{pagetotal}%
}
\AtEveryCitekey{%
	\iffieldundef{journaltitle}{}{\clearfield{eprint}}%
	\iffieldundef{doi}{}{\clearfield{url}\clearfield{urlyear}\clearfield{urlmonth}\clearfield{urlday}}%
	\ifentrytype{online}{}{\clearfield{url}\clearfield{urlyear}\clearfield{urlmonth}\clearfield{urlday}}%
	\iffieldequalstr{eprinttype}{arXiv}{\clearfield{doi}}{}%
	\clearfield{issue}%
	\clearfield{month}\clearfield{day}%
	\clearfield{pagetotal}%
}

\DeclareFieldFormat[article,incollection]{title}{#1}
\DeclareFieldFormat[article]{volume}{\textbf{#1}\setunit*{\space}}

\renewcommand*{\bibeidpunct}{\addcolon}

\renewbibmacro*{volume+number+eid}{%
	\printfield{volume}%
	\setunit*{\adddot}%
	\printfield{number}%
	\setunit*{\bibeidpunct}%
	\printfield{eid}%
	\setunit{\addcomma\space}%
}

\renewbibmacro{in:}{
	\ifentrytype{article}{}{\printtext{\bibstring{in}\intitlepunct}}%
}

\renewbibmacro*{journal}{%
	\iffieldundef{shortjournal}%
		{%
			\iffieldundef{journaltitle}%
			{}%
			{\printtext[journaltitle]%
				{%
					\printfield[titlecase]{journaltitle}%
					\setunit{\subtitlepunct}%
					\printfield[titlecase]{journalsubtitle}%
				}%
			}%
		}%
		{%
			\printtext[journaltitle]%
				{%
					\printfield[titlecase]{shortjournal}%
				}%
		}%
}

\makeatletter

\DeclareFieldFormat{doi}{%
	\ifhyperref
		{\href{https://doi.org/#1}{\mkbibacro{DOI}\addcolon\space\nolinkurl{#1}}}
		{\mkbibacro{DOI}\addcolon\space\nolinkurl{#1}}}
	
\DeclareFieldFormat{eprint:hdl}{%
	\ifhyperref
		{\href{http://hdl.handle.net/#1}{HDL\addcolon\space\nolinkurl{#1}}}
		{HDL\addcolon\space\nolinkurl{#1}}}
	
\DeclareFieldFormat{eprint:arxiv}{%
	\ifhyperref
		{\href{https://arxiv.org/\abx@arxivpath/#1}{%
			arXiv\addcolon\space\nolinkurl{#1}%
			\iffieldundef{eprintclass}
				{}
				{\addspace\texttt{\mkbibbrackets{\thefield{eprintclass}}}}}}
		{arXiv\addcolon\space\nolinkurl{#1}%
		 \iffieldundef{eprintclass}
			{}
			{\addspace\texttt{\mkbibbrackets{\thefield{eprintclass}}}}}}

\DeclareFieldFormat{eprint:jstor}{%
	\ifhyperref
		{\href{http://www.jstor.org/stable/#1}{JSTOR\addcolon\space\nolinkurl{#1}}}
		{JSTOR\addcolon\space\nolinkurl{#1}}}

\DeclareFieldFormat{eprint:pubmed}{%
	\ifhyperref
		{\href{http://www.ncbi.nlm.nih.gov/pubmed/#1}{PMID\addcolon\space\nolinkurl{#1}}}
		{PMID\addcolon\space\nolinkurl{#1}}}

\DeclareFieldFormat{eprint:googlebooks}{%
	\ifhyperref
		{\href{http://books.google.com/books?id=#1}{Google Books\addcolon\space\nolinkurl{#1}}}
		{Google Books\addcolon\space\nolinkurl{#1}}}
		
\makeatother



\theoremstyle{plain}                        
\newtheorem{thm}{Theorem}[section]
\newtheorem{lem}[thm]{Lemma}
\newtheorem{prop}[thm]{Proposition}


\theoremstyle{definition}
\newtheorem{defn}[thm]{Definition}
\newtheorem*{defn*}{Definition}

\newtheorem{exmp}[thm]{Example}

\theoremstyle{remark}
\newtheorem*{rem}{Remark}

\numberwithin{equation}{section}


\usepackage{mdframed}   
\usepackage{pifont}     


\setlength{\parindent}{0em}                  


\DeclareMathOperator{\id}{id}                

\DeclareMathOperator{\supp}{supp}            

\DeclareMathOperator{\ev}{ev}                

\DeclareMathOperator{\dd}{\mathrm{d}\!}       










\DeclareMathOperator{\Ren}{Ren}               


\DeclareMathOperator{\fA}{\mathfrak{A}}
\DeclareMathOperator{\fB}{\mathfrak{B}}
\DeclareMathOperator{\fC}{\mathfrak{C}}

\DeclareMathOperator{\fL}{\mathfrak{L}}
\DeclareMathOperator{\fM}{\mathfrak{M}}

\DeclareMathOperator{\fP}{\mathfrak{P}}

\DeclareMathOperator{\fV}{\mathfrak{V}}

\DeclareMathOperator{\fX}{\mathfrak{X}}



\DeclareMathOperator{\fq}{\mathfrak{q}}


\DeclareMathOperator{\bE}{\mathbf{E}}

\DeclareMathOperator{\bT}{\mathbf{T}}



\DeclareMathOperator{\bh}{\mathbf{h}}


\DeclareMathOperator{\bx}{\mathbf{x}}
\DeclareMathOperator{\by}{\mathbf{y}}




\DeclareMathOperator{\bmu}{\bm{\mu}}


\DeclareMathOperator{\bbB}{\mathbb{B}}

\DeclareMathOperator{\bbE}{\mathbb{E}}

\DeclareMathOperator{\bbN}{\mathbb{N}}

\DeclareMathOperator{\bbP}{\mathbb{P}}

\DeclareMathOperator{\bbR}{\mathbb{R}}

\DeclareMathOperator{\bbT}{\mathbb{T}}

\DeclareMathOperator{\bbW}{\mathbb{W}}
\DeclareMathOperator{\bbX}{\mathbb{X}}


\DeclareMathOperator{\calB}{\mathcal{B}}
\DeclareMathOperator{\calC}{\mathcal{C}}

\DeclareMathOperator{\calF}{\mathcal{F}}
\DeclareMathOperator{\calG}{\mathcal{G}}

\DeclareMathOperator{\calI}{\mathcal{I}}

\DeclareMathOperator{\calN}{\mathcal{N}}

\DeclareMathOperator{\calP}{\mathcal{P}}

\DeclareMathOperator{\calT}{\mathcal{T}}

\DeclareMathOperator{\calW}{\mathcal{W}}


\DeclareMathOperator{\rmP}{\mathrm{P}}


\DeclareMathOperator{\rmi}{\mathrm{i}}

\DeclareMathOperator{\rmm}{\mathrm{m}}


\DeclareMathOperator{\sB}{\mathscr{B}}

\DeclareMathOperator{\sH}{\mathscr{H}}
\DeclareMathOperator{\sI}{\mathscr{I}}
\DeclareMathOperator{\sJ}{\mathscr{J}}
\DeclareMathOperator{\sK}{\mathscr{K}}

\DeclareMathOperator{\sN}{\mathscr{N}}

\DeclareMathOperator{\sR}{\mathscr{R}}



\usetikzlibrary{shapes.misc, shapes.symbols}


\usetikzlibrary{shapes.misc, shapes.symbols}
\tikzset{
  dot/.style={circle, fill=black, draw=black, solid, inner sep=0pt, minimum size=0.5mm},
  yy/.style={circle, fill=gray!20, draw=black, inner sep=0pt, minimum size=0.8mm},
  >=stealth,
}


\makeatletter
\def\DeclareSymbol#1#2#3{%
  \expandafter\gdef\csname GC@symb@#1\endcsname{%
    \tikz[baseline=#2, scale=0.15]{#3}%
  }%
  \expandafter\gdef\csname GC@symb@#1s\endcsname{%
    \scalebox{0.6}{\tikz[baseline=#2, scale=0.15]{#3}}%
  }%
}
\def\<#1>{\csname GC@symb@#1\endcsname}
\makeatother


\DeclareSymbol{X}{-2.4}{\node[dot] {};}
\DeclareSymbol{1}{0}{\draw[white] (-.4,0) -- (.4,0); \draw (0,0) -- (0,1.2) node[dot] {};}
\DeclareSymbol{2}{0}{\draw (-0.5,1.2) node[dot] {} -- (0,0) -- (0.5,1.2) node[dot] {};}
\DeclareSymbol{3}{0}{%
  \draw (0,0) -- (0,1.2) node[dot] {};
  \draw (-.7,1) node[dot] {} -- (0,0) -- (.7,1) node[dot] {};
}

\DeclareSymbol{31}{-3}{%
  \draw (0,0) -- (0,-1) -- (1,0) node[dot] {};
  \draw (0,0) -- (0,1.2) node[dot] {};
  \draw (-.7,1) node[dot] {} -- (0,0) -- (.7,1) node[dot] {};
}
\DeclareSymbol{30}{-3}{%
  \draw (0,0) -- (0,-1);
  \draw (0,0) -- (0,1.2) node[dot] {};
  \draw (-.7,1) node[dot] {} -- (0,0) -- (.7,1) node[dot] {};
}
\DeclareSymbol{32}{-3}{%
  \draw (0,0) -- (0,-1) -- (1,0) node[dot] {};
  \draw (0,0) -- (0,-1) -- (-1,0) node[dot] {};
  \draw (0,0) -- (0,1.2) node[dot] {};
  \draw (-.7,1) node[dot] {} -- (0,0) -- (.7,1) node[dot] {};
}
\DeclareSymbol{22}{-3}{%
  \draw (0,0.3) -- (0,-1) -- (1,0) node[dot] {};
  \draw (0,0.3) -- (0,-1) -- (-1,0) node[dot] {};
  \draw (-.7,1) node[dot] {} -- (0,0.3) -- (.7,1) node[dot] {};
}
\DeclareSymbol{20}{-3}{%
  \draw (0,0) -- (0,-1);
  \draw (-.7,1) node[dot] {} -- (0,0) -- (.7,1) node[dot] {};
}

\DeclareSymbol{32W}{-3}{%
  \draw[densely dotted, semithick] (-1.5,0) node[dot] {} -- (0,-1.5) node[yy]{} -- (1.5,0) node[dot] {};
  \draw (.8,-2.3) node {\tiny $y$};
  \draw[semithick] (0,0) -- (0,-1.5) node[yy] {};
  \draw[densely dotted, semithick] (0,0) -- (0,1.8) node[dot] {};
  \draw[densely dotted, semithick] (-1,1.5) node[dot] {} -- (0,0) -- (1,1.5) node[dot] {};
}
\DeclareSymbol{32WW}{-3}{%
  \draw[densely dotted, semithick] (-1.5,0) node[dot] {} -- (0,-1.5) node[yy]{} -- (1.5,0) node[dot] {};
  \draw (.8,-2.3) node {\tiny $\bar{y}$};
  \draw[semithick] (0,0) -- (0,-1.5) node[yy] {};
  \draw[densely dotted, semithick] (0,0) -- (0,1.8) node[dot] {};
  \draw[densely dotted, semithick] (-1,1.5) node[dot] {} -- (0,0) -- (1,1.5) node[dot] {};
}

\addbibresource{AWMS_bibliography.bib}               

\usepackage[
  a4paper,
  top=3.5cm,
  bottom=3.5cm,
  left=2.5cm,
  right=2.5cm
]{geometry}

\setlength{\parindent}{0em}   
\raggedbottom                 
\setlength{\tabcolsep}{10pt}  

\usepackage{fix-cm}   
\usepackage{shuffle}  
\DeclareFontFamily{U}{shuffle}{}
\DeclareFontShape{U}{shuffle}{m}{n}{ <-8>shuffle7 <8->shuffle10}{}


\DeclareMathOperator{\GRP}{GRP}
\DeclareMathOperator{\PL}{PL}
\DeclareMathOperator{\KL}{KL}
\DeclareMathOperator{\var}{var}
\DeclareMathOperator{\sbH}{\pmb{\sH}}

\newcommand{\vertiii}[1]{{\left\vert\kern-0.25ex\left\vert\kern-0.25ex\left\vert #1 
    \right\vert\kern-0.25ex\right\vert\kern-0.25ex\right\vert}_{\mathbf{E}}}

\numberwithin{equation}{section} 
\setcounter{tocdepth}{2}         

\newenvironment{qn}{\par\noindent\begin{minipage}{\linewidth}}{\end{minipage}\par} 

\usepackage{orcidlink} 

\makeatletter
\renewcommand{\@maketitle}{
  \begin{center}
    \vskip 0.5em
    {\LARGE\scshape \@title \par} 
    \vskip 0.5em
    {\large \@author \par}        
    \vskip 1em
    {\@date \par}                  
  \end{center}
  \par\vskip 1em
}
\makeatother


\begin{document}

	\title{Towards Abstract Wiener Model Spaces}
    \author[1]{Gideon Chiusole \thanks{GC gratefully acknowledges support by the doctoral program TopMath and partial funding by the VolkswagenStiftung via a Lichtenberg Professorship awarded to Christian Kuehn. GC would also like to thank Tom Klose and Martin Hairer for helpful discussions and TU Berlin for its hospitality during the summer of 2022, when GC was a visiting student there and this work was started. Email: \href{mailto:g.chiusole@tum.de}{g.chiusole@tum.de}}\,\orcidlink{0009-0005-3930-2924}}
    \author[2,3]{Peter K. Friz\thanks{PKF acknowledges funding by the Deutsche Forschungsgemeinschaft (DFG, German Research Foundation) - CRC/TRR 388 ``Rough Analysis, Stochastic Dynamics and Related Fields'' - Project ID 516748464 and the excellence cluster MATH+, via a MATH+ Distinguished Fellowship. Email: \href{mailto:friz@math.tu-berlin.de}{friz@math.tu-berlin.de}}\,\orcidlink{0000-0003-2571-8388}}
    \affil[1]{Technische Universit\"at M\"unchen, Munich, Germany.}
    \affil[2]{Technische Universit\"at Berlin, Berlin, Germany.}
    \affil[3]{Weierstraß-Institut f\"ur Angewandte Analysis und Stochastik, Berlin, Germany.}
	\date{}

\maketitle

\vspace*{-20pt}

\begin{abstract}
Abstract Wiener spaces are in many ways the decisive setting for fundamental results on Gaussian measures: large deviations (Schilder), quasi-invariance (Cameron--Martin), differential calculus (Malliavin), support description (Stroock--Varadhan), concentration of measure (Fernique), etc. Analogues of these classical results have been derived in the ``enhanced'' context of Gaussian rough paths and, more recently, regularity structures equipped with Gaussian models. The aim of this article is to propose a similar notion \textit{directly} on this enhanced level - an {\em abstract Wiener model space} - that encompasses the aforementioned. More specifically, we focus here on enhanced Schilder type results, Cameron--Martin shifts and Fernique estimates, offering a somewhat unified view on results of Friz--Victoir and Hairer--Weber.
\end{abstract}

\textbf{Keywords} - 
    rough path theory, 
    regularity structures, 
    abstract Wiener spaces, 
    large deviations, 
    Cameron--Martin theorem, 
    concentration of measure

\textbf{MSC Classification} - 
    60L20, 60L30, 28C20, 46G12, 60B11

\newpage
\tableofcontents

\newpage
\maketitle

\section{Introduction}

The study of abstract Wiener spaces (AWS) was arguably initiated with the insight of L. Gross \cite{grossAbstractWienerSpaces1967} that analysis on Wiener space $(C([0,1]; \bbR), \mu) \label{symb:continuous_functions}$ (i.e. the Banach space of continuous functions, equipped with the distribution $\mu$ of a Brownian motion) does not rely so much on the properties of $C([0,1]; \bbR)$ itself, but rather on a Hilbert subspace $W^{1,2}_0([0,1];\bbR) \label{symb:sobolev_space_starting_at_0}$ (the Hilbert space of square integrable functions starting at $0$ which have a square integrable weak derivative), the Cameron--Martin space. In the paper, Gross starts from an abstract separable Hilbert space $\sH$ and constructs from it a separable Banach space $E$ in which $\sH$ is included along an injection $i$ and on which a Gaussian measure $\mu$ (which is determined by $\sH$) is supported, thus forming a quadruple $(E, \sH, i, \mu)$ called abstract Wiener space. Common examples are given in Table \ref{tab:AWS}. 

{
\renewcommand{\arraystretch}{2}
\begin{table}
\centering
    \begin{tabular}{|l|l|l|}
        \hline
            \textbf{Gaussian Measure $\mu$} & \textbf{Hilbert Space} $\mathscr{H}$ & \textbf{Banach Space} $E$ \\ 
            \hline \hline
            Mult. Gaussian $\sN(0, \Sigma)$ & $\left( \bbR^d, \langle \cdot, \Sigma^{-1} \cdot \rangle_{\bbR^d} \right)$ & $\left( \bbR^d, \langle \cdot, \cdot \rangle_{\bbR^d} \right), \ldots$ \\ 
            \hline
            Brownian Motion & $\left(W_{0}^{1,2}([0,T]), \int_0^T (\cdot)' (\cdot)' \dd \lambda \right)$ & $C [0,T], \mathcal{C}^{0,\frac{1}{2}-\kappa}[0,T] \label{symb:hoelder_space_closure_of_smooth}, \ldots$ \\ 
            \hline
            $\beta$-fractional Brownian Motion & $\left( \dot{H}^{\beta+\frac{1}{2}}, \left\langle \cdot, (-\Delta)^{\beta+\frac{1}{2}} \cdot \right\rangle_{L^2} \right)$ & $\mathcal{C}^{0,\beta-\kappa}, \ldots$ \\ 
            \hline 
            Space-time White Noise & $\left( L^2(\bbT^d), \langle \cdot, \cdot \rangle_{L^2(\bbT^d)} \right)$ & $\mathcal{C}_{\mathfrak{s}}^{0, -\frac{d+2}{2}- \kappa}\left(\bbT^d \right), \ldots$ \\ 
            \hline 
            Dirichlet Gaussian Free Field & $\left( \dot{H}_0^1(U) \label{symb:homogeneous_sobolev_hilbert_dirichlet}, \int_U \langle \nabla \cdot, \nabla \cdot \rangle \dd \lambda^d \right)$ & $\dot{H}^{- \frac{d-2}{2}- \kappa}(U), \ldots$ \\
            \hline  
    \end{tabular}
        \captionsetup{width=0.8\linewidth}
        \caption{Examples of abstract Wiener spaces. $T > 0, \kappa > 0$. See Appendix \ref{app:symbolic_index} for symbols.}
    \label{tab:AWS}
\end{table}
}

Abstract Wiener spaces allow to state, understand, and prove many fundamental theorems on classical Wiener space in a more general language. In particular, in the decades after Gross' foundational paper, the theory became the basis for many results of Gaussian measure theory, generalizing propositions about the classical Wiener measure to general Gaussian measures. For example, in the setting above the following is true. 

\begin{itemize}
\item(Schilder's Large Deviation Principle) The family $(\mu(\varepsilon^{-1} (\cdot)))_{\varepsilon > 0}$ satisfies a large deviation principle (LDP) on $E$ with speed $\varepsilon^2$ and good rate function given by 

\begin{equation}
\sI(x) = 
    \begin{cases} 
        \frac{1}{2} \Vert x \Vert_{\sH}^{2}, & \text{if } x \in \sH, \\
        + \infty, & \text{else} .
    \end{cases} 
\end{equation}

\item (Cameron--Martin Theorem and Formula) For any $x \in E$ the measures $\mu(\cdot)$ and $\mu(\cdot - x)$ are equivalent\footnote{In the sense that either measure is absolutely continuous w.r.t. the other.} if and only if $x \in \sH$. Otherwise they are mutually singular. 

\item (Malliavin Calculus) The distribution of a (non-linear) Wiener functional $\Psi: E \rightarrow \bbR$ has a density with respect to the Lebesgue measure whenever the $\sH$-derivative/Malliavin derivative (not the Fréchet derivative) of $\Psi$ is non-degenerate.

\item (Support Theorem) The topological support of $\mu$ in $E$ is given by the $E$-closure of $\sH$.

\item (Fernique Estimates) The random variable $x \mapsto \Vert x \Vert_E$ has Gaussian tails with decay rate controlled by the values of the $\sH$-norm on the unit sphere in $E$.
\end{itemize}

In particular, abstract Wiener spaces play an important role when studying the solutions to stochastic differential equations, which are after all functionals on a space of generalized paths/fields which is equipped with the distribution of the driving noise. Thus, with the advent of rough paths through \cite{lyonsDifferentialEquationsDriven1998}, \cite{gubinelliControllingRoughPaths2004} and regularity structures through \cite{hairerTheoryRegularityStructures2014} it comes to no surprise that many of the results above have their analogues in the context of rough paths/regularity structures - see Table \ref{tab:classical_vs_enhanced}.

{
\renewcommand{\arraystretch}{2}
\begin{table}
\centering
    \begin{tabular}{|l|l|l|}
        \hline
    \textbf{Theorem/Theory} & \textbf{Classical} & \textbf{RP \& Reg. Structures} \\ \hline \hline
    Large Deviations & e.g. \cite[Sec. 3.4]{deuschelLargeDeviations1989} & \cite{frizMultidimensionalStochasticProcesses2010}, \cite{milletLargeDeviationsRough2006}, \cite{frizLargeDeviationPrinciple2007}, \cite{hairerLargeDeviationsWhiteNoise2015} \\ \hline
    Cameron--Martin & e.g. \cite[Sec. 4.2]{bogachevGaussianMeasures1998} & \cite[Sec. 15.8]{frizMultidimensionalStochasticProcesses2010} \\ \hline
    Malliavin Calculus & e.g. \cite{nualartMalliavinCalculusRelated2006} & \cite{cassMalliavinCalculusRough2011}, \cite{schonbauerMalliavinCalculusDensities2023}, \cite{cannizzaroMalliavinCalculusRegularity2017} \\ \hline
    Support Theory & e.g. \cite[Sec. 9.3]{frizCourseRoughPaths2020} & \cite{choukSupportTheoremSingular2018}, \cite{hairerSupportSingularStochastic2022}, \cite[Sec. 15.8]{frizMultidimensionalStochasticProcesses2010} \\ \hline
    Fernique Estimates & e.g. \cite[Chap. III]{deuschelLargeDeviations1989}, \cite{ledouxIsoperimetryGaussianAnalysis1996} &  \cite{frizGeneralizedFerniqueTheorem2010}, \cite{frizMultidimensionalStochasticProcesses2010}, \cite{hairerLargeDeviationsWhiteNoise2015}, \cite{frizPreciseLaplaceAsymptotics2022} \\ \hline 
    \end{tabular}
        \captionsetup{width=0.8\linewidth}
        \caption{Classical vs. enhanced theory}
    \label{tab:classical_vs_enhanced}
\end{table}
}

The general setting in this ``enhanced'' framework of rough paths \& regularity structures is to consider a ``lift'' $\hat{\fL}$ from a space $E$ of (classical, generalized) paths/fields to a (non-linear subspace of a) direct sum of spaces $E_{\tau}$ of functions/distributions, each of which is usually a closure of smooth two-parameter functions under a Hölder or $p$-variation type norm (often referred to as rough path norm or model norm). This lift assigns to an element $x \in E$ its associated rough path/model $\bx_{\tau} \in E_{\tau}$ associated to a symbol $\tau$. While in principle entirely deterministic, in its natural context of stochastic (partial) differential equations, $E$ carries a probability measure $\mu$, depending on the nature of the noise driving the problem. If the noise is Gaussian, then $\mu$ is Gaussian, which equips $E$ with the structure of an abstract Wiener space (see Section \ref{sec:examples} for serveral instances of this setup). The results in the third column of Table \ref{tab:classical_vs_enhanced} are then derived from properties of that abstract Wiener space and the lift $\hat{\fL}$. \\ 

\begin{figure}[ht]
\begin{center}
            \begin{tikzpicture}
        \path
        (-4,0) node(1b) {\texttt{classical level}} 
        (-4,3.5) node(1t) {\texttt{enhanced level}}

        (0,-1) node(z1) {\texttt{Abstract}} 
        (6,-1) node(z2) {\texttt{Concrete: Ito rough paths}}
        (6,-1.5) node(z3) {$0 < \alpha < 1/2$, \textit{See Subsection \ref{subsec:Ito}}}
        
        (0,3.5) node(2t) {$\oplus_{\tau} E_{\tau}$} 
        (0,0) node(2b) {$(E, \mu)$}

        (6,3.5) node(3t) {$\calC^{0, \alpha}([0,1]) \oplus \calC^{0,2 \alpha}([0,1]^2)$} 
        (6,0) node(3b) {$(\calC^{0,\alpha}([0,1]), \text{Wiener Measure})$};
        
        \draw[->] (2b) to node[right] {$\hat{\fL}$} (2t);
        \draw[->] (3b) to node[right] {$(B, \int B \dd B)$} (3t);
        \end{tikzpicture}

        \caption{Diagram of abstract and concrete setting.}
\end{center}
\end{figure}
 
The purpose of this article is to give some ideas about such a construction \textit{directly} on the enhanced level of rough paths/models; i.e. about an \textit{abstract Wiener model space (AWMS)}. We have been guided by the goal of embedding the works \cite{frizLargeDeviationPrinciple2007} and \cite{hairerLargeDeviationsWhiteNoise2015}, that deal with enhanced Gaussian large deviations in the context of rough paths and regularity structures, respectively, into a common abstract framework. Let us emphasise that it is not our primary goal to produce theorems previously unknown, but rather to give a clarifying framework for existing results (similar to the role of abstract Wiener spaces in Gaussian analysis). \\

\begin{rem}[Para-controlled calculus and Polchinski flow]
It is a natural question to what extent our results have correspondences in the context of other approaches to singular SPDEs such as para-controlled calculus and the Polchinski flow. In the case of the paracontrolled approach of \cite{gubinelliParacontrolledDistributionsSingular2015} our results can certainly be reformulated, leveraging the results of Bailleul--Hoshino \cite{bailleulParacontrolledCalculusRegularity2021} and references therein, who established a precise correspondence between modelled distributions in the sense of Hairer and higher-order paracontrolled objects. (Although we have not obtained a general support theorem in our context of abstract Wiener model spaces, the work \cite{hairerSupportSingularStochastic2022} suggests that results in this directions may be possible, in which case \cite{choukSupportTheoremSingular2018} stands as model case in the paracontrolled setting.)

As for the flow-equation (Polchinski) approach to singular SPDEs by Duch \cite{duchFlowEquationApproach2025} the situation is less clear. To the best of our knowledge, there does not yet exist a dictionary or translation between the objects appearing in regularity structures/modelled distributions and those in the flow-equation formalism. For this reason, we do not currently have a precise statement of how our theorems would apply in that context.
\end{rem}

\paragraph{This article is organised as follows:}
In Section \ref{sec:technical_setup} we introduce the building blocks of the theory, which are, roughly speaking the elements assembled in Figure \ref{fig:AWMS_first_appearence}. These consist primarily of (1) elements on the ``classical level'', (2) elements on the ``enhanced level'', and (3) elements relating the two. We also exhibit the central problem with a naive definition of an ``enhanced Cameron--Martin space'' and sketch how it leads to two natural approaches, neatly mirroring the ideas of Stratonovich rough paths and Ito rough paths, respectively. 

With all the technical machinery in place, in Section \ref{sec:defn_and_construction}, we finally give the definition of a (bare) AWMS as well as the stronger notion of an AWMS \emph{with approximation}. In Theorem \ref{thm:top_down_construction} we give a construction of an AMWS from certain data primarily on the basis of the notion of the full lift, following the ``Top-Down'' approach outlined in Section \ref{sec:technical_setup}, which is inspired by the construction of the Ito lift from Brownian motion. In Theorem \ref{thm:bottom_up_construction} we provide a theorem similar in faith, but following the ``Bottom-Up'' approach, which primarily relies on the skeleton lift and is more aligned with the ideas around Gaussian rough paths. 

In Section \ref{sec:LDP}, \ref{sec:Fernique}, and \ref{sec:CM}, we prove a large deviation principle (Theorem \ref{thm:LDP_for_AWMS}), a Fernique estimate (Theorem \ref{thm:exp_int}), and a Cameron--Martin theorem (Theorem \ref{thm:lifted_CM_thm} and \ref{thm:shift_operator_consistency}) for AWMS, respectively. 

Finally, in Section \ref{sec:examples} we show that many examples fit the developed framework and apply the theorems of Sections \ref{sec:LDP}, \ref{sec:Fernique}, and \ref{sec:CM}. The examples considered include Gaussian rough paths (and thus in particular the Stratonovich lift of Brownian motion), the Ito lift of Brownian motion, and regularity structures associated to rough volatility, the $\Phi^4_d$-model, and the parabolic Anderson model. 

\section{Technical Setup} \label{sec:technical_setup}

Let us make a preliminary sketch: in analogy to abstract Wiener spaces and motivated by the preceding paragraphs, an abstract Wiener model space should (at least) consist of some state space $\bE$ on which a measure $\bmu$ is supported, the behaviour of which is controlled by a subspace $\sbH \subseteq \bE$. All of the above should be linked with classical abstract Wiener space theory (in that the latter should be a special case) and with rough paths/regularity structures (in that $\bmu$ should be the distribution of an enhancement $\hat{\fL}$ as alluded to before). The definition we will eventually arrive at is the following.

\begin{defn*}[Abstract Wiener Model Space]
    An \textbf{abstract Wiener model space} is a quintuple consisting of the following data: 
    
    \begin{enumerate}[(1)]
    \item An ambient space $(\calT, \bE, [\cdot], \calN)$,
    
    \item a subset $\sbH \subseteq \bE$, called the \textbf{enhanced Cameron--Martin space}, 
        
    \item a Borel probability measure $\bmu$ on $\bE$, called \textbf{enhanced measure}, such that $\mu := \pi_{\ast} \bmu$ is centred Gaussian on $E$ and $\sH := \pi(\sbH)$ is the Cameron--Martin space associated to $\mu$,
    
    \item an $\sH$-skeleton lift $\fL: \sH \rightarrow \bE$ which is a left inverse of $\pi\vert_{\sbH}$ (that is, $\fL \circ \pi\vert_{\sbH} = \id_{\sbH}$ and thus by (3) $\sbH = \fL(\sH)$), simply called \textbf{skeleton lift}, 
    
    \item a $\mu$-a.s.\! equivalence class represented by a measurable lift $\hat{\fL} \in \calP^{(\leq [\calT])}(E, \mu; \bE)$, called \textbf{full lift}, s.t. $\hat{\fL}_{\ast} \mu = \bmu$ and $\overline{\hat{\fL}_{\tau}} = \fL_{\tau}$ for every $\tau \in \calT$, where $\calP^{(\leq [\calT])}(E, \mu; \bE)$ denotes the $\calT$-inhomogeneous Wiener--Ito chaos in the sense of Definition \ref{defn:graded_WIC} and $\overline{(\cdot)}$ denotes the proxy-restriction in the sense of Definition \ref{defn:proxy_restriction}.
    \end{enumerate}
\end{defn*}

See Definition \ref{defn:AWMS} for further explanation and Figure \ref{fig:AWMS_first_appearence} for a display of the involved data: 

\begin{figure}[ht]
\begin{center}
        \begin{tikzpicture}
        \path
        (-3,0) node(1b) {\texttt{classical}} 
        (-3,4) node(1t) {\texttt{enhanced}}
        
        (4,4) node(1ti) {$\sbH$} 
        (5,4) node(1tsub) {$\subseteq$} 
        (6,4) node(2t) {$\bE$} 
        (8,4) node(3t) {$\bmu$} 
        (0,0) node(1b) {$\sH$} 
        (6,0) node(2b) {$\underbrace{\pi_{\calN}(\bE)}_{= E}$} 
        (8,0) node(3b) {$\underbrace{\left(\pi_{\calN}\right)_{\ast}\bmu}_{= \mu}$};
        
        \draw[right hook->] (1b) to node[above] {$i$} (2b); 
        \draw[->] (1b) to node[right] {$\fL$} (1ti); 
    
        \draw[->] (2b) to [bend right] node[right] {$\hat{\fL}$} (2t);
        \draw[->>] (2t) to node[left] {$\pi_{\calN}$} (2b);
        
        \draw[|->] (3b) to node[right] {$\hat{\fL}_{\ast}$} (3t);
        \end{tikzpicture}
        \caption{Diagram of the definition of AWMS.}
        \label{fig:AWMS_first_appearence}
\end{center}
\end{figure}

The rest of the present section will be spent introducing and motivating the components of the above definition and some technical machinery needed in later sections. Throughout the section we will keep a running example of the simplest nontrivial cases of the theory: $\bbR^d$-valued Ito rough paths of Brownian motion on an interval. 

\subsection{Classical Setup} \label{subsec:enhanced_Gaussian}

The definition of abstract Wiener spaces we use in this paper is the following: 

\begin{defn}[{Abstract Wiener Space, e.g. \cite[Chap. 3.4]{deuschelLargeDeviations1989}}] \label{defn:AWS}
An \textbf{abstract Wiener space} is a quadruple $(E, \sH, i, \mu)$ consisting of 

\begin{enumerate}[(1)]
    \item $E$, a separable (real) Banach space, 
    \item $\sH$, a separable (real) Hilbert space (called the \textbf{Cameron--Martin space}),
    \item $i: \sH \rightarrow E$, a continuous, linear injection\footnote{As is common, we will often tacitly make the identification $i(\sH) \cong \sH$ and/or state $i$ implicitly.},
    \item $\mu$, a (necessarily centred Gaussian) probability measure on $(E, \sB_{E})$ s.t. its characteristic functional $\check{\mu}$ has the form

    \begin{equation}
        \check{\mu}(\ell) := \int_E \exp \left( \rmi \ell(x) \right) \dd \mu(x) = \exp \left( - \frac{1}{2} \Vert i^{\ast}(\ell) \Vert_{\sH}^2 \right), \quad \ell \in E^{\ast},
    \end{equation}    
\end{enumerate}

where $\sB_{E} \label{symb:Borel_sigma}$ denotes the Borel $\sigma$-algebra on $E$ and $i^{\ast}: E^{\ast} \rightarrow \sH^{\ast} \cong \sH$ denotes the adjoint of the injection $i$, i.e. $[i^{\ast}(\ell)](h) = \ell(i(h))$ for every $\ell \in E^{\ast}, h \in \sH$.
\end{defn}

\begin{rem}
    In particular, $\mu$ is a Gaussian Borel measure on $E$ with topological support $\mathrm{supp} \mu = \overline{\sH}^{\Vert \cdot \Vert_E}$, the topological closure of $\sH$ in the topology induced by $E$. In the literature\footnote{E.g. \cite[Def. 3.9.4.]{bogachevGaussianMeasures1998}.} it is often assumed that $\mu$ has full support, so that $\overline{\sH}^{\Vert \cdot \Vert_E} = E$. In the context of abstract Wiener spaces this entails no loss of generality, as one can replace $E$ by $\overline{\sH}^{\Vert \cdot \Vert_E}$ when necessary. Once we talk about the law of enhanced Gaussian processes/models though, such a full support assumption is not reasonable, as the algebraic relations (Chen's relation/ $\Pi$-$\Gamma$-relations) imposed by rough paths and regularity structures typically force the lifted process/random field to live on non-linear subvarieties of $\bE$. We thus do \textit{not} assume full support for $\mu$, with the additional advantage that the same Banach space can serve as the underlying space for different Gaussian measures.\footnote{E.g. let $\mu$ be the distribution of a Brownian bridge on $[0,1]$, then we want to allow $E = C[0,1]$ instead of requiring $E=\{x \in C[0,1]: 0 = x(0) = x(1)\}$.} 
\end{rem}

\begin{exmp}[Running Example]
    The Gaussian measure $\mu$ for our running example is the classical Wiener measure; i.e. the distribution of Brownian motion on the interval $[0,1]$. The associated Cameron--Martin space $\sH$ can be identified with $W^{1,2}([0,1]; \bbR^d)$, the space of absolutely continuous $\bbR^d$-valued paths with square integrable derivative and inner product $\langle x,y \rangle := \int_0^1 \langle x'(t), y'(t)\rangle \dd t$. For the Banach space $E$ on which the measure is supported there are multiple options, among them $C([0,1]; \bbR^d)$, the space of continuous functions on the interval together with the $\sup$-norm; $\calC^{0, \alpha}([0,1]; \bbR^d)$ with $0 < \alpha < 1/2$, the closure w.r.t. the $\alpha$-Hölder norm of the set of smooth functions on $[0,1]$; $\calC^{0, 1-\var}([0,1]; \bbR^d)$, the closure w.r.t. the $1$-variation norm of the set of smooth functions on $[0,1]$. For definiteness we choose $E = \calC^{0, \alpha}([0,1]; \bbR^d)$ here and stick with it throughout the running example. The injection $i$ is simply the inclusion.
\end{exmp}
 
\subsection{Ambient Space} \label{subsec:banach_setup}

Let us start by describing $\bE$ in Figure \ref{fig:AWMS_first_appearence}.

\begin{defn}[Ambient Space] \label{defn:ambient_space}
    An \textbf{ambient space} $(\calT, \bE, [\cdot], \calN)$ consists of 

    \begin{enumerate}[(1)]
        \item a finite set $\calT$, whose elements we call \textbf{symbols},
        \item a separable Banach space $\bE := \bigoplus_{\tau \in \calT} E_{\tau}$, graded on the set $\calT$,
        \item a function $[\cdot]: \calT \rightarrow \bbN_{\geq 1}$ called \textbf{degree}, and
        \item a distinguished subset $\calN \subseteq \calT$ with $[\tau] = 1$ for every $\tau \in \calN$. 
    \end{enumerate}
\end{defn}

The finite set $\calT$ should be thought of as a generating set for the model space\footnote{Not to be confused with the space of models.} of a regularity structure, the distinguished subset $\calN$ as the set of symbols associated to the unlifted noise, and the degree $[\tau]$ of a symbol $\tau \in \calT$ as the number of appearances of the noise in that symbol - see Subsection \ref{subsec:Phi}. Define the sets 

\begin{equation}
[\calT] := \left\{ [\tau] : \tau \in \calT \right\} \quad \text{and} \quad \calT^{(k)} := \left\{ \tau \in \calT : [\tau] = k \right\}, \quad k \geq 1 ,
\end{equation}

and the Banach spaces

\begin{equation}
     E^{(k)} := \bigoplus_{\tau \in \calT^{(k)}} E_{\tau}, \quad \text{giving} \quad \bE = \bigoplus_{\tau \in \calT} E_{\tau} = \bigoplus_{k = 1}^{\max [\calT]} E^{(k)} \label{symb:k_component_of_bE}.
\end{equation}

The projections onto the constituent subspaces are denoted by 

\begin{equation}
    \pi_{\tau}: \mathbf{E} \rightarrow E_{\tau} \quad \text{and} \quad \pi^{(k)}: \mathbf{E} \rightarrow E^{(k)}, \quad \tau \in \calT, k \geq 1 .
    \end{equation}

The space $\bigoplus_{\tau \in \calN} E_{\tau} \label{symb:E1}$ and its associated projection $\sum_{\tau \in \calN} \pi_{\tau}$ will play a distinguished role in the subsequent material and will therefore simply be denoted 

\begin{equation}
    E := E_{\calN} = \bigoplus_{\tau \in \calN} E_{\tau}, \quad \text{and} \quad \pi := \pi_{\calN} = \sum_{\tau \in \calN} \pi_{\tau}. \label{symb:pi}
\end{equation}

We will generally use the shorthand $\bx_{\tau} := \pi_{\tau}(\bx)$ and $\bx^{(k)} := \pi^{(k)}(\bx)$. The spaces $\bE$ and $E^{(k)}$ canonically inherit a (separable) Banach space structure from their summands given by 

\begin{equation}
    \Vert \bx \Vert_{\bE} = \sum_{\tau \in \calT} \Vert \bx_{\tau} \Vert_{E_{\tau}} 
   = \sum_{k \geq 1} \Vert \bx^{(k)}  \Vert_{E^{(k)}}, \quad \bx \in \bE . \label{symb:norm_on_bE}
\end{equation}

Scalar multiplication on $E$, that is $\mathrm{m}_{\varepsilon}: x \mapsto \varepsilon x \label{symb:m_scaling}$, is extended to \textbf{dilation} on $\bE$, by setting

\begin{equation}
    \delta_{\varepsilon}: \bx \mapsto \sum_{\tau \in \calT} \varepsilon^{[\tau]} \bx_{\tau}, \quad \bx \in \bE, \varepsilon \geq 0. \label{eq:definition_of_dilation}
\end{equation}

For $\bx, \by \in \bE$, the Banach distance $\Vert \bx - \by \Vert_{\bE}$ is (locally uniformly\footnote{In the sense that the identity map $\id: (\bE, \Vert \cdot \Vert_{\bE}) \rightarrow (\bE, \vertiii{\cdot})$ and its inverse are locally uniformly continuous.}) equivalent to the \textbf{homogeneous distance $\vertiii{\bx - \by}$} on $\bE$, induced by 

\begin{equation}
\vertiii{\bx} := \sum_{\tau \in \calT} \Vert \bx_{\tau} \Vert_{E_{\tau}}^{\frac{1}{[\tau]}}, \quad \bx \in \bE, \label{symb:homogeneous_distance}
\end{equation}

which (since $0 \leq \frac{1}{[\tau]} \leq 1$ for every $\tau \in \calT$) is also a metric on $\bE$. The raison d'être of the homogeneous distance is its compatibility with the dilation operator in the sense that

\begin{equation}
\vertiii{\delta_{\varepsilon} \bx} =  \varepsilon \vertiii{\bx}, \quad \varepsilon \geq 0 . \label{eq:homogeneity_of_hom_norm}
\end{equation}

\begin{exmp}[Running Example continued]
    Continuing our running example, the set of symbols is given by the union of $\calT^{(1)} := \{1, \ldots, d \}$ (which we identify with $\calN$) and $\calT^{(2)} := \{ ij : 1 \leq i,j \leq d \}$; i.e. the set of indices of the components of the vector valued Brownian motion and of the matrix-valued Ito lift. The graded Banach space $\bE$ has as components $d$ many copies of $\calC^{0, \alpha}([0,1]; \bbR)$ and $d^2$ many copies of $\calC^{0, 2 \alpha}([0,1]^2; \bbR)$, which are closures of smooth functions in Hölder type topologies (see Subsection \ref{subsec:Ito} for their precise definition). Furthermore, we have $[\tau] = 1$ for every $\tau \in \calT^{(1)}$ and $[\tau] = 2$ for every $\calT^{(2)}$. In general it is not the case that $\calT^{(1)} = \calN$ (see e.g. Subsection \ref{subsec:Phi}). In Definition \ref{defn:AWMS} we will see that $E_{\calN}$ can be identified with the Banach space $E$ of the underlying abstract Wiener space, so that $E$ can be thought of as being contained inside of $\bE$ (in the same way that a base space of a vector bundle can be thought of as being contained in the total space). \\
    
    At this point, there is considerable freedom in choosing the involved data since there are no compatibility conditions yet. These will be introduced in Definition \ref{defn:AWMS}. For example, the degree function $[\cdot]$ is significantly restricted by (5) of Definition \ref{defn:AWMS}. Also, we could combine all symbols in $\calN$ into a single symbol, which would correspond to viewing $\calC^{0, \alpha}([0,1]; \bbR^d)$ as a single space instead of as decomposed into a direct sum $\oplus_{0 \leq i \leq d} \calC^{0, \alpha}([0,1]; \bbR)$.
 \end{exmp}

    \subsection{Lifting and Approaches to an Enhanced Cameron--Martin Space} \label{subsec:lifting}

The connection between the classical and the enhanced level is the rough path/model lift. Abstractly we consider the following: 

\begin{defn}[Lift] \label{defn:lift_defn}
    Let $(\calT, \bE, [\cdot], \calN)$ be an ambient space, let $A \subseteq E_{\calN}$ be a subset, and let $f: A \rightarrow \bE$ be a function. We say that $f$ is a \textbf{lift} if 

    \begin{equation}
       \pi_{\calN} \circ f = \id_{A},  \label{eq:lift_defn}
    \end{equation}

    i.e. if $f$ is a right-inverse of $\pi_{\calN} \vert_{f(A)}$. If $f: (A, \sB_A^{\mu}) \rightarrow (\bE, \sB_{\bE})$ is the representative of a $\mu$-a.s.\! equivalence class\footnote{Here and below, $\sB$ denotes the Borel $\sigma$-algebra of some topological space, indicated as subscript, possibly completed with respect to some measure $\mu$, in which case this is indicated as superscript.} for some probability measure $\mu$ on $A$, then \eqref{eq:lift_defn} is only required to hold $\mu$-a.s.\!  
\end{defn} 

Consider a measurable, not necessarily continuous lift from $E$ to $\bE$ in the sense of the above definition; that is, a measurable map $\hat{\fL}: E \rightarrow \bE$ such that $\pi \circ \hat{\fL} = \id_E$ $\mu$-a.s.\! For example, all rough path lifts of a $d$-dimensional Brownian motion ($d \geq 2$), discussed in \cite{frizCourseRoughPaths2020} (It\^o, Stratonovich and ``magnetic''), yield different examples of such a lift as those lifts are well-known to be discontinuous, but measurable and only defined up to $\mu$-a.s.\! equivalence. We are interested in the measure space $(\bE, \bmu)$, where $\bmu :=  \hat{\fL}_{\ast} \mu$ is the distribution of the lift $\hat{\fL}$ w.r.t. $\mu$ and a functional analytic object $\sbH$, that controls the behaviour of $\bmu$. Firstly, we note that $\bmu$ only depends on the $\mu$-equivalence class of $\hat{\fL}$ and thus the same should be true for $\sbH$. Motivated from examples, naively at least, one would like to define $\sbH$ as a subset of $\bE$ such that $\sbH := \hat{\fL} \vert_{\sH}(\sH)$, where $\hat{\fL}|_{\sH}$ denotes ``the restriction'' of the lifting map $\hat{\fL}$ to the Cameron--Martin space $\sH$. However, as is well-known\footnote{See e.g. \cite[Thm. 2.4.7]{bogachevGaussianMeasures1998}.} $\mu(\sH) = 0$, whenever $\dim(\sH) = \infty$, rendering $\sbH$ meaningless in the hinted generality ($\hat{\fL}$ defined up to $\mu$-a.s.\! equivalence.). Therefore, a separate definition of ``the restriction of $\hat{\fL}$ to $\sH$'' will be needed. 

\begin{rem}[Aida--Kusuoka--Stroock] \label{rem:AKS}
The question when some abstract measurable map defined on $E$ admits a meaningful restriction to $\sH$ goes back at least to \cite{aidaSupportWienerFunctionals1993} where the authors introduce the notion of $\mathcal{K}$-regularity. Put in our context, given a measurable $\hat{\fL}: E \rightarrow B$, where $E$ is part of an abstract Wiener space $(E, \sH, i, \mu)$ and $B$ is Polish, they postulate (c.f. \cite[Cor. 1.13]{aidaSupportWienerFunctionals1993}) the existence of a continuous map $\fL: \sH \to B$, s.t. 

\begin{equation}
    \fL \left( \overline P_n (x) \right) \rightarrow \hat{\fL} (x) \quad \text{and} \quad \hat{\fL}\left( \left( \id - \overline{P}_n \right) x + P_n h \right) \rightarrow \fL (h), \quad h \in \sH, \label{eq:AKS}
\end{equation}

in probability w.r.t. $\mu$, which leads to $\supp \hat{\fL}_{\ast} \mu \subseteq \overline{{\fL}(\sH)}^{B}$, $\supp \hat{\fL}_{\ast} \mu \supseteq \overline{{\fL}(\sH)}^{B}$, and hence an abstract support theorem (put in our context) of the form

\begin{equation}
    \supp \hat{\fL}_{\ast} = \overline{{\fL}(\sH)}^{B}. \label{eq:AKS_support}
\end{equation}

(Leaving details of notation to that paper, $P_n(h)$ is basically the projection of $h \in \sH$ to the subspace spanned by the first $n$ basis vectors of some orthonormal basis (ONB) of $\sH$ and $\overline{P}_n$ is its extension to $E$.) We note a recent application via rough paths by Y. Inahama \cite{inahamaSupportTheoremPinned2024}. Note that \eqref{eq:AKS_support} does in general not hold for singular SPDEs; see e.g. \cite{choukSupportTheoremSingular2018} for the generalized parabolic Anderson model (gPAM), \cite{tsatsoulisSpectralGapStochastic2018} for the $\Phi^4_2$-equation, and \cite{hairerSupportSingularStochastic2022} for more general results. 
\end{rem} 

There are two ways of going about ``the restriction of $\hat{\fL}$ to $\sH$'', which will turn out to be (in some sense) consistent: 

\begin{itemize}
    \item (Bottom-Up) One is to utilize the stronger topology of $\sH \subseteq E$, \textit{start} from a continuous lift $\fL$ defined on the subspace $\sH \subseteq E$, and to postulate\footnote{Cf. Remark on p. \pageref{rem:AKS} and \cite{aidaSupportWienerFunctionals1993}.} the existence of a limit $\hat{\fL}$ in probability w.r.t. $\mu$:

    \begin{equation}
        \lim_{m \rightarrow \infty} \fL \circ \Phi_m =: \hat{\fL},
    \end{equation}

    for some suitable approximation scheme $\Phi_m: E \rightarrow \sH$, consisting of bounded linear operators; the prime example being $\Phi_m =  \overline{P}_m$ taken from the left-hand side of \eqref{eq:AKS}, sometimes called Karhunen--Loève approximation\footnote{Other names include spectral-Galerkin approximation or $L^2$-approximation.}, but we also wish to account for piecewise-linear and mollifier approximations, ubiquitous in rough paths and regularity structures.\footnote{See Subsection \ref{subsec:intermediate_space}.}

    By virtue of continuity, if $(\Phi_m)_{m \in \bbN}$ approximates the identity pointwise on $\sH$,\footnote{That is, $\Phi_m(h) \to h$ for all $h \in \sH$. This is plainly the case when $\Phi_m = \overline{P}_m$.} the above limit exists on $\sH$, so that the restriction of $\hat{\fL}$ to $\sH$ is well-defined,\footnote{In the sense that any continuous function which is a representative of a $\mu$-a.s.\! equivalence class has the same restriction to $\sH$.} and agrees with $\fL$. This is close to the strategy pursued in \cite{frizLargeDeviationPrinciple2007} for deriving large deviation principles for Gaussian rough paths with the help of Banach-valued Wiener--Ito chaos, therein defined as the canonical lift $\fL$ of some mollified Gaussian process. 
        
    \item (Top-Down) The other approach is to start from a $\mu$-a.s.\! version of a lift $\hat{\fL}$ defined on all of $E$, make additional structural assumptions about $\hat{\fL}$ and extract a proxy-restriction\footnote{See Definition \ref{defn:proxy_restriction}.} $\fL$ of $\hat{\fL}$ to $\sH$. We want to emphasise here that there is no canonical way of defining $\fL$, so a choice has to be made. This is the strategy pursued in \cite{hairerLargeDeviationsWhiteNoise2015}, building upon ideas going back to \cite{borellTailProbabilitiesGauss1978} and \cite{ledouxNoteLargeDeviations1990}.
\end{itemize}
 
\begin{qn}
\begin{center}
\textbf{Two options:}
\end{center}

\begin{minipage}[t]{0.5\textwidth}
\begin{center}
\textbf{Bottom-Up:} \\
Define $\fL$ on $\sH$ and \textit{extend} to $E$ 
\vspace{10pt}

\begin{tikzpicture}

\path
(0,0) node(1) {$\mathscr{H}$} 
(3,0) node(2) {$E$}
(0,2) node(1a) {$\sbH$} 
(3,2) node(2a) {$\bigoplus_{\tau \in \calT} E_{\tau}$}
(1.5,2) node(sub) {$\subseteq$} 
(0.5,1) node(a) {} 
(2.5,1) node(b) {};

\draw[right hook->] (1) to node[above] {} (2);
\draw[->] (1) to node[left] {$\fL$} (1a);
\draw[->] (2) to node[left] {$\hat{\fL}$} (2a);
\draw[->,
line join=round,
decorate, decoration={
    zigzag,
    segment length=4,
    amplitude=.9,post=lineto,
    post length=2pt
}] (a) to node[left] {} (b);
\end{tikzpicture}
\end{center}
\end{minipage}%
\begin{minipage}[t]{0.5\textwidth}
\begin{center}
\textbf{Top-Down:} \\
Define $\hat{\fL}$ on $E$ and \textit{proxy-restrict} to $\sH$ 
\vspace{10pt}

\begin{tikzpicture}

\path
(0,0) node(1) {$\mathscr{H}$} 
(3,0) node(2) {$E$}
(0,2) node(1a) {$\sbH$} 
(3,2) node(2a) {$\bigoplus_{\tau \in \calT} E_{\tau}$}
(1.5,2) node(sub) {$\subseteq$} 
(0.5,1) node(a) {} 
(2.5,1) node(b) {};

\draw[right hook->] (1) to node[above] {} (2);
\draw[->] (1) to node[left] {$\fL$} (1a);
\draw[->] (2) to node[left] {$\hat{\fL}$} (2a);
\draw[->,
line join=round,
decorate, decoration={
    zigzag,
    segment length=4,
    amplitude=.9,post=lineto,
    post length=2pt
}] (b) to node[left] {} (a);
\end{tikzpicture}
\end{center}
\end{minipage}
\end{qn}

\vspace{10pt}

\begin{rem}[Renormalized Bottom-Up] Following \cite{hairerLargeDeviationsWhiteNoise2015}, the Top-Down approach applies to singular SPDEs, whereas the Bottom-Up approach is closer to the existing literature on Gaussian rough paths, starting with \cite{frizLargeDeviationPrinciple2007}. That said, following \cite[Chap. 10]{hairerTheoryRegularityStructures2014}, the natural construction of Gaussian models amounts to having conditions that give convergence in probability w.r.t. $\mu$ of

\begin{equation}
    \lim_{m \rightarrow \infty} \texttt{Ren}^m \circ \fL \circ \Phi_m =: \hat{\fL},
\end{equation}

where $\texttt{Ren}^m$ expresses the action of an element in some renormalization group, $\texttt{Ren}^m \in \calG$ \label{symb:renormalization_group}. It is an important feature of M. Hairer's theory (see \cite{brunedRoughPathPerspective2019} for a rough path perspective) that this group $\calG$ is a finite-dimensional Lie group, essentially due to a stationarity assumption of the underlying noise that is to be preserved by renormalization. It is conceivable that such features can be incorporated in an abstract setup like the one proposed here, e.g. via a group of measure-preserving transformation on $(\bE, \bmu)$.\footnote{Recall a commonly used abstract viewpoint in the analysis of stationary sequences. Instead of processes with a shift-invariant law, one considers a measure space $(\Omega, \calF)$ with $T$-invariant measures, for some measurable transformation $T$, with a measurable inverse $T^{-1}$. In this case the group of transformation is simply $(T^n : n \in \mathbb{Z}) \cong ( \mathbb{Z}, +)$.} Any such investigation should start however by intersecting stationarity with abstract Wiener spaces $(E, \sH, i, \mu)$; we are unaware of a reference but would be surprised if this had not been attempted yet. 
\end{rem}

\subsection{Intermediate Spaces and Skeleton Lifts} \label{subsec:intermediate_space}

The following definition of an intermediate space $\sK$ does not appear in Figure \ref{fig:AWMS_first_appearence} and its significant may not be immediately apparent. The reason for introducing it is to accommodate for the following kind of situation: Let $\sH$ be the Cameron--Martin space of a two-sided Brownian motion $B$ restricted to $[-1,1]$. Then $\sH$ consists precisely of functions $h(t) \equiv  \int_0^t h'(s) \dd s$, with $h' \in L^2([-1,1])$, and in particular $\sH$ is contained in $\left\{ x: [-1,1] \rightarrow \bbR \big\vert \text{cont. and} ~ x(0) = 0 \right\}$. Now consider the distribution of this process as a measure on $E = C([-1,1]; \bbR)$ and a natural approximation to be the piecewise linear approximation $(\Phi^{Q}_m)_{m \in \bbN}$, subordinate to some sequence of partitions $Q = (Q_m)_{m \in \bbN}$. If the partitions happen to not include the point $0$, then in general $\Phi^{Q}_m(E) \nsubseteq \sH$. On the other hand, for $\sK := \calC^{0,1-\var}([-1,1]; \bbR)$ we indeed have $\Phi^{Q}_m(E) \subseteq \sK$, regardless of which points are included in the partitions, and $i(\sH) \subseteq \sK$ and $(\Phi^{Q}_m)_{m \in \bbN}$ are bounded and linear. The situation is similar for convolution with a mollifier as smearing out the values near $0$ will a.s.\! lead to a non-zero value at the origin. We thus detangle the Cameron--Martin space $\sH$ from some other nice space $\sK$, in which the approximations take values and on which the skeleton lift is defined. 

Upon first reading it can be useful to pretend that $\sK \equiv \sH$ (similar to how it can be useful, but incorrect, to pretend that the Dirac distribution is a function).

\begin{defn}[Intermediate Space] \label{defn:intermediate_space}
    Let $(E, \sH, i, \mu)$ be an abstract Wiener space. An \textbf{intermediate space} is a separable Banach space $(\sK, \Vert \cdot \Vert_{\sK})$ contained in $E$ such that $i(\sH) \subseteq \sK$ and

    \begin{equation}
        \sH \xhookrightarrow{i} \sK \subseteq E,
    \end{equation}

    is $\sH-\sK$-continuous. An intermediate space is called \textbf{compact} if the linear injection $i: \sH \hookrightarrow \sK$ is compact (in the sense of bounded linear operators).
\end{defn}

The second part of the Definition \ref{defn:intermediate_space} (also not present in Figure \ref{fig:AWMS_first_appearence}), is that of an adaptation of the lift from $\sH$ to the larger space $\sK$. In the same vein as above, upon first reading it can be useful to pretend that $\fM \equiv \fL$ (and $\sK \equiv \sH$). The importance of both the intermediate space $\sK$ and the associated $\sK$-skeleton lift will become apparent in Theorem \ref{thm:bottom_up_construction}.

\begin{defn}[$\sK$-skeleton lift]\label{defn:skeleton_lift}
    Let $(\calT, \bE, [\cdot], \calN)$ be an ambient space, let $(E, \sH, i, \mu)$ be an abstract Wiener space and let $\sK$ be an intermediate space. A lift $\fM: \sK \rightarrow \bE$ is called \textbf{$\sK$-skeleton lift} if it satisfies the following properties:

    \begin{enumerate}
        \item $\fM: \sK \subseteq E \rightarrow \bE$ is continuous w.r.t. the topology on $\sK$.   
        
        \item $\fM: \sK \subseteq E \rightarrow \bE$ is \textbf{$\calT$-multi-linear} in the following sense: For $\tau \in \calT$, let $\sK^{\otimes_A [\tau]}$ denote the $[\tau]$-fold algebraic tensor product \label{symb:algebraic_tensor}of $\sK$ with itself and let $(\cdot)^{\otimes [\tau]}$ denote the canonical inclusion of $\sK$ into $\sK^{\otimes_A [\tau]}$ via $[\tau]$-fold tensor powering. Then $\fM$ is called \textbf{$\calT$-multi-linear} if for every $\tau \in \calT$ there exists a linear function 
        
            \begin{equation}
                \fM_{\tau}^{\otimes}: \sK^{\otimes_A [\tau]} \rightarrow E_{\tau}, \quad \text{s.t.} \quad \pi_{\tau} \circ \fM = \fM_{\tau}^{\otimes} \circ (\cdot)^{\otimes [\tau]}.
            \end{equation}
    \end{enumerate} 
    
    If $\sK = \sH$, we may refer to $\fM$ as just a skeleton lift.
\end{defn}

\begin{lem}[Homogeneity] \label{lem:homogeneity_of_fL}
    Let $(\calT, \bE, [\cdot], \calN)$ be an ambient space, let $(E, \sH, i, \mu)$ be an abstract Wiener space, let $\sK$ be an intermediate space, and let $\fM: \sK \rightarrow \bE$ be a $\sK$-skeleton lift. Then 

    \begin{equation}
        \fM \circ \rmm_{\varepsilon} = \delta_{\varepsilon} \circ \fM , \quad \varepsilon \geq 0 .
    \end{equation}
    
    We call this property \textbf{homogeneity} of $\fM$. 
\end{lem}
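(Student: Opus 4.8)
The plan is to verify the identity componentwise on the graded pieces $E_{\tau}$, $\tau \in \calT$, and then conclude using the fact that the family of projections $(\pi_{\tau})_{\tau \in \calT}$ separates the points of $\bE = \bigoplus_{\tau \in \calT} E_{\tau}$. Only the $\calT$-multi-linearity (part (2) of Definition \ref{defn:skeleton_lift}) enters; the continuity clause plays no role.

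Fix $\tau \in \calT$, $h \in \sK$ and $\varepsilon \geq 0$. Since $\rmm_{\varepsilon}(h) = \varepsilon h \in \sK$ and the $[\tau]$-fold tensor-power map $(\cdot)^{\otimes [\tau]} \colon \sK \to \sK^{\otimes_A [\tau]}$ satisfies $(\varepsilon h)^{\otimes [\tau]} = \varepsilon^{[\tau]}\, h^{\otimes [\tau]}$ (by multilinearity of the algebraic tensor product, using $[\tau] \geq 1$), the defining relation $\pi_{\tau} \circ \fM = \fM_{\tau}^{\otimes} \circ (\cdot)^{\otimes [\tau]}$ together with the linearity of $\fM_{\tau}^{\otimes}$ gives
\[
    \pi_{\tau}\bigl(\fM(\varepsilon h)\bigr) = \fM_{\tau}^{\otimes}\bigl((\varepsilon h)^{\otimes [\tau]}\bigr) = \varepsilon^{[\tau]}\, \fM_{\tau}^{\otimes}\bigl(h^{\otimes [\tau]}\bigr) = \varepsilon^{[\tau]}\, \pi_{\tau}\bigl(\fM(h)\bigr).
\]
On the other hand, by the definition \eqref{eq:definition_of_dilation} of the dilation, $\pi_{\tau}(\delta_{\varepsilon}\fM(h)) = \varepsilon^{[\tau]}\pi_{\tau}(\fM(h))$. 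Hence $\pi_{\tau}(\fM(\varepsilon h)) = \pi_{\tau}(\delta_{\varepsilon}\fM(h))$ for every $\tau \in \calT$, and since $\bx \mapsto (\pi_{\tau}\bx)_{\tau \in \calT}$ is injective on $\bE$ we conclude $\fM(\varepsilon h) = \delta_{\varepsilon}\fM(h)$, i.e.\ $\fM \circ \rmm_{\varepsilon} = \delta_{\varepsilon} \circ \fM$.

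There is no substantive obstacle here. The only points worth flagging are: one uses $[\tau] \geq 1$ (guaranteed by Definition \ref{defn:ambient_space}(3)) both to make sense of $h^{\otimes[\tau]}$ and to match the exponents on the tensor and dilation sides; the case $\varepsilon = 0$ requires no special treatment, since $[\tau] \geq 1$ forces $0^{\otimes[\tau]} = 0$ and $0^{[\tau]} = 0$, so both sides reduce to the origin of $\bE$; and the argument uses $\calT$-multi-linearity alone, not the lift property $\pi_{\calN}\circ\fM = \id$ (although, as a consistency check, restricting the identity to $\calN$, where $[\tau] = 1$, recovers $\pi_{\calN}(\fM(\varepsilon h)) = \varepsilon h = \rmm_{\varepsilon}(h)$, compatibly with $\fM$ being a lift).
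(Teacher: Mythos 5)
Your proof is correct and follows essentially the same route as the paper: fix $\tau \in \calT$, use the $\calT$-multi-linearity relation $\pi_{\tau}\circ\fM = \fM_{\tau}^{\otimes}\circ(\cdot)^{\otimes[\tau]}$ together with linearity of $\fM_{\tau}^{\otimes}$ to pull out $\varepsilon^{[\tau]}$, and conclude componentwise. The extra remarks (the case $\varepsilon=0$, the role of $[\tau]\geq 1$, and that continuity is not needed) are fine but not essential.
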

\begin{proof}
    Let $\varepsilon \geq 0$. Then for every $\tau \in \calT$ and $k \in \sK$ we have

    \begin{align}
        \pi_{\tau}(\fM(\rmm_{\varepsilon} k)) &= \pi_{\tau}(\fM(\varepsilon k)) =\fM^{\otimes}_{\tau}\left( (\varepsilon k)^{\otimes [\tau]}) \right) = \fM^{\otimes}_{\tau}(\varepsilon^{[\tau]} k^{\otimes [\tau]}) \\
        &= \varepsilon^{[\tau]}\fM^{\otimes}_{\tau}(k^{\otimes [\tau]}) = \varepsilon^{[\tau]} \pi_{\tau}(\fM(k)) = \pi_{\tau}(\delta_{\varepsilon} (\fM (k))) .
    \end{align}
\end{proof}

\subsection{Proxy-Restriction}

As expected from a theory dealing with Gaussian measures, the Wiener--Ito chaos decomposition will be an indispensable tool. Here we define a variant that takes into account the grading bestowed upon $\bE$. See Appendix \ref{sec:WIC} for an exposition of Banach valued and classical Wiener--Ito chaos.

\begin{defn}[Graded Wiener--Ito Chaos] \label{defn:graded_WIC}
    Let $(\calT, \bE, [\cdot], \calN)$ be an ambient space and let $(E, \sH, i, \mu)$ be an abstract Wiener space. Define the \textbf{$\calT$-inhomogeneous Wiener--Ito chaos} to be 

    \begin{equation}
            \calP^{(\leq [\calT])}(E, \mu; \bE) := \bigoplus_{\tau \in \calT} \calP^{(\leq [\tau])}(E, \mu; E_{\tau}) \subseteq L^2(E, \mu; \bE)
    \end{equation}

    and the \textbf{$\calT$-homogeneous Wiener--Ito chaos} to be

    \begin{equation}
            \calP^{([\calT])}(E, \mu; \bE) := \bigoplus_{\tau \in \calT} \calP^{([\tau])}(E, \mu; E_{\tau}) \subseteq L^2(E, \mu; \bE) ,
    \end{equation}

    where $\calP^{(\leq k)}(E, \mu; B)$ (resp. $\calP^{(k)}(E, \mu; B)$) denote the $k$-th inhomogeneous (resp. homogeneous) $B$-valued Wiener--Ito chaos. There is of course a natural projection

    \begin{equation}
        \Pi_{[\calT]}: \calP^{(\leq [\calT])}(E, \mu; \bE) \rightarrow \calP^{([\calT])}(E, \mu; \bE); \quad \Psi \mapsto \sum_{\tau \in \calT} \Pi_{\tau} \Psi_{\tau},
    \end{equation}

    where $\Pi_{\tau}: \calP^{(\leq [\tau])}(E, \mu; E_{\tau}) \rightarrow \calP^{([\tau])}(E, \mu; E_{\tau})$ is the natural projection onto the $[\tau]$-th homogeneous chaos.
\end{defn}

We now come to one of the key definitions of the theory, that of a proxy-restriction. It provides the main compatibility condition between the full lift $\hat{\fL}$, defined on all of $E$, and the skeleton lift, defined only on $\sH$ (see Definition \ref{defn:AWMS} (5)). In our setting, the proxy-restriction of an element in a finite Wiener--Ito chaos should be thought of as a substitute for ``the restriction of'' (the highest order part of) that element to the associated Cameron--Martin space. It is of course not \emph{the} restriction, as such a notion is is not well defined in this context (cf. the discussion in Subsection \ref{subsec:lifting}), but it provides a good enough substitute to produce the proofs of Theorem \ref{thm:LDP_for_AWMS}, Theorem \ref{thm:exp_int}, Theorem \ref{thm:lifted_CM_thm}.

\begin{defn}[Proxy-Restriction] \label{defn:proxy_restriction}
    Let $(\calT, \bE, [\cdot], \calN)$ be an ambient space, let $(E, \sH, i, \mu)$ be an abstract Wiener space, and let $\Psi \in \calP^{(\leq [\calT])}(E, \mu; \bE)$. For any $\tau \in \calT$ define the \textbf{proxy-restriction} of $\Psi_{\tau}$ to $\sH$ as a map $\overline{\Psi_{\tau}}: \sH \rightarrow E_{\tau}$, defined by\footnote{Since $\Psi$ is a Banach space-valued random variable, all integrals are to be understood as Bochner integrals.}

    \begin{equation}
        \overline{\Psi_{\tau}}(h) := \int_E \left( \Pi_{[\tau]} \Psi_{\tau} \right) \circ T_h \dd \mu, \quad h \in \sH, \label{eq:definition_of_bar_lift}
    \end{equation} 

    where $T_h$ is the classical shift operator $T_h(x) = x+h$ (see also Theorem \ref{thm:classical_CM}). Also define the notation  

    \begin{equation}
        \Psi^{\circ} = \sum_{\tau \in \calT} \underbrace{\Pi_{[\tau]} \Psi_{\tau}}_{=: \Psi_{\tau}^{\circ}}, \quad \overline{\Psi} := \sum_{\tau \in \calT} \overline{\Psi_{\tau}}.
    \end{equation}
\end{defn}

\begin{rem}
    In the literature on white noise analysis e.g. \cite[Sec. 3]{kondratevSpacesWhiteNoise1993} or \cite[Prop. 2.3.]{hidaWhiteNoiseInfinite1993}, Definition \ref{defn:proxy_restriction} is also known as the \textit{$S$-transform} of $\Pi_{[\tau]} \Psi_{\tau}$. In the context of large deviation principles, the concept was already used in \cite{hairerLargeDeviationsWhiteNoise2015} (under the name \textit{homogeneous part}) of $\Psi$ and earlier in \cite{ledouxNoteLargeDeviations1990} (without a dedicated name). Note that if $E_{\tau} = \bbR$, then the proxy-restriction is nothing but a projection onto the homogeneous Wiener--Ito chaos of degree $[\tau]$ followed by an application of the inverse of the Wiener--Ito isometry in the sense of \cite{nualartMalliavinCalculusRelated2006} (and an identification of the symmetric tensor power of $\sH$ with its dual space). 
\end{rem}

\begin{exmp}[Running example continued]
    An AWMS built for Ito Brownian motion includes two types of lift: a full lift $\hat{\fL}$, given by $B \mapsto (B, \int B \dd B)$, i.e. the Ito integral of Brownian motion against itself, and the $\sH$-skeleton lift $\fL$, which is given by the same map, but now interpreted in the sense of Young integration and defined on $\sH = W^{1,2}$. The significance of the proxy-restriction is that $\fL = \overline{\hat{\fL}}$ (see Proposition \ref{prop:proxy_restr_of_Ito}).
\end{exmp}

In the following we collect some basic properties of the proxy-restriction that will be used in the rest of the paper. 

\begin{prop}[Properties of the Proxy-Restriction] \label{prop:proxy_restriction_is_cont}
    Let $C(\sH, \bE)$ denote the space of continuous functions from $\sH$ to $\bE$. Let $(\calT, \bE, [\cdot], \calN)$ be an ambient space, let $(E, \sH, i, \mu)$ be an abstract Wiener space, and let $\Psi \in \calP^{(\leq [\calT])}(E, \mu; \bE)$. Then
    
    \begin{enumerate}[(1)]
        \item If $\pi_{\calN} \circ \Psi = \id_{E}$ $\mu$-a.s.\! (i.e. if $\Psi$ is a lift on $E$ in the sense of Definition \ref{defn:lift_defn}), then $\pi_{\calN} \circ \overline{\Psi} = \id_{\sH}$ (i.e. $\overline{\Psi}$ is a lift on $\sH$ in the sense of Definition \ref{defn:lift_defn}). 
        \item The proxy-restriction $\overline{\Psi}$ is a continuous function on $\sH$. That is, there is an assignment 
        
        \begin{equation}
        \overline{(\cdot)}: \calP^{(\leq [\calT])}(E, \mu; \bE) \rightarrow C(\sH, \bE), \quad \Psi \mapsto \overline{\Psi}. 
        \end{equation}

        \item The assignment $\overline{(\cdot)}$ is well-defined on $\mu$-a.s. equivalence classes and linear.
        \item For any $\Psi \in \calP^{(\leq [\calT])}(E, \mu; \bE)$ the proxy-restriction $\overline{\Psi}$ is $\calT$-multi-linear in the sense of Definition \ref{defn:skeleton_lift}.
    \end{enumerate}

    In particular, as a consequence of (1), (2), and (4), if $\Psi \in \calP^{(\leq [\calT])}(E, \mu; \bE)$ is a lift in the sense of Definition \ref{defn:lift_defn}, then $\overline{\Psi}$ is a $\sH$-skeleton lift in the sense of Definition \ref{defn:skeleton_lift}.
\end{prop}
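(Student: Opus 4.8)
The plan is to reduce everything to a single closed formula for the proxy-restriction and then read off (1)--(4). Fix $\tau\in\calT$, put $k:=[\tau]$ and $\Phi:=\Pi_{[\tau]}\Psi_\tau\in\calP^{(k)}(E,\mu;E_\tau)\subseteq L^2(E,\mu;E_\tau)$. For $h\in\sH$, the classical Cameron--Martin theorem (Theorem \ref{thm:classical_CM}) gives $\int_E\Phi\circ T_h\,\dd\mu=\int_E\Phi\cdot R_h\,\dd\mu$, where $R_h:=\dd((T_h)_\ast\mu)/\dd\mu=\exp\!\big(\widehat h-\tfrac12\|h\|_\sH^2\big)\in\bigcap_{p\geq1}L^p(E,\mu)$ and $\widehat h$ is the first-chaos element attached to $h$; Cauchy--Schwarz (using $\Phi,R_h\in L^2$) shows the Bochner integral converges, so $\overline{\Psi_\tau}$ is well-defined to begin with. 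Inserting the $L^2(E,\mu)$-convergent Wiener--Ito chaos expansion $R_h=\sum_{j\geq0}R_h^{(j)}$ with $R_h^{(j)}\in\calP^{(j)}(E,\mu;\bbR)$ --- where $R_h^{(j)}$ is, up to a combinatorial constant, the $j$-th multiple Wiener integral of $h^{\otimes j}$ (Appendix \ref{sec:WIC}) --- pairing against $\Phi$, and using orthogonality of chaoses of distinct degrees (applied after testing with $\ell\in E_\tau^\ast$, for which $\ell(\Phi)\in\calP^{(k)}(E,\mu;\bbR)$), all terms with $j\ne k$ vanish and
\begin{equation}
    \overline{\Psi_\tau}(h)=\int_E\Phi\cdot R_h^{(k)}\,\dd\mu,\qquad h\in\sH.\label{eq:pr_closed_form}
\end{equation}

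Parts (3) and (1) are then short. For (3): $\Pi_{[\tau]}$ is defined on $L^2$-classes and, for $h\in\sH$, $T_h$ preserves $\mu$-nullsets (Cameron--Martin), so $\Phi\circ T_h$ and its integral depend only on the $\mu$-class of $\Psi$; linearity of $\overline{(\cdot)}$ is inherited from $\Pi_{[\tau]}$, from $\Psi\mapsto\Psi\circ T_h$, and from the Bochner integral. For (1): if $\pi_\calN\circ\Psi=\id_E$ $\mu$-a.s., then comparing $E_\sigma$-components for $\sigma\in\calN$ yields $\Psi_\sigma=\pi_\sigma$ $\mu$-a.s.; since $[\sigma]=1$ and $\mu$ is centred, $\Psi_\sigma$ has zero mean, hence $\Pi_1\Psi_\sigma=\pi_\sigma$ in $L^2(E,\mu;E_\sigma)$, and therefore, choosing the continuous representative $\pi_\sigma$, $\overline{\Psi_\sigma}(h)=\int_E\pi_\sigma\big(x+i(h)\big)\,\dd\mu(x)=\int_E\pi_\sigma\,\dd\mu+\pi_\sigma(i(h))=\pi_\sigma(i(h))$ (the mean of a centred Gaussian vanishes). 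Summing over $\sigma\in\calN$ gives $\pi_\calN\circ\overline\Psi=\id_\sH$ under the identification $i(\sH)\cong\sH$.

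For (4) and (2) I use \eqref{eq:pr_closed_form} together with the chaos--tensor dictionary: the $k$-th multiple Wiener integral $I_k$ is a linear isomorphism of $\sH^{\odot k}$ onto $\calP^{(k)}(E,\mu;\bbR)$ sending a fixed scalar multiple of $h^{\otimes k}$ to $R_h^{(k)}$. Hence $g\mapsto\int_E\Phi\cdot I_k(g)\,\dd\mu$ is linear $\sH^{\odot k}\to E_\tau$; precomposing with the algebraic symmetrization $\sH^{\otimes_A[\tau]}\to\sH^{\odot[\tau]}$ and rescaling by the combinatorial constant yields a linear map $\overline{\Psi_\tau}^{\otimes}:\sH^{\otimes_A[\tau]}\to E_\tau$ with $\overline{\Psi_\tau}^{\otimes}(h^{\otimes[\tau]})=\overline{\Psi_\tau}(h)$; since $\pi_\tau\circ\overline\Psi=\overline{\Psi_\tau}$ (each summand $\overline{\Psi_\sigma}$ takes values in $E_\sigma$), this is exactly the $\calT$-multi-linearity relation $\pi_\tau\circ\overline\Psi=\overline{\Psi_\tau}^{\otimes}\circ(\cdot)^{\otimes[\tau]}$ of (4). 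For (2), $h\mapsto h^{\otimes k}$ is locally Lipschitz from $\sH$ to $\sH^{\odot k}$ by the telescoping bound $\|h^{\otimes k}-g^{\otimes k}\|\leq k\max(\|h\|_\sH,\|g\|_\sH)^{k-1}\|h-g\|_\sH$, so $h\mapsto R_h^{(k)}$ is continuous $\sH\to L^2(E,\mu)$, and \eqref{eq:pr_closed_form} with Cauchy--Schwarz gives $\|\overline{\Psi_\tau}(h)-\overline{\Psi_\tau}(g)\|_{E_\tau}\leq\|\Phi\|_{L^2(E,\mu;E_\tau)}\,\|R_h^{(k)}-R_g^{(k)}\|_{L^2(E,\mu)}$, which tends to $0$ as $g\to h$. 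Hence each $\overline{\Psi_\tau}$, and so $\overline\Psi=\sum_{\tau\in\calT}\overline{\Psi_\tau}$ (a finite sum), is continuous --- this is the assignment $\overline{(\cdot)}:\calP^{(\leq[\calT])}(E,\mu;\bE)\to C(\sH,\bE)$ claimed in (2). The concluding ``in particular'' is then the conjunction of (1), (2), (4) with Definition \ref{defn:skeleton_lift}.

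The step I expect to be the real obstacle is not an estimate --- everything above runs on Cauchy--Schwarz --- but the bookkeeping for Banach-valued Wiener--Ito chaos underlying \eqref{eq:pr_closed_form}: one needs Appendix \ref{sec:WIC} to provide the $L^2(E,\mu)$-convergent chaos decomposition of the Cameron--Martin density $R_h$, the reduction of chaos-orthogonality to the scalar case by testing against $E_\tau^\ast$, and the isometric identification $\calP^{(k)}(E,\mu;\bbR)\cong\sH^{\odot k}$ through multiple Wiener integrals, with normalization conventions consistent enough that the linear map $\overline{\Psi_\tau}^{\otimes}$ appearing in (4) realises $\overline{\Psi_\tau}$ on diagonal tensors exactly, not merely up to constants.
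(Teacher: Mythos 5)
Your proof is correct, but it takes a genuinely different route from the paper's. You derive a closed formula $\overline{\Psi_\tau}(h)=\int_E \Pi_{[\tau]}\Psi_\tau\cdot R_h^{([\tau])}\,\dd\mu$ by expanding the Cameron--Martin density in Wiener--Ito chaos and using orthogonality (reduced to the scalar case by testing against $E_\tau^{\ast}$, which is legitimate by the characterization of Banach-valued chaos in Proposition \ref{prop:WIC_equiv_characterizations}); continuity and $\calT$-multi-linearity then fall out of the identification of $R_h^{(k)}$ with a fixed multiple of $I_k(h^{\otimes k})$ and the Wiener--Ito isometry $\calP^{(k)}(E,\mu;\bbR)\cong\sH^{\odot k}$. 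The paper instead stays entirely inside the ONB--Hermite framework of Appendix \ref{sec:WIC}: continuity is proved by a direct estimate on $f_{h_n}-f_h$ via the mean value theorem and Gaussian moment generating functions, and multi-linearity is obtained by approximating $\Psi_\tau$ through conditional expectations (vector-valued martingale convergence, Proposition \ref{prop:convergence_of_conditional_exp}) and computing $\bbE[H_\alpha\circ T_h]=\prod_i\langle e_i,h\rangle^{\alpha_i}$ with the binomial theorem for Hermite polynomials. Your version is more compact, yields the stronger conclusion of local Lipschitz continuity on bounded sets, and makes the multi-linear structure transparent; its cost is exactly the bookkeeping you flag: the chaos expansion of the stochastic exponential and the isometry with symmetric tensor powers are standard (Nualart, and alluded to in the paper's Remark after Definition \ref{defn:proxy_restriction}) but are not developed in Appendix \ref{sec:WIC}, so your argument imports external machinery where the paper is self-contained. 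The paper's heavier computation also pays for itself later: the identity $\bbE[H_\alpha\circ T_h]=\prod_i\langle e_i,h\rangle^{\alpha_i}$ is reused essentially verbatim in Lemma \ref{lem:agreement_for_bottom_up} and Lemma \ref{lem:LDP_of_each_approx}, whereas your closed formula would have to be re-derived there in the $\sH_m$-setting. Parts (1) and (3) of your argument coincide with the paper's up to presentation.
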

\begin{proof}  
    (1) Let $h \in \sH$ be arbitrary. Then from 

    \begin{equation}
        \pi \circ \Psi^{\circ} = \pi \circ \Bigg( \sum_{\tau \in \calT} \Pi_{[\tau]} \Psi_{\tau} \Bigg) = \sum_{\substack{\tau \in \calT \\ [\tau] = 1}} \Pi_1 \Psi_{\tau} = \Pi_1 \Bigg( \sum_{\substack{\tau \in \calT \\ [\tau] = 1}} \Psi_{\tau} \Bigg) = \Pi_1 \left( \pi \circ \Psi \right)
    \end{equation}

    we deduce that

    \begin{align}
        \pi \left( \overline{\Psi}(h) \right) &= \pi \left( \bbE \left[ \Psi^{\circ} \circ T_h \right] \right) = \bbE \left[ \pi \circ \Psi^{\circ} \circ T_h \right] = \bbE \left[ \Pi_1 \left( \pi \circ \Psi \right) \circ T_h \right] = \int_E x+h ~ \dd \mu(x) = h,
    \end{align}

    where we made use of the fact that $\Pi_1( \id_E) = \id_E$ and of the assumption $\pi \circ \Psi = \id_{E}$ $\mu$-a.s.\! and therefore also $\mu_h$-a.s.\! by Theorem \ref{thm:classical_CM}. Hence $\pi \circ \overline{\Psi} = \id_{\sH}$. 
    
    (2) To see that the proxy-restriction is continuous, let $h_n \rightarrow h$ in $\sH$. Then 

    \begin{align}
        &\left\Vert \overline{\Psi}(h_n) - \overline{\Psi}(h) \right\Vert_{\bE} = \left\Vert \int_E \Psi^{\circ} \circ T_{h_n} \dd \mu - \int_E \Psi^{\circ} \circ T_{h} \dd \mu \right\Vert_{\bE} \\
        &= \left\Vert \int_E \Psi^{\circ} \cdot (f_{h_n} - f_{h}) \dd \mu \right\Vert_{\bE} \leq  \int_E \left\Vert \Psi^{\circ} \right\Vert_{\bE} \vert f_{h_n} - f_{h} \vert \dd \mu,
    \end{align}

    where

    \begin{equation}
        f_h(x) = \exp \left( \underline{h}(x) - \frac{1}{2} \Vert h \Vert_{\sH}^2 \right), \quad x \in E,
    \end{equation}

    denotes the Radon--Nikodým derivative of $(T_h)_{\ast} \mu$ w.r.t. $\mu$, and $\underline{h}$ denotes the image of $h$ under the identification of $\sH$ with the reproducing kernel Hilbert space of $\mu$ (see Appendix \ref{sec:AWS}). Applying Cauchy--Schwarz yields 

    \begin{equation}
        \left\Vert \overline{\Psi}(h_n) - \overline{\Psi}(h) \right\Vert_{\bE} \leq \underbrace{\left\Vert \Psi^{\circ} \right\Vert_{L^2(E, \mu; \bE)}}_{< \infty} \Vert f_{h_n} - f_{h} \Vert_{L^2(E, \mu; \bbR)}, 
    \end{equation}

    where the first term on the right-hand-side is finite since $\Psi \in L^2(E, \mu; \bE)$ by assumption. Using again the Cauchy--Schwarz inequality we can upper bound the square of the second term on the right-hand-side as 

    \begin{align}
        \int_E \vert f_{h_n} - f_{h} \vert^2 \dd \mu &= \int_E \left\vert \exp \left( \underline{h_n}(x) - \frac{1}{2} \Vert h_n \Vert_{\sH}^2 \right) - \exp \left( \underline{h}(x) - \frac{1}{2} \Vert h \Vert_{\sH}^2 \right) \right\vert^2 \dd \mu(x) \\
        &\leq \int_E \exp(2 \chi(n,x)) \left\vert \left( \underline{h_n} - \underline{h} \right) - \frac{1}{2} \left( \Vert h_n \Vert_{\sH}^2 - \Vert h \Vert_{\sH}^2 \right) \right\vert^2 \dd \mu(x) \\
        &\leq \underbrace{\bbE \left[ \exp(4 \chi(n,\cdot)) \right]^{\frac{1}{2}}}_{(\ast)} \left\Vert \left\vert \left( \underline{h_n} - \underline{h} \right) - \frac{1}{2} \left( \Vert h_n \Vert_{\sH}^2 - \Vert h \Vert_{\sH}^2 \right) \right\vert^2 \right\Vert_{L^2(E, \mu; \bbR)}, \label{eq:cont_of_shift_eq}
    \end{align}

    where we used $f(b) - f(a) = f'(\chi) (b - a)$ for some $\chi \in [a,b]$. In particular, for any $n \geq 0$ and $x \in E$

    \begin{align}
        \exp(4 \chi(n,x)) &\leq \max\big\{ \exp \left( 4 \underline{h_n}(x) - \Vert h_n \Vert_{\sH}^2 \right), \exp \left( 4 \underline{h}(x) - \Vert h \Vert_{\sH}^2 \right)\big\} \\
        &\leq \max\big\{ \exp \left( 4 \underline{h_n}(x) \right), \exp \left( 4 \underline{h}(x) \right)\big\}. \label{eq:upper_bound_on_chi}
    \end{align}

    Recall at this point that for any $h \in \sH$ the random variable $\underline{h} \in L^2(E, \mu; \bbR)$ is centred Gaussian with distribution $\sN(0, \Vert h \Vert_{\sH}^2)$ and that the moment generating function of $\underline{h}$ thus exists on all of $\bbR$ and has the form  

    \begin{equation}
        \bbE \left[ \exp(\lambda \underline{h}) \right] = \exp \left( \Vert h \Vert_{\sH}^2 \frac{\lambda^2}{2} \right).
    \end{equation}

    Hence, using \eqref{eq:upper_bound_on_chi}, $(\ast)$ can be upper bounded by 

    \begin{equation}
        \max \left\{ \exp \left( \Vert h_n \Vert_{\sH}^2 \frac{4^2}{2} \right), \exp \left( \Vert h \Vert_{\sH}^2 \frac{4^2}{2} \right)
        \right\}, 
    \end{equation}

    which is upper bounded uniformly in $n \in \bbN$ since $\sup_{n \in \bbN} \Vert h_n \Vert_{\sH} < \infty$. Finally, the second term goes to $0$ since it can be upper bounded by 

    \begin{align}
        & \left\Vert \vert \underline{h_n} - \underline{h} \vert^2 \right\Vert_{L^2(E, \mu; \bbR)} + \left\Vert \left\vert \frac{1}{2} \left( \Vert h_n \Vert_{\sH}^2 - \Vert h \Vert_{\sH}^2 \right) \right\vert^2 \right\Vert_{L^2(E, \mu; \bbR)} \\
        &= \left\Vert \underline{h_n} - \underline{h} \right\Vert^2_{L^4(E, \mu; \bbR)} + \big\vert \frac{1}{2} \underbrace{ \left( \Vert h_n \Vert_{\sH}^2 - \Vert h \Vert_{\sH}^2 \right)}_{\rightarrow 0} \big\vert^2.
    \end{align}

    Since $h_n \rightarrow h$ in $\sH \cong \sH^{\ast}$ and thus in $L^2(E, \mu; \bbR)$ and all elements lie in $\calP^{(\leq 1)}(E, \mu; \bbR)$, Lemma \ref{lem:equiv_of_norms} shows that the first term approaches $0$ as $n \rightarrow \infty$. Hence $\overline{\Psi}$ is continuous.

    (3) The linearity of the assignment is clear since it is a composition of linear operators. For the well-definedness, assume $\Psi = \Psi'$ $\mu$-a.s.\! Then since $h \in \sH$, the Cameron--Martin Theorem \ref{thm:classical_CM} guarantees that $\overline{\Psi} = \overline{\Psi'}$. 
    
    (4) To see the $\calT$-multi-linearity of $\overline{\Psi}$ let $\tau \in \calT$ be arbitrary. By the vector valued discrete martingale $L^p$-convergence theorem (Proposition \ref{prop:convergence_of_conditional_exp})

    \begin{equation}
        \sum_{\alpha \in A^{\leq [\tau]}_{m}} \bbE \left[ \Psi_{\tau} H_{\alpha} \right] H_{\alpha} = \bbE \left[ \Psi_{\tau} \Big\vert \calF_m \right] \rightarrow \Psi_{\tau}, \quad \text{in} ~ L^2(E, \mu; E_{\tau}),
    \end{equation}

    where $A^{\leq [\tau]}_{m}$ is as defined in \eqref{symb:Akn} on p.~\pageref{symb:Akn}, and $\calF_m$ is as defined in \eqref{eq:sigma_algebra} on p.~\pageref{eq:sigma_algebra}. The convergence thus also holds in $L^1(E, \mu_h; E_{\tau})$ by Proposition \ref{prop:L2L1}. Hence (denoting the mode of convergence by super-script for clarity) we have for any $h \in \sH$

    \begin{align}
        &\overline{\Psi}_{\tau} (h) = \bbE \left[ \left( \Pi_{[\tau]} \Psi_{\tau} \right) \circ T_h \right] = \bbE \left[ \left( \lim_{m \rightarrow \infty}^{L^2(E, \mu; E_{\tau})} \sum_{\alpha \in A^{[\tau]}_{m}} \bbE \left[ \Psi_{\tau} H_{\alpha} \right] H_{\alpha} \right) \circ T_h \right] \\
        &= \bbE \left[ \lim_{m \rightarrow \infty}^{L^1(E, \mu; E_{\tau})} \left( \sum_{\alpha \in A^{[\tau]}_{m}} \bbE \left[ \Psi_{\tau} H_{\alpha} \right] \left( H_{\alpha} \circ T_h \right) \right) \right] = \lim_{m \rightarrow \infty}^{E_{\tau}} \sum_{\alpha \in A^{[\tau]}_{m}} \underbrace{\bbE \left[ H_{\alpha} \circ T_h \right]}_{(\ast)} \bbE \left[ \Psi_{\tau} H_{\alpha} \right]. \label{eq:picking}
    \end{align}

    Now, fix $m \geq 0$ and $\alpha \in A^{[\tau]}_{m}$ and focus on $(\ast)$: applying the Binomial theorem for Hermite polynomials (see Proposition \ref{prop:hermite_polynomial_binomial}) to each factor $h_{\alpha_i}$ of $H_{\alpha}$ yields 
    
    \begin{equation}
        (\ast) = \bbE\left[ \prod_{i \in \bbN} \left( \sum_{l = 0}^{\alpha_{i}} \binom{\alpha_i}{l} h_l \left( \langle e_i, \cdot \rangle \right) \langle e_i, h \rangle^{\alpha_i - l} \right) \right]. 
    \end{equation}
    
    Write $S_{\alpha}:= \times_{i \in \bbN} \{0, \ldots, \alpha_i\}$ \label{page:prod_sum_switching} and note that $S_{\alpha}$ is finite. Switching the sum and the product, and pulling the sum out of the expectation, we obtain
    
    \begin{equation}
        = \sum_{\sigma \in S_{\alpha}} \bbE\left[ \prod_{i \in \bbN} {\alpha_i}{\sigma_i} h_{\sigma_i} \left( \langle e_i, \cdot \rangle \right) \langle e_i, h \rangle^{\alpha_i - \sigma_i} \right].
    \end{equation}
    
    Since the sequence $(\langle e_i,\cdot \rangle)_{i \in \bbN}$ is iid w.r.t. $\mu$ we may also pull the product out of the expectation to obtain
    
    \begin{equation}
        = \sum_{\sigma \in S_{\alpha}} \prod_{i \in \bbN} \binom{\alpha_i}{\sigma_i} \underbrace{\bbE\left[ h_{\sigma_i} \left( \langle e_i, \cdot \rangle \right) \right]}_{= 1_{\{\sigma_i = 0\}}} \langle e_i, h \rangle^{\alpha_i - \sigma_i} = \prod_{i \in \bbN} \langle e_i, h \rangle^{\alpha_i},
    \end{equation}

    where we used the fact that all Hermite polynomials of order $>0$ are centered, and those of order $0$ have expectation $1$. Thus, continuing from Equation \eqref{eq:picking},

    \begin{equation}
        \overline{\Psi}_{\tau} (h) = \lim_{m \rightarrow \infty}^{E_{\tau}} \sum_{\alpha \in A^{[\tau]}_{m}} \prod_{i \in \bbN} \langle e_i, h \rangle^{\alpha_i} \bbE \left[ \Psi_{\tau} H_{\alpha} \right].
    \end{equation}

    Since for a given $\alpha = (\alpha_1, \alpha_2, \ldots) \in A^{[\tau]}_{m}$ the map $h_1 \otimes \ldots \otimes h_{[\tau]} \mapsto \langle e_{i_1'}, h_1 \rangle \cdot \ldots \cdot \langle e_{i_{b}'}, h_{[\tau]} \rangle$, where the $i' \in \bbN$ are those indices such that $\alpha_{i'} \neq 0$ (counted with multiplicity, thus $b \leq [\tau]$), is a linear operator $\sH^{\otimes [\tau]} \rightarrow \bbR$ we may define 
    
    \begin{equation}
        \overline{\Psi}_{\tau}^{\otimes} := \lim_{m \rightarrow \infty}^{E_{\tau}} \sum_{\alpha \in A^{[\tau]}_{m}} \prod_{i \in \bbN} \langle e_i, \cdot \rangle^{\alpha_i} \bbE \left[ \Psi_{\tau} H_{\alpha} \right] .
    \end{equation}

    As a pointwise limit of sums and scalings of the linear operators, this again gives a linear operator $\sH^{\otimes [\tau]} \rightarrow E_{\tau}$.   
\end{proof}

\section{Definition and Constructions of Abstract Wiener Model Spaces} \label{sec:defn_and_construction}

\begin{defn}[Abstract Wiener Model Space] \label{defn:AWMS}
    An \textbf{abstract Wiener model space} is a quintuple $((\calT, \bE, [\cdot], \calN), \sbH, \bmu, \fL, \hat{\fL})$ consisting of the following data: 
    
    \begin{enumerate}[(1)]
    \item An ambient space $(\calT, \bE, [\cdot], \calN)$ (in the sense of Definition \ref{defn:ambient_space}),
    
    \item a subset $\sbH \subseteq \bE$, called the \textbf{enhanced Cameron--Martin space}, 
    
    \item a Borel probability measure $\bmu$ on $\bE$, called \textbf{enhanced measure}, such that $\mu := \pi_{\ast} \bmu$ is centred Gaussian on $E$ and $\sH := \pi(\sbH)$ is the Cameron--Martin space\footnote{Recall that we identify $\sH$ with $i(\sH)$.} associated to $\mu$,
    
    \item an $\sH$-skeleton lift $\fL: \sH \rightarrow \bE$ (in the sense of Definition \ref{defn:skeleton_lift}) which is a left inverse\footnote{That is, $\fL \circ \pi\vert_{\sbH} = \id_{\sbH}$ and thus by (3) $\sbH = \fL(\sH)$.} of $\pi\vert_{\sbH}$, in this context simply called \textbf{skeleton lift}, 
    
    \item a $\mu$-a.s.\! equivalence class represented by a measurable lift\footnote{Recall that since the lift is only assumed to be measurable and represent a $\mu$-a.s.\! equivalence class, the lifting property \eqref{eq:lift_defn} is only assumed to hold $\mu$-a.s.\!} $\hat{\fL} \in \calP^{(\leq [\calT])}(E, \mu; \bE)$, called \textbf{full lift}, s.t. $\hat{\fL}_{\ast} \mu = \bmu$ and
    
    \begin{equation}
        \overline{\hat{\fL}_{\tau}} = \fL_{\tau}, \quad \forall \tau \in \calT, \label{eq:proxy_restriction_equals_skeleton_lift}
    \end{equation}
    
    where $\calP^{(\leq [\calT])}(E, \mu; \bE)$ denotes the $\calT$-inhomogeneous Wiener--Ito chaos in the sense of Definition \ref{defn:graded_WIC} and $\overline{(\cdot)}$ denotes the proxy-restriction in the sense of Definition \ref{defn:proxy_restriction}. \\

    The abstract Wiener space $(E, (\sH, i), \mu)$ will be referred to as the \textbf{underlying AWS} of the AWMS. 
    \end{enumerate}
\end{defn}

Note that the above definition is not minimal in the sense that fixing $\hat{\fL}$ leaves only one choice for $\fL$, namely $\fL = \overline{\hat{\fL}}$. Indeed, this point will become very relevant in Subsections \ref{subsec:TD_constr} and \ref{subsec:bottom_up_construction}. The fact that the definition is not minimal is reminiscent of the classical theory, where fixing $E$ and $(\sH, i)$ leaves only one choice for $\mu$ (cf. discussion before Theorem \ref{thm:characterization_of_enhanced_measure} below). 

\begin{figure}[ht]
\begin{center}
        \begin{tikzpicture}
        \path
        (-3,0) node(1b) {\texttt{classical}} 
        (-3,4) node(1t) {\texttt{enhanced}}
        
        (4,4) node(1ti) {$\sbH$} 
        (5,4) node(1tsub) {$\subseteq$} 
        (6,4) node(2t) {$\bE$} 
        (8,4) node(3t) {$\bmu$} 
        (0,0) node(1b) {\textcolor{blue}{$\underbrace{\texttt{CM} \left( \pi_{\calN}(\bE), \left(\pi_{\calN}\right)_{\ast} \bmu \right)}_{= \sH}$}} 
        (6,0) node(2b) {\textcolor{blue}{$\underbrace{\pi_{\calN}(\bE)}_{= E}$}} 
        (8,0) node(3b) {\textcolor{blue}{$\underbrace{\left(\pi_{\calN}\right)_{\ast}\bmu}_{= \mu}$}};
        
        \draw[right hook->, blue] (1b) to node[above] {\textcolor{blue}{$i$}} (2b); 
        \draw[->] (1b) to node[right] {$\fL$} (1ti); 
    
        \draw[->] (2b) to [bend right] node[right] {$\hat{\fL}$} (2t);
        \draw[->>] (2t) to node[left] {$\pi_{\calN}$} (2b);
        
        \draw[->, blue] (3b) to node[right] {\textcolor{blue}{$\hat{\fL}_{\ast}$}} (3t);
        \end{tikzpicture}
        \caption{Diagram of the definition of AWMS. By $\texttt{CM}(E, \mu)$ 
        \label{symb:CM_space} we denote the Cameron--Martin space associated to $(E, \mu)$. The lower level corresponds to the data which belongs to the classical theory, while the upper level corresponds to data in the enhanced setting. The two lifts $\fL$ and $\hat{\fL}$ provide a connection between the two. Black symbols represent data which needs to be chosen in the definition an AWMS, while blue symbols can be defined from that choice. Arrows of the form $\twoheadrightarrow$ represent projections, while arrows of the form $\hookrightarrow$ represent inclusions.}
        \label{fig:AWMS}
\end{center}
\end{figure}

\begin{rem}[AWS as AWMS]
    As a first observation, note that every AWS provides an example of an AWMS by supplementing $(E, H, i, \mu)$ with the ambient space given by $\calT = \{ \ast \}, \quad \bE = E, \quad [\ast] = 1, \quad \calN = \calT$, the enhanced Cameron--Martin space $\sbH = i(\sH)$ and the enhanced measure, $\sH$-skeleton lift and full lift $\bmu = \mu$, $\fL = i$, $\hat{\fL} = \id_{E}$.
\end{rem}

Recall that if $(E_1, \sH_1, i_1, \mu_1)$ and $(E_2, \sH_2, i_2, \mu_2)$ are two abstract Wiener spaces such that 

\begin{equation}
    E_1 = E_2, \quad \text{and} \quad (\sH_1, i_1) = (\sH_2, i_2),
\end{equation}

then $\mu_1 = \mu_2$ on $E_1 = E_2$. One quick way of seeing this is noting that the characteristic functional $\check{\mu}_j$ of $\mu_j$ is determined by $(\sH_j, i_j)$ where $j = 1,2$: for every $\ell \in E_1^{\ast} = E_2^{\ast}$ 

\begin{align}
    \check{\mu}_1(\ell) = \exp \left( - \frac{1}{2} \Vert i_1^{\ast}(\ell) \Vert_{\sH_1}^2 \right) = \exp \left( - \frac{1}{2} \Vert i_2^{\ast}(\ell) \Vert_{\sH_2}^2 \right) = \check{\mu}_2(\ell) .
\end{align}

Since the characteristic functional characterizes a measure on an separable Banach space the conclusion follows. A similar statement is true for AWMS: 

\begin{thm} \label{thm:characterization_of_enhanced_measure}
    Let $((\calT_1, \bE_1, [\cdot]_1, \calN_1), \sbH_1, \bmu_1, \fL_1, \hat{\fL}_1)$, and $((\calT_2, \bE_2, [\cdot]_2, \calN_2), \sbH_2, \bmu_2$, $\fL_2, \hat{\fL}_2)$ be two abstract Wiener model spaces. If 

    \begin{equation}
        (\calT_1, \bE_1, [\cdot]_1, \calN_1) = (\calT_2, \bE_2, [\cdot]_2, \calN_2), \quad \sbH_1 = \sbH_2, \quad \text{and} \quad \hat{\fL}_1 = \hat{\fL}_2,
    \end{equation}

    then $\bmu_1 = \bmu_2$ on $\bE_1 = \bE_2$.
\end{thm}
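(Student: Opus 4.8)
The plan is to peel the statement down to the classical fact recalled just before the theorem --- that an abstract Wiener space is determined by its Banach space together with its Cameron--Martin space (as a Hilbert space, with its injection) --- and to do so in four short steps.

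\textit{Step 1 (the ambient space).} From $(\calT_1, \bE_1, [\cdot]_1, \calN_1) = (\calT_2, \bE_2, [\cdot]_2, \calN_2)$ we get $\bE_1 = \bE_2 =: \bE$, with identical grading and identical distinguished subset $\calN$; in particular $E_1 = E_{\calN} = E_2 =: E$, and the projection $\pi = \pi_{\calN}: \bE \to E$ is literally the same map for both spaces.

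\textit{Step 2 (the underlying abstract Wiener spaces coincide).} By Definition \ref{defn:AWMS}(4), $\fL_j: \sH_j \to \bE$ is a two-sided inverse of $\pi|_{\sbH_j}: \sbH_j \to \sH_j$ --- a left inverse by hypothesis, and a right inverse because $\fL_j$, being an $\sH_j$-skeleton lift, is in particular a lift, so $\pi \circ \fL_j = \id_{\sH_j}$. Thus $\pi$ restricts to a bijection $\sbH_j \to \sH_j$, and it is an isometric isomorphism for the Hilbert structure that $\sbH_j$ carries as the enhanced Cameron--Martin space: namely the Cameron--Martin structure of $\mu_j$ on $\sH_j = \pi(\sbH_j)$ transported back along $\fL_j$, equivalently the unique Hilbert structure making $\pi|_{\sbH_j}$ an isometry. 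Now $\pi|_{\sbH_1}$ and $\pi|_{\sbH_2}$ are the same map with the same domain $\sbH_1 = \sbH_2$, and the hypothesis $\sbH_1 = \sbH_2$ is to be read as equality of enhanced Cameron--Martin spaces \emph{together with} their Hilbert structure. Applying $\pi$ therefore yields $\sH_1 = \sH_2$ not merely as subsets of $E$ but as Hilbert spaces, with the same isometric injection $i_1 = i_2$ (the inclusion $\sH \hookrightarrow E$). Hence the two abstract Wiener model spaces share one and the same underlying abstract Wiener space data $(E, (\sH, i))$.

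\textit{Step 3 (recover $\mu$, hence $\bmu$).} By the classical characterization recalled above, $E_1 = E_2$ and $(\sH_1, i_1) = (\sH_2, i_2)$ force equality of characteristic functionals, $\check\mu_1(\ell) = \exp(-\tfrac12 \|i_1^{\ast}\ell\|_{\sH_1}^2) = \exp(-\tfrac12 \|i_2^{\ast}\ell\|_{\sH_2}^2) = \check\mu_2(\ell)$ for all $\ell \in E^{\ast}$; since a Borel probability measure on a separable Banach space is determined by its characteristic functional, $\mu_1 = \mu_2 =: \mu$. By Definition \ref{defn:AWMS}(5) we have $\bmu_j = (\hat{\fL}_j)_{\ast}\mu_j$; since $\mu_1 = \mu_2 = \mu$, the $\mu$-a.s.\! equivalence classes $\hat{\fL}_1$ and $\hat{\fL}_2$ refer to the same $\sigma$-ideal of null sets, so the hypothesis $\hat{\fL}_1 = \hat{\fL}_2$ gives $\bmu_1 = (\hat{\fL}_1)_{\ast}\mu = (\hat{\fL}_2)_{\ast}\mu = \bmu_2$ on $\bE$. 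Observe the clean division of labour, mirroring the remark preceding the theorem: the ambient space and $\sbH$ already pin down $\mu$, and the extra datum $\hat{\fL}$ is what then upgrades this to $\bmu$.

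\textit{Main obstacle.} The only delicate point is Step 2: extracting from $\sbH_j$ not merely the \emph{set} $\sH_j = \pi(\sbH_j)$ but the Hilbert-space structure of the underlying Cameron--Martin space. Reading ``$\sbH_1 = \sbH_2$'' as equality of subsets of $\bE$ alone would be too weak: on $E = \bbR$ the lift $x \mapsto (x, x^2)$ produces the same parabola $\sbH = \{(h, h^2) : h \in \bbR\}$ and the same skeleton lift $h \mapsto (h, h^2)$ for every centred scalar Gaussian $\mu = \sN(0, \sigma^2)$ --- and even the homogeneous distance on $\bE$ fails to detect $\sigma$ --- although the laws $\bmu = (\hat{\fL})_{\ast}\mu$ genuinely differ. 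What saves the theorem is that the Hilbert norm carried by $\sbH$ encodes, through the isometry $\pi|_{\sbH}$, exactly the Cameron--Martin norm of $\mu$; granting this, the argument above goes through, with everything outside Step 2 reducing to bookkeeping together with the already-recalled uniqueness of a Gaussian measure with prescribed Cameron--Martin data.
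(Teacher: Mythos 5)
Your proof takes the same route as the paper's: read off the classical data from the enhanced data, invoke the classical fact that $E$ together with $(\sH,i)$ determines the Gaussian measure via its characteristic functional, and then push $\mu_1=\mu_2$ forward through the common full lift to obtain $\bmu_1=(\hat{\fL}_1)_{\ast}\mu=(\hat{\fL}_2)_{\ast}\mu=\bmu_2$. Your Steps 1 and 3 are exactly the paper's two displayed identities, so in terms of strategy there is nothing new.

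Where you genuinely add something is Step 2, and your ``main obstacle'' is a real issue rather than an artefact of your write-up. In Definition \ref{defn:AWMS}, $\sbH$ is only a subset of $\bE$, so the hypothesis $\sbH_1=\sbH_2$ literally yields $\sH_1=\sH_2$ only as subsets of $E$, whereas the classical uniqueness argument needs $(\sH_1,i_1)=(\sH_2,i_2)$ as Hilbert spaces, since the characteristic functional involves the Cameron--Martin \emph{norm}. Your parabola example is a valid witness: with $\calT=\{1,2\}$, $[1]=1$, $[2]=2$, $E_\tau=\bbR$, $\hat{\fL}(x)=(x,x^2)$ and $\mu=\sN(0,\sigma^2)$, all five axioms of Definition \ref{defn:AWMS} hold for every $\sigma>0$ (the proxy-restriction of $x\mapsto x^2$ is $h\mapsto h^2$ for every $\sigma$), and $\sbH$, $\fL$, $\hat{\fL}$ are independent of $\sigma$ while $\bmu$ is not; adding $\fL_1=\fL_2$ to the hypotheses would not help either. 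The paper's own first step, $\mu_1=(\pi_1)_{\ast}\bmu_1=(\pi_2)_{\ast}\bmu_2=\mu_2$ ``by the classical result'', silently uses the stronger Hilbert-space reading of $\sH_1=\sH_2$, so your discussion makes explicit an assumption the paper's proof leaves implicit. Your resolution --- reading $\sbH_1=\sbH_2$ as equality together with the Hilbert structure transported along $\fL_j$, equivalently the one making $\pi\vert_{\sbH_j}$ an isometry onto $\texttt{CM}(E,\mu_j)$ --- does make the argument go through, but be aware that this is a strengthening of the hypothesis relative to the paper's literal definition of $\sbH$ rather than something you can extract from it; with that reading stated explicitly, your proof is complete and essentially identical to the paper's.
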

\begin{proof}
    Let $j = 1,2$ and write $\pi_j := \pi_{\calN_j}$. By definition of the enhanced Cameron--Martin space and the enhanced measure, $\sH_j := \pi_j(\sbH_j)$ is the Cameron--Martin space of $\mu_j := (\pi_j)_{\ast} \bmu_j$. By the classical result above 

    \begin{equation}
        \mu_1 = (\pi_1)_{\ast} \bmu_1 = (\pi_2)_{\ast} \bmu_2 = \mu_2, \quad \text{on} ~ E_1 = E_2 .
    \end{equation}

    Hence, by the definition of the full lift and the enhanced measure

    \begin{equation}
        \bmu_1 = \left(\hat{\fL}_1 \right)_{\ast}(\mu_1) = \left(\hat{\fL}_2 \right)_{\ast}(\mu_2) = \bmu_2, \quad \text{on} ~ \bE_1 = \bE_2 .
    \end{equation}
\end{proof}

\begin{rem} \label{rem:replacing_condition}
    Replacing the condition $\hat{\fL}_1 = \hat{\fL}_2$ by $\fL_1 = \fL_2$ is not sufficient. For example, as will be shown in Subsection \ref{subsec:Ito}, the skeleton lifts of the Ito-enhancement and the Stratonovich-enhancement coincide, but their full lifts differ (by a bracket term) and therefore the enhanced measures also differ. 
\end{rem}

\subsection{Top-Down Construction} \label{subsec:TD_constr}

Recall that a classical AWS is over-determined in the sense that given $E$ and $\mu$ there is a unique (up to isometric isomorphism) choice of $\sH$ and $i$ such that $(E, \sH, i, \mu)$ is an AWS. Similarly, given $\sH$ and a choice of measurable norm (in the sense of \cite{grossAbstractWienerSpaces1967}) $\Vert \cdot \Vert_E$, there is a unique (up to precomposition with an isometric isomorphism of $\sH$) $i$ and a unique Borel probability measure $\mu$ on $E:=\overline{\sH}^{\Vert \cdot \Vert_E}$ such that $(E, \sH, i, \mu)$ is an AWS. In much the same way, an AWMS in the sense of Definition \ref{defn:AWMS} can be constructed from strictly less data than is required in the definition. 

Recall the discussion in the end of Subsection \ref{subsec:lifting}. The following construction is along the lines of the ``Top-Down'' philosophy alluded to at that point. It proceeds by assuming (in particular) the full lift as part of the given data and constructing other elements of Definition \ref{defn:AWMS} from it. 

\begin{thm}[Top-Down Construction] \label{thm:top_down_construction}
    Let $(\calT, \bE, [\cdot], \calN)$ be an ambient space, $\bmu$ a Borel probability measure on $\bE$ s.t. $\mu := \pi_{\ast} \bmu$ is centred Gaussian on $E := E_{\calN}$, a $\mu$-a.s.\! equivalence class represented by a measurable lift $\hat{\fL} \in \calP^{(\leq [\calT])}(E, \mu; \bE)$ s.t. $\hat{\fL}_{\ast} \mu = \bmu$. \\

    Then the following data constitutes an abstract Wiener model space which does not depend on the representative of the $\mu$-a.s.\! equivalence class of $\hat{\fL}$: 

    \begin{enumerate}[(1)]
        \item the ambient space $(\calT, \bE, [\cdot], \calN)$
        \item the subset $\sbH := \overline{\hat{\fL}} (\sH)$
        \item the enhanced measure $\bmu$
        \item the skeleton lift $\fL := \overline{\hat{\fL}}$
        \item the full lift $\hat{\fL}$
    \end{enumerate} 
    
We will refer to a triple $((\calT, \bE, [\cdot], \calN), \bmu, \hat{\fL})$ satisfying the above assumptions as \textbf{Top-Down data}.
The construction may be summarized by Figure \ref{fig:top_down}.
\end{thm}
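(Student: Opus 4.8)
The plan is to check, one item at a time, that the five objects listed in the theorem satisfy requirements (1)--(5) of Definition~\ref{defn:AWMS}, with essentially all of the analytic content already packaged in Proposition~\ref{prop:proxy_restriction_is_cont}. Throughout, write $\sH$ for the Cameron--Martin space associated to the centred Gaussian measure $\mu = \pi_{\ast}\bmu$ (which exists by the Top-Down data hypothesis), so that $\overline{\hat{\fL}}\colon \sH\to\bE$ is defined. First I would simply record the inputs quoted verbatim from the definition of Top-Down data: the ambient space is given; $\bmu$ is a Borel probability measure on $\bE$ with $\pi_{\ast}\bmu = \mu$ centred Gaussian; and $\hat{\fL}\in\calP^{(\leq[\calT])}(E,\mu;\bE)$ represents a $\mu$-a.s.\ equivalence class, is a measurable lift (so $\pi\circ\hat{\fL} = \id_E$ $\mu$-a.s.), and satisfies $\hat{\fL}_{\ast}\mu = \bmu$. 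This already settles (1), the measure part of (3), and---once the remaining structural conditions are in place---(5).

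Next I would invoke Proposition~\ref{prop:proxy_restriction_is_cont}: since $\hat{\fL}$ is a lift on $E$, its proxy-restriction $\overline{\hat{\fL}}\in C(\sH,\bE)$ is a well-defined $\sH$-skeleton lift, and the assignment $\hat{\fL}\mapsto\overline{\hat{\fL}}$ depends only on the $\mu$-a.s.\ class of $\hat{\fL}$ (items (1)--(4) and the concluding clause of that proposition). In particular $\sbH = \overline{\hat{\fL}}(\sH)\subseteq\bE$, giving (2), and $\fL = \overline{\hat{\fL}}$ is an $\sH$-skeleton lift, so for (4) it only remains to verify that $\fL$ is a left inverse of $\pi\vert_{\sbH}$. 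Here I would use item~(1) of the proposition, which yields $\pi\circ\overline{\hat{\fL}} = \id_{\sH}$: for any $\bh\in\sbH$, say $\bh = \overline{\hat{\fL}}(h)$ with $h\in\sH$, we get $\pi(\bh) = h$, whence $\fL(\pi(\bh)) = \overline{\hat{\fL}}(h) = \bh$. This simultaneously shows that $\overline{\hat{\fL}}\colon\sH\to\sbH$ is a bijection with inverse $\pi\vert_{\sbH}$ and that $\pi(\sbH) = \sH$, which closes item~(3): the Cameron--Martin space of $\mu$ is indeed recovered as $\pi(\sbH)$, so the superficially circular definition $\sbH := \overline{\hat{\fL}}(\sH)$ is consistent. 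For (5), the identity $\overline{\hat{\fL}_{\tau}} = \fL_{\tau}$ is immediate: by Definition~\ref{defn:proxy_restriction}, $\overline{\hat{\fL}} = \sum_{\tau\in\calT}\overline{\hat{\fL}_{\tau}}$ with $\overline{\hat{\fL}_{\tau}}$ valued in $E_{\tau}$, so $\fL_{\tau} = \pi_{\tau}\circ\overline{\hat{\fL}} = \overline{\hat{\fL}_{\tau}}$; and $\hat{\fL}_{\ast}\mu = \bmu$ is a hypothesis. Representative-independence of the construction then follows: $\bmu$ and the class $[\hat{\fL}]$ are representative-free by construction, while $\sbH$ and $\fL$ inherit this from item~(3) of Proposition~\ref{prop:proxy_restriction_is_cont}.

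The point requiring care---rather than a genuine obstacle---is the bookkeeping around the Cameron--Martin space in item~(3): one must make sure that the $\sH$ appearing in $\sbH := \overline{\hat{\fL}}(\sH)$, in $\fL := \overline{\hat{\fL}}$, and as ``the Cameron--Martin space associated to $\mu$'' are all the same object and that $\pi(\sbH)$ returns it; this is exactly what $\pi\circ\overline{\hat{\fL}} = \id_{\sH}$ delivers. Beyond that there is essentially nothing to prove here: all the substantive analysis (continuity of the proxy-restriction, its $\calT$-multilinearity, the lift property on $\sH$, well-definedness on $\mu$-a.s.\ classes) has been carried out in Proposition~\ref{prop:proxy_restriction_is_cont}, and the present theorem is an assembly of those facts against the checklist of Definition~\ref{defn:AWMS}, as summarised in Figure~\ref{fig:top_down}.
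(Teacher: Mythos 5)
Your proposal is correct and follows essentially the same route as the paper: both treat the theorem as an assembly of Proposition~\ref{prop:proxy_restriction_is_cont} against the checklist of Definition~\ref{defn:AWMS}, first extracting the underlying AWS $(E,\sH,i,\mu)$ from the centred Gaussian $\mu=\pi_{\ast}\bmu$, then using the proxy-restriction's lift property, continuity, $\calT$-multi-linearity and representative-independence to verify items (1)--(5). Your elementwise verification that $\fL\circ\pi\vert_{\sbH}=\id_{\sbH}$ (writing $\bh=\overline{\hat{\fL}}(h)$ and applying $\pi\circ\overline{\hat{\fL}}=\id_{\sH}$) is just a concrete rendering of the paper's surjectivity/left-right-inverse argument, so there is no substantive difference.
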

\begin{proof}
    Since $\mu := \pi_{\ast} \bmu$ is a centred Gaussian measure on a separable Banach space $E$, there exists a separable Hilbert space $\sH := \texttt{CM}(E, \mu)$ and a linear injection $i: \sH \hookrightarrow E$ such that $\mu$ is an extension of the canonical cylinder measure associated to $\sH$ i.e. $(E, \sH, i, \mu)$ is an abstract Wiener space.\footnote{See e.g. \cite[Chap. 2 \& 3]{bogachevGaussianMeasures1998} for more details on this construction.} \\ 
    
\begin{enumerate}[(1)]
    \item $(\calT, \bE, [\cdot], \calN)$ is an ambient space by definition. 
    
    \item By Proposition \ref{prop:proxy_restriction_is_cont}, $\sbH$ is a well defined subset of $\bE$. 
    
    \item By definition $\bmu$ is a Borel probability measure on $\bE$ s.t. $\mu := \pi_{\ast} \bmu$ is centred Gaussian on $E$. By Proposition \ref{prop:proxy_restriction_is_cont}, since $\hat{\fL} \in \calP^{(\leq [\calT])}(E, \mu; \bE)$ is a measurable lift in the sense of Definition \ref{defn:lift_defn}, $\fL$ is an $\sH$-skeleton lift. Hence $\pi(\sbH) = \pi(\fL(\sH)) = \sH$ is indeed the Cameron--Martin space of $(E, \mu)$. 

    \item That $\fL$ is an $\sH$-skeleton lift was shown in (2). To see that $\fL$ is also a left inverse of $\pi\vert_{\sbH}$ observe that $\sbH$ is the image of $\sH$ under the map $\overline{\hat{\fL}}$, which makes $\overline{\hat{\fL}}$ surjective. Thus it has a left- and right-inverse, which both have to coincide with its left-inverse $\pi \vert_{\sbH}$.

    \item Both, $\hat{\fL} \in \calP^{(\leq [\calT])}(E, \mu; \bE)$ and $\fL = \overline{\hat{\fL}}$, are satisfied by assumption. 
\end{enumerate}    

The fact that the above construction does not depend on the representative of the $\mu$-a.s.\! equivalence class of $\hat{\fL}$ is a consequence of Proposition \ref{prop:proxy_restriction_is_cont}(3).
\end{proof}

\begin{figure}[ht]
\begin{center}
    \begin{tikzpicture}
        \path
        (-3,0) node(1b) {\texttt{classical}} 
        (-3,4) node(1t) {\texttt{enhanced}}

        (4,4) node(1ti) {\textcolor{blue}{$\sbH$}} 
        (5,4) node(1tsub) {\textcolor{blue}{$\subseteq$}}

        (6,4) node(2t) {$\bE$} 
        (8,4) node(3t) {$\bmu$} 
        (0,0) node(1b) {\textcolor{blue}{$\underbrace{\texttt{CM}(\pi_{\calN}(\bE), \left(\pi_{\calN}\right)_{\ast} \bmu)}_{= \sH}$}} 
        (6,0) node(2b) {\textcolor{blue}{$\underbrace{\pi_{\calN}(\bE)}_{= E}$}} 
        (8,0) node(3b) {\textcolor{blue}{$\underbrace{\left(\pi_{\calN}\right)_{\ast}\bmu}_{= \mu}$}};

        \draw[right hook->, blue] (1b) to node[above] {\textcolor{blue}{$i$}} (2b); 
        
        \draw[->, blue] (1b) to node[left] {\textcolor{blue}{$\fL = \overline{\hat{\fL}}~~$}} (1ti); 
                
        \draw[->] (2b) to [bend right] node[right] {$\hat{\fL}$} (2t);
        \draw[->>] (2t) to node[left] {$\pi_{\calN}$} (2b);
        
        \draw[->, blue] (3b) to node[right] {\textcolor{blue}{$\hat{\fL}_{\ast}$}} (3t);
    \end{tikzpicture}
    \caption{Summary of the Top-Down construction. Symbols in black are assumed (Top-Down Data) while symbols in blue are constructed.}
    \label{fig:top_down}
\end{center}
\end{figure}

\subsection{Abstract Wiener Model Spaces with Approximation} \label{subsec:AWMS_with_approx}

In the present and the subsequent subsection we are going to flesh out the ideas of the ``Bottom-Up'' philosophy sketched in Subsection \ref{subsec:lifting}; that is, of building an AWMS primarily from the data of a skeleton lift, as opposed to primarily from the data of a full lift as in Subsection \ref{subsec:TD_constr}. We start by properly formulating what we mean by an approximation in this context. 

\begin{defn}[Admissible Approximation] \label{defn:admissible_approximation}
    Let $(\calT, \bE, [\cdot], \calN)$ be an ambient space, let $(E, \sH, i, \mu)$ be an abstract Wiener space, let $\sK$ be an intermediate space, and let $\fM: \sK \rightarrow \bE$ be a $\sK$-skeleton lift. A sequence of bounded linear operators $(\Phi_m)_{m \in \bbN}: E \rightarrow \sK$ is called an \textbf{admissible approximation} if it satisfies the following properties:

    \begin{enumerate}[(1)]
        \item Approximation of identity in $E$ and $\sK$: For $m \rightarrow \infty$ we have 
    
        \begin{align}
        \Vert \Phi_m(x) &- x \Vert_E \rightarrow 0, \quad \text{for} ~ \mu-a.e. ~ x \in E, \label{eq:A2.I}\\
        \Vert \Phi_m(h) &- h \Vert_{\sK} \rightarrow 0, \quad \forall h \in \sH. \label{eq:A2.II}
        \end{align}

        \item Existence of a limit in probability: The limit $X:= \lim_{m \rightarrow \infty} \fM \circ \Phi_m$ exists in probability w.r.t. $\mu$; i.e. 
        
        \begin{equation}
        \lim_{m \rightarrow \infty} \mu \left( \vertiii{\fM \circ \Phi_m - X} > \eta \right) = 0, \quad \forall \eta > 0 . \label{eq:A2.III}
        \end{equation}

        \item Compatibility with $\fM$: For every $\tau \in \calT$, $h \in \sH$ and $a \in \{x, h\}^{[\tau]}$
        
        \begin{equation}
            x \mapsto \fM_{\tau}^{\otimes} (\Phi_{m}(a_1) \otimes \ldots \otimes \Phi_{m}(a_{[\tau]})), \label{eq:sK_compatibility}
        \end{equation}
        
        converges in probability w.r.t. $\mu$ in $E_{\tau}$ for $m \rightarrow \infty$.
    \end{enumerate}
\end{defn}

Note that condition \eqref{eq:A2.II} has to be satisfied only for $h \in \sH$ and only in the $\sK$-norm. Also note in \eqref{eq:sK_compatibility} that $a = (x, \ldots, x)$ is nothing but \eqref{eq:A2.III}. Since $\Vert \cdot \Vert_{\bE}$ and $\vertiii{\cdot}$ are equivalent metrics, condition \eqref{eq:A2.III} may equivalently be stated in terms of $\Vert \cdot \Vert_{\bE}$.

\begin{rem}
    Given a fixed lift $\fM$, the challenge is to find an intermediate space $\sK$ and an admissible approximation $(\Phi_m)_{m \in \bbN}$ such that on the one hand $\sK$ is large enough (with a topology which is weak enough) such that each $\Phi_m$ maps into $\sK$ (cf. the discussion in Subsection \ref{subsec:intermediate_space}) and Condition \eqref{eq:A2.II} is satisfied, but which on the other hand is small enough (with a topology which is strong enough) such that $\fM$ is well-defined and continuous on $\sK$. 
\end{rem}

\begin{exmp}[Running example continued]    
    In the case of Stratonovich Brownian motion the skeleton lift is given by iterated Young integration of paths (see Subsection \ref{subsec:GaussianRP}), $\sH$ is given by the classical Cameron--Martin space of Brownian motion, while a suitable choice for $\sK$ is $\calC^{0, 1-\var}$ (see Subsection \ref{subsubsec:GRP_with_approx}).     
    
    Two natural choices for $(\Phi_m)_{m \in \bbN}$ are the Karhunen--Loève approximation and the piecewise linear approximation (see Subsection \ref{subsubsec:GRP_with_approx}). Under mild assumptions on the ambient space one can show that in the former case the choice of $\sH$ is sufficient as an intermediate space, while in the later case the larger space $\calC^{0, 1-\var}$ is sufficient - see Proposition \ref{prop:KL_is_approx} and \ref{prop:PL_is_approx}.    
\end{exmp}

The Bottom-Up construction, assuming more structure (in the form of an admissible approximation), naturally leads to a stronger notion of AWMS, leading in turn to stronger theorems (see e.g. Theorem \ref{thm:shift_operator_consistency}). This is the content of the following definition. 

\begin{defn}[Abstract Wiener Model Space with Approximation] \label{defn:AWMS_with_approx}
    An \textbf{abstract Wiener model space with approximation} is an AWMS $((\calT, \bE, [\cdot], \calN), \sbH, \bmu, \fL, \hat{\fL})$ together with two additional pieces of data, called an \textbf{approximation scheme}:

    \begin{enumerate}[(1)]
    \setcounter{enumi}{5}
        \item an intermediate space $\sH \hookrightarrow \sK \subseteq E$ (as in Definition \ref{defn:intermediate_space}) and a $\sK$-skeleton lift $\fM: \sK \rightarrow \bE$ such that $\fM\vert_{\sH} = \fL$. Here, $\sH$ denotes the Cameron--Martin space associated to $\pi_{\ast} \bmu$.\footnote{Note that if $\sK = \sH$ this implies $\fM = \fL$.}
        
        \item an admissible approximation $(\Phi_m)_{m \in \bbN}$ (in the sense of Definition \ref{defn:admissible_approximation}) such that the limit in probability postulated in (2) of Definition \ref{defn:admissible_approximation} coincides with $\hat{\fL}$;\footnote{Cf. Remark on pr. \pageref{rem:AKS}.} i.e. $\hat{\fL} = \lim_{m \rightarrow \infty} \fM \circ \Phi_m$ in probability w.r.t. $\mu$.       
    \end{enumerate}
\end{defn}

If we want to emphazise that an AWMS is not an AWMS \emph{with} approximation, we call it \textbf{bare}.

\subsection{Bottom-Up Construction} \label{subsec:bottom_up_construction}

We now turn to the statement and proof of the second large construction of this article, although much of the heavy lifting in the proof will be outsourced to Lemma \ref{lem:agreement_for_bottom_up}.

\begin{thm}[Bottom-Up Construction] \label{thm:bottom_up_construction}
    Let $(\calT, \bE, [\cdot], \calN)$ be an ambient space, 
    with $E := E_{\calN}$ s.t. $(E, \sH, i, \mu)$ is an abstract Wiener space. 
    Let $\sK$ be an intermediate space, 
    let $\fM: \sK \rightarrow \bE$ be a $\sK$-skeleton lift, and 
    let $(\Phi_m)_{m \in \bbN}$ be an admissible approximation
    s.t.
    
    \begin{equation}
        \fL \circ \Phi_m \in \calP^{(\leq [\calT])}(E, \mu; \bE), \quad \forall m \in \bbN. \label{eq:WIC_assumption_BU}
    \end{equation}
    
    Then the following data constitutes an abstract Wiener model space with approximation: 

        \begin{enumerate}[(1)]
            \item the ambient space $(\calT, \bE, [\cdot], \calN)$ 
            \item the subset $\sbH := \fM\vert_{\sH}(\sH)$
            \item the enhanced measure $\bmu := \hat{\fL}_{\ast} \mu$ (with $\hat{\fL}_{\ast}$ defined in (5))
            \item the skeleton lift $\fL := \fM\vert_{\sH}$
            \item the full lift $\hat{\fL} := \lim_{m \rightarrow \infty} \fM \circ \Phi_m$
            \item the intermediate space $\sK$ with the $\sK$-skeleton lift $\fM$
            \item the admissible approximation $(\Phi_m)_{m \in \bbN}$
        \end{enumerate}

        Furthermore, if $\sK$ is a compact intermediate space, then $\fL: \sH \rightarrow \bE$ is compact.\footnote{By a compact (non-linear) function between metric spaces we mean one which sends bounded sets to pre-compact sets.}

        For future reference, we will refer to a quadruple $((\calT, \bE, [\cdot], \calN), (E, \sH, i, \mu), (\sK, \fM), (\Phi_m)_{m \in \bbN})$ satisfying the assumptions of Theorem \ref{thm:bottom_up_construction} above as \textbf{Bottom-Up data}. The construction may be summarized by Figure \ref{fig:bottom_up}.
\end{thm}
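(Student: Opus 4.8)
The plan is to verify, in turn, the five defining conditions of Definition~\ref{defn:AWMS} and then conditions~(6)--(7) of Definition~\ref{defn:AWMS_with_approx} for the listed data, and finally the compactness addendum. Conditions~(1), (6) and~(7) are essentially definitional: $(\calT,\bE,[\cdot],\calN)$ is an ambient space by hypothesis; $\sK$ together with $\fM$ is an intermediate space with a $\sK$-skeleton lift, and $\fM\vert_{\sH}=\fL$ holds because we \emph{defined} $\fL:=\fM\vert_{\sH}$; and the limit in probability postulated in Definition~\ref{defn:admissible_approximation}(2) coincides with $\hat{\fL}$ because we \emph{defined} $\hat{\fL}:=\lim_{m}\fM\circ\Phi_m$. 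So the real content is~(2)--(5).

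First I would record the basic facts about $\hat{\fL}$. It exists as a $\mu$-a.s.\ equivalence class of measurable maps $E\to\bE$ by admissibility (Definition~\ref{defn:admissible_approximation}(2)), hence has a measurable representative; and since $\fM$ is a lift, $\pi_{\calN}\circ(\fM\circ\Phi_m)=\Phi_m$, so passing to the limit in probability (with $\pi_{\calN}$ a bounded projection) and invoking \eqref{eq:A2.I} gives $\pi_{\calN}\circ\hat{\fL}=\lim_m\Phi_m=\id_E$ $\mu$-a.s. Hence $\hat{\fL}$ is a lift in the sense of Definition~\ref{defn:lift_defn}, and $\bmu:=\hat{\fL}_{\ast}\mu$ satisfies $\pi_{\ast}\bmu=(\pi_{\calN}\circ\hat{\fL})_{\ast}\mu=\mu$, which by hypothesis is centred Gaussian with Cameron--Martin space $\sH$; moreover $\pi(\sbH)=\pi(\fM(\sH))=\sH$ since $\pi_{\calN}\circ\fM=\id_{\sK}$. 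This settles~(2) and~(3). For~(4), $\fL=\fM\vert_{\sH}$ is an $\sH$-skeleton lift (continuity transfers along the continuous inclusion $\sH\hookrightarrow\sK$, and $\calT$-multilinearity is inherited by restricting each $\fM_{\tau}^{\otimes}$ to $\sH^{\otimes_A[\tau]}$); it is injective on $\sH$ and surjective onto $\sbH$, so the same elementary argument as in the proof of Theorem~\ref{thm:top_down_construction} shows that $\pi\vert_{\sbH}$ is its two-sided inverse, i.e.\ $\fL\circ\pi\vert_{\sbH}=\id_{\sbH}$.

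The heart of the matter is condition~(5): $\hat{\fL}\in\calP^{(\leq[\calT])}(E,\mu;\bE)$ and $\overline{\hat{\fL}_{\tau}}=\fL_{\tau}$ for every $\tau$. For the first part, each $\fM\circ\Phi_m$ lies in $\calP^{(\leq[\calT])}(E,\mu;\bE)$ by \eqref{eq:WIC_assumption_BU}, and a fixed inhomogeneous Wiener--Ito chaos is closed under convergence in probability: the equivalence of all $L^p$-norms on a fixed chaos (Lemma~\ref{lem:equiv_of_norms}, via hypercontractivity) forces a sequence that is Cauchy in probability to be Cauchy in $L^2$, so the limit $\hat{\fL}$ remains in the closed subspace $\calP^{(\leq[\calT])}(E,\mu;\bE)$ and in fact $\fM\circ\Phi_m\to\hat{\fL}$ in $L^2$. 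For the identity $\overline{\hat{\fL}_{\tau}}=\fL_{\tau}$ I would chain three facts. (a)~The proxy-restriction $\overline{(\cdot)}$ is continuous from $\calP^{(\leq[\calT])}(E,\mu;\bE)$ with its $L^2$-topology into $C(\sH,\bE)$: for fixed $h$ one has $\|\overline{\Psi}(h)\|_{\bE}\leq\|\Psi^{\circ}\|_{L^2}\|f_h\|_{L^2}$ by the estimates in the proof of Proposition~\ref{prop:proxy_restriction_is_cont}(2), and $\overline{(\cdot)}$ is linear; hence $\overline{\hat{\fL}_{\tau}}(h)=\lim_m\overline{(\fM\circ\Phi_m)_{\tau}}(h)$. (b)~For fixed $m$, $(\fM\circ\Phi_m)_{\tau}=\fM_{\tau}^{\otimes}\!\circ(\Phi_m(\cdot))^{\otimes[\tau]}$ is a homogeneous polynomial chaos of degree $[\tau]$, and the proxy-restriction (the $S$-transform) recovers the value of such a polynomial at $h\in\sH$, i.e.\ $\overline{(\fM\circ\Phi_m)_{\tau}}(h)=\fM_{\tau}^{\otimes}\!\big(\Phi_m(h)^{\otimes[\tau]}\big)=(\fM\circ\Phi_m)_{\tau}(h)$; this is exactly the Hermite-binomial computation in the proof of Proposition~\ref{prop:proxy_restriction_is_cont}(4), equivalently a Cameron--Martin shift argument via Proposition~\ref{prop:hermite_polynomial_binomial}. (c)~By \eqref{eq:A2.II}, $\Phi_m(h)\to h$ in $\sK$, and $\fM$ is continuous on $\sK$, so $(\fM\circ\Phi_m)_{\tau}(h)=\fM_{\tau}(\Phi_m(h))\to\fM_{\tau}(h)=\fL_{\tau}(h)$. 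Combining (a)--(c) gives $\overline{\hat{\fL}_{\tau}}=\fL_{\tau}$. This is precisely the package delivered by Lemma~\ref{lem:agreement_for_bottom_up}, and I expect the careful reconciliation of convergence in probability with $L^2$-convergence in the Banach-valued, $\calT$-graded chaos --- and hence the legitimacy of interchanging $\overline{(\cdot)}$ with $\lim_m$ in step~(a) --- to be the main technical obstacle; the remaining steps are routine.

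Finally, the compactness addendum: if $i:\sH\hookrightarrow\sK$ is a compact linear operator it sends bounded subsets of $\sH$ to relatively compact subsets of $\sK$, and since $\fL=\fM\circ i$ with $\fM:\sK\to\bE$ continuous, the image under $\fL$ of a bounded set lies in $\fM$ of a compact set, hence is relatively compact in $\bE$; thus $\fL$ is compact as a (nonlinear) map.
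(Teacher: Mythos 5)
Your proposal is correct and follows essentially the same route as the paper: the definitional checks, closure of the graded chaos under convergence in probability (upgraded to $L^2$ via Lemma \ref{lem:equiv_of_norms}), and your decomposition (a)--(c) is exactly how Lemma \ref{lem:agreement_for_bottom_up} establishes $\overline{\hat{\fL}_{\tau}}=\fL_{\tau}$. The only slight mislocation is where the difficulty sits: the paper's main labour is your step (b) for fixed $m$ --- since $\Phi_m$ need not have finite rank, one expands in an ONB of the Cameron--Martin space $\sH_m$ of $(\Phi_m)_{\ast}\mu$ and uses $\Phi_m(\sH)\subseteq\sH_m$ (Proposition \ref{prop:containment_of_image_CM}) to justify the Hermite/Wick computation and its evaluation at $h$ --- whereas the interchange of $\overline{(\cdot)}$ with $\lim_m$ in your step (a) is the comparatively easy part.
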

\begin{proof}
    \begin{enumerate}[(1)]
        \item $(\calT, \bE, [\cdot], \calN)$ is an ambient space by definition. 
        \item $\sbH$ is a well defined subset of $\bE$.
        \item Since the approximation $(\Phi_m)_{m \in \bbN}$ is admissible, the sequence $(\fM \circ \Phi_m)_{m \in \bbN}$ converges in probability to a measurable function $E \rightarrow \bE$. By assumption this limit is the full lift $\hat{\fL}$. 
        Thus, since $\mu$ is a Borel probability measure, so is $\bmu$. There exists a subsequence $(\fM \circ \Phi_{m_k})_{k \in \bbN} \subseteq (\fM \circ \Phi_m)_{m \in \bbN}$ and a $\mu$-nullset $N_{\Phi} \subseteq E$ s.t.

        \begin{equation}
            (\fM \circ \Phi_{m_k})(x) \rightarrow \hat{\fL}(x), \quad x \in N_{\Phi}^c.
        \end{equation}
    
        Thus, due to $\pi \circ \fM = \id_{\sK}$ and \eqref{eq:A2.I}, for $\mu$-a.e. $x \in E$
    
        \begin{align}
            &\left\Vert \left(\pi  \circ \hat{\fL} \right)(x) - x \right\Vert_E = \left\Vert \pi \left( \lim_{k \rightarrow \infty} \fM (\Phi_{m_k}(x)) \right) - x \right\Vert_E \\
            &= \lim_{k \rightarrow \infty} \left\Vert \pi(\fM (\Phi_{m_k}(x))) - x \right\Vert_E = \lim_{k \rightarrow \infty} \Vert \Phi_{m_k}(x) - x \Vert_E = 0 .
        \end{align}
    
        That is, $\hat{\fL}$ is a measurable lift and in particular 

        \begin{equation}
            \pi_{\ast}(\bmu) = \pi_{\ast}(\hat{\fL}_{\ast}(\mu)) = (\pi \circ \hat{\fL})_{\ast}(\mu) = \mu. 
        \end{equation}

       That $\pi(\sbH)$ is the Cameron--Martin space of $(E, \mu)$ follows from the fact that $\fM$ and therefore $\fM \vert_{\sH}$ is a lift.
        
        \item The fact that $\fL$ is an $\sH$ skeleton lift follows from Definitions \ref{defn:intermediate_space} and \ref{defn:admissible_approximation}, while the $\calT$-multi-linearity of $\fL$ can be seen by defining $\fL^{\otimes}_{\tau} = \fM^{\otimes}_{\tau} \vert_{\sH^{\otimes_A [\tau]}}$ for any $\tau \in \calT$. To see that $\fL$ is also a left inverse of $\pi\vert_{\sbH}$ observe that $\sbH$ is the image of $\sH$ under the map $\fM \vert_{\sH}$, which makes $\fL$ surjective onto $\sbH$. Thus it has a left- and right-inverse on $\sbH$, which both have to coincide with its left-inverse $\pi \vert_{\sbH}$.

        \item The fact that $\hat{\fL}$ is a measurable lift was shown in \textit{(3)}, while $\hat{\fL}_{\ast} \mu = \bmu$ is true by definition. To see the graded Wiener--Ito chaos assumption, let $\tau \in \calT$ be arbitrary. Then since the convergence $\fM \circ \Phi_{m_k} \rightarrow \hat{\fL}$ and thus $\pi_{\tau} \circ \fM \circ \Phi_{m_k} \rightarrow \pi_{\tau} \circ \hat{\fL}$ is in probability w.r.t. $\mu$ and $\calP^{(\leq [\tau])}(E, \mu; E_{\tau})$ is closed under convergence in probability w.r.t. $\mu$ (see Lemma \ref{lem:complete_in_probability}) we obtain $\pi_{\tau} \circ \hat{\fL} \in \calP^{(\leq [\tau])}(E, \mu; E_{\tau})$ for each $\tau \in \calT$.
        
        The proof of Property \eqref{eq:proxy_restriction_equals_skeleton_lift} is considerably more involved and will thus be done separately in Lemma \ref{lem:agreement_for_bottom_up}.        

        \item By assumption $\sK$ is an intermediate space and $\fM$ is a $\sK$-skeleton lift. 

        \item By assumption $(\Phi_m)_{m \in \bbN}$ is an admissible approximation. \\ 
    \end{enumerate}
    
    To see the statement about compactness, let $A \subseteq \sbH$ be bounded. Then since $i$ is a compact linear operator, $i(A)$ is pre-compact in $\sK$ and hence, since $\fM$ is continuous on $\sK$, $i(A)$ is pre-compact in $\bE$.
\end{proof}

\begin{figure}[ht]
\begin{center}
        \begin{tikzpicture}
        \path
        (-2,0) node(1b) {\texttt{classical}} 
        (-2,4) node(1t) {\texttt{enhanced}}
        
        (4,4) node(1t) {\textcolor{blue}{$\sbH$}} 
        (5,4) node(1tsub) {\textcolor{blue}{$\subseteq$}}
        (6,4) node(2t) {$\bE$} 
        (10,4) node(3t) {\textcolor{blue}{$\bmu = \hat{\fL}_{\ast} \mu$}} 
        (0,0) node(1b) {$\sH$} 
        (4,0) node(1bK) {$\sK$} 
        (5,0) node(1bs) {$\subseteq$} 
        (6,0) node(2b) {$E$} 
        (10,0) node(3b) {$\mu$};
        
        \draw[right hook->] (1b) to node[above] {$i$} (1bK); 
        
        \draw[->] (2b) to [bend left] node[below] {$(\Phi_m)_{m \in \mathbb{N}}$} (1bK); 
        \draw[right hook->, blue] (1b) to node[left] {\textcolor{blue}{$\fL = \fM \vert_{\sH}~$}} (1t); 
        \draw[right hook->] (1bK) to node[left] {$\fM$} (2t); 
    
        \draw[->, blue] (2b) to [bend right] node[right] {\textcolor{blue}{$\underbrace{\lim_{m \rightarrow \infty} \fL \circ \Phi_m}_{= \hat{\fL}}$}} (2t);
        \draw[->>] (2t) to node[left] {$\pi_{\calN}$} (2b);
        
        \draw[->, blue] (3b) to node[right] {\textcolor{blue}{$\hat{\fL}_{\ast}$}} (3t);
        \end{tikzpicture}
        \caption{Summary of the Bottom-Up construction. Symbols in black are assumed (Bottom-Up Data) while symbols in blue are constructed.}
        \label{fig:bottom_up}
    \end{center}
\end{figure}

\begin{lem}[Property \eqref{eq:proxy_restriction_equals_skeleton_lift} for Bottom-Up Construction] \label{lem:agreement_for_bottom_up}
    In the context of Theorem \ref{thm:bottom_up_construction}, $\overline{\hat{\fL}_{\tau}} = \fL_{\tau}$ for every $\tau \in \calT$. 
\end{lem}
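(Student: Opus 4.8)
The plan is to prove the pointwise identity $\overline{\hat{\fL}_{\tau}}(h) = \fL_{\tau}(h)$ for each fixed $\tau \in \calT$ and every $h \in \sH$. Abbreviate $\Psi_m := \pi_{\tau} \circ \fM \circ \Phi_m$ and $\Psi := \pi_{\tau} \circ \hat{\fL} = \hat{\fL}_{\tau}$. By \eqref{eq:WIC_assumption_BU} (componentwise) and part~(5) of the proof of Theorem~\ref{thm:bottom_up_construction}, both $\Psi_m$ and $\Psi$ belong to the single finite chaos $\calP^{(\leq [\tau])}(E, \mu; E_{\tau})$, and $\Psi_m \to \Psi$ in probability w.r.t.\ $\mu$ (since $\hat{\fL} = \lim_{m} \fM \circ \Phi_m$ in probability by construction and $\pi_{\tau}$ is continuous). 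On a fixed finite chaos, convergence in probability upgrades to $L^2$-convergence (Lemmas~\ref{lem:equiv_of_norms} and~\ref{lem:complete_in_probability}), so in fact $\Psi_m \to \Psi$ in $L^2(E, \mu; E_{\tau})$. Arguing exactly as in the proof of Proposition~\ref{prop:proxy_restriction_is_cont}(2) (Cauchy--Schwarz against the Cameron--Martin density $f_h$, plus the fact that $\Pi_{[\tau]}$ is an orthogonal projection) one obtains
\[
    \bigl\Vert \overline{\Psi_m}(h) - \overline{\hat{\fL}_{\tau}}(h) \bigr\Vert_{E_{\tau}} \;\leq\; \Vert f_h \Vert_{L^2(E, \mu; \bbR)} \, \Vert \Psi_m - \Psi \Vert_{L^2(E, \mu; E_{\tau})} \;\xrightarrow[m \to \infty]{}\; 0 ,
\]
so it suffices to identify the proxy-restriction $\overline{\Psi_m}(h)$ of each approximant and pass to the limit.

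The key step is the identity
\[
    \overline{\Psi_m}(h) \;=\; \pi_{\tau}\bigl( \fM(\Phi_m(h)) \bigr), \qquad h \in \sH, \ m \in \bbN .
\]
Granting it: since $\Phi_m(h) \to h$ in $\sK$ by \eqref{eq:A2.II} and $\pi_{\tau} \circ \fM$ is continuous on $\sK$ (Definition~\ref{defn:skeleton_lift}(1)), letting $m \to \infty$ yields
\[
    \overline{\hat{\fL}_{\tau}}(h) \;=\; \lim_{m} \overline{\Psi_m}(h) \;=\; \lim_{m} \pi_{\tau}\bigl( \fM(\Phi_m(h)) \bigr) \;=\; \pi_{\tau}\bigl( \fM(h) \bigr) \;=\; \fM_{\tau}^{\otimes}\bigl( h^{\otimes [\tau]} \bigr) \;=\; \fL_{\tau}(h),
\]
where the last two equalities use the $\calT$-multi-linearity of $\fM$ and $\fL = \fM\vert_{\sH}$. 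This is the assertion.

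For the key identity I would compare two evaluations of the polynomial $t \mapsto \bbE[\Psi_m \circ T_{th}]$, $t \in \bbR$ (each term is finite: $\Psi_m$ is a finite-chaos element, hence in $L^4$, so $\Psi_m \circ T_{th} \in L^2$ for all $t$ by Cameron--Martin). On the one hand, the $\calT$-multi-linearity of $\fM$ gives the \emph{pointwise} identity $\Psi_m(x) = \fM_{\tau}^{\otimes}\bigl( \Phi_m(x)^{\otimes [\tau]} \bigr)$ ($\fM \circ \Phi_m$ being a genuine continuous map, as $\Phi_m$ is bounded linear and $\fM$ is continuous on $\sK$), so linearity of $\Phi_m$ and the multilinear expansion of a tensor power produce the pointwise polynomial identity
\[
    \Psi_m(x + t h) \;=\; \fM_{\tau}^{\otimes}\bigl( (\Phi_m(x) + t\, \Phi_m(h))^{\otimes [\tau]} \bigr) \;=\; \sum_{j=0}^{[\tau]} t^{j}\, Q_j(x), \qquad x \in E, \ t \in \bbR ,
\]
in which the top coefficient $Q_{[\tau]} \equiv \fM_{\tau}^{\otimes}\bigl( \Phi_m(h)^{\otimes [\tau]} \bigr)$ is constant in $x$; a Vandermonde interpolation over $[\tau]+1$ values of $t$ promotes this to an identity in $L^2(E, \mu; E_{\tau})$ (so each $Q_j \in L^2$), whence the $t^{[\tau]}$-coefficient of $t \mapsto \bbE[\Psi_m \circ T_{th}]$ equals $\fM_{\tau}^{\otimes}\bigl( \Phi_m(h)^{\otimes [\tau]} \bigr) = \pi_{\tau}\bigl( \fM(\Phi_m(h)) \bigr)$. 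On the other hand, decomposing $\Psi_m = \sum_{j=0}^{[\tau]} \Pi_j \Psi_m$ into homogeneous chaos components and using $\bbE[\Psi_m \circ T_{th}] = \bbE[\Psi_m f_{th}] = \sum_{j=0}^{[\tau]} \bbE[(\Pi_j \Psi_m) \circ T_{th}]$, the computation $\bbE[H_{\alpha} \circ T_{g}] = \prod_i \langle e_i, g \rangle^{\alpha_i}$ from the proof of Proposition~\ref{prop:proxy_restriction_is_cont}(4) (applied with $g = t h$, together with the $L^2(\mu) \to L^1((T_{th})_{\ast}\mu)$ passage of Proposition~\ref{prop:L2L1} legitimising term-by-term integration of the chaos series of $\Psi_m$) gives $\bbE[(\Pi_j \Psi_m) \circ T_{th}] = t^{j}\, \bbE[(\Pi_j \Psi_m) \circ T_h]$ for every $j$; hence the $t^{[\tau]}$-coefficient on this side is $\bbE[(\Pi_{[\tau]} \Psi_m) \circ T_h] = \overline{\Psi_m}(h)$ by Definition~\ref{defn:proxy_restriction}. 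Equating the two $t^{[\tau]}$-coefficients proves the identity.

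The only genuine difficulty is the integrability bookkeeping around the shifted measure: one must check that the partial polarizations $Q_j$ are square-integrable and that the a priori infinite chaos expansion of $\Psi_m$ may be integrated term by term against $f_{th}$. Both follow from hypercontractivity on a fixed finite chaos together with the $L^2(\mu) \to L^1((T_h)_{\ast}\mu)$ bound already used in the proof of Proposition~\ref{prop:proxy_restriction_is_cont}, so no new analytic ingredient is required --- only care in chaining these facts. (Note that only the case $a = (x, \dots, x)$ of \eqref{eq:sK_compatibility} --- equivalently \eqref{eq:A2.III} --- enters, namely to guarantee that $\hat{\fL}$ exists in probability and that $\hat{\fL}_{\tau} \in \calP^{(\leq [\tau])}(E, \mu; E_{\tau})$; the mixed cases of \eqref{eq:sK_compatibility} are not needed here.)
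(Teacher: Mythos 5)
Your proposal is correct, and its outer skeleton coincides with the paper's: both reduce the claim to the per-$m$ identity $\overline{(\fM_{\tau}\circ\Phi_m)}(h) = (\fM_{\tau}\circ\Phi_m)(h)$ (via $L^2$-convergence of $\fM_\tau\circ\Phi_m$ to $\hat{\fL}_\tau$ inside the fixed chaos $\calP^{(\leq[\tau])}$ and $L^2$-continuity of the proxy-restriction), and then conclude with \eqref{eq:A2.II} and continuity of $\fM$ on $\sK$. Where you genuinely diverge is in how that per-$m$ identity is proved. The paper pushes $\mu$ forward by $\Phi_m$, builds an auxiliary abstract Wiener space $((E_m,\Vert\cdot\Vert_{\sK}),\sH_m,\mu_m)$, expands $\Phi_m(x)$ in an ONB $(e^m_k)$ of $\sH_m$, identifies $\Pi_{[\tau]}$ of the resulting series through the Hermite polynomials $H^m_\alpha(\Phi_m(\cdot))$, and evaluates the shifted expectation term by term via the binomial theorem; this requires Proposition \ref{prop:containment_of_image_CM}, the treatment of possible degeneracy of $\mu_m$, and the observation that $(\langle e^m_i,\Phi_m(\cdot)\rangle)_i$ is again an orthonormal Gaussian family under $\mu$. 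You avoid this second AWS entirely: you compute the degree-$[\tau]$ coefficient of the polynomial $t\mapsto\bbE[\Psi_m\circ T_{th}]$ once pointwise (using $\calT$-multi-linearity, linearity of $\Phi_m$, and a Vandermonde interpolation to secure integrability of the polarized pieces $Q_j$) and once through the finite chaos decomposition (using the homogeneity $\bbE[(\Pi_j\Psi_m)\circ T_{th}]=t^j\,\bbE[(\Pi_j\Psi_m)\circ T_h]$, which rests only on $\bbE[H_\alpha\circ T_g]=\prod_i\langle e_i,g\rangle^{\alpha_i}$ for the fixed ambient ONB of $\sH$ and on Proposition \ref{prop:L2L1} for term-by-term integration). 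Your route is shorter and more elementary, and it makes transparent that only the diagonal case of \eqref{eq:sK_compatibility} enters; what it forgoes is the explicit Hermite-series representation \eqref{eq:consistency_pre_4} of $(\fM_\tau\circ\Phi_m)^\circ$, which the paper's ONB computation produces along the way. Two cosmetic points: calling $\Pi_{[\tau]}$ an ``orthogonal projection'' is loose in the Banach-valued setting --- what you actually need (and what the paper itself invokes) is its $L^2$-continuity; and the $L^2$-upgrade of convergence in probability requires all $\Psi_m$ to lie in the same finite chaos, which is exactly assumption \eqref{eq:WIC_assumption_BU}, so you should cite it at that point.
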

\begin{proof}
    We split the proof into several parts: \\

    1) Let $\tau \in \calT$ and $h \in \sH$ be arbitrary. By assumption, $\fM \circ \Phi_m \rightarrow \hat{\fL}$ w.r.t. $\mu$ and thus also $\fM_{\tau} \circ \Phi_m \rightarrow \hat{\fL}_{\tau}$ in probability w.r.t. $\mu$. By sequential completeness of $\calP^{(\leq [\tau])}(E, \mu; E_{\tau})$ (see Lemma \ref{lem:complete_in_probability}) and Lemma \ref{lem:equiv_of_norms} we obtain $\Vert \fM_{\tau} \circ \Phi_m - \hat{\fL}_{\tau} \Vert_{L^2(E, \mu; E_{\tau})} \rightarrow 0$ and thus $\Vert (\fM_{\tau} \circ \Phi_m)^{\circ} - \hat{\fL}_{\tau}^{\circ} \Vert_{L^2(E, \mu; E_{\tau})} \rightarrow 0$ since $\Pi_{[\tau]}$ is $L^2$-continuous. By Proposition \ref{prop:L2L1} we further obtain $\Vert (\fM_{\tau} \circ \Phi_m)^{\circ} - \hat{\fL}_{\tau}^{\circ} \Vert_{L^1(E, \mu_h; E_{\tau})} = \Vert (\fM_{\tau} \circ \Phi_m)^{\circ} \circ T_h - \hat{\fL}_{\tau}^{\circ} \circ T_h \Vert_{L^1(E, \mu; E_{\tau})} \rightarrow 0$. In conclusion we have the following: (we indicate the mode of convergence as a super-script for clarity.)
    
    \begin{align}
        \overline{\hat{\fL}_{\tau}}(h) &= \bbE \left[ \hat{\fL}_{\tau}^{\circ} \circ T_h \right] = \bbE \left[ \left( \lim_{m \rightarrow \infty}^{\mu} \fM_{\tau} \circ \Phi_m \right)^{\circ} \circ T_h \right] = \bbE \left[ \left(\lim_{m \rightarrow \infty}^{L^2(E, \mu; E_{\tau})} \fM_{\tau} \circ \Phi_m \right)^{\circ} \circ T_h \right] \\
        &= \bbE \left[ \left(\lim_{m \rightarrow \infty}^{L^2(E, \mu; E_{\tau})} (\fM_{\tau} \circ \Phi_m)^{\circ} \right) \circ T_h \right] = \bbE \left[ \left(\lim_{m \rightarrow \infty}^{L^1(E, \mu_h; E_{\tau})} (\fM_{\tau} \circ \Phi_m)^{\circ} \right) \circ T_h \right] \\
        &= \bbE \left[ \lim_{m \rightarrow \infty}^{L^1(E, \mu; E_{\tau})} \left( (\fM_{\tau} \circ \Phi_m)^{\circ} \circ T_h \right) \right] = \lim_{m \rightarrow \infty}^{E_{\tau}} \bbE \left[ (\fM_{\tau} \circ \Phi_m)^{\circ} \circ T_h \right]. \label{eq:lim_h}
    \end{align}

    2) From now on fix $m \in \bbN$ and recall that, as a push-forward of a Gaussian measure by a bounded linear map, $\mu_m:= (\Phi_m)_{\ast} \mu$ is a Gaussian measure on $\sK$. Denote by $\sH_m \subseteq \sK$ its Cameron--Martin space. If $\mu_m$ is degenerate on $\sK$, i.e. if there exists a non-zero $f \in \sK^{\ast}$ s.t. $\bbE_{\mu_m} \left[ \vert f \vert^2 \right] = 0$, then there exists a closed linear subspace\footnote{In the case where $\mu_m$ is not degenerate on $\sK$ the above is still true with $E_m = \sK$.} $E_m \subseteq \sK$ on which $\mu_m$ is non-degenerate and in which $\sH_m$ is $\sK$-dense. In particular, $\Phi_m$ takes values in $E_m$ $\mu$-a.s.\! Therefore $((E_m, \Vert \cdot \Vert_{\sK}), \sH_m, \mu_m)$ constitutes an abstract Wiener space. Let $(e^m_k)_{k \in \bbN}$ be an ONB of $\sH_m$ contained in $E_m^{\ast}$. Then by classical abstract Wiener space theory\footnote{See e.g. \cite[App. D.3]{frizMultidimensionalStochasticProcesses2010}.} 

    \begin{equation}
        y = \sum_{k =1}^{\infty} \langle e^m_k, y \rangle_{\sH_m} e^m_k, \quad \text{for} \quad \mu_m \text{-a.e.} ~ y \in E_m, \quad \forall y \in \sH_m,
    \end{equation}

    where the convergence is w.r.t. $\Vert \cdot \Vert_{\sK}$ and hence 

    \begin{equation}
        \Phi_m(x) = \sum_{k =1}^{\infty} \langle e^m_k, \Phi_m(x) \rangle_{\sH_m} e^m_k, \quad \text{for} \quad \mu \text{-a.e.} ~ x \in E, \quad \forall x \in \sH, \label{eq:consistency_convergencee_also_on_H}
    \end{equation}

    where the convergence is w.r.t. $\Vert \cdot \Vert_{\sK}$. Here we used the fact that $\Phi_m(\sH) \subseteq \sH_m$, proven in Proposition \ref{prop:containment_of_image_CM}. \\
    
    For the remainder of this part of the proof, let $x \in E$ be fixed such that the above converges. Since $\fL$ is continuous on $\sK$ we have
    
    \begin{align}
        (\fM_{\tau} \circ \Phi_m)(x) &= \fM_{\tau} \left( \lim_{n \rightarrow \infty}^{\sK} \sum_{k = 1}^{n} \langle e^m_k, \Phi_m(x) \rangle_{\sH_m} e^m_k \right) \\
        &= \lim_{n \rightarrow \infty}^{E_{\tau}} \fM_{\tau} \left(  \sum_{k = 1}^{n} \langle e^m_k, \Phi_m(x) \rangle_{\sH_m} e^m_k \right). \label{eq:limit_in_consist}
    \end{align}
    
    To streamline notation, for every $k, n \geq 0$ define
    
    \begin{align}
        A^{k}_n &:= \left\{ \alpha: \bbN \rightarrow \bbN_0 : \sum_{i \in \bbN} \alpha_i = k, \alpha_i = 0 ~ \text{for every} ~ i > n \right\}, \quad A^{\leq k}_n := \bigcup_{i = 1}^k A^i_n, \label{symb:Akn} \\
        \underline{e}_{\alpha}^m &:= \bigotimes_{i \in \bbN} (e_i^{m})^{\otimes \alpha_i} \in \sH_m^{\otimes [\tau]}, \quad \alpha \in A^{[\tau]}_n, 
    \end{align}
    
    \begin{equation}
        \quad \left\langle \otimes_{i=1}^{[\tau]} v_i, \otimes_{j=1}^{[\tau]} w_j \right\rangle_{\sH_m^{\otimes [\tau]}} := \prod_{i = 1}^{[\tau]} \langle v_i, w_i \rangle_{\sH_m}, \quad v_1, \ldots, v_{[\tau]}, w_1, \ldots, w_{[\tau]} \in \sH_m .
    \end{equation}

    Using the $\calT$-multi-linearity of the lift we may rewrite \eqref{eq:limit_in_consist} as 
    
    \begin{align}
        \lim_{n \rightarrow \infty} \sum_{\alpha \in A^{[\tau]}_n} \prod_{i \in \bbN} \langle e^m_i, \Phi_m(x) \rangle^{\alpha_i}_{\sH_m} \fM_{\tau}^{\otimes} \left( \bigotimes_{i \in \bbN} (e_i^{m})^{\otimes \alpha_i} \right)
        = \lim_{n \rightarrow \infty} \sum_{\alpha \in A^{[\tau]}_n} \langle \underline{e}_{\alpha}^m, \Phi_m(x)^{\otimes [\tau]} \rangle_{\sH^{\otimes [\tau]}_m} \fL^{\otimes}_{\tau} \left( \underline{e}_{\alpha}^m \right) . \label{eq:consistency_split}
    \end{align}

    Note that $A^{[\tau]}_n$ is a finite set and that only finitely many indices in the product give factors differing from $1$. For every $\alpha \in A^{[\tau]}_n$ the functional $x \mapsto \langle \underline{e}_{\alpha}^m, \Phi_m(x)^{\otimes [\tau]} \rangle_{\sH^{\otimes [\tau]}_m}$ is a $[\tau]$-fold product of bounded linear functionals $x \mapsto \langle e^m_i, \Phi_m(x) \rangle_{\sH_m}$ on $E$, and thus lies in $\calP^{(\leq [\tau])}(E, \mu; \bbR)$ by Lemma \ref{lem:polynomial_in_WIC}. The above limit is a $\mu$-a.s.\! limit and therefore a limit probability w.r.t. $\mu$ in $E_{\tau}$ of elements in $\calP^{(\leq [\tau])}(E, \mu; E_{\tau})$. Thus, by Lemma \ref{lem:equiv_of_norms} the sum converges in $L^2(E, \mu; E_{\tau})$. Therefore, applying the projection onto the $[\tau]$-th component of the Wiener--Ito chaos to \eqref{eq:consistency_split} and pulling out the limit via $L^2$-continuity we obtain
    
    \begin{align}
        (\fM_{\tau} \circ \Phi_m)^{\circ}(x) &= \Pi_{[\tau]} \lim_{n \rightarrow \infty} \sum_{\alpha \in A^{[\tau]}_n} \langle \underline{e}_{\alpha}^m, \Phi_m(x)^{\otimes [\tau]} \rangle_{\sH^{\otimes [\tau]}_m} \fM_{\tau}^{\otimes} \left( \underline{e}_{\alpha}^m \right) \\
        &= \lim_{n \rightarrow \infty} \sum_{\alpha \in A^{[\tau]}_n} \Pi_{[\tau]} \langle \underline{e}_{\alpha}^m, \Phi_m(x)^{\otimes [\tau]} \rangle_{\sH^{\otimes [\tau]}_m} \fM_{\tau}^{\otimes} \left( \underline{e}_{\alpha}^m \right) . \label{eq:consistency_2100}
    \end{align}

    3) Fix now $n \in \bbN$ and $\alpha \in A^{[\tau]}_n$. Let $h_i$ denote the $i$-th (monic) Hermite polynomial and let  

    \begin{equation}
        H^m_{\alpha}(x) := \prod_{i \in \bbN} h_{\alpha_i} \left( \langle e^m_i, x \rangle_{\sH_m} \right), \quad x \in E, \label{symb:hermite}
    \end{equation}
    
    denote the multi-dimensional Hermite polynomial with multi-index $\alpha$ associated\footnote{Recall that while the Hermite polynomial $h_i$ is defined without reference to any underlying measure, the definition of the multi-dimensional Hermite polynomials $H_{\alpha}$ involves an ONB of the Cameron--Martin space of the underlying Gaussian measure $\mu$ and thus depends on that measure.} to $\mu_m$ and note that 
    
    \begin{equation}
        \prod_{i \in \bbN} \langle e^m_i, \Phi_m(x) \rangle^{\alpha_i}_{\sH_m} = \langle \underline{e}_{\alpha}^m, \Phi_m(x)^{\otimes [\tau]} \rangle_{\sH^{\otimes [\tau]}_m}
    \end{equation}

    is the leading monomial of $H^m_{\alpha}(\Phi_m(\cdot))$. Thus we conclude that the second term on the right-hand side of 

    \begin{equation}
        \langle \underline{e}_{\alpha}^m, \Phi_m(x)^{\otimes [\tau]} \rangle_{\sH^{\otimes [\tau]}_m} = \underbrace{H^m_{\alpha}(\Phi_m(x))}_{\in \calP^{([\tau])}(E, \mu; \bbR)} + \underbrace{\left( \langle \underline{e}_{\alpha}^m, \Phi_m(x)^{\otimes [\tau]} \rangle_{\sH^{\otimes [\tau]}_m} - H^m_{\alpha}(\Phi_m(x)) \right)}_{\in \calP^{(< [\tau])}(E, \mu; \bbR)} \label{eq:consistency_splitting}
    \end{equation}

    is a polynomial of bounded linear functionals of degree strictly less than $\sum_{i \in \bbN} \alpha_i = [\tau]$ and therefore must lie in $\calP^{(< [\tau])}(E, \mu; \bbR)$ by Lemma \ref{lem:polynomial_in_WIC}. The first term, on the other hand, has the form 

    \begin{equation}
        H^m_{\alpha}(\Phi_m(x)) = \prod_{i \in \bbN} h_{\alpha_i} \left( \langle e^m_i, \Phi_m(x) \rangle_{\sH_m} \right). 
    \end{equation}

    Since $(e^m_i)_{i \in \bbN}$ is an ONB of $\sH_m$, the functionals $(\langle e^m_i, \cdot \rangle_{\sH_m})_{i \in \bbN}$ are iid $\sim \sN(0,1) \label{symb:normal_distr}$ w.r.t. $\mu_m$ and thus orthonormal in $L^2(E_m, \mu_m; \bbR)$. Therefore $(\langle e^m_i, \Phi_m(\cdot) \rangle_{\sH_m})_{i \in \bbN}$ is iid $\sim \sN(0,1)$ w.r.t. $\mu$ and thus orthonormal in $L^2(E, \mu; \bbR)$. Denote by $\fC_{\mu}: E^{\ast} \rightarrow E$ the covariance operator associated to $\mu$. Then by the above $(\fC_{\mu}\langle e^m_i, \Phi_m(\cdot) \rangle_{\sH_m})_{i \in \bbN}$ forms an orthonormal system in $\sH$, which can be completed to an ONB of $\sH$. Hence, since the definition of Wiener--Ito chaos is independent of the choice of ONB of $\sH$, we conclude that $H^m_{\alpha}(\Phi_m(\cdot)) \in \calP^{([\tau])}(E, \mu; \bbR)$. Thus according to \eqref{eq:consistency_splitting}

    \begin{equation}
       \Pi_{[\tau]} \left\langle \underline{e}_{\alpha}^m, \Phi_m(x)^{\otimes [\tau]} \right\rangle_{\sH^{\otimes [\tau]}_m} \fL^{\otimes}_{\tau} \left( \underline{e}_{\alpha}^m \right) = H_{\alpha}(\Phi_m(x)) \fL^{\otimes}_{\tau} \left( \underline{e}_{\alpha}^m \right)
    \end{equation}
    
    and hence, by inserting back into \eqref{eq:consistency_2100}, 

    \begin{equation}
        (\fM_{\tau} \circ \Phi_m)^{\circ}(x) = \lim_{n \rightarrow \infty} \sum_{\alpha \in A^{[\tau]}_n} \underbrace{H_{\alpha}(\Phi_m(x)) \fL^{\otimes}_{\tau} \left( \underline{e}_{\alpha}^m \right)}_{\in \calP^{([\tau]}(E, \mu; E_{\tau})}, \quad \mu-a.s.\! \label{eq:consistency_pre_4}
    \end{equation}

    Since in \eqref{eq:consistency_pre_4} all elements of the sequence lie in $\calP^{([\tau])}(E, \mu; E_{\tau})$, Lemma \ref{lem:equiv_of_norms} shows that the convergence is not only $\mu$-a.s.\! in $E_{\tau}$, but also in $L^2(E, \mu; E_{\tau})$. \\

    4) Inserting \eqref{eq:consistency_pre_4} into \eqref{eq:lim_h} yields

    \begin{equation}
        \bbE\left[ \left( \fM_{\tau} \circ \Phi_m \right)^{\circ}(\cdot + h) \right] = \bbE \left[ \lim_{n \rightarrow \infty} \sum_{\alpha \in A^{[\tau]}_n} H^m_{\alpha}(\Phi_m(\cdot + h)) \fM_{\tau}^{\otimes} \left( \underline{e}_{\alpha}^m \right)\right]. \label{eq:consistency_4}
    \end{equation}

    By the remarks at the end of 3) and Proposition \ref{prop:L2L1} we can pull the limit outside of the expectation, giving
    
    \begin{align}
        &= \lim_{n \rightarrow \infty} \sum_{\alpha \in A^{[\tau]}_n} \bbE\left[ H^m_{\alpha}(\Phi_m(\cdot + h)) \right] \fM_{\tau}^{\otimes} \left( \underline{e}_{\alpha}^m \right) \\
        &= \lim_{n \rightarrow \infty} \sum_{\alpha \in A^{[\tau]}_n} \bbE\left[ \prod_{i \in \bbN} h_{\alpha_i}(\langle e^m_i, \Phi_m(\cdot + h) \rangle_{\sH_m}) \right] \fM_{\tau}^{\otimes} \left( \underline{e}_{\alpha}^m \right) \\
        &= \lim_{n \rightarrow \infty} \sum_{\alpha \in A^{[\tau]}_n} \underbrace{\bbE\left[ \prod_{i \in \bbN} h_{\alpha_i} \big( \langle e^m_i, \Phi_m(\cdot) \rangle_{\sH_m} + \langle e^m_i, \Phi_m(h) \rangle_{\sH_m} \big) \right]}_{(\ast)} \fM_{\tau}^{\otimes} \left( \underline{e}_{\alpha}^m \right) . \label{eq:consistency_post_4}
    \end{align}

    For the rest of this part of the proof fix $n \geq 0$ and $\alpha \in A^{[\tau]}_{n}$ and focus on $(\ast)$: applying the Binomial theorem for Hermite polynomials (see Proposition \ref{prop:hermite_polynomial_binomial}) to $h_{\alpha_i}$ yields 
    
    \begin{equation}
        (\ast) = \bbE\left[ \prod_{i \in \bbN} \left( \sum_{l = 0}^{\alpha_{i}} \binom{\alpha_i}{l} h_l \left( \langle e^m_i, \Phi_m(\cdot) \rangle_{\sH_m} \right) \langle e^m_i, \Phi_m(h) \rangle_{\sH_m}^{\alpha_i - l} \right) \right]. 
    \end{equation}
    
    Write $S_{\alpha}:= \times_{i \in \bbN} \{0, \ldots, \alpha_i\}$ and note that the cardinality of $S_{\alpha}$ is finite. We switch the sum and the product to obtain
    
    \begin{align}
        &= \bbE\left[ \sum_{\sigma \in S_{\alpha}} \prod_{i \in \bbN} \binom{\alpha_i}{\sigma_i} h_{\sigma_i} \left( \langle e^m_i, \Phi_m(\cdot) \rangle_{\sH_m} \right) \langle e^m_i, \Phi_m(h) \rangle_{\sH_m}^{\alpha_i - \sigma_i} \right] \\
        &= \sum_{\sigma \in S_{\alpha}} \bbE\left[ \prod_{i \in \bbN} \binom{\alpha_i}{\sigma_i} h_{\sigma_i} \left( \langle e^m_i, \Phi_m(\cdot) \rangle_{\sH_m} \right) \langle e^m_i, \Phi_m(h) \rangle_{\sH_m}^{\alpha_i - \sigma_i} \right] .
    \end{align}
    
    Since the sequence $(\langle e^m_i,\cdot \rangle_{\sH_m})_{i \in \bbN}$ is iid w.r.t. $\mu_m = (\Phi_m)_{\ast} \mu$ we may pull the product out of the expectation to obtain
    
        \begin{align}
        &= \sum_{\sigma \in S_{\alpha}} \prod_{i \in \bbN} \binom{\alpha_i}{\sigma_i} \underbrace{\bbE\left[ h_{\sigma_i} \left( \langle e^m_i, \Phi_m(\cdot) \rangle_{\sH_m} \right) \right]}_{= 1_{\{\sigma_i = 0\}}} \langle e^m_i, \Phi_m(h) \rangle_{\sH_m}^{\alpha_i - \sigma_i} \\  
        &= \prod_{i \in \bbN} \langle e^m_i, \Phi_m(h) \rangle_{\sH_m}^{\alpha_i} \\
        &= \langle \underline{e}_{\alpha}, \Phi_m(h)^{\otimes [\tau]} \rangle_{\sH_m^{\otimes [\tau]}}, \label{eq:consistency_pre_5}
    \end{align}

    where from the first to the second line we used the fact that all Hermite polynomials of order $>0$ are centered, and those of order $0$ have expectation $1$. Finally, insert \eqref{eq:consistency_pre_5} back into \eqref{eq:consistency_post_4} to obtain 

    \begin{equation}
        \bbE\left[ \left( \fM_{\tau} \circ \Phi_m \right)^{\circ}(\cdot + h) \right] = \lim_{n \rightarrow \infty} \sum_{\alpha \in A^{[\tau]}_n} \langle \underline{e}_{\alpha}, \Phi_m(h)^{\otimes [\tau]} \rangle_{\sH_m^{\otimes [\tau]}} \fM_{\tau}^{\otimes} \left( \underline{e}_{\alpha}^m \right) = \left( \fM_{\tau} \circ \Phi_m \right)(h) , \label{eq:consistency_pre_pre_5}
    \end{equation}

    where in the last equality we used the second quantifier in  \eqref{eq:consistency_convergencee_also_on_H}. \\
    
    5) In conclusion, using \eqref{eq:lim_h}, then \eqref{eq:consistency_pre_pre_5}, and then \eqref{eq:A2.II}, for every $h \in \sH$
    
    \begin{equation}
        \overline{\hat{\fL}_{\tau}}(h)= \lim_{m \rightarrow \infty} \bbE \left[ \left( \fM_{\tau} \circ \Phi_m \right)^{\circ}(\cdot + h) \right] = \lim_{m \rightarrow \infty} \left( \fM_{\tau} \circ \Phi_m \right)(h) = \fM_{\tau}(h) = \fL_{\tau}(h) .
    \end{equation}

    Hence $\fL_{\tau} = \overline{\hat{\fL}_{\tau}}$ for every $\tau \in \calT$, which concludes the proof. 
\end{proof}

\section{Large Deviations} \label{sec:LDP}

As alluded to in the introduction, one of our goals is to derive results on large deviations for the family of measures $((\delta_{\varepsilon})_{\ast} \bmu)_{\varepsilon > 0}$ associated to an abstract Wiener model space. 

\begin{rem}
    In the entirety of the paper, all LDPs will be assumed to have speed $\varepsilon^2$ without further comment. 
\end{rem}

\begin{thm}[LDP for AWMS] \label{thm:LDP_for_AWMS}
    Let $((\calT, \bE, [\cdot], \calN), \sbH, \bmu, \fL, \hat{\fL})$ be an abstract Wiener model space. Then the family of measures $((\delta_{\varepsilon})_{\ast} \bmu)_{\varepsilon > 0} = (\bmu_{\varepsilon})_{\varepsilon > 0}$ satisfies an LDP on $\bE$ with good rate function $\sJ: \bE \rightarrow [0, \infty]$ given by 
    
        \begin{equation}
            \sJ(\bx) = \begin{cases}
            \frac{1}{2} \Vert \pi (\bx) \Vert^2_{\sH} & \quad \bx \in \sbH \\
            + \infty & \quad \text{else}. 
            \end{cases} \label{eq:LDP_for_AWMS}
        \end{equation}
\end{thm}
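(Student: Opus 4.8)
The plan is to transport the classical Schilder theorem to the enhanced level through a finite-dimensional chaos truncation of the full lift, together with a Fernique-type exponential estimate; this is the abstract version of the Ledoux--Borell scheme behind the large-deviation results for (singular) SPDEs, cf.\ \cite{hairerLargeDeviationsWhiteNoise2015} and \cite{ledouxNoteLargeDeviations1990}. Fix an orthonormal basis $(e_k)_k$ of the Cameron--Martin space $\sH$ with each $\langle e_k,\cdot\rangle\in E^{\ast}$, let $\calF_n:=\sigma(\langle e_1,\cdot\rangle,\dots,\langle e_n,\cdot\rangle)$ and let $\hat{\fL}^{n}$ be the continuous representative of $\bbE[\hat{\fL}\mid\calF_n]\in\calP^{(\leq[\calT])}(E,\mu;\bE)$: on the $\tau$-component, $\hat{\fL}^{n}_{\tau}$ is an $E_{\tau}$-valued polynomial in $\langle e_1,\cdot\rangle,\dots,\langle e_n,\cdot\rangle$ of degree $\leq[\tau]$, hence genuinely continuous on $E$. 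Writing $\bmu_{\varepsilon}=(\delta_{\varepsilon}\circ\hat{\fL})_{\ast}\mu$ and $\bnu^{n}_{\varepsilon}:=(\delta_{\varepsilon}\circ\hat{\fL}^{n})_{\ast}\mu$, the strategy is to prove the LDP for $\bnu^{n}_{\varepsilon}$ by the contraction principle and then let $n\to\infty$.

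The first point is that $(\bnu^{n}_{\varepsilon})$ is an exponentially good approximation of $(\bmu_{\varepsilon})$, uniformly in $\varepsilon$. The $\tau$-component of $\delta_{\varepsilon}\hat{\fL}-\delta_{\varepsilon}\hat{\fL}^{n}$ is $\varepsilon^{[\tau]}(\hat{\fL}_{\tau}-\hat{\fL}^{n}_{\tau})$ with $\hat{\fL}_{\tau}-\hat{\fL}^{n}_{\tau}\in\calP^{(\leq[\tau])}(E,\mu;E_{\tau})$ and $\sigma_{n,\tau}:=\lVert\hat{\fL}_{\tau}-\hat{\fL}^{n}_{\tau}\rVert_{L^2}\to0$. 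By equivalence of norms in a fixed chaos (Lemma~\ref{lem:equiv_of_norms}) one has a tail bound $\bbP(\lVert\hat{\fL}_{\tau}-\hat{\fL}^{n}_{\tau}\rVert_{E_{\tau}}>t)\leq\exp(-c\,(t/\sigma_{n,\tau})^{2/[\tau]})$; inserting $t\asymp\eta^{[\tau]}\varepsilon^{-[\tau]}$ the powers of $\varepsilon$ cancel -- this is exactly where the compatibility ``dilation weight $[\tau]$ $=$ chaos degree'' is used -- leaving $\varepsilon^{2}\log\bbP(\cdots)\lesssim-c\,\eta^{2}\sigma_{n,\tau}^{-2/[\tau]}$, so that $\lim_{n}\limsup_{\varepsilon\to0}\varepsilon^{2}\log\mu\big(\vertiii{\delta_{\varepsilon}\hat{\fL}-\delta_{\varepsilon}\hat{\fL}^{n}}>\eta\big)=-\infty$. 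The same chaos tail bound applied to $\hat{\fL}$ itself (equivalently, the Fernique estimate of Theorem~\ref{thm:exp_int}), combined with the truncation, also gives exponential tightness of $(\bmu_{\varepsilon})$.

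Next, write $\delta_{\varepsilon}\circ\hat{\fL}^{n}=\Psi^{n}_{\varepsilon}\circ\rmm_{\varepsilon}$ with $\Psi^{n}_{\varepsilon}:=\delta_{\varepsilon}\circ\hat{\fL}^{n}\circ\rmm_{1/\varepsilon}\colon E\to\bE$ continuous, so that $\bnu^{n}_{\varepsilon}=(\Psi^{n}_{\varepsilon})_{\ast}\mu_{\varepsilon}$, where $\mu_{\varepsilon}:=(\rmm_{\varepsilon})_{\ast}\mu$ satisfies Schilder's LDP with good rate $\sI(x)=\tfrac12\lVert x\rVert_{\sH}^{2}$. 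Expanding the (monic) Hermite polynomials, $\Psi^{n}_{\varepsilon}(x)_{\tau}=\varepsilon^{[\tau]}\hat{\fL}^{n}_{\tau}(x/\varepsilon)$ converges, as $\varepsilon\to0$, to the degree-$[\tau]$ homogeneous part $P^{n}(x)_{\tau}$ of $\hat{\fL}^{n}_{\tau}$ (the lower-degree chaos pieces carry strictly positive powers of $\varepsilon$), and this convergence is uniform on each sublevel set $\{\sI\leq\alpha\}=\{h\in\sH:\lVert h\rVert_{\sH}^{2}\leq2\alpha\}$, which is compact in $E$ by compactness of the Cameron--Martin embedding and on which the $\langle e_i,\cdot\rangle$ are uniformly bounded. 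Since $P^{n}\colon E\to\bE$ is continuous, the extended contraction principle yields that $(\bnu^{n}_{\varepsilon})$ satisfies an LDP with good rate $\tilde{\sJ}_{n}(\bx)=\inf\{\tfrac12\lVert h\rVert_{\sH}^{2}:h\in\sH,\ P^{n}(h)=\bx\}$.

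Finally one passes $n\to\infty$: the previous two paragraphs and the standard theorem on LDPs via exponentially good approximations give an LDP for $(\bmu_{\varepsilon})$ whose rate is the relaxed limit of the $\tilde{\sJ}_{n}$, and it remains to identify this with $\sJ$. By the explicit series for the proxy-restriction obtained in the proof of Proposition~\ref{prop:proxy_restriction_is_cont}(4), $P^{n}|_{\sH}$ is the $n$-th partial sum of the series defining $\fL=\overline{\hat{\fL}}$; since $\hat{\fL}^{\circ}_{\tau}\in L^{2}$ has square-summable chaos coefficients, a Cauchy--Schwarz/multinomial estimate gives $\sup_{\lVert h\rVert_{\sH}\leq R}\vertiii{P^{n}(h)-\fL(h)}\to0$ for every $R$, as well as weak sequential continuity of each $\fL_{\tau}$ on $\sH$-bounded sets. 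Hence $P^{n}(h)\to\fL(h)$ supplies the matching upper bound at $\bx=\fL(h)\in\sbH$; conversely, if $\bx_{n}\to\bx$ with $\sup_{n}\tilde{\sJ}_{n}(\bx_{n})<\infty$, the associated near-minimizers $h_{n}$ are $\sH$-bounded, a weak limit $h_{\infty}$ exists (strongly convergent in $E$), and using that $P^{n}$ is a lift, the uniform convergence above and weak continuity of $\fL$, one gets $\bx=\fL(h_{\infty})\in\sbH$ with $\sJ(\bx)=\tfrac12\lVert h_{\infty}\rVert_{\sH}^{2}\leq\liminf_{n}\tilde{\sJ}_{n}(\bx_{n})$. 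Thus the rate equals $\sJ$ as in \eqref{eq:LDP_for_AWMS}, and its level sets $\fL(\{h\in\sH:\lVert h\rVert_{\sH}\leq r\})$ are compact in $\bE$ (compact embedding $\sH\hookrightarrow E$ plus weak-to-norm continuity of $\fL$ on $\sH$-bounded sets), so $\sJ$ is a good rate function. The main obstacle is precisely this last step: the ``smoothing'' properties of $\fL$ (uniform-on-bounded-sets approximation by $P^{n}$, weak continuity, compactness of level sets) are not part of the definition of a \emph{bare} AWMS and must be recovered from $\hat{\fL}\in\calP^{(\leq[\calT])}$, after which one still has to check that the relaxed limit of the $\tilde{\sJ}_{n}$ collapses exactly to $\sJ$; by contrast, the uniform-in-$\varepsilon$ exponential estimate of the second paragraph is the other structurally essential point but is routine once the fixed-chaos norm equivalence is available.
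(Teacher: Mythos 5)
Your architecture is the same as the paper's: truncate the full lift by conditioning, $\hat{\fL}^{n}=\bbE[\hat{\fL}\mid\calF_n]$ (Proposition \ref{prop:proxy_restriction_of_conditioning}), prove exponential equivalence of $(\delta_\varepsilon)_{\ast}\bmu$ and $(\delta_\varepsilon\circ\hat{\fL}^n)_{\ast}\mu$ using the fixed-chaos moment equivalence (your tail-bound computation is the same estimate as Lemma \ref{lem:exp_equiv_after}, just with the optimization over $q$ hidden in the chaos tail bound), obtain an LDP for each truncation by the extended contraction principle applied to $\Psi^n_\varepsilon=\delta_\varepsilon\circ\hat{\fL}^n\circ\rmm_{1/\varepsilon}$ with limit the leading homogeneous polynomial (Lemma \ref{lem:LDP_of_each_approx}), and identify that limit on $\sH$ with the proxy-restriction. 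Up to there your proposal is correct; the only cosmetic point is that Theorem \ref{thm:HW_extended_contraction_principle} asks for uniform convergence on a \emph{neighborhood} of each sublevel set of $\sI$, which is harmless since your polynomials converge uniformly on $E$-bounded sets.

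The genuine gap is in the final identification of the limiting rate and its goodness. You justify $\sup_{\Vert h\Vert_{\sH}\le R}\vertiii{P^n(h)-\fL(h)}\to 0$ by saying that $\hat{\fL}^{\circ}_{\tau}\in L^2$ "has square-summable chaos coefficients": for Banach-space-valued chaos there is no Bessel inequality, so square-summability of $\Vert\bbE[\hat{\fL}_\tau H_\alpha]\Vert_{E_\tau}$ is simply not available in a general $E_\tau$ (it would require cotype-type assumptions). The uniform convergence itself is true, but the correct mechanism is different: write $\overline{\Psi}(h)=\int_E\Psi^{\circ}f_h\,\dd\mu$, use martingale $L^2$-convergence $\hat{\fL}^n\to\hat{\fL}$ and scalar Cauchy--Schwarz together with $\sup_{\Vert h\Vert_{\sH}\le R}\Vert f_h\Vert_{L^2(E,\mu;\bbR)}<\infty$ (Proposition \ref{prop:int_of_CM_density}); this is exactly Lemma \ref{lem:L2_convergence_implies_pointwise_convergence_of_homogeneous_part}. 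Second, your passage to the limit of the rates and your proof that $\sJ$ is good invoke weak-to-norm sequential continuity of $\fL$ on $\sH$-bounded sets and compactness of $\fL(\{\Vert h\Vert_{\sH}\le r\})$ in $\bE$. Neither is part of the axioms of a \emph{bare} AWMS, neither is proved in the paper, and compactness of $\fL$ is precisely what the paper only obtains under the extra hypothesis of a compact intermediate space in the Bottom-Up construction (Definition \ref{defn:intermediate_space}, Theorem \ref{thm:bottom_up_construction}); these are nontrivial facts about Banach-valued chaos that your one-line argument does not deliver. They are also unnecessary: once the uniform convergence on $\sI$-sublevel sets is in hand, the approximate-contraction lemma \cite[Lem.\ 2.1.4]{deuschelLargeDeviations1989} (this is Lemma \ref{lem:relation_among_rate_functions}) simultaneously identifies the limit of the $\sJ_{\!m}$ as $\sJ$ and yields goodness of $\sJ$, because the truncations $\Psi_{0,m}$ are continuous on all of $E$ and the sublevel sets of the Schilder rate function are compact in $E$; no weak continuity or compactness of $\fL$ enters. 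Replacing your last step by this argument closes the gap and recovers the paper's proof.
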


\begin{rem}[Form of the rate function]
    The major insight, already understood by \cite{hairerLargeDeviationsWhiteNoise2015} in their setup, which becomes apparent in the proof of Lemma \ref{lem:LDP_of_each_approx}, is the fact that, firstly, all contributions from components in the Wiener--Ito chaos expansion of degree less than $[\tau]$, as well as, secondly, all contributions from those components of degree $[\tau]$, but not of leading order, are ``scaled away''. This is how the specific form of the rate function of Theorem \ref{thm:LDP_for_AWMS} and thus the definition of the proxy-restriction arises. 
    
    The first part is taken care of by projecting the full lift into homogeneous chaos: $\hat{\fL} \mapsto \hat{\fL}^{\circ}$, while the second part is a consequence of integrating a perturbation of $\hat{\fL}^{\circ}$ by a shift operator: $\hat{\fL}^{\circ} \mapsto \bbE [\hat{\fL}^{\circ} \circ T_h]$.
\end{rem}

The proof presented here follows a Freidlin--Wentzell type strategy\footnote{See for instance \cite[Thm. 1.4.25]{deuschelLargeDeviations1989}.} and is heavily inspired by the proof of \cite[Thm. 3.5]{hairerLargeDeviationsWhiteNoise2015}. Let us give a rough sketch before we start: 

\begin{enumerate}
    \item Define an approximation $\hat{\fL}_m := \bbE \left[ \hat{\fL} \Big\vert \calF_m\right]$ of $\hat{\fL}$ by conditioning on basis elements of an ONB of $\sH$. 
    \item Define $\bmu^m := (\hat{\fL}_m)_{\ast} \mu$ and show that $\bmu$ and $(\bmu^m)_{m \in \bbN}$ are exponentially equivalent. This is done in Lemma \ref{lem:exp_equiv_after}, which is similar in spirit to \cite[Lem. 4]{frizLargeDeviationPrinciple2007} and \cite[Lem 3.9]{hairerLargeDeviationsWhiteNoise2015}.
    \item Show that for every $m \in \bbN$ the sequence of measures $(\bmu^m_{\varepsilon})_{\varepsilon > 0} := ( (\delta_{\varepsilon})_{\ast} \bmu^m)_{\varepsilon > 0}$ satisfies an LDP where the rate function $\sJ_{\!m}$ only depends on $\overline{\hat{\fL}_m}$. This is done in Lemma \ref{lem:LDP_of_each_approx}, which is similar in spirit to \cite[Lem 3.7]{hairerLargeDeviationsWhiteNoise2015}.
    \item Finally, show by hand that the rate functions $\sJ_{\!m}$ approximate \eqref{eq:LDP_for_AWMS} in the appropriate sense. This is done in Lemma \ref{lem:relation_among_rate_functions}, which is similar in spirit to \cite[Lem. 3.8]{hairerLargeDeviationsWhiteNoise2015}.
\end{enumerate}

Let $(e_k)_{k \in \mathbb{N}}$ be an ONB of $\sH$ contained in $E^{\ast}$ and define

\begin{equation}
    \calF_m := \sigma \big( \langle e_k, \cdot \rangle: 1 \leq k \leq m \big), \quad m \in \bbN, \label{eq:sigma_algebra}
\end{equation}

i.e. the $\sigma$-algebra on $E$ generated by the random variables $\{ \langle e_k, \cdot \rangle: 1 \leq k \leq m\}$. Furthermore, for each $m \in \bbN, \tau \in \calT$ let $\rmP_m:E \rightarrow E$ \label{symb:projection_onto_finite_basis} be the projection defined by 

    \begin{equation}
        \rmP_m(x) = \sum_{k = 1}^{m} \langle e_k, x \rangle e_k, \quad x \in E, 
    \end{equation}

and let $\sH_m := \rmP_m(\sH)$, $\mu^m := (\rmP_m)_{\ast} \mu$, and  

\begin{equation}
    \hat{\fL}_{\tau} := \pi_{\tau} \circ \hat{\fL}, \quad \hat{\fL}_m := \bbE \left[\hat{\fL} \Big\vert \calF_m \right], \quad \hat{\fL}_{m,\tau} := \pi_{\tau} \circ \bbE \left[ \hat{\fL} \Big\vert \calF_m \right] = \bbE \left[ \pi_{\tau} \circ \hat{\fL} \Big\vert \calF_m \right]. \label{eq:definition_of_fl_m}
\end{equation}

\begin{prop}[{AWMS for each $m \in \bbN$}] \label{prop:proxy_restriction_of_conditioning}
    Consider the context of Theorem \ref{thm:LDP_for_AWMS} and let $m \in \bbN$. Then the ambient space $(\calT, \bE, [\cdot], \calN)$, the enhanced measure $\bmu^m := (\hat{\fL}_m)_{\ast} \mu$, and the full lift $\hat{\fL}_m$ constitute Top-Down data in the sense of Theorem \ref{thm:top_down_construction} and induce an AWMS such that the underlying AWS is $(E, (\sH_m, \langle \cdot, \cdot \rangle_{\sH}), i\vert_{\sH_m}, \mu_m)$. In particular $\fL_m: \sH_m \rightarrow \bE$, obtained from the Top-Down construction, is injective and furthermore $\overline{\hat{\fL}_m}: \sH \rightarrow \bE$, the proxy-restriction of $\hat{\fL}_m$ to $\sH$, satisfies $\fL_m \circ \rmP_m = \overline{\hat{\fL}_m}$. 
\end{prop}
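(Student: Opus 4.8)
The plan is to verify that $((\calT,\bE,[\cdot],\calN),\bmu^m,\hat{\fL}_m)$ is Top-Down data in the sense of Theorem~\ref{thm:top_down_construction}, read off the induced AWMS, and then settle the two remaining assertions. The structural fact driving everything is that $\hat{\fL}_m=\bbE[\hat{\fL}\mid\calF_m]$ is $\calF_m$-measurable, hence a Borel function of $\rmP_m$ alone, say $\hat{\fL}_m=g_m\circ\rmP_m$ with $g_m\colon\sH_m\to\bE$ Borel, together with the observation that $(\langle e_1,\cdot\rangle,\dots,\langle e_m,\cdot\rangle)$ is i.i.d.\ $\sN(0,1)$ under both $\mu$ and $\mu_m:=\mu^m=(\rmP_m)_\ast\mu$ (indeed $\langle e_k,\rmP_m x\rangle=\langle e_k,x\rangle$ for $k\le m$). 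Consequently, on $\calF_m$-measurable functions the finite-dimensional spaces $\calP^{(\le[\tau])}$ (polynomials of degree $\le[\tau]$ in those $m$ variables), their Hermite coefficients, and the projections $\Pi_{[\tau]}$ are literally the same objects for $\mu$ and for $\mu_m$; in particular $\hat{\fL}_m\in\calP^{(\le[\calT])}(E,\mu_m;\bE)$ componentwise, with $\hat{\fL}_{m,\tau}=\sum_{\alpha\in A^{\leq[\tau]}_m}\bbE_\mu[\hat{\fL}_\tau H_\alpha]H_\alpha$, and $\hat{\fL}_m^{\circ}$ is one and the same function whether its homogeneous-chaos projection is taken with respect to $\mu$ or to $\mu_m$.

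To check the Top-Down data: $\bmu^m:=(\hat{\fL}_m)_\ast\mu$ is a Borel probability measure on $\bE$, and since $\hat{\fL}_m$ factors through $\rmP_m$ and $(\rmP_m)_\ast\mu_m=\mu_m$ ($\mu_m$ being carried by $\sH_m$, where $\rmP_m$ is the identity) we also get $(\hat{\fL}_m)_\ast\mu_m=(g_m)_\ast\mu_m=\bmu^m$. Because $\pi$ is continuous linear and $\pi\circ\hat{\fL}=\id_E$ $\mu$-a.s., $\pi\circ\hat{\fL}_m=\bbE[\id_E\mid\calF_m]=\rmP_m$ (using $\bbE[\langle e_k,\cdot\rangle\mid\calF_m]=0$ for $k>m$), hence $\pi_\ast\bmu^m=(\rmP_m)_\ast\mu=\mu_m$; this is a centred Gaussian on $E$ carried by the finite-dimensional subspace $\sH_m=\rmP_m(\sH)$, namely the law of $\sum_{k=1}^m g_k e_k$ with $g_k$ i.i.d.\ $\sN(0,1)$, so $\texttt{CM}(E,\mu_m)=(\sH_m,\langle\cdot,\cdot\rangle_{\sH})$ with injection the inclusion $i|_{\sH_m}$. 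Finally $\hat{\fL}_m$ is a measurable lift with respect to $\mu_m$ because $\pi\circ\hat{\fL}_m=\rmP_m=\id_E$ holds $\mu_m$-a.s. Thus Theorem~\ref{thm:top_down_construction} applies and produces the AWMS whose underlying AWS is $(E,(\sH_m,\langle\cdot,\cdot\rangle_{\sH}),i|_{\sH_m},\mu_m)$, with skeleton lift $\fL_m=\overline{\hat{\fL}_m}$, the proxy-restriction with respect to that AWS.

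It remains to treat the two final claims. Injectivity of $\fL_m$ is immediate: by Proposition~\ref{prop:proxy_restriction_is_cont}(1) (equivalently Theorem~\ref{thm:top_down_construction}(4)) $\fL_m$ is a lift on $\sH_m$, so $\pi\circ\fL_m=\id_{\sH_m}$ and $\fL_m$ is injective. For the identity $\fL_m\circ\rmP_m\vert_{\sH}=\overline{\hat{\fL}_m}$ — where now $\overline{\hat{\fL}_m}\colon\sH\to\bE$ denotes the proxy-restriction for the \emph{original} AWS $(E,\sH,i,\mu)$, whereas the $\fL_m$ on the left is the proxy-restriction for $(E,\sH_m,i|_{\sH_m},\mu_m)$ — first note that $\Pi_{[\tau]}$ commutes with $\bbE[\cdot\mid\calF_m]$ (both are orthogonal projections and conditional expectation preserves the chaos grading), so $\hat{\fL}_m^{\circ}=\bbE[\hat{\fL}^{\circ}\mid\calF_m]$. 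Fix $h\in\sH$. Since $\hat{\fL}_m^{\circ}$ is $\calF_m$-measurable and $\langle e_k,\cdot+h\rangle=\langle e_k,\cdot\rangle+\langle e_k,h\rangle_{\sH}=\langle e_k,\cdot\rangle+\langle e_k,\rmP_m h\rangle$ for $k\le m$, the integrand $\hat{\fL}_m^{\circ}\circ T_h$ depends on its argument only through $(\langle e_k,\cdot\rangle)_{k\le m}$, whose law is $\sN(0,I_m)$ under both $\mu$ and $\mu_m$; hence $\overline{\hat{\fL}_m}(h)=\int_E\hat{\fL}_m^{\circ}\circ T_h\,\dd\mu=\int_E\hat{\fL}_m^{\circ}\circ T_{\rmP_m h}\,\dd\mu_m=\fL_m(\rmP_m h)$, the last equality being the definition of $\fL_m$ as the $\mu_m$-proxy-restriction (and $\rmP_m h\in\sH_m$). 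This is the claimed relation on $\sH$.

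The only genuinely non-routine content is the pair of transfer statements between $\mu$ and $\mu_m$ — agreement of the chaos decompositions on $\calF_m$-measurable functions, and the change-of-measure in the last display — both of which rest solely on the fact that $(\langle e_k,\cdot\rangle)_{k\le m}$ has the same joint law under the two measures and that $\hat{\fL}_m$ factors through $\rmP_m$; everything else is bookkeeping plus the quoted Theorem~\ref{thm:top_down_construction} and Proposition~\ref{prop:proxy_restriction_is_cont}.
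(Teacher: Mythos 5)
Your proposal is correct and follows essentially the same route as the paper: verify the Top-Down data by exploiting that $\hat{\fL}_m$ factors through $\rmP_m$, identify the underlying AWS as $(E,(\sH_m,\langle\cdot,\cdot\rangle_{\sH}),i|_{\sH_m},\mu_m)$, deduce injectivity from the lift property, and obtain $\fL_m\circ\rmP_m=\overline{\hat{\fL}_m}$ by transferring between $\mu$ and $\mu_m$ using that $(\langle e_k,\cdot\rangle)_{k\le m}$ has the same law under both and is invariant under shifts by $h$ versus $\rmP_m h$. The only difference is presentational: where the paper carries out explicit Hermite-expansion bookkeeping ($H_\alpha\circ\rmP_m=H_\alpha$, $\int H_\alpha\circ T_{\rmP_m h}\,\dd\mu=\int H_\alpha\circ T_h\,\dd\mu$), you phrase the same computations via conditional expectation commuting with $\pi$ and $\Pi_{[\tau]}$ and equality in law, which is a sound and slightly more abstract packaging of identical content.
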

\begin{proof}
    Recall that by standard AWS theory $\id_E = \lim_{n \rightarrow \infty}^{L^2(E, \mu; E)} \sum_{k = 1}^n \langle e_i, \cdot \rangle e_i$. Then for any $\alpha \in A_m^{\leq \infty}$ (with $A_m^{\leq \infty}$ as defined in \eqref{symb:Akn} on p.~\pageref{symb:Akn}) we have 

    \begin{equation}
        \bbE [\id_E H_{\alpha}] = \sum_{k = 1}^{m} \bbE[\langle e_k, \cdot \rangle H_{\alpha}] e_k + \sum_{k = m+1}^{\infty} \underbrace{\bbE[\langle e_k, \cdot \rangle H_{\alpha}]}_{\substack{= 0 \\ \text{since} ~ k > m}} e_k =  \sum_{k = 1}^{m} \bbE[\langle e_k, \cdot \rangle H_{\alpha}] e_k = \bbE [\rmP_m H_{\alpha}] \label{eq:exp_of_identity_against_Hermite}
    \end{equation}

    and   
    
    \begin{equation}
        H_{\alpha} \circ \rmP_m = \prod_{i \in \bbN} h_{\alpha_i} \left( \left\langle e_i, \sum_{k = 1}^{m} \langle e_k, \cdot \rangle e_k \rangle \right\rangle \right) = \prod_{1 \leq i \leq m} h_{\alpha_i} \left( \left\langle e_i, \cdot \right\rangle \right) = H_{\alpha} . \label{eq:Hermite_after_projection}
    \end{equation}

    Therefore

    \begin{equation}
        \hat{\fL}_m \circ \rmP_m = \sum_{\tau \in \calT} \sum_{\alpha \in A^{\leq[\tau]}_{m}} \bbE [\hat{\fL}_{\tau} H_{\alpha}] \underbrace{H_{\alpha} \circ \rmP_m}_{= H_{\alpha}} = \hat{\fL}_m \label{eq:fL_m_after_projection}
    \end{equation} 
    
    and, using the fact that $\hat{\fL}$ is a lift on $E$,

    \begin{align}
        &\pi \circ \hat{\fL}_m \circ \rmP_m = \pi \left( \sum_{\tau \in \calT} \sum_{\alpha \in A^{\leq[\tau]}_{m}} \bbE [\hat{\fL}_{\tau} H_{\alpha}] (H_{\alpha} \circ \rmP_m) \right) \\
        &= \sum_{\tau \in \calT} \sum_{\alpha \in A^{\leq[\tau]}_{m}} \bbE [(\pi \circ \hat{\fL}_{\tau}) H_{\alpha}] (H_{\alpha} \circ \rmP_m) = \sum_{\alpha \in A^{\leq 1}_{m}} \underbrace{\bbE [\id_E H_{\alpha}]}_{=\bbE [\rmP_m H_{\alpha}]} \underbrace{H_{\alpha} \circ \rmP_m}_{= H_{\alpha}} = \rmP_m . \label{eq:projection_after_fL_m}
    \end{align}    

    Therefore, using \eqref{eq:fL_m_after_projection} and \eqref{eq:projection_after_fL_m} we obtain 

    \begin{equation}
        \pi_{\ast} \bmu^m = \left( \pi \circ \hat{\fL}_m \right)_{\ast} \mu = \left( \pi \circ \hat{\fL}_m \circ \rmP_m \right)_{\ast} \mu = (\rmP_m)_{\ast} \mu = \mu^m 
    \end{equation}

    and, using \eqref{eq:fL_m_after_projection},

    \begin{equation}
        \bmu^m = (\hat{\fL}_m)_{\ast} \mu = (\hat{\fL}_m \circ \rmP_m)_{\ast} \mu = (\hat{\fL}_m)_{\ast} \mu^m .
    \end{equation}

    Hence we have Top-Down data and the underlying AWS is then necessarily given as claimed. In particular, $\fL_m: \sH_m \rightarrow \bE$ is an $\sH_m$-skeleton lift and thus injective.

   Finally, let $h \in \sH$. Then via \eqref{eq:Hermite_after_projection}

    \begin{equation}
        \int_E H_{\alpha} \circ T_{\rmP_m(h)} \dd \mu = \int_E H_{\alpha}(\cdot + \rmP_m(h)) \dd \mu = \int_E \underbrace{H_{\alpha}(\rmP_m(\cdot + h))}_{= H_{\alpha}(\cdot + h)} \dd \mu = \int_E H_{\alpha} \circ T_{h} \dd \mu,
    \end{equation}

    and therefore 

    \begin{align}
        \fL_m ( \rmP_m(h)) &= \sum_{\tau \in \calT} \sum_{\alpha \in A^{[\tau]}_{m}} \bbE [\hat{\fL}_m H_{\alpha}] \int_E H_{\alpha} \circ T_{\rmP_m(h)} \dd \mu \\
        &= \sum_{\tau \in \calT} \sum_{\alpha \in A^{[\tau]}_{m}} \bbE [\hat{\fL}_m H_{\alpha}] \int_E H_{\alpha} \circ T_{h} \dd \mu = \overline{\hat{\fL}_m} (h) .
        \end{align}
\end{proof}

\begin{lem}[{Exponential Equivalence}] \label{lem:exp_equiv_after}
   In the context of Theorem \ref{thm:LDP_for_AWMS}

   \begin{equation}
       \limsup_{m \rightarrow \infty} \limsup_{\varepsilon \downarrow 0} \varepsilon^2 \log \mu \left( \vertiii{\delta_{\varepsilon} \circ \hat{\fL} - \delta_{\varepsilon} \circ \hat{\fL}_m} > \eta \right) = - \infty, \quad \eta > 0.
   \end{equation}
\end{lem}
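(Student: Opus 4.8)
The plan is to use the homogeneity of the dilation to strip off $\delta_\varepsilon$, reducing everything to a tail estimate for $\vertiii{\hat{\fL}-\hat{\fL}_m}$ at scale $\eta/\varepsilon$, and then to exploit that $\hat{\fL}_\tau-\hat{\fL}_{m,\tau}$ lives in a fixed Wiener--Ito chaos of bounded degree. First, since $\delta_\varepsilon$ is linear and $\vertiii{\delta_\varepsilon \by}=\varepsilon\vertiii{\by}$ by \eqref{eq:homogeneity_of_hom_norm}, one has for every $x\in E$ that $\vertiii{(\delta_\varepsilon\circ\hat{\fL})(x)-(\delta_\varepsilon\circ\hat{\fL}_m)(x)}=\vertiii{\delta_\varepsilon\big(\hat{\fL}(x)-\hat{\fL}_m(x)\big)}=\varepsilon\vertiii{\hat{\fL}(x)-\hat{\fL}_m(x)}$, so that
\begin{equation}
    \mu\left( \vertiii{\delta_\varepsilon\circ\hat{\fL}-\delta_\varepsilon\circ\hat{\fL}_m}>\eta \right) = \mu\left( \vertiii{\hat{\fL}-\hat{\fL}_m}>\eta/\varepsilon \right).
\end{equation}

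Next I would split over components. From $\vertiii{\by}=\sum_{\tau\in\calT}\Vert\by_\tau\Vert_{E_\tau}^{1/[\tau]}$ and the finiteness of $\calT$, the event $\{\vertiii{\hat{\fL}-\hat{\fL}_m}>\eta/\varepsilon\}$ is contained in $\bigcup_{\tau\in\calT}\big\{\Vert\hat{\fL}_\tau-\hat{\fL}_{m,\tau}\Vert_{E_\tau}>(\eta/(\varepsilon|\calT|))^{[\tau]}\big\}$, so a union bound reduces the problem to each $\tau$ separately. For fixed $\tau$ the random variable $Z:=\hat{\fL}_\tau-\hat{\fL}_{m,\tau}=\hat{\fL}_\tau-\bbE[\hat{\fL}_\tau\mid\calF_m]$ again lies in $\calP^{(\leq[\tau])}(E,\mu;E_\tau)$, since conditioning on $\calF_m$ preserves the chaos grading. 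By the hypercontractive bound on a chaos of degree $\leq[\tau]$ (a standard consequence of the equivalence of $L^p$-norms on Wiener--Ito chaos of bounded degree, cf. Lemma \ref{lem:equiv_of_norms} and the appendix on Wiener--Ito chaos), $\Vert Z\Vert_{L^p(E,\mu;E_\tau)}\leq (p-1)^{[\tau]/2}\Vert Z\Vert_{L^2(E,\mu;E_\tau)}$ for all $p\geq2$; writing $\sigma_{m,\tau}:=\Vert Z\Vert_{L^2(E,\mu;E_\tau)}$, a Markov inequality followed by optimization over $p$ gives, for all $s\geq e\,\sigma_{m,\tau}$,
\begin{equation}
    \mu\left( \Vert Z\Vert_{E_\tau}>s \right) \leq \exp\left( -c_{[\tau]}\left( s/\sigma_{m,\tau} \right)^{2/[\tau]} \right)
\end{equation}
with $c_{[\tau]}>0$ depending only on $[\tau]$. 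Applying this with $s=(\eta/(\varepsilon|\calT|))^{[\tau]}$ (legitimate once $\varepsilon$ is small enough, depending on $m$, $\eta$, $\tau$, since $s\to\infty$ as $\varepsilon\downarrow0$), the power $[\tau]$ cancels against the exponent $2/[\tau]$ and one obtains the bound $\exp(-a_{m,\tau}/\varepsilon^2)$ with $a_{m,\tau}:=c_{[\tau]}\,\eta^2/(|\calT|^2\,\sigma_{m,\tau}^{2/[\tau]})$ (with the convention $a_{m,\tau}=+\infty$ if $\sigma_{m,\tau}=0$, in which case that $\tau$ contributes nothing).

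Summing the finitely many component bounds, for $\varepsilon$ small one gets $\mu(\vertiii{\hat{\fL}-\hat{\fL}_m}>\eta/\varepsilon)\leq|\calT|\exp(-(\min_\tau a_{m,\tau})/\varepsilon^2)$, hence $\limsup_{\varepsilon\downarrow0}\varepsilon^2\log\mu(\,\cdot\,)\leq-\min_{\tau\in\calT}a_{m,\tau}$, using the elementary fact that $\varepsilon^2\log$ of a finite sum of terms of the form $e^{-a/\varepsilon^2}$ converges to $-\min a$. Finally, by the vector-valued martingale $L^2$-convergence theorem (Proposition \ref{prop:convergence_of_conditional_exp}), $\hat{\fL}_{m,\tau}=\bbE[\hat{\fL}_\tau\mid\calF_m]\to\hat{\fL}_\tau$ in $L^2(E,\mu;E_\tau)$, i.e. $\sigma_{m,\tau}\to0$ as $m\to\infty$ for each of the finitely many $\tau$; therefore $a_{m,\tau}\to+\infty$ for each $\tau$, so $\min_\tau a_{m,\tau}\to+\infty$, and consequently $\limsup_{m\to\infty}\limsup_{\varepsilon\downarrow0}\varepsilon^2\log\mu(\,\cdot\,)=-\infty$, as claimed.

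The one genuinely nontrivial ingredient is the stretched-exponential tail bound for Banach-space-valued Wiener--Ito chaos of bounded degree, together with the crucial feature that its decay constant scales like $\sigma_{m,\tau}^{-2/[\tau]}$ in the $L^2$-size of $Z$; this is precisely what forces the double-$\limsup$ to be $-\infty$ rather than merely negative, and it relies on Borell-type hypercontractivity (equivalence of all $L^p$-norms on chaos of bounded degree). Everything else is bookkeeping: the compatibility of $\vertiii{\cdot}$ with $\delta_\varepsilon$, a union bound over the finite set $\calT$, and the Laplace-type computation of $\varepsilon^2\log$ of a finite exponential sum.
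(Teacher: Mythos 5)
Your proof is correct, and it rests on exactly the same three pillars as the paper's: the homogeneity identity $\vertiii{\delta_{\varepsilon}\bx}=\varepsilon\vertiii{\bx}$, hypercontractivity (equivalence of $L^p$-norms) on Wiener--Ito chaos of bounded degree, and martingale $L^p$-convergence of $\hat{\fL}_m=\bbE[\hat{\fL}\mid\calF_m]$ to drive the relevant $L^2$-quantity to zero. The only difference is organizational: you decompose over $\tau\in\calT$ with a union bound and apply the standard optimized stretched-exponential tail $\exp(-c_{[\tau]}(s/\sigma_{m,\tau})^{2/[\tau]})$ to each component, letting the exponent $2/[\tau]$ cancel the power $[\tau]$ in the threshold $s=(\eta/(\varepsilon\,\#\calT))^{[\tau]}$; the paper instead packages this cancellation once and for all into its Lemma~\ref{lem:equiv_of_norm_homogeneous}, which says the scalar random variable $\vertiii{\hat{\fL}-\hat{\fL}_m}$ has $L^q$-norm growing only like $\sqrt{q}$, and then applies Chebyshev at the single exponent $q=1/\varepsilon^2$. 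The two bookkeeping schemes are interchangeable; yours is slightly more elementary (no bespoke lemma about the homogeneous distance), while the paper's yields the cleaner intermediate bound $\varepsilon^2\log\mu(\cdots)\le\log(\alpha_m/\eta)$ in one line. Your handling of the edge cases (the threshold $s\ge e\,\sigma_{m,\tau}$ being satisfied for $\varepsilon$ small since the $\varepsilon$-limit is taken first, and the convention when $\sigma_{m,\tau}=0$) is sound.
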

\begin{proof}
    Let $N := \max\{ [\tau] : \tau \in \calT\}$ and define

    \begin{equation}
        \alpha_m := \left\Vert \vertiii{\hat{\fL} - \hat{\fL}_m} \right\Vert_{L^{2N}(E, \mu; \bbR)} .
    \end{equation}

    Via the triangle inequality we obtain 

    \begin{align}
    \alpha_m = \left\Vert \vertiii{\hat{\fL} - \hat{\fL}_m} \right\Vert_{L^{2N}(E, \mu; \bbR)} \leq \sum_{\tau \in \calT} \Vert \pi_{\tau} (\hat{\fL} - \hat{\fL}_m) \Vert_{L^{\frac{2N}{[\tau]}}(E, \mu; E_{\tau})}^{\frac{1}{[\tau]}}. \label{eq:sum_martingale}
    \end{align}
    
    Since $\hat{\fL}_m = \bbE [\hat{\fL} \vert \calF_m]$, the vector valued discrete martingale $L^p$-convergence theorem (Proposition \ref{prop:convergence_of_conditional_exp}) shows that each summand in \eqref{eq:sum_martingale} converges to $0$ (note that $p = \frac{2N}{[\tau]} \in [1, \infty)$). Since the indexing set of the sum is finite we obtain $\lim_{m \rightarrow \infty} \alpha_m = 0$. 
    
    By equivalence of the $p$-norms in the homogeneous distance (see Lemma \ref{lem:equiv_of_norm_homogeneous})
    
    \begin{equation}
    \left\Vert \vertiii{\hat{\fL} - \hat{\fL}_m} \right\Vert_{L^q(E, \mu; \bbR)} \leq C'(N) \sqrt{q} \alpha_m, \quad \forall q \geq 2N. 
    \end{equation}
    
    Let $\eta > 0$. Then via the Chebychev inequality with $\cdot \mapsto (\cdot)^q$ and \eqref{eq:homogeneity_of_hom_norm} we can estimate 
    
    \begin{align*}
    &\mu\left( \vertiii{\delta_{\varepsilon} \circ \hat{\fL} - \delta_{\varepsilon} \circ \hat{\fL}_m} > \eta \right) = \mu\left( \vertiii{\hat{\fL} - \hat{\fL}_m} > \frac{\eta}{\varepsilon}
    \right) \\
    &\lesssim \left( \frac{\eta}{\varepsilon} \right)^{-q} \sqrt{q}^{q} \alpha_m^{q} = \exp \left( q \log \left( \frac{\varepsilon}{\eta} \alpha_m \sqrt{q} \right) \right) . 
    \end{align*}
    
    Choosing $q := 1/\varepsilon^2$ and for $\frac{1}{\sqrt{2N}} \geq \varepsilon$
    
    \begin{equation}
        \varepsilon^2 \log \mu\left( \vertiii{\delta_{\varepsilon} \circ \hat{\fL} - \delta_{\varepsilon} \circ \hat{\fL}_m} > \eta \right) \leq \varepsilon^2 q \log \left( \frac{\varepsilon}{\eta} \alpha_m \sqrt{q} \right) = \log \left( \frac{\alpha_m}{\eta} \right) .
    \end{equation}
    
    Taking the $\limsup_{\varepsilon \downarrow 0}$, subsequently $\lim_{m \rightarrow \infty}$, and recalling $\alpha_m \rightarrow 0$ completes the proof. 
\end{proof}

\begin{lem}[{LDP for each Approximation}] \label{lem:LDP_of_each_approx}
    Fix $m \in \bbN$, let $\fL_m := \overline{\hat{\fL}_m}$ and let $\sbH_m = \fL_m(\sH_m)$ denote the enhanced Cameron--Martin space constructed in Proposition \ref{prop:proxy_restriction_of_conditioning}. In the context of Theorem \ref{thm:LDP_for_AWMS} the family of measures $(\bmu^m_{\varepsilon})_{\varepsilon > 0} = ( (\delta_{\varepsilon} \circ \hat{\fL}_m )_{\ast}\mu )_{\varepsilon > 0}$ satisfies an LDP with good rate function 
    
    \begin{equation}
        \sJ_{\!m}(\bx) = \inf \left\{ \frac{1}{2} \Vert h \Vert_{\sH}^2 : h \in \sH, \fL_m(h) = \bx \right\} = \begin{cases}
            \frac{1}{2} \Vert \pi (\bx) \Vert_{\sH}^2 & \bx \in \sbH_m \\
            + \infty & \text{else}
        \end{cases}, \label{eq:LDP_of_each_approx}
    \end{equation}

    on $\bE$, where $\hat{\fL}_m$ is as defined in \eqref{eq:definition_of_fl_m} and $\overline{(\cdot)}$ denotes the proxy-restriction as in Definition \ref{defn:proxy_restriction}.
\end{lem}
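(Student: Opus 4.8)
The plan is to reduce everything to an $m$-dimensional Schilder problem and finish with an approximation (extended/``Freidlin--Wentzell'') contraction argument, in the spirit of \cite[Thm.~1.4.25]{deuschelLargeDeviations1989} and \cite[Lem.~3.7]{hairerLargeDeviationsWhiteNoise2015}. Since $\hat{\fL}_m = \bbE[\hat{\fL}\mid\calF_m]$ is $\calF_m$-measurable, it factors as $\hat{\fL}_m = F\circ\Xi_m$, where $\Xi_m\colon E\to\bbR^m$, $\Xi_m(x):=(\langle e_1,x\rangle,\dots,\langle e_m,x\rangle)$, satisfies $(\Xi_m)_{\ast}\mu=\gamma_m$, the standard Gaussian on $\bbR^m$. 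By the explicit expansion $\hat{\fL}_m=\sum_{\tau\in\calT}\sum_{\alpha\in A^{\leq[\tau]}_m}\bbE[\hat{\fL}_{m,\tau}H_\alpha]\,H_\alpha$ recorded in the proof of Proposition~\ref{prop:proxy_restriction_of_conditioning} (recall $\hat{\fL}\in\calP^{(\leq[\calT])}(E,\mu;\bE)$), each component $F_\tau\colon\bbR^m\to E_\tau$ is an $E_\tau$-valued polynomial of degree $\leq[\tau]$. Decompose $F_\tau=\sum_{j=0}^{[\tau]}F_\tau^{(j)}$ into homogeneous parts and set $G:=\sum_{\tau\in\calT}F_\tau^{([\tau])}$, the leading part; each $G_\tau$ is a homogeneous polynomial of degree $[\tau]$, hence the diagonal restriction of a symmetric $[\tau]$-linear map, so $G$ is $\calT$-multi-linear and homogeneous, $G\circ\rmm_\lambda=\delta_\lambda\circ G$.

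Next comes the scaling step. One has $\bmu^m_\varepsilon=(\delta_\varepsilon\circ F)_{\ast}\gamma_m=(\Psi_\varepsilon)_{\ast}\,\sN(0,\varepsilon^2\rmI_m)$, where $\Psi_\varepsilon\colon\bbR^m\to\bE$ is $\Psi_\varepsilon(v):=\delta_\varepsilon F(\varepsilon^{-1}v)=\sum_{\tau\in\calT}\sum_{j=0}^{[\tau]}\varepsilon^{[\tau]-j}F_\tau^{(j)}(v)$. As every exponent $[\tau]-j$ with $j<[\tau]$ is strictly positive, $\Psi_\varepsilon\to G$ uniformly on every bounded subset of $\bbR^m$ as $\varepsilon\downarrow0$ — this is the precise mechanism by which the lower-chaos and non-leading contributions get ``scaled away''. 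The family $(\sN(0,\varepsilon^2\rmI_m))_{\varepsilon>0}$ satisfies an LDP on $\bbR^m$ with good rate function $I_m(v)=\tfrac12|v|^2$ (elementary $m$-dimensional Schilder), whose sublevel sets are Euclidean balls; the maps $\Psi_\varepsilon$ and $G$ are continuous and $\Psi_\varepsilon\to G$ uniformly on those sublevel sets, so the extended contraction principle (the $\varepsilon$-dependent variant of \cite[Thm.~1.4.25]{deuschelLargeDeviations1989}) yields an LDP for $\bmu^m_\varepsilon$ on $\bE$ with good rate function $\bx\mapsto\inf\{\tfrac12|v|^2:v\in\bbR^m,\,G(v)=\bx\}$.

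It remains to identify this rate function. Identify $\bbR^m$ isometrically with $\sH_m$ via $v\mapsto\sum_{k=1}^m v_k e_k$ (isometric since $(e_k)_k$ is an ONB of $\sH$), under which $\Xi_m|_{\sH_m}$ becomes the identity. The formula $\overline{\hat{\fL}_m}(h)=\fL_m(\rmP_m h)=\sum_{\tau\in\calT}\sum_{\alpha\in A^{[\tau]}_m}\bbE[\hat{\fL}_{m,\tau}H_\alpha]\prod_i\langle e_i,h\rangle^{\alpha_i}$ from Proposition~\ref{prop:proxy_restriction_of_conditioning} exhibits $\overline{\hat{\fL}_{m,\tau}}(h)$ as exactly the degree-$[\tau]$ homogeneous part of $F_\tau$ evaluated at $\Xi_m h$ (the $H_\alpha$ with $\alpha\in A^{[\tau]}_m$ being monic with leading monomial $x^\alpha$), whence $\overline{\hat{\fL}_m}(h)=G(\Xi_m h)$; i.e.\ $G$ is precisely the injective $\sH_m$-skeleton lift $\fL_m$ of the AWMS of Proposition~\ref{prop:proxy_restriction_of_conditioning}. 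Since $\|h\|_\sH^2=\|\rmP_m h\|_\sH^2+\|(\id-\rmP_m)h\|_\sH^2$ and $\overline{\hat{\fL}_m}$ depends on $h$ only through $\rmP_m h$, the infimum defining $\sJ_{\!m}(\bx)$ may be restricted to $h\in\sH_m$, giving the first equality in \eqref{eq:LDP_of_each_approx}. Finally $\fL_m\colon\sH_m\to\sbH_m$ is a bijection with inverse $\pi|_{\sbH_m}$ (injectivity and the lift property, Proposition~\ref{prop:proxy_restriction_of_conditioning}), so for $\bx\in\sbH_m$ the unique preimage is $\pi(\bx)$ and $\sJ_{\!m}(\bx)=\tfrac12\|\pi(\bx)\|_\sH^2$, while for $\bx\notin\sbH_m$ the constraint set is empty and $\sJ_{\!m}(\bx)=+\infty$; this is \eqref{eq:LDP_of_each_approx}.

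The one genuinely non-formal point is the scaling step: $\delta_\varepsilon\circ\hat{\fL}_m$ is \emph{not} the image of a Gaussian measure under a \emph{fixed} continuous map, so the plain contraction principle does not apply; one is forced to pass to the $\varepsilon$-dependent maps $\Psi_\varepsilon$ and verify their uniform convergence to the leading-order map $G$ on the (compact) sublevel sets of $I_m$. This uniform convergence, together with the goodness of $I_m$, is the analytic substance behind the ``scaling away'' of the lower chaos highlighted in the remark following Theorem~\ref{thm:LDP_for_AWMS}; the remaining identification of $G$ with $\fL_m$ is then a direct reading of the explicit expansion already established in Proposition~\ref{prop:proxy_restriction_of_conditioning}.
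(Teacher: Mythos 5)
Your proof is correct and rests on the same Freidlin--Wentzell mechanism as the paper's: an $\varepsilon$-dependent family of continuous maps whose lower-chaos/non-leading parts carry positive powers of $\varepsilon$ and vanish uniformly on sublevel sets, fed into the extended contraction principle (Theorem \ref{thm:HW_extended_contraction_principle}). The difference is where the contraction is performed. The paper stays infinite-dimensional: it rewrites $(\delta_\varepsilon\circ\hat{\fL}_m)_{\ast}\mu=(\delta_\varepsilon\circ\hat{\fL}_m\circ\rmm_{\varepsilon^{-1}})_{\ast}\mu_\varepsilon$, invokes the generalized Schilder LDP for $(\mu_\varepsilon)_{\varepsilon>0}$ on $E$, and must fix a globally continuous representative $\Psi_{\varepsilon,m}$ of the conditional expectation (with a footnote handling the fact that $\hat{\fL}_m\circ\rmm_{\varepsilon^{-1}}$ is a priori only defined up to equivalence). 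You instead factor $\hat{\fL}_m=F\circ\Xi_m$ through $\bbR^m$, so that only the elementary Gaussian LDP for $\sN(0,\varepsilon^2\rmI_m)$ is needed, $F$ is an honest polynomial, and the scaling identity on $\bbR^m$ is exact rather than almost sure --- a slightly cleaner way to sidestep the representative issue. The identification of the rate function is then essentially the paper's: where you restrict the infimum from $\sH$ to $\sH_m$ by Pythagoras (since $\fL_m(h)$ depends on $h$ only through $\rmP_m h$), the paper uses $\fL_m\circ\rmP_m=\fL_m$ from Proposition \ref{prop:proxy_restriction_of_conditioning}; both then use injectivity of $\fL_m$ on $\sH_m$ and the lift property to reduce the infimum to the single point $\pi(\bx)$.

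One small inaccuracy: the explicit monomial formula $\overline{\hat{\fL}_m}(h)=\sum_{\tau\in\calT}\sum_{\alpha\in A^{[\tau]}_m}\bbE[\hat{\fL}_{\tau}H_{\alpha}]\prod_{i}\langle e_i,h\rangle^{\alpha_i}$, which you need in order to identify your leading part $G$ with $\fL_m$, is not literally contained in Proposition \ref{prop:proxy_restriction_of_conditioning} (that proposition only gives $\fL_m\circ\rmP_m=\overline{\hat{\fL}_m}$). It requires the short computation $\bbE\left[H_{\alpha}\circ T_h\right]=\prod_i\langle e_i,h\rangle^{\alpha_i}$ via the binomial theorem for Hermite polynomials (Proposition \ref{prop:hermite_polynomial_binomial}) and independence of the coordinates --- precisely the computation the paper carries out inside its own proof in \eqref{eq:LDP_for_each_pre_pre_end}--\eqref{eq:LDP_TD_equality_for_Psi_0m} (and which also appears in the proof of Proposition \ref{prop:proxy_restriction_is_cont}(4)). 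Since all ingredients are at hand and the step is two lines, this is a citation slip rather than a gap.
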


\begin{proof}[Proof of Lemma \ref{lem:LDP_of_each_approx}]
    Let $m \in \mathbb{N}$ be arbitrary. We want to use Theorem \ref{thm:HW_extended_contraction_principle}. That is, we consider 
    
    \begin{enumerate}[(i)]
        \item the spaces $(E, \Vert \cdot \Vert_E)$ and $(\bE, \vertiii{\cdot})$,
        \item the sequence of probability measures $(\mu_{\varepsilon})_{\varepsilon > 0}$, satisfying an LDP on $E$ with good rate function $\sI$, and
        \item the family of functions $(\Psi_{\varepsilon, m})_{\varepsilon \geq 0}: (E, \Vert \cdot \Vert_E) \rightarrow (\bE, \vertiii{\cdot})$ to be defined below.    
    \end{enumerate}

    For (iii), recall that the measures we want to derive an LDP for are\footnote{Since $\hat{\fL}_m$ is only a representative of a $\mu$-equivalence class and $\mu$ and $\mu_{\varepsilon}$ are mutually singular for any $\varepsilon > 0$ (see e.g. \cite[Rem. 2.10]{dapratoIntroductionInfinitedimensionalAnalysis2006}), the expression $\hat{\fL}_m \circ \rmm_{\varepsilon^{-1}}$ is a priori not well defined, i.e. $[\hat{\fL}_m \circ \rmm_{\varepsilon^{-1}}]_{\mu} \label{symb:equiv_class}$ is not independent of the choice of representative for $[\hat{\fL}_m]_{\mu}$. However, $[\hat{\fL}_m \circ \rmm_{\varepsilon^{-1}}]_{\mu_{\varepsilon}}$ is independent of such a choice, and therefore (as also \eqref{eq:pushforward_rewriting} shows), the measure $\left( \delta_{\varepsilon} \circ \hat{\fL}_m \circ \rmm_{\varepsilon^{-1}} \right)_{\ast} \mu_{\varepsilon}$ is well defined i.e. independent of the $\mu$-representative of $\hat{\fL}_m$.} 
    
    \begin{align}
        \left( \delta_{\varepsilon} \circ \hat{\fL}_m \right)_{\ast} \mu &= \left( \delta_{\varepsilon} \circ \hat{\fL}_m \circ \rmm_{\varepsilon^{-1}} \circ \rmm_{\varepsilon} \right)_{\ast} \mu = \left( \delta_{\varepsilon} \circ \hat{\fL}_m \circ \rmm_{\varepsilon^{-1}} \right)_{\ast} \mu_{\varepsilon}, \quad \varepsilon > 0. \label{eq:pushforward_rewriting}
    \end{align}

    Thus, in order to apply Theorem \ref{thm:HW_extended_contraction_principle} we need versions of the measurable functions $(\delta_{\varepsilon} \circ \hat{\fL}_m \circ \rmm_{\varepsilon^{-1}})_{\varepsilon > 0}$ which are continuous on neighborhoods of $\sH = \{ x \in E : \sI(x) < \infty \}$. For this, consider the following computation: 
    
    Let $\varepsilon > 0$ be arbitrary and recall the notation $A^k_n$ and $A^{\leq k}_n$ from \eqref{symb:Akn} on p.~\pageref{symb:Akn}. Then using Proposition \ref{prop:conditional_expectation_of_finite_WIC} for the conditional expectation

   \begin{align}
       \delta_{\varepsilon} \circ \bbE \left[ \hat{\fL} \Big\vert \calF_m \right] \circ \rmm_{\varepsilon^{-1}} 
       &= \delta_{\varepsilon} \left( \sum_{\tau \in \calT} \bbE \left[ \hat{\fL}_{\tau} \Big\vert \calF_m \right] \circ \rmm_{\varepsilon^{-1}} \right) \label{eq:cond_exp1}\\
       &= \delta_{\varepsilon} \left( \sum_{\tau \in \calT} \sum_{\alpha \in A^{\leq [\tau]}_m} \bbE \left[ \hat{\fL}_{\tau} H_{\alpha} \right] \left( H_{\alpha} \circ \rmm_{\varepsilon^{-1}} \right) \right) \label{eq:cond_exp2}\\
       &= \sum_{\tau \in \calT} \sum_{0 \leq k \leq [\tau]} \varepsilon^{[\tau] - k}
       \sum_{\alpha \in A^{k}_m} \bbE \left[ \hat{\fL}_{\tau} H_{\alpha} \right] \underbrace{\varepsilon^k \left( H_{\alpha} \circ \rmm_{\varepsilon^{-1}} \right)}_{=:I(\varepsilon)} \label{eq:TD_LDP_I} \\
        &= \underbrace{\sum_{\tau \in \calT} \sum_{\alpha \in A^{[\tau]}_m} \bbE \left[ \hat{\fL}_{\tau} H_{\alpha} \right] \varepsilon^{[\tau]} \left( H_{\alpha} \circ \rmm_{\varepsilon^{-1}} \right)}_{=:II(\varepsilon)} \\
        &+  \underbrace{\sum_{\tau \in \calT} \sum_{0 \leq k < [\tau]} \varepsilon^{[\tau] - k}
       \sum_{\alpha \in A^{k}_m} \bbE \left[ \hat{\fL}_{\tau} H_{\alpha} \right] \varepsilon^k \left( H_{\alpha} \circ \rmm_{\varepsilon^{-1}} \right)}_{=:III(\varepsilon)} =: \Psi_{\varepsilon,m}. \label{eq:definition_of_Psi_eps_m}
   \end{align}

   From \eqref{eq:cond_exp1} to \eqref{eq:cond_exp2} we chose a specific representative of the conditional expectation, which is continuous on all of $E$. That specific continuous representative shall define $\Psi_{\varepsilon,m}$, giving (iii) for $\varepsilon > 0$. \\

   In order to compute the limit of $\Psi_{\varepsilon, m}$ as $\varepsilon \rightarrow 0$ (and hence show (iv)) consider $I(\varepsilon)$ in \eqref{eq:TD_LDP_I}. For $\alpha \in A^{k}_m$ and $i \geq 0$ expand $h_{\alpha_i}(x) = \sum_{l = 0}^{\alpha_i} c_l x^l$ and recall that by our convention $c_{\alpha_i} = 1$. Then

   \begin{align}
       \varepsilon^k \left( H_{\alpha} \circ \rmm_{\varepsilon^{-1}} \right) (x) &= \varepsilon^k \prod_{i \in \bbN} h_{\alpha_i} (\langle e_i, \varepsilon^{-1} x \rangle) = \prod_{i \in \bbN} \varepsilon^{\alpha_i} h_{\alpha_i} (\langle e_i, \varepsilon^{-1} x \rangle) \\
       &= \prod_{i \in \bbN} \varepsilon^{\alpha_i} \sum_{l = 0}^{\alpha_i} c_l \langle e_i, \varepsilon^{-1} x \rangle^l = \prod_{i \in \bbN} \sum_{l = 0}^{\alpha_i} \varepsilon^{\alpha_i - l} c_l \langle e_i, x \rangle^l \rightarrow \prod_{i \in \bbN} \langle e_i, x \rangle^{\alpha_i},
   \end{align}

   for $\varepsilon \rightarrow 0$. Since all involved products and sums are finite and all functions are continuous, the convergence is uniformly on bounded subsets of $E$. This further implies that $III(\varepsilon) \rightarrow 0$ for $\varepsilon \rightarrow 0$, uniformly on bounded subsets of $E$ and 

   \begin{equation}
       II(\varepsilon) \rightarrow \sum_{\tau \in \calT} \sum_{\alpha \in A^{[\tau]}_m} \bbE \left[ \hat{\fL}_{\tau} H_{\alpha} \right] \prod_{i \in \bbN} \langle e_i, \cdot \rangle^{\alpha_i}, \quad \varepsilon \rightarrow 0,
   \end{equation}

   also uniformly on bounded subsets of $E$. In conclusion

   \begin{equation}
       \Psi_{\varepsilon,m} \rightarrow \sum_{\tau \in \calT} \sum_{\alpha \in A^{[\tau]}_m} \bbE \left[ \hat{\fL}_{\tau} H_{\alpha} \right] \prod_{i \in \bbN}  \langle e_i, \cdot \rangle^{\alpha_i} =: \Psi_{0,m}, \quad \varepsilon \rightarrow 0, \label{eq:definition_of_Psi_eps_0}
   \end{equation}

    uniformly on bounded subsets of $E$. In particular, for every $C \in \bbR$ the convergence is uniform on $B^E(r_C) := \{x \in E: \frac{1}{2} \Vert x \Vert_E^2 \leq r_C\}$, where $r_C > 0$ is large enough such that $B^E(r_C)$ is a neighborhood of $\{x \in E: \frac{1}{2} \Vert x \Vert_{\sH}^2 \leq C\}$. Such an $r_C > 0$ exists since $\Vert \cdot \Vert_E \lesssim \Vert \cdot \Vert_{\sH}$. This shows (iv) (and gives (iii) for $\varepsilon = 0$). \\

    Hence, \eqref{eq:pushforward_rewriting} and Theorem \ref{thm:HW_extended_contraction_principle} imply that the family $\left(\left(\delta_{\varepsilon} \circ \hat{\fL}_m \right)_{\ast} \mu \right)_{\varepsilon > 0}$ satisfies an LDP with good rate function 

   \begin{equation}
       \sJ_{\!m}(\bx) = \inf \{ \sI(x) : x \in E, \Psi_{0,m}(x) = \bx \} . \label{eq:rate_Psi_0}
   \end{equation}

    The final step is to show that this agrees with \eqref{eq:LDP_of_each_approx}. Substituting the generalized Schilder rate function 
    
    \begin{equation}
    \sI(x) = \begin{cases}
    \frac{1}{2} \Vert x \Vert^2_{\sH} & x \in \sH \\
    + \infty & \text{else}.
    \end{cases} \label{eq:Schilder_rate}
    \end{equation}
    
    (see e.g. \cite[Thm. 3.4.12]{deuschelLargeDeviations1989}) into \eqref{eq:rate_Psi_0} yields 
    
   \begin{equation}
       \sJ_{\!m}(\bx) = \inf \left\{ \frac{1}{2} \Vert h \Vert_{\sH}^2 : h \in \sH, \Psi_{0,m}(h) = \bx \right\}  \label{eq:rate_subst}
   \end{equation}

    Hence, only the values of $\Psi_{0,m}$ on elements of the Cameron--Martin space $\sH$ are relevant. Indeed, for any $h \in \sH$ we have 

    \begin{align}
        \overline{\hat{\fL}_m}(h) &= \sum_{\tau \in \calT} \overline{\bbE \left[ \hat{\fL}_{\tau} \vert \calF_m \right]} (h) = \sum_{\tau \in \calT} \overline{\sum_{\alpha \in A^{\leq [\tau]}_m} \bbE \left[ \hat{\fL}_{\tau} H_{\alpha} \right]  H_{\alpha}} (h) \\
        &= \sum_{\tau \in \calT}  \sum_{\alpha \in A^{\leq [\tau]}_m} \bbE \left[ \hat{\fL}_{\tau} H_{\alpha} \right] \int_E ( \Pi_{[\tau]} H_{\alpha}) \circ T_h \dd \mu = \sum_{\tau \in \calT}  \sum_{\alpha \in A^{[\tau]}_m} \bbE \left[ \hat{\fL}_{\tau} H_{\alpha} \right] \int_E H_{\alpha} \circ T_h \dd \mu , \label{eq:LDP_for_each_pre_pre_end}
   \end{align}

    where in the last line we used that for any $\alpha \in A^{\leq [\tau]}_m$ we have that 
    
    \begin{equation}
        \Pi_{[\tau]} H_{\alpha} = \begin{cases}
            H_{\alpha} & \vert \alpha \vert := \sum_{i \in \bbN} \alpha_i = [\tau] \\
            0 & \vert \alpha \vert \neq [\tau] ,
        \end{cases}
    \end{equation}
    
    since $H_{\alpha} \in \calP^{(\vert \alpha \vert)}(E, \mu; \bbR)$ by definition. Fix now $\tau \in \calT$ and $\alpha \in A^{\leq [\tau]}_m$. Then by virtue of the Binomial theorem for Hermite polynomials (see Proposition \ref{prop:hermite_polynomial_binomial})

    \begin{align}
        \int_E H_{\alpha} \circ T_h \dd \mu &= \bbE \left[ \prod_{i \in \bbN} h_{\alpha_i}(\langle e_i, \cdot + h \rangle) \right] = \bbE \left[ \prod_{i \in \bbN} \sum_{l = 0}^{\alpha_i} \binom{\alpha_i}{l} h_l(\langle e_i, \cdot \rangle) \langle e_i, h \rangle^{\alpha_i - l} \right] . \label{eq:LDP_TD_switching_prod_and_sum}
    \end{align}

    (The same way as on p.~\pageref{page:prod_sum_switching}), define $S_{\alpha} := \times_{i \in \bbN} \{0, \ldots, \alpha_i\}$ and note that the cardinality of $S_{\alpha}$ is finite. Switch the product and the sum in \eqref{eq:LDP_TD_switching_prod_and_sum} and pull out the finite sum to obtain

    \begin{equation}
        =\bbE \left[ \sum_{\sigma \in S_{\alpha}} \prod_{i \in \bbN} \binom{\alpha_i}{\sigma_i} h_{\sigma_i}(\langle e_i, \cdot \rangle) \langle e_i, h \rangle^{\alpha_i - \sigma_i} \right] = \sum_{\sigma \in S_{\alpha}} \bbE \left[\prod_{i \in \bbN} \binom{\alpha_i}{\sigma_i} h_{\sigma_i}(\langle e_i, \cdot \rangle)\langle e_i, h \rangle^{\alpha_i - \sigma_i} \right] .
    \end{equation}

    The product can be pulled outside of the integral since the sequence $(\langle e_i, \cdot \rangle)_{i \in \bbN}$ is independent w.r.t. $\mu$, giving
    
    \begin{equation}
        = \sum_{\sigma \in S_{\alpha}} \prod_{i \in \bbN} \binom{\alpha_i}{\sigma_i} \underbrace{\bbE \left[h_{\sigma_i}(\langle e_i, \cdot \rangle)\right]}_{= 1_{\sigma_i = 0}} \langle e_i, h \rangle^{\alpha_i - \sigma_i} = \prod_{i \in \bbN} \langle e_i, h \rangle^{\alpha_i}, \label{eq:LDP_for_each_pre_end}
    \end{equation}
    
   and therefore, by inserting \eqref{eq:LDP_for_each_pre_end} back into \eqref{eq:LDP_for_each_pre_pre_end},

   \begin{equation}
        \fL_m(h) = \overline{\hat{\fL}_m}(h) = \sum_{\tau \in \calT} \sum_{\alpha \in A_m^{[\tau]}} \bbE \left[ \hat{\fL}_{\tau} H_{\alpha} \right] \prod_{i \in \bbN} \langle e_i, h \rangle^{\alpha_i} = \Psi_{0,m}(h), \quad h \in \sH. \label{eq:LDP_TD_equality_for_Psi_0m}
   \end{equation}
  
    Thus, substituting this into \eqref{eq:rate_subst}, we conclude that the family $\left( \left( \delta_{\varepsilon} \circ \hat{\fL}_m \right)_{\ast} \mu \right)_{\varepsilon > 0}$ satisfies an LDP with good rate function 
     
   \begin{equation}
    \sJ_{\!m}(\bx) = \inf \left\{ \frac{1}{2} \Vert h \Vert_{\sH}^2 : h \in \sH, \fL_m(h) = \bx \right\} = \inf \left\{ \frac{1}{2} \Vert h \Vert_{\sH}^2 : h \in \sH_m, \fL_m(h) = \bx \right\},
    \end{equation}

    where in the last equality we used $\fL_m \circ \rmP_m = \fL_m$, shown in Proposition \ref{prop:proxy_restriction_of_conditioning}. Since by Proposition \ref{prop:proxy_restriction_of_conditioning} $\fL_m$ is an $\sH_m$-skeleton lift (and thus injective on $\sH_m$), the infimum above is either over the empty set or over a set with a single element: $\pi(\bx)$. Hence, using $\fL_m(\sH_m) = \sbH_m$, we may rewrite 

    \begin{equation}
        \sJ_{\!m}(\bx) = \begin{cases}
            \frac{1}{2} \Vert \pi (\bx) \Vert_{\sH}^2 & \bx \in \sbH_m \\
            + \infty & \text{else}.
        \end{cases}
    \end{equation}   
\end{proof}

\begin{lem}[{Relation Among Rate Functions}] \label{lem:relation_among_rate_functions}
    Let $\sJ: \bE \rightarrow [0,\infty]$ be defined by 
    
   \begin{equation}
    \sJ(\bx) = \inf \left\{ \frac{1}{2} \Vert h \Vert_{\sH}^2 : h \in \sH, \overline{\hat{\fL}}(h) = \bx \right\} = \begin{cases}
            \frac{1}{2} \Vert \pi (\bx) \Vert_{\sH}^2 & \bx \in \sbH \\
            + \infty & \text{else}
        \end{cases}  \label{eq:relation_among_rate_functions}
   \end{equation}
    
    and let $(\sJ_{\!m})_{m \in \mathbb{N}}$ be the sequence of good rate functions resulting from Lemma \ref{lem:LDP_of_each_approx}. Then (i) $\sJ$ is a good rate function and (ii) for any closed set $A \subseteq \bE$ we have 

    \begin{equation}
        \lim_{\eta \downarrow 0} \liminf_{m \rightarrow \infty} \inf_{ \bx \in A_{\eta}} \sJ_{\!m}(\bx) = \inf_{\bx \in A} \sJ(\bx),
    \end{equation}

    where $A_{\eta} := \{ \bx \in \bE : \inf_{\by \in A} \vertiii{\by - \bx} \leq \eta \}$ is the $\eta$-fattening of $A$. 
\end{lem}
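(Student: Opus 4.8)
The plan is to deduce both assertions from the finite-dimensional large deviation principles of Lemma~\ref{lem:LDP_of_each_approx}, the essential bridge being that the proxy-restrictions $\overline{\hat{\fL}_m}$ converge to $\fL=\overline{\hat{\fL}}$ \emph{uniformly on balls of $\sH$}. I would begin with bookkeeping. Since $\fL$ is a skeleton lift it is injective on $\sH$ (being a left inverse of $\pi|_{\sbH}$), so $\sJ(\bx)=\tfrac12\Vert\pi(\bx)\Vert_{\sH}^{2}$ for $\bx\in\sbH$ and $+\infty$ otherwise, whence $\{\sJ\le c\}=\fL(K_{c})$ with $K_{c}:=\{h\in\sH:\tfrac12\Vert h\Vert_{\sH}^{2}\le c\}$. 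By Proposition~\ref{prop:proxy_restriction_of_conditioning} and the identity $\overline{\hat{\fL}_m}(h)=\Psi_{0,m}(h)$ established in the proof of Lemma~\ref{lem:LDP_of_each_approx}, one has $\sbH_m=\overline{\hat{\fL}_m}(\sH)$, $\pi\circ\overline{\hat{\fL}_m}=\rmP_m$ on $\sH$, and $\sJ_{\!m}(\bx)=\tfrac12\Vert\pi(\bx)\Vert_{\sH}^{2}$ on $\sbH_m$. Finally, running the Cauchy--Schwarz argument from the continuity proof of Proposition~\ref{prop:proxy_restriction_is_cont} and using $\Vert f_h\Vert_{L^{2}(E,\mu;\bbR)}=e^{\frac12\Vert h\Vert_{\sH}^{2}}$, one obtains
\[
\sup_{h\in K_{c}}\bigl\Vert\overline{\hat{\fL}_m}(h)-\fL(h)\bigr\Vert_{\bE}\ \le\ e^{c}\,\bigl\Vert\hat{\fL}_m^{\circ}-\hat{\fL}^{\circ}\bigr\Vert_{L^{2}(E,\mu;\bE)},
\]
and the right-hand side tends to $0$ because $\hat{\fL}_m^{\circ}=\Pi_{[\calT]}\bbE[\hat{\fL}\mid\calF_m]\to\Pi_{[\calT]}\hat{\fL}=\hat{\fL}^{\circ}$ in $L^{2}$, by the vector-valued martingale convergence theorem (Proposition~\ref{prop:convergence_of_conditional_exp}) and the $L^{2}$-continuity of $\Pi_{[\calT]}$; here $\hat{\fL}^{\circ}\in L^{2}$ precisely because $\hat{\fL}$ lies in the \emph{finite} chaos $\calP^{(\le[\calT])}(E,\mu;\bE)$. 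Thus $\overline{\hat{\fL}_m}\to\fL$ uniformly on each ball $K_{c}$.

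For (i) it is enough to show each $\{\sJ\le c\}=\fL(K_{c})$ is compact. Each $\Psi_{0,m}|_{\sH}=\overline{\hat{\fL}_m}|_{\sH}$ is a polynomial in the finitely many bounded functionals $\langle e_1,\cdot\rangle,\dots,\langle e_m,\cdot\rangle$, hence sequentially continuous from $\sH$ with its weak topology into $\bE$; since $\overline{\hat{\fL}_m}\to\fL$ uniformly on $K_{c}$, the restriction $\fL|_{K_{c}}$ inherits this weak-to-strong sequential continuity. As $K_{c}$ is a bounded, weakly closed subset of the Hilbert space $\sH$, it is weakly sequentially compact, so $\fL(K_{c})$ is sequentially compact in the metric space $\bE$, hence compact and in particular closed; therefore $\sJ$ is lower semicontinuous, and so a good rate function. (Equivalently, $K_{c}$ is the $c$-sublevel set of the good Schilder rate function $\sI$ on $E$, hence $E$-compact, and $\fL|_{K_{c}}$, being the uniform limit of the $E$-continuous maps $\Psi_{0,m}|_{K_{c}}$, is $E$-continuous.)

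For (ii) I would prove the two inequalities separately. ``$\le$'': given $\bx_{0}=\fL(h_{0})\in A\cap\sbH$ with $c_{0}:=\tfrac12\Vert h_{0}\Vert_{\sH}^{2}<\infty$, the point $\overline{\hat{\fL}_m}(h_{0})$ lies in $\sbH_m$ with $\sJ_{\!m}(\overline{\hat{\fL}_m}(h_{0}))=\tfrac12\Vert\rmP_m h_{0}\Vert_{\sH}^{2}\le c_{0}$ (using $\pi\circ\overline{\hat{\fL}_m}=\rmP_m$) and $\overline{\hat{\fL}_m}(h_{0})\to\bx_{0}$, hence it lies in $A_{\eta}$ for $m$ large, so $\liminf_m\inf_{A_{\eta}}\sJ_{\!m}\le c_{0}$ for every $\eta>0$; letting $\eta\downarrow0$ and then optimizing over $\bx_{0}$ gives $\lim_{\eta\downarrow0}\liminf_m\inf_{A_{\eta}}\sJ_{\!m}\le\inf_{A}\sJ$. ``$\ge$'': suppose the left side were $<\beta<\inf_{A}\sJ$. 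Since $\eta\mapsto\inf_{A_{\eta}}\sJ_{\!m}$ is non-increasing, for each $n$ there are $m_{n}\uparrow\infty$ and $\bx_{n}\in A_{1/n}$ with $\sJ_{\!m_{n}}(\bx_{n})<\beta$; writing $h_{n}:=\pi(\bx_{n})\in\sH_{m_{n}}\subseteq\sH$ this forces $h_{n}\in K_{\beta}$ and $\bx_{n}=\overline{\hat{\fL}_{m_{n}}}(h_{n})$. Passing to a subsequence with $h_{n}\rightharpoonup h_{\ast}\in K_{\beta}$ weakly in $\sH$, the uniform convergence $\overline{\hat{\fL}_m}\to\fL$ on $K_{\beta}$ together with the weak-to-strong continuity of $\fL|_{K_{\beta}}$ from (i) yields $\bx_{n}=\overline{\hat{\fL}_{m_{n}}}(h_{n})\to\fL(h_{\ast})$; as $\bx_{n}\in A_{1/n}$ and $A$ is closed, $\fL(h_{\ast})\in A$, while $\sJ(\fL(h_{\ast}))=\tfrac12\Vert h_{\ast}\Vert_{\sH}^{2}\le\beta<\inf_{A}\sJ$, a contradiction.

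The main obstacle is the weak-continuity issue met in step (i): a priori $\fL$ is only norm-continuous on $\sH$, so neither the compactness of the sublevel sets $\fL(K_{c})$ nor the convergence $\overline{\hat{\fL}_{m_{n}}}(h_{n})\to\fL(h_{\ast})$ used in (ii) is immediate. The resolution is that $\fL$ is in fact weak-to-strong sequentially continuous on bounded subsets of $\sH$ — a property transferred from the finite-coordinate polynomials $\Psi_{0,m}$ through the uniform estimate above, and it is exactly here that the finiteness of the Wiener--Itô chaos $\calP^{(\le[\calT])}(E,\mu;\bE)$ enters.
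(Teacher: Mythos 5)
Your proof is correct, and its analytic core is the same as the paper's: the uniform convergence $\overline{\hat{\fL}_m}\to\overline{\hat{\fL}}=\fL$ on bounded subsets of $\sH$, obtained from martingale convergence of $\hat{\fL}_m=\bbE[\hat{\fL}\mid\calF_m]$, the $L^2$-continuity of the projection onto the homogeneous chaos, and the Cauchy--Schwarz bound $\Vert f_h\Vert_{L^2}=e^{\frac12\Vert h\Vert_{\sH}^2}$ (this is exactly the content of Lemma \ref{lem:L2_convergence_implies_pointwise_convergence_of_homogeneous_part}, which you re-derive inline). Where you diverge is in how the two conclusions are drawn from this: the paper stops at the uniform convergence on sub-level sets of $\sI$ and delegates both (i) and (ii) to the abstract approximation lemma \cite[Lem.\ 2.1.4]{deuschelLargeDeviations1989}, which works with the $E$-topology and the $E$-compactness of the Schilder sub-level sets (essentially your parenthetical alternative). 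You instead prove (i) and (ii) by hand: sub-level sets $\{\sJ\le c\}=\fL(K_c)$ are compact because $K_c$ is weakly compact in $\sH$ and $\fL|_{K_c}$ is weak-to-norm continuous as a uniform limit of the finite-rank polynomials $\Psi_{0,m}|_{\sH}$; and (ii) follows by a direct upper bound using $\overline{\hat{\fL}_m}(h_0)\in\sbH_m$ with $\sJ_{\!m}$-value $\frac12\Vert\rmP_m h_0\Vert^2_{\sH}\le c_0$, plus a compactness/contradiction argument for the lower bound. This buys a self-contained argument (no external LDP approximation lemma) and makes explicit a structural fact of independent interest, namely the weak-to-strong continuity of $\fL$ on Cameron--Martin balls; the price is length, since you are in effect reproving the Deuschel--Stroock lemma in this special case.

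Two small points. First, injectivity of $\fL$ on $\sH$ follows from the lift property $\pi\circ\fL=\id_{\sH}$ (Definition \ref{defn:lift_defn}), not from $\fL$ being a left inverse of $\pi\vert_{\sbH}$, which only gives surjectivity onto $\sbH$; the conclusion you need is of course still true. Second, your convergences are stated in $\Vert\cdot\Vert_{\bE}$ while $A_\eta$ and the statement use $\vertiii{\cdot}$; this is harmless by the (local uniform) equivalence of the two distances, but it deserves a word, as does the (elementary) fact that $\sJ\not\equiv+\infty$ since $\sJ(\fL(0))=0$.
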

\begin{proof}
    Extend $\overline{\hat{\fL}}$ (which is defined on $\sH$) to a measurable function on $E$ by defining

    \begin{equation}
        \overline{\hat{\fL}_{\infty}}(x) := \begin{cases}
            \overline{\hat{\fL}}(x) & x \in \sH \\
            0 & \text{else}
        \end{cases}
    \end{equation}

    which is a measurable function $E \rightarrow \bE$, and let $\Psi_{0,m}$ be as defined in \eqref{eq:definition_of_Psi_eps_0}. Then using \eqref{eq:LDP_TD_equality_for_Psi_0m}, the vector valued discrete martingale $L^p$-convergence theorem (Proposition \ref{prop:convergence_of_conditional_exp}), and Lemma \ref{lem:L2_convergence_implies_pointwise_convergence_of_homogeneous_part}, we deduce that 

    \begin{align}
        \vertiii{\overline{\hat{\fL}_{\infty}}(h) - \Psi_{0,m}(h)} = \vertiii{\overline{\hat{\fL}}(h) - \overline{\hat{\fL}_m}(h)} \rightarrow 0
    \end{align}

    uniformly on bounded sets of $\sH$ and thus uniformly on sub-level sets of $\sI$. Hence \cite[Lem. 2.1.4]{deuschelLargeDeviations1989} implies (i) and (ii) follows from the proof of \cite[Lem. 2.1.4]{deuschelLargeDeviations1989}.

    To see the equality in \eqref{eq:relation_among_rate_functions}, recall that $\fL = \overline{\hat{\fL}}$ is an $\sH$-lift and thus injective. Hence the set over which the infimum is taken either consists only of $\pi(\bx)$ or is empty. This, together with $\sbH = \fL(\sH)$ gives the equality.   
\end{proof}

\begin{proof}[Proof of Theorem \ref{thm:LDP_for_AWMS}]
    \textit{Upper bound for closed sets}: Let $A \subseteq \bE$ be closed. Then for every $\varepsilon >0, m \in \bbN, \eta > 0$

    \begin{equation}
       \mu \left( \delta_{\varepsilon} \circ \hat{\fL} \in A \right) \leq \mu \left( \delta_{\varepsilon} \circ \hat{\fL}_m \in A_{\eta} \right) + \mu \left( \vertiii{\delta_{\varepsilon} \circ \hat{\fL} - \delta_{\varepsilon} \circ \hat{\fL}_m} \geq \eta \right),
    \end{equation}
    
    where $A_{\eta} := \{ \bx \in \bE : \inf_{\by \in A} \vertiii{\by - \bx} \leq \eta \}$ is the $\eta$-fattening of $A$. By Lemma \ref{lem:log_equiv}

    \begin{align}
        \limsup_{\varepsilon \rightarrow 0} \varepsilon^2 \log \mu \left( \delta_{\varepsilon} \circ \hat{\fL} \in A \right) &\leq \limsup_{\varepsilon \rightarrow 0} \varepsilon^2 \log \mu \left( \delta_{\varepsilon} \circ \hat{\fL}_m \in A_{\eta} \right) \\ 
        &\vee \limsup_{\varepsilon \rightarrow 0} \varepsilon^2 \log \mu \left( \vertiii{\delta_{\varepsilon} \circ \hat{\fL} - \delta_{\varepsilon} \circ \hat{\fL}_m} \geq \eta \right) .
    \end{align}
    
    By Lemma \ref{lem:LDP_of_each_approx}, since $A_{\eta}$ is closed, for any $m \in \mathbb{N}$

    \begin{equation}
        \limsup_{\varepsilon \rightarrow 0} \varepsilon^2 \log \mu \left( \delta_{\varepsilon} \circ \hat{\fL}_m \in A_{\eta} \right) \leq - \inf_{x \in A_{\eta}} \sJ_{\!m}(\bx) .
    \end{equation}

    If $\inf_{\bx \in A} \sJ(\bx) > - \infty$, then by Lemma \ref{lem:exp_equiv_after} there exists an $m(A) \in \mathbb{N}$ s.t. 

    \begin{equation}
        \limsup_{\varepsilon \rightarrow 0} \varepsilon^2 \log \mu \left( \vertiii{\delta_{\varepsilon} \circ \hat{\fL} - \delta_{\varepsilon} \circ \hat{\fL}_m} \geq \eta \right) \leq - \inf_{\bx \in A} \sJ(\bx), \quad m \geq m(A), \label{eq:using_exp_equiv_after}
    \end{equation}

    while if $\inf_{\bx \in A} \sJ(\bx) = - \infty$ \eqref{eq:using_exp_equiv_after} holds anyway. Therefore in conclusion

    \begin{align}
        \limsup_{\varepsilon \rightarrow 0} \varepsilon^2 \log \mu \left( \delta_{\varepsilon} \circ \hat{\fL} \in A \right) &\leq \lim_{\eta \downarrow 0} \liminf_{m \rightarrow \infty} \left( - \inf_{x \in A_{\eta}} \sJ_{\!m}(\bx)\right) \vee \left( - \inf_{\bx \in A} \sJ(\bx) \right) \\
        &= \left( - \lim_{\eta \downarrow 0} \limsup_{m \rightarrow \infty} \inf_{x \in A_{\eta}} \sJ_{\!m}(\bx) \right) \vee \left( - \inf_{\bx \in A} \sJ(\bx) \right) \\
        &\leq \left( - \lim_{\eta \downarrow 0} \liminf_{m \rightarrow \infty} \inf_{x \in A_{\eta}} \sJ_{\!m}(\bx) \right) \vee \left( - \inf_{\bx \in A} \sJ(\bx) \right) \\
        &= - \inf_{\bx \in A} \sJ(\bx),
    \end{align}

    where the last equality is due to Lemma \ref{lem:relation_among_rate_functions}(ii). \\

    \textit{Lower bound on open sets}: Let $U \subseteq \bE$ be open and let $\bx \in U$ be arbitrary. Choose $\eta > 0$ s.t. $\overline{B}(\bx, 2 \eta) := \{ \by \in \bE : \vertiii{\bx - \by} \leq 2 \eta \} \subseteq U$. Then for any $m \in \mathbb{N}$ we have 

    \begin{align}
    \mu \left( \vertiii{\bx - \delta_{\varepsilon} \circ \hat{\fL}_m} < \eta \right) &\leq \mu \left( \left\{ \vertiii{\bx - \delta_{\varepsilon} \circ \hat{\fL}_m} < \eta \right\} \cap \left\{ \delta_{\varepsilon} \circ \hat{\fL} \in U \right\} \right) \\
    &+ \mu \left( \left\{\vertiii{\bx - \delta_{\varepsilon} \circ \hat{\fL}_m} < \eta \right\} \cap \left\{ \delta_{\varepsilon} \circ \hat{\fL} \not\in U \right\} \right) \\
    &\leq \mu \left(\delta_{\varepsilon} \circ \hat{\fL} \in U \right) + \mu \left( \vertiii{\delta_{\varepsilon} \circ \hat{\fL} - \delta_{\varepsilon} \circ \hat{\fL}_m} \geq \eta \right).
    \end{align}

    Thus by Lemma \ref{lem:log_equiv} we obtain 

    \begin{align}
        \liminf_{\varepsilon \rightarrow 0} \varepsilon^2 \log \mu \left( \vertiii{\bx - \delta_{\varepsilon} \circ \hat{\fL}_m} < \eta \right) &\leq \liminf_{\varepsilon \rightarrow 0} \varepsilon^2 \log \mu \left( \delta_{\varepsilon} \circ \hat{\fL} \in U \right) \\
        &\vee \liminf_{\varepsilon \rightarrow 0} \varepsilon^2 \log  \mu \left( \vertiii{\delta_{\varepsilon} \circ \hat{\fL} - \delta_{\varepsilon} \circ \hat{\fL}_m} \geq \eta \right).
    \end{align}

    If $\liminf_{\varepsilon \rightarrow 0} \varepsilon^2 \log \mu(\delta_{\varepsilon} \circ \hat{\fL} \in U ) > - \infty$, then by Lemma \ref{lem:exp_equiv_after} there exists an $m(U) \in \bbN$ such that for every $m \geq m(U)$ 

    \begin{align}
        &\liminf_{\varepsilon \rightarrow 0} \varepsilon^2 \log \mu \left( \delta_{\varepsilon} \circ \hat{\fL} \in U \right) \vee \liminf_{\varepsilon \rightarrow 0} \varepsilon^2 \log  \mu \left( \vertiii{\delta_{\varepsilon} \circ \hat{\fL} - \delta_{\varepsilon} \circ \hat{\fL}_m} \geq \eta \right) \\
        &\leq \liminf_{\varepsilon \rightarrow 0} \varepsilon^2 \log \mu \left( \delta_{\varepsilon} \circ \hat{\fL} \in U \right) \vee \limsup_{\varepsilon \rightarrow 0} \varepsilon^2 \log  \mu \left( \vertiii{\delta_{\varepsilon} \circ \hat{\fL} - \delta_{\varepsilon} \circ \hat{\fL}_m} \geq \eta \right) \\       
        &= \liminf_{\varepsilon \rightarrow 0} \varepsilon^2 \log \mu \left( \delta_{\varepsilon} \circ \hat{\fL} \in U \right), \label{eq:1}
    \end{align}

    while if $\liminf_{\varepsilon \rightarrow 0} \varepsilon^2 \log \mu \left( \delta_{\varepsilon} \circ \hat{\fL} \in U \right) = - \infty$ \eqref{eq:1} holds anyway. Thus we only need a lower bound on $\liminf_{\varepsilon \rightarrow 0} \varepsilon^2 \log \mu \left( \vertiii{\bx - \delta_{\varepsilon} \circ \hat{\fL}_m} < \eta \right)$, asymptotically as $m \rightarrow \infty$. By Lemma \ref{lem:LDP_of_each_approx}, applied to the open set $B(\bx, \eta) := \{ \by \in \bE : \vertiii{\bx - \by} < \eta \}$, we obtain 

    \begin{equation}
          - \inf_{\by \in \overline{B}(\bx, \frac{\eta}{2})} \sJ_{\!m}(\by) \leq - \inf_{\by \in B(\bx, \eta)} \sJ_{\!m}(\by) \leq \liminf_{\varepsilon \rightarrow 0} \varepsilon^2 \log \mu \left( \vertiii{\bx - \delta_{\varepsilon} \circ \hat{\fL}_m} < \eta \right), \label{eq:intermediate_lower_bound}
    \end{equation}

    and by Lemma \ref{lem:relation_among_rate_functions} applied to the closed set $\{\bx\}$ we have 

    \begin{align}
        \lim_{\eta \downarrow 0} \limsup_{m \rightarrow \infty} \left(- \inf_{\by \in \overline{B}(\bx, \frac{\eta}{2})} \sJ_{\!m}(\by) \right) &= - \lim_{\eta \downarrow 0} \liminf_{m \rightarrow \infty} \inf_{\by \in \overline{B}(\bx, \frac{\eta}{2})} \sJ_{\!m}(\by) = - \sJ(\bx) .  \label{eq:applying_consistency_of_rate_function}
    \end{align}

    That is, taking the limit superior as $m \rightarrow \infty$ and then the limit $\eta \rightarrow 0$ in \eqref{eq:intermediate_lower_bound}, and combining \eqref{eq:applying_consistency_of_rate_function} with \eqref{eq:1} yields 

    \begin{equation}
          - \sJ(\bx) \leq \liminf_{\varepsilon \rightarrow 0} \varepsilon^2 \log \mu \left( \delta_{\varepsilon} \circ \hat{\fL} \in U \right) .
    \end{equation}

    Taking the supremum over all $\bx \in U$ on the left hand side yields the result. Thus the family of measures $(\bmu_{\varepsilon})_{\varepsilon > 0}$ satisfies an LDP with good rate function $\sJ$. 
\end{proof}
Regarding the above proof, see also \cite[Thm. 3.5]{hairerLargeDeviationsWhiteNoise2015}.

\begin{rem}
    While Definition \ref{defn:skeleton_lift} requires $\calT$-multi-linearity of $\fL$, this not strictly necessary in order to derive an LDP. What is needed is the weaker property of homogeneity (as in Proposition \ref{lem:homogeneity_of_fL}).
\end{rem}

\section{Fernique Estimate} \label{sec:Fernique}

Let $d \geq 1$ and let $X:(\Omega, \bbP) \rightarrow \bbR^d$ be a multivariate normally distributed $d$-dimensional random vector. Then the distribution of $X$ famously has Gaussian tails, i.e. there exist a constant $\eta_0 > 0$ such that

\begin{equation}
    \bbP \left( \Vert X \Vert \geq t \right) \lesssim \exp\left( - \eta_0 t^2 \right), \quad \forall t \geq 0.
\end{equation}

This exceptionally good integrability property is critical in Gaussian analysis, guaranteeing among other things the existence of momenta of all orders without being compactly supported. In the infinite dimensional setting a similar result holds: the celebrated theorem of X. Fernique.  

\begin{thm}[{Fernique's Theorem, see e.g. \cite[Thm. 2.8.5]{bogachevGaussianMeasures1998} or \cite[Thm. 1.3.24]{deuschelLargeDeviations1989}}] \label{thm:fernique}
    Let $(E, \sH, i, \mu)$ be an abstract Wiener space and let

    \begin{equation}
      \eta_0 := \inf \left\{ \frac{1}{2} \Vert h \Vert_{\sH}^2 : h \in \sH, \Vert h \Vert_E = 1 \right\}.
    \end{equation} 

    Then for any $\eta < \eta_0$

    \begin{equation}
        \mu \left( x \in E: \Vert x \Vert_E \geq t \right) \lesssim
 \exp\left( - \eta t^2 \right), \quad \forall t \geq 0,
    \end{equation}

    and in particular the random variable $x \mapsto \Vert x \Vert_E$ has Gaussian tails in the sense that 

    \begin{equation}
        \bbE_{\mu}\! \left[ \exp \left( \eta \Vert x \Vert_E^2 \right) \right] < \infty.
    \end{equation}
\end{thm}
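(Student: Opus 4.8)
The plan is to reduce the statement to the quantitative tail bound $\mu(\|x\|_E\ge t)\lesssim\exp(-\eta t^2)$ for every $\eta<\eta_0$; the claimed exponential integrability is then immediate from the layer-cake formula, since for $\eta<\eta'<\eta_0$ one has $\bbE_\mu[\exp(\eta\|x\|_E^2)]=1+2\eta\int_0^\infty t\,e^{\eta t^2}\mu(\|x\|_E\ge t)\,dt<\infty$ once $\mu(\|x\|_E\ge t)\lesssim e^{-\eta' t^2}$. So the work is entirely in the tail estimate with the sharp constant $\eta_0$.

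First I would run the classical qualitative step. Exploiting that $\mu\otimes\mu$ on $E\times E$ is invariant under the rotation $(x,y)\mapsto\bigl(\tfrac{x-y}{\sqrt2},\tfrac{x+y}{\sqrt2}\bigr)$, together with the triangle inequality, gives $\mu(\|x\|_E\le s)\,\mu(\|x\|_E>t)\le\mu\bigl(\|x\|_E>\tfrac{t-s}{\sqrt2}\bigr)^2$. Since $E$ is separable, $\mu$ is Radon and hence tight, so there is $t_0$ with $\mu(\|x\|_E\le t_0)\ge3/4$; iterating along $t_{n+1}=t_0+\sqrt2\,t_n$ produces doubly-exponential decay of $\mu(\|x\|_E>t_n)$ against the $(\sqrt2)^n$-growth of $t_n$, which already yields $\mu(\|x\|_E>t)\lesssim e^{-ct^2}$ for some $c>0$, hence $\|x\|_E\in L^p(\mu)$ for all $p$ and in particular the finiteness of the weak variance $\sigma^2:=\sup\{\|i^{\ast}(\ell)\|_\sH^2:\ell\in E^{\ast},\ \|\ell\|_{E^{\ast}}\le1\}$. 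Next I would identify $\eta_0=\tfrac1{2\sigma^2}$: for $h\in\sH$ and $\|\ell\|_{E^{\ast}}\le1$ we have $\ell(i(h))=\langle i^{\ast}(\ell),h\rangle_\sH\le\sigma\|h\|_\sH$, so $\|h\|_E\le\sigma\|h\|_\sH$, giving $\eta_0\ge\tfrac1{2\sigma^2}$; conversely, choosing $\ell_n$ with $\|i^{\ast}(\ell_n)\|_\sH\to\sigma$ and $h_n:=i(i^{\ast}(\ell_n))$ gives $\|h_n\|_E\ge\ell_n(h_n)=\|i^{\ast}(\ell_n)\|_\sH^2$ and $\|h_n\|_\sH=\|i^{\ast}(\ell_n)\|_\sH$, so the normalised vectors $h_n/\|h_n\|_E$ witness $\eta_0\le\tfrac1{2\sigma^2}$.

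The sharp step uses the geometric observation that the norm $F:=\|\cdot\|_E$ is $\sigma$-Lipschitz with respect to the Cameron--Martin structure of $\mu$, i.e. $|F(x+h)-F(x)|\le\|h\|_E\le\sigma\|h\|_\sH$ for all $h\in\sH$, with $\sigma$ exactly the constant computed above. The Gaussian isoperimetric/concentration inequality (Borell, Sudakov--Tsirel'son; see e.g. \cite{ledouxIsoperimetryGaussianAnalysis1996}) then gives, with $m$ a median of $F$, $\mu(F\ge m+r)\le\tfrac12\exp\bigl(-\tfrac{r^2}{2\sigma^2}\bigr)$ for all $r\ge0$ (one can equivalently invoke Gross's logarithmic Sobolev inequality or Ehrhard symmetrisation to get the same dimension-free constant). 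Hence for any $\eta<\eta_0=\tfrac1{2\sigma^2}$ and all $t\ge0$, $\mu(\|x\|_E\ge t)\le\tfrac12\exp\bigl(-\tfrac{(t-m)_+^2}{2\sigma^2}\bigr)\lesssim\exp(-\eta t^2)$, the implicit constant absorbing the cross term coming from $m$; this is the desired tail, and the exponential moment bound follows as in the first paragraph.

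The main obstacle is this last step: obtaining the \emph{optimal} constant $\eta_0$, not merely some Gaussian decay. Fernique's elementary doubling argument is robust but wasteful in the constant, so recovering $\eta_0$ genuinely requires a concentration-of-measure input (Gaussian isoperimetry or log-Sobolev) combined with the duality computation showing that the Cameron--Martin Lipschitz constant of $\|\cdot\|_E$ equals the weak standard deviation $\sigma$; everything else is bookkeeping. (As the reference in the statement indicates, this is a classical result, so in the write-up I would present the above skeleton and cite \cite{bogachevGaussianMeasures1998} or \cite{deuschelLargeDeviations1989} for the details of the concentration estimate.)
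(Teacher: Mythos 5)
Your proof sketch is correct, but note that the paper does not actually prove Theorem \ref{thm:fernique}: it is quoted as a classical fact with pointers to Bogachev and Deuschel--Stroock, so there is no in-text argument to match against. Your route is the concentration-of-measure one: the duality computation identifying $\eta_0$ with $1/(2\sigma^2)$, where $\sigma^2=\sup\{\Vert i^{\ast}(\ell)\Vert_{\sH}^2:\Vert\ell\Vert_{E^{\ast}}\le 1\}$, checks out in both directions (with the harmless degenerate case $\sigma=0$, where $\mu=\delta_0$ and the claim is vacuous), the resulting $\sH$-Lipschitz bound $\vert\,\Vert x+h\Vert_E-\Vert x\Vert_E\,\vert\le\sigma\Vert h\Vert_{\sH}$ feeds into Gaussian isoperimetry to give the median concentration estimate, and the conversion to a bound in $t$ alone plus the layer-cake integration are routine; the preliminary rotation-invariance step is needed only to guarantee $\Vert x\Vert_E\in L^2(\mu)$ and hence $\sigma<\infty$, which is exactly how you use it. It is worth contrasting this with the strategy the paper adopts for its \emph{enhanced} analogue, Theorem \ref{thm:exp_int}: there the tail bound with the sharp constant $\eta_0$ is extracted from the large deviation principle via the contraction principle along $\bx\mapsto\vertiii{\bx}$, followed by an asymptotic-to-nonasymptotic conversion, and the identical argument applied to the generalized Schilder LDP (Theorem \ref{thm:generalized_Schilder}) would reprove the classical statement without any isoperimetric input. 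Your approach buys a genuinely non-asymptotic, single-step inequality (and a cleaner identification of the constant as a weak standard deviation), at the price of importing Gaussian concentration; the LDP route stays entirely within the machinery the paper already develops.
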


As a consequence of the results in Section \ref{sec:LDP} one can show a similar statement for the enhanced measure $\bmu$ of an abstract Wiener model space; despite the fact that $\bmu$ is (expect for trivial cases) not Gaussian. Before that, recall the following lemma.

\begin{lem} \label{lem:decay_to_exp_int}
    Let $(\Omega, \bbP)$ be a probability space, let $X: \Omega \rightarrow \bbR$ be a random variable and let $\eta_0, t_0 > 0$ be s.t. 

    \begin{equation}
        \bbP \left( \vert X \vert \geq t \right) \lesssim e^{- \eta_0 t^2 }, \quad \forall t > t_0.
    \end{equation}

    Then $X$ has Gaussian tails in the sense that 

    \begin{equation}
        \bbE_{\bbP}\! \left[ \exp \left( \eta \vert X \vert^2 \right) \right] < \infty, \quad \forall \eta < \eta_0 .
    \end{equation}
\end{lem}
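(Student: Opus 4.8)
The plan is to deduce the exponential moment bound from the tail bound via the standard layer-cake representation applied to the nonnegative random variable $|X|^2$. Fix $\eta < \eta_0$. Starting from the elementary identity $e^{\eta s} = 1 + \int_0^s \eta e^{\eta u}\,\dd u$, valid for $s \geq 0$, and applying Tonelli's theorem (all integrands being nonnegative), one rewrites the quantity of interest as
\begin{equation}
    \bbE_{\bbP}\!\left[ \exp\left( \eta |X|^2 \right) \right] = 1 + \eta \int_0^\infty e^{\eta u}\, \bbP\left( |X|^2 > u \right)\, \dd u = 1 + \eta \int_0^\infty e^{\eta u}\, \bbP\left( |X| > \sqrt{u} \right)\, \dd u .
\end{equation}

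Next I would split the integral at $u = t_0^2$. On $[0, t_0^2]$ the integrand is bounded above by $e^{\eta t_0^2}$, so this piece contributes at most $\eta t_0^2 e^{\eta t_0^2} < \infty$. On $(t_0^2, \infty)$ the hypothesis furnishes a constant $C > 0$ with $\bbP(|X| > \sqrt u) \leq C e^{-\eta_0 u}$ for all $u > t_0^2$, whence
\begin{equation}
    \eta \int_{t_0^2}^\infty e^{\eta u}\, \bbP\left( |X| > \sqrt u \right)\, \dd u \leq C \eta \int_{t_0^2}^\infty e^{-(\eta_0 - \eta) u}\, \dd u = \frac{C \eta}{\eta_0 - \eta}\, e^{-(\eta_0 - \eta) t_0^2} < \infty ,
\end{equation}
and this is precisely the place where the strict inequality $\eta < \eta_0$ is used. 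Summing the two contributions yields the asserted finiteness.

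There is essentially no obstacle here: the only points demanding a modicum of care are the invocation of Tonelli to justify the layer-cake identity and the bookkeeping of the implied constant in $\lesssim$ when restricting to $(t_0^2, \infty)$. The statement and its proof are entirely classical; they are recorded only because the Fernique-type estimate for $\bmu$ (Theorem \ref{thm:exp_int}) will be obtained by first establishing a Gaussian-type tail bound for the relevant norm and then invoking this lemma.
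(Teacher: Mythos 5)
Your proof is correct and follows essentially the same route as the paper: a layer-cake representation of $\bbE[\exp(\eta|X|^2)]$, a split of the integral at the threshold coming from $t_0$, and the tail hypothesis to control the unbounded part, with $\eta<\eta_0$ guaranteeing convergence. The only cosmetic difference is that the paper integrates $\bbP(e^{\eta|X|^2}\geq s)$ in the level variable $s$ (obtaining the power-law integrand $s^{-\eta_0/\eta}$), whereas you integrate $e^{\eta u}\,\bbP(|X|^2>u)$ in $u$ (obtaining $e^{-(\eta_0-\eta)u}$); both are equivalent and your split at $u=t_0^2$ is, if anything, slightly tidier.
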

\begin{proof}
    Let $\eta < \eta_0$ be arbitrary. Then, using the layer cake representation, 

    \begin{align}
        \bbE_{\bbP}\! \left[ \exp \left( \eta \vert X \vert^2 \right) \right] 
        &= \underbrace{\int_0^{t_0} \bbP \left\{ e^{\eta \vert X \vert^2} \geq s \right\} \dd s}_{ < \infty} + \int_{t_0}^{\infty} \underbrace{\bbP \left\{ e^{\eta \vert X \vert^2} \geq s \right\}}_{ = \bbP \left\{ \vert X \vert \geq \sqrt{\frac{\log (s)}{\eta} } \right\}} \dd s .
    \end{align}

    By assumption, the right most term can be upper bounded up to a constant by 

    \begin{equation}
    \int_{t_0}^{\infty} e^{- \eta_0 \left( \sqrt{\frac{\log(s)}{\eta} }\right)^2 } \dd s = \int_{t_0}^{\infty} s^{- \frac{\eta_0}{\eta}} \dd s. 
    \end{equation}

    Since $1 < \frac{\eta_0}{\eta}$ the integral is finite, which proves the claim.
\end{proof}

\begin{thm}[Fernique Estimate for AWMS] \label{thm:exp_int}
        Let $((\calT, \bE, [\cdot], \calN), \sbH, \bmu, \fL, \hat{\fL})$ be an abstract Wiener model space. Let 
        
        \begin{equation}
            \eta_0 := \inf \left\{ \frac{1}{2} \Vert \pi (\bh) \Vert_{\sH}^2 : \bh \in \sbH, \vertiii{\bh} =1 \right\} .
        \end{equation}
        
        Then $0 < \eta_0$ and for any $0 \leq \eta < \eta_0$

        \begin{equation}
            \bmu \left( \bx \in \bE: \vertiii{\bx} \geq t \right) \lesssim \exp\left( - \eta t^2 \right), \quad \forall t \geq 0, \label{eq:exp_int}
        \end{equation}
        
        and in particular the random variable $\bx \mapsto \vertiii{\bx}$ has Gaussian tails in the sense that 

        \begin{equation}
        \bbE_{\bmu}\! \left[ \exp \left( \eta \vertiii{\bx}^2 \right) \right] = \int_{\bE} e^{\eta \vertiii{\bx}^2} \dd \bmu(\bx) < \infty.
        \end{equation}
\end{thm}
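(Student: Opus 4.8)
The plan is to read both displays off the large deviation principle of Theorem~\ref{thm:LDP_for_AWMS}, using the dilation identity \eqref{eq:homogeneity_of_hom_norm}, $\vertiii{\delta_{\varepsilon}\bx}=\varepsilon\vertiii{\bx}$, to translate the small-$\varepsilon$ behaviour of $\bmu_{\varepsilon}:=(\delta_{\varepsilon})_{\ast}\bmu$ into a large-$t$ tail estimate for $\bmu$, and then feeding that estimate into Lemma~\ref{lem:decay_to_exp_int}. The key observation is that for the closed set $A:=\{\bx\in\bE:\vertiii{\bx}\ge1\}$ one has $\bmu_{\varepsilon}(A)=\bmu(\{\vertiii{\cdot}\ge1/\varepsilon\})$, so the LDP upper bound for $A$ is exactly a statement about $\bmu(\{\vertiii{\cdot}\ge t\})$ as $t\to\infty$.

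First I would verify $0<\eta_0$. Since $\fL$ is a skeleton lift it is continuous on $\sH$, so for each $\tau\in\calT$ the map $h\mapsto\pi_{\tau}(\fL(h))=\fL^{\otimes}_{\tau}(h^{\otimes[\tau]})$ is continuous and positively homogeneous of degree $[\tau]$; any such map is bounded on the unit ball of $\sH$, say $\|\pi_{\tau}(\fL(h))\|_{E_{\tau}}\le C_{\tau}\|h\|_{\sH}^{[\tau]}$, whence $\vertiii{\fL(h)}\le C\|h\|_{\sH}$ with $C:=\sum_{\tau\in\calT}C_{\tau}^{1/[\tau]}$. Because every $\bh\in\sbH$ equals $\fL(\pi(\bh))$, the constraint $\vertiii{\bh}=1$ forces $\|\pi(\bh)\|_{\sH}\ge1/C$, and so $\eta_0\ge 1/(2C^2)>0$. (Alternatively, one can appeal to goodness of $\sJ$ from Lemma~\ref{lem:relation_among_rate_functions}: a minimizing sequence for $\sJ$ on the closed set $A$ eventually lives in a compact sublevel set and so produces a minimizer $\bx$ with $\vertiii{\bx}\ge1$; if $\sJ(\bx)=0$ then $\bx\in\sbH$ with $\pi(\bx)=0$, forcing $\bx=\fL(0)=0$, which contradicts $\vertiii{\bx}\ge1$.)

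Next I would identify $\inf\{\sJ(\bx):\vertiii{\bx}\ge1\}=\eta_0$. The inequality ``$\le$'' is immediate since every $\bh\in\sbH$ with $\vertiii{\bh}=1$ lies in $A$. For ``$\ge$'', given $\bx\in\sbH$ with $\lambda:=\vertiii{\bx}\ge1$, set $\bh:=\delta_{1/\lambda}\bx$; by homogeneity of $\fL$ (Lemma~\ref{lem:homogeneity_of_fL}) we have $\bh=\fL(\pi(\bx)/\lambda)\in\sbH$, and since $[\tau]=1$ for $\tau\in\calN$ also $\pi(\bh)=\lambda^{-1}\pi(\bx)$ and $\vertiii{\bh}=1$; hence $\sJ(\bx)=\frac{1}{2}\|\pi(\bx)\|_{\sH}^2\ge\frac{1}{2}\|\pi(\bh)\|_{\sH}^2\ge\eta_0$ (and $\sJ(\bx)=+\infty$ when $\bx\notin\sbH$). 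Applying the upper bound in Theorem~\ref{thm:LDP_for_AWMS} to the closed set $A$ (closed because $\vertiii{\cdot}$ is continuous for the Banach topology) and using $\bmu_{\varepsilon}(A)=\bmu(\{\vertiii{\cdot}\ge1/\varepsilon\})$, the substitution $t=1/\varepsilon$ gives $\limsup_{t\to\infty}t^{-2}\log\bmu(\vertiii{\cdot}\ge t)\le-\eta_0$. Thus for any $\eta<\eta_0$ there is $t_0$ with $\bmu(\vertiii{\cdot}\ge t)\le e^{-\eta t^2}$ for $t\ge t_0$, while $\bmu(\vertiii{\cdot}\ge t)\le1\le e^{\eta t_0^2}e^{-\eta t^2}$ for $t<t_0$, which is \eqref{eq:exp_int}. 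Finally, for a given $\eta<\eta_0$ pick $\eta'\in(\eta,\eta_0)$, run the previous step with $\eta'$, and apply Lemma~\ref{lem:decay_to_exp_int} with $X=\vertiii{\cdot}$ to obtain $\bbE_{\bmu}[\exp(\eta\vertiii{\bx}^2)]<\infty$.

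The step I expect to be the main obstacle is the identification $\inf\{\sJ(\bx):\vertiii{\bx}\ge1\}=\eta_0$: it uses simultaneously the degree-wise scaling of $\delta_{\varepsilon}$, the fact that $\pi$ scales by only the first power of $\varepsilon$ on $E_{\calN}$, the homogeneity of the skeleton lift, and $\sbH=\fL(\sH)$, and one has to be careful to feed the LDP the closed set $\{\vertiii{\cdot}\ge1\}$ rather than the open set $\{\vertiii{\cdot}>1\}$. Checking $\eta_0>0$ is the secondary technical point, but it is dealt with directly by the boundedness estimate $\vertiii{\fL(h)}\le C\|h\|_{\sH}$.
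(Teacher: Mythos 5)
Your proposal is correct and follows essentially the same route as the paper: both deduce the tail bound from the LDP of Theorem~\ref{thm:LDP_for_AWMS} applied (directly, or via the contraction principle along $\vertiii{\cdot}$) to the closed set $\{\vertiii{\cdot}\ge 1\}$, identify the resulting constant with $\eta_0$, and invoke Lemma~\ref{lem:decay_to_exp_int} after sacrificing an arbitrarily small $\delta=\eta_0-\eta$. Your explicit dilation argument reducing $\inf\{\sJ(\bx):\vertiii{\bx}\ge 1\}$ to the unit sphere $\vertiii{\bh}=1$, and your quantitative bound $\vertiii{\fL(h)}\le C\Vert h\Vert_{\sH}$ for $\eta_0>0$, merely spell out steps the paper's proof states without detail (the paper instead argues $\eta_0>0$ by contradiction from continuity of $\fL$ at $0$), so there is no substantive difference.
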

\begin{proof}
    In order to see that $\eta_0 > 0$, recall that $\fL$ is continuous on $\sH$ and that $\fL(0) = 0$ by homogeneity. Now assume $\eta_0 = 0$ and choose a minimizing sequence, i.e. a sequence $(\bh_n)_{n \in \bbN} \subseteq \sbH$ such that $\vertiii{\bh_n} = 1$ and $\frac{1}{2} \Vert \pi (\bh_n) \Vert_{\sH}^2 \rightarrow 0$. Then by continuity of $\fL$ and the fact that $\fL$ is a left-inverse of $\pi\vert_{\sbH}$ we conclude 
    
    \begin{equation}
        \vertiii{\bh_n} = \vertiii{\fL(\pi (\bh_n))} \rightarrow 0, \quad n \rightarrow \infty.
    \end{equation}
    
    A contradiction. \\
    In order to show \eqref{eq:exp_int}, applying the contraction principle along the map $\bx \mapsto \vertiii{\bx}$ to the LDP with rate function $\sJ$, derived in Theorem \ref{thm:LDP_for_AWMS}, gives

    \begin{equation}
        \limsup_{\varepsilon \downarrow 0} \varepsilon^2 \log \bmu \left( \vertiii{\delta_{\varepsilon} \bx} \geq 1 \right) \leq - \inf_{\lambda \in [1, \infty)} \inf_{\substack{\bx \in \bE \\ \vertiii{\bx} = \lambda}} \sJ(\bx),
    \end{equation} 

    where 

    \begin{align}
        \inf_{\lambda \in [1, \infty)} \inf_{\substack{\bx \in \bE \\ \vertiii{\bx} = \lambda}} \sJ(\bx) &= \inf \left\{ \sJ(\bx) : \vertiii{\bx} \geq 1 \right\} \\
        &= \inf \left\{ \frac{1}{2} \Vert \pi (\bh) \Vert_{\sH}^2 : \bh \in \sbH, \vertiii{\bh} \geq 1 \right\} \\
        &= \inf \left\{ \frac{1}{2} \Vert \pi (\bh) \Vert_{\sH}^2 : \bh \in \sbH, \vertiii{\bh} = 1 \right\} \\
        &= \eta_0.
    \end{align}

   Let $\delta > 0$ be arbitrary. Then

    \begin{equation}
        \limsup_{\varepsilon \downarrow 0} \varepsilon^2 \log \bmu \left( \vertiii{\delta_{\varepsilon} \bx} \geq 1 \right) \leq - \eta_0 < - (\eta_0 - \delta),
    \end{equation}

    and thus there exists a $\varepsilon_0(\delta) > 0$ s.t.  

    \begin{equation}
        \bmu \left( \vertiii{\delta_{\varepsilon} \bx} \geq 1 \right) \leq e^{\frac{- (\eta_0 - \delta)}{\varepsilon^2}}, \quad \forall \varepsilon < \varepsilon_0(\delta) .
    \end{equation}

    Set $\ell = \frac{1}{\varepsilon}$ and $\ell_0 = \frac{1}{\varepsilon_0(\delta)}$. Then for every $\ell > \ell_0$ (and thus $\varepsilon < \varepsilon_0(\delta)$)

    \begin{equation}
        \bmu \left( \vertiii{\bx} \geq \ell \right) = \bmu \left( \varepsilon \vertiii{\bx} \geq 1 \right) = \bmu \left( \vertiii{\delta_{\varepsilon} \bx} \geq 1 \right) \leq e^{\frac{- (\eta_0 - \delta)}{\varepsilon^2}} = e^{- (\eta_0 - \delta) \ell^2} .
    \end{equation}

    Thus for any $\delta$ we may choose a constant $C$ and set $\eta = \eta_0 - \delta$ so that \eqref{eq:exp_int} is satisfied. 
    
    By Lemma \ref{lem:decay_to_exp_int} we conclude that 

    \begin{equation}
        \bbE \left[ \exp\left( \beta \vertiii{\bx}^2
        \right)\right] < \infty, \quad \beta < \eta .
    \end{equation}

    Since $\delta > 0$ was arbitrary, the result follows.
\end{proof}

\section{Cameron--Martin Theorem and Formula} \label{sec:CM}

Let $\mu = \sN(0, \Sigma)$ be a (possibly degenerate) Gaussian measure on $\bbR^d$ and for a fixed $x \in \bbR^d$ consider the shifted measure 

\begin{equation}
    \mu_x(A) := \mu(A - x), \quad A \in \sB_{\bbR^d}. \label{symb:shifted_measure}
\end{equation}

The subspace of directions into which $\mu$ can be shifted in the above sense and remain equivalent\footnote{In the sense that both Radon--Nikodým derivatives $\frac{\dd \mu_x}{\dd \mu}$ and $\frac{\dd \mu}{\dd \mu_x}$ exist; in symbols $\mu \approx \mu_x$.} can easily be guessed: it consists of the vectors in the support of $\mu$, which coincides precisely with the Cameron--Martin space of $\mu$ in $\bbR^d$: 

\begin{equation}
    \texttt{CM}(\bbR^d, \mu) = \supp(\mu) . \label{symb:support}
\end{equation}

 Reducing considerations to the subspace $\supp(\mu) \subseteq \bbR^d$ yields a non-degenerate Gaussian measure $\mu$ on $\supp(\mu) \subseteq \bbR^d$. Thus for any $x \in \texttt{CM}(\bbR^d, \mu)$ there exists a density of $\mu$ and $\mu_x$ w.r.t. the Lebesgue measure on $\supp(\mu) \subseteq \bbR^d$ which is strictly positive. This in turn implies that $\mu$ and $\mu_x$ are equivalent\label{symb:equivalence_of_measures}. Furthermore, there exists an explicit formula for $\frac{\dd \mu_x}{\dd \mu}$. On infinite dimensional spaces similar results hold, albeit with the subtlety that \eqref{symb:support} is only an inclusion from left to right and that there is no analogue of the Lebesgue measure. This is the content of the following well-known theorem of R.H. Cameron and W.T. Martin.

\begin{thm}[{Classical Cameron--Martin Theorem, see e.g. \cite[Prop. 2.4.2. \& Prop. 2.4.5.(i)]{bogachevGaussianMeasures1998}}] \label{thm:classical_CM}
    Let $(E, \sH, i, \mu)$ be an abstract Wiener space and define for any $x \in E$ the shift operator $T_x: E \rightarrow E$ by 

    \begin{equation}
        T_x(y) = y + x, \quad y \in E . \label{symb:ordinary_shift}
    \end{equation}

    Then 

    \begin{equation}
        \underbrace{(T_h)_{\ast} \mu}_{=: \mu_h} \approx \mu \quad \Leftrightarrow \quad h \in \sH .
    \end{equation}

    If $h \in \sH$, then the Radon--Nikodým of $\mu_h$ w.r.t. $\mu$ has the form 

    \begin{equation}
        f_h(x) := \frac{\dd \mu_h}{\dd \mu}(x) = \exp \left( \underline{h}(x) - \frac{1}{2} \Vert h \Vert_{\sH}^2 \right), \quad x \in E, \label{symb:f_h}
    \end{equation}

    where $\underline{h}$ is the image of $h$ under the identification of $\sH$ with the reproducing kernel Hilbert space of $\mu$. 
\end{thm}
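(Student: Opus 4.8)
This statement is classical; below I sketch the standard argument and refer to \cite[Prop. 2.4.2 \& Prop. 2.4.5(i)]{bogachevGaussianMeasures1998} for the details. The plan is to treat the two implications separately: for $h\in\sH$ we produce the density explicitly and check it has the right Fourier transform, and for $h\notin\sH$ we invoke a zero--one dichotomy.

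\textbf{The direction $h\in\sH\ \Rightarrow\ \mu_h\approx\mu$, with the asserted density.} First I would show that the nonnegative function $f_h$ converts $\mu$ into $\mu_h:=(T_h)_\ast\mu$, i.e. that the (a priori only finite) measure $f_h\,\dd\mu$ has the same characteristic functional as $\mu_h$; uniqueness of Fourier transforms on a separable Banach space then forces $\mu_h=f_h\,\dd\mu$, and since $f_h>0$ pointwise the derivative $\dd\mu/\dd\mu_h=1/f_h$ is also well-defined, giving $\mu_h\approx\mu$. The computation is: for $\ell\in E^\ast$ one has $\check\mu_h(\ell)=\bbE_\mu[\exp(\rmi\ell(\cdot+h))]=e^{\rmi\ell(h)}\exp(-\tfrac12\|i^\ast\ell\|_\sH^2)$; on the other hand, under $\mu$ the pair $(\ell(\cdot),\underline h(\cdot))$ is jointly centred Gaussian with $\Var(\ell)=\|i^\ast\ell\|_\sH^2$, $\Var(\underline h)=\|h\|_\sH^2$ and $\mathrm{Cov}(\ell,\underline h)=(i^\ast\ell)(h)=\ell(ih)$ --- this last identity is exactly the defining property of the identification $h\mapsto\underline h$ with the reproducing kernel Hilbert space of $\mu$ --- so the two-variable Gaussian moment formula gives $\bbE_\mu[e^{\rmi\ell(\cdot)}f_h]=e^{-\frac12\|h\|_\sH^2}\exp(-\tfrac12\|i^\ast\ell\|_\sH^2+\tfrac12\|h\|_\sH^2+\rmi\ell(h))=\check\mu_h(\ell)$. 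Taking $\ell=0$ shows $f_h\,\dd\mu$ is a probability measure, and the matching-transform argument concludes this direction; it contains no real obstacle.

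\textbf{The direction $\mu_h\approx\mu\ \Rightarrow\ h\in\sH$.} Here I would fix an orthonormal basis $(e_k)_{k\in\bbN}$ of $\sH$ contained in $E^\ast$, so that $(\underline{e_k})_{k\in\bbN}$ is i.i.d.\ $\sN(0,1)$ under $\mu$ and generates $\sB_E$ up to $\mu$-null sets. Restricted to $\sigma(\underline{e_1},\dots,\underline{e_n})$, the Radon--Nikod\'ym derivative of $\mu_h$ against $\mu$ is the exponential martingale $R_n=\exp\!\big(\sum_{k=1}^n(\underline{e_k}\,e_k(h)-\tfrac12 e_k(h)^2)\big)$; by Kakutani's dichotomy for infinite products of Gaussians (equivalently, by computing the relevant Hellinger integral) $\mu_h$ and $\mu$ are either equivalent or mutually singular, with equivalence holding precisely when $\sum_k e_k(h)^2<\infty$. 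Finally, $\sum_k e_k(h)^2<\infty$ says exactly that $h$ induces a bounded linear functional on $\sH$ via the $(e_k)$, i.e.\ $h\in i(\sH)$, closing the argument.

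The only genuinely delicate points are in the second implication: establishing the zero--one dichotomy that rules out a nontrivial intermediate absolute continuity, and identifying the summability condition $\sum_k e_k(h)^2<\infty$ with membership in the Cameron--Martin space. As both are standard, for the purposes of this paper we simply invoke \cite[Prop. 2.4.2 \& Prop. 2.4.5(i)]{bogachevGaussianMeasures1998}.
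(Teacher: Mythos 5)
The paper does not prove this statement: it is quoted as a classical result with a pointer to \cite[Prop. 2.4.2 \& Prop. 2.4.5(i)]{bogachevGaussianMeasures1998}, so there is no in-paper argument to compare against. Your sketch is the standard proof and its computations are correct: the characteristic-functional match $\bbE_\mu[e^{\rmi\ell}f_h]=\exp(-\tfrac12\Vert i^{\ast}\ell\Vert_{\sH}^2+\rmi\ell(h))=\check{\mu}_h(\ell)$ follows from the joint Gaussianity of $(\ell,\underline{h})$ with $\mathrm{Cov}(\ell,\underline{h})=\ell(i h)$ (which is exactly $[\fC_\mu\underline{h}](\ell)=\ell(h)$ in the paper's notation), and Kakutani's dichotomy applied to the product structure induced by an ONB $(e_k)\subseteq E^{\ast}$ of $\sH$ gives the equivalence-or-singularity alternative with threshold $\sum_k e_k(h)^2<\infty$. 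Two points deserve slightly more care if one wanted the argument self-contained in this paper's conventions: since the authors explicitly do \emph{not} assume $\mu$ has full support, the converse direction should first dispose of $h\notin\overline{\sH}^{\Vert\cdot\Vert_E}=\supp\mu$ (there $\supp\mu_h=\supp\mu+h$ is a disjoint coset, so $\mu_h\perp\mu$ trivially), after which the functionals $(e_k)$ do separate points of $\supp\mu$ and the identification of $\sum_k e_k(h)^2<\infty$ with $h\in i(\sH)$ goes through; and the claim that $\sigma(\underline{e_1},\underline{e_2},\dots)$ exhausts $\sB_E$ up to $\mu$-null sets is likewise only true modulo this restriction to the support. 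Given that the paper itself merely cites the literature here, deferring these points to Bogachev is entirely appropriate.
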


As similar relation between the Cameron--Martin space, equivalence of shifted measures, and densities holds for abstract Wiener model spaces. This is the content of the rest of this section. 

\begin{defn}[Lifted Shift Operator] \label{defn:lifted_shift}
    Let $((\calT, \bE, [\cdot], \calN), \sbH, \bmu, \fL, \hat{\fL})$ be an AWMS. A function
    
    \begin{equation}
        \bT: \sbH \times \supp \bmu \rightarrow \bE; \quad (\bh, \bx) \mapsto \bT_{\bh}(\bx),
    \end{equation}

     which is measurable in the second entry is called \textbf{lifted shift operator} if for any $\bh \in \sbH$

    \begin{equation}
        \left( \bT_{\bh} \circ \hat{\fL} \right)(\bx) = \left( \hat{\fL} \circ T_{\pi(\bh)} \right)(\bx), \label{eq:lifted_shift_defn}
    \end{equation}

    for $\bmu$-a.e. $\bx \in \bE$.
\end{defn}

\begin{thm}[Lifted Cameron--Martin Theorem] \label{thm:lifted_CM_thm}
    Let $((\calT, \bE, [\cdot], \calN), \sbH, \bmu, \fL, \hat{\fL})$ be an AWMS.
    
    \begin{enumerate}[(i)]
        \item Let $h \in \sH$ and let $\bmu_h := (\hat{\fL} \circ T_h)_{\ast} \mu$. Then $\bmu_h \approx \bmu$ and 

        \begin{equation}
            \frac{\dd \bmu_h}{\dd \bmu}(\bx) = \exp \left( \underline{h}(\pi (\bx)) - \frac{1}{2} \Vert h \Vert_{\sH}^2 \right).
        \end{equation}

        \item Let $\bT$ be a lifted shift operator in the sense of Definition \ref{defn:lifted_shift}, then $\bmu_{\bh} := \left( \bT_{\bh} \circ \hat{\fL} \right)_{\ast} \mu$ coincides with $\bmu_{\pi(\bh)}$ as defined in (i) and 

        \begin{equation}
            \frac{\dd \bmu_{\bh}}{\dd \bmu}(\bx) = \exp \left( \underline{\pi(\bh)}(\pi(\bx)) - \frac{1}{2} \Vert \pi(\bh) \Vert_{\sH}^2 \right). \label{eq:density_for_lifted_shift}
        \end{equation}
    \end{enumerate}
\end{thm}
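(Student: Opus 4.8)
The plan is to treat part~(i) as the substance of the statement and obtain part~(ii) from it in essentially one line. For part~(i) the idea is simply to transport the classical Cameron--Martin identity of Theorem~\ref{thm:classical_CM} through the lift $\hat{\fL}$, using crucially that $\hat{\fL}$ is a lift, i.e.\ $\pi \circ \hat{\fL} = \id_E$ holds $\mu$-a.s.

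For part~(i), I would first record the harmless measure-theoretic bookkeeping: since $h \in \sH$, Theorem~\ref{thm:classical_CM} gives $\mu_h \approx \mu$, so $T_h^{-1}$ sends $\mu$-null sets to $\mu$-null sets and hence $\hat{\fL} \circ T_h$ is a well-defined $\mu$-a.s.\ equivalence class of measurable maps $E \to \bE$; likewise $\underline{h} \circ \pi$ is a $\bmu$-a.s.\ well-defined Borel function on $\bE$, because $\pi$ is continuous and $\pi_\ast \bmu = \mu$. Then, for an arbitrary bounded Borel $g : \bE \to \bbR$, using $\bmu_h = \hat{\fL}_\ast\bigl((T_h)_\ast \mu\bigr) = \hat{\fL}_\ast \mu_h$ and the density $f_h = \dd\mu_h / \dd\mu$ from Theorem~\ref{thm:classical_CM}, one has
\begin{equation*}
    \int_{\bE} g \, \dd \bmu_h = \int_E (g \circ \hat{\fL} \circ T_h) \, \dd \mu = \int_E (g \circ \hat{\fL}) \, \dd \mu_h = \int_E (g \circ \hat{\fL})\, f_h \, \dd \mu .
\end{equation*}
Since $\pi\bigl(\hat{\fL}(y)\bigr) = y$ for $\mu$-a.e.\ $y$, one rewrites $f_h(y) = \exp\!\bigl(\underline{h}(\pi(\hat{\fL}(y))) - \tfrac{1}{2}\Vert h\Vert_{\sH}^2\bigr)$; the integrand is now a function of $\hat{\fL}(y)$ alone, so changing variables back via $\bmu = \hat{\fL}_\ast \mu$ gives
\begin{equation*}
    \int_{\bE} g \, \dd \bmu_h = \int_{\bE} g(\bx)\, \exp\!\Bigl( \underline{h}(\pi(\bx)) - \tfrac{1}{2} \Vert h \Vert_{\sH}^2 \Bigr) \, \dd \bmu(\bx) .
\end{equation*}
As $g$ is arbitrary this yields $\bmu_h \ll \bmu$ with the asserted density, and since that density is strictly positive $\bmu$-a.s.\ we also get $\bmu \ll \bmu_h$, hence $\bmu_h \approx \bmu$.

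For part~(ii), let $\bh \in \sbH$ and put $h := \pi(\bh)$, which lies in $\pi(\sbH) = \sH$ by Definition~\ref{defn:AWMS}(3). The defining relation~\eqref{eq:lifted_shift_defn} of a lifted shift operator states that $\bT_{\bh} \circ \hat{\fL} = \hat{\fL} \circ T_{\pi(\bh)}$ $\mu$-a.s., and maps that agree $\mu$-a.s.\ have equal $\mu$-pushforwards, so
\begin{equation*}
    \bmu_{\bh} = (\bT_{\bh} \circ \hat{\fL})_\ast \mu = (\hat{\fL} \circ T_h)_\ast \mu = \bmu_h = \bmu_{\pi(\bh)} ,
\end{equation*}
with $\bmu_{\pi(\bh)}$ as in part~(i). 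Feeding this $h$ into part~(i) then delivers both the identification with $\bmu_{\pi(\bh)}$ and the density formula~\eqref{eq:density_for_lifted_shift}.

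I do not expect a genuine obstacle here: the finite-chaos structure of $\hat{\fL}$ plays no role, and the only point requiring care is the bookkeeping of $\mu$-a.s.\ equivalence classes — checking that $\hat{\fL} \circ T_h$ and the candidate density $\exp(\underline{h}\circ\pi - \tfrac{1}{2}\Vert h\Vert_{\sH}^2)$ do not depend on the chosen representatives — which is precisely where $\mu_h \approx \mu$ and $\pi_\ast\bmu = \mu$ enter.
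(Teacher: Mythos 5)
Your argument is correct and follows essentially the same route as the paper's: push the classical Cameron--Martin density $f_h$ through the change of variables $\bmu = \hat{\fL}_{\ast}\mu$ and use the a.s.\ lift property $\pi \circ \hat{\fL} = \id_E$ to rewrite $f_h$ as $f_h \circ \pi$ evaluated on the lifted variable, with part~(ii) reduced to part~(i) via the defining relation of the lifted shift operator. The only cosmetic differences are that you test against bounded Borel functions rather than indicators of measurable sets, and you deduce mutual absolute continuity from strict positivity of the density rather than by matching null sets directly; both are immaterial.
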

\begin{proof} 
    (i) Let $h \in \sH$ be arbitrary and let $A \subseteq \bE$ be measurable. Then 

    \begin{align}
        \bmu_{h} (A) = \mu \left( \hat{\fL} \circ T_h \in A \right) = \mu_h \left( \hat{\fL} \in A \right) .
    \end{align}

    The classical Cameron--Martin Theorem \ref{thm:classical_CM} implies that the last term is $0$ if and only if $\mu \left( \hat{\fL} \in A \right) = \bmu (A)$ is also $0$. Hence $\bmu_h \approx \bmu$. Regarding the density, let again $\bh \in \sbH$ be arbitrary and $A \subseteq \bE$ measurable. Then 

    \begin{align}
        \bmu_{h}(A) = \mu \left( \hat{\fL} \circ T_h \in A \right) = \int_E 1_{\left\{ \hat{\fL} \in A \right\}} f_h(x) \dd \mu(x) = \int_{\bE} 1_{\left\{ \bx \in A \right\}} f_{h} \left( \pi(\bx) \right) \dd \bmu(\bx) .
    \end{align}

    The function $f_{h} \circ \pi$ is measurable, non-negative, and integrates to $1$ w.r.t. $\bmu$. Hence, from the above and \eqref{symb:f_h} of the classical Cameron--Martin Theorem \ref{thm:classical_CM} we conclude that 
    
    \begin{equation}
        \exp \left( \underline{h}(\pi\bx) - \frac{1}{2} \Vert h \Vert_{\sH}^2 \right) = f_{h}(\pi(\bx)) = \frac{\dd \bmu_{h}}{\dd \bmu}(\bx), \quad \text{for} \bmu \text{-a.e.} \bx \in \bE.
    \end{equation}

    (ii) Let $\bh \in \sbH$. Then using \eqref{eq:lifted_shift_defn} we obtain

    \begin{equation}
        \bmu_{\bh} = \left( \bT_{\bh} \circ \hat{\fL} \right)_{\ast} \mu = \left( \hat{\fL} \circ \bT_{\pi(\bh)} \right)_{\ast} \mu = \bmu_{\pi(\bh)} ,
    \end{equation}

    and \eqref{eq:density_for_lifted_shift} follows.
\end{proof}

For abstract Wiener models spaces with approximation, there is a canonical choice of lifted shift operator: 

\begin{thm} \label{thm:shift_operator_consistency}
   Let $((\calT, \bE, [\cdot], \calN), \sbH, \bmu, \fL, \hat{\fL}, (\sK, \fM), (\Phi_m)_{m \in \bbN})$ be an abstract Wiener model space with approximation. For any $\bh \in \sbH$ define $\bT_{\bh}: \supp \bmu \rightarrow \bE$ via

    \begin{equation}
        \bT_{\bh} (\bx) = \sum_{\tau \in \calT} \sum_{a \in \{\pi(\bx),\pi(\bh)\}^{[\tau]}} \lim_{m \rightarrow \infty} \fM_{\tau}^{\otimes}(\Phi_{m}(a_1) \otimes \ldots \otimes \Phi_{m}(a_{[\tau]})), \label{eq:shift_defn}
    \end{equation}

    as a limit in $\bmu$-probability. Then $\bT$ is a lifted shift operator in the sense of Definition \ref{defn:lifted_shift}, i.e. for any $\bh \in \sbH$

    \begin{equation}
        \left( \bT_{\bh} \circ \hat{\fL} \right)(\bx) = \left( \hat{\fL} \circ T_{\pi(\bh)} \right)(\bx)
    \end{equation}

    on the complement of a $\bmu$-nullset (depending on $\bh)$. 
\end{thm}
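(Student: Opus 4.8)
The plan is to unwind both sides of the asserted identity into the single object $\lim_{m\to\infty}\fM\circ\Phi_m\circ T_h$, where $h:=\pi(\bh)\in\sH$, and then to conclude by uniqueness of limits in probability. Note first that $\bh\in\sbH=\fL(\sH)$ together with $\fM|_\sH=\fL$ gives $\bh=\fM(h)$, and recall $\pi\circ\hat\fL=\id_E$ $\mu$-a.s.\ (Definition \ref{defn:AWMS}(5), Proposition \ref{prop:proxy_restriction_is_cont}(1)), $\mu=\pi_*\bmu$, and $\bmu=\hat\fL_*\mu$. The starting point is an elementary tensor expansion: for $x\in E$ and $m\in\bbN$, linearity of $\Phi_m$ and the $\calT$-multi-linearity of $\fM$ (Definition \ref{defn:skeleton_lift}(2)) give, componentwise,
\begin{align*}
\pi_\tau\big(\fM(\Phi_m(x+h))\big)&=\fM_\tau^\otimes\big((\Phi_m(x)+\Phi_m(h))^{\otimes[\tau]}\big)\\
&=\sum_{a\in\{x,h\}^{[\tau]}}\fM_\tau^\otimes\big(\Phi_m(a_1)\otimes\cdots\otimes\Phi_m(a_{[\tau]})\big),
\end{align*}
and summing over $\tau$,
\begin{equation}
\fM(\Phi_m(x+h))=\sum_{\tau\in\calT}\sum_{a\in\{x,h\}^{[\tau]}}\fM_\tau^\otimes\big(\Phi_m(a_1)\otimes\cdots\otimes\Phi_m(a_{[\tau]})\big).\tag{$\star$}
\end{equation}
Applying $(\star)$ with $x=\pi(\bx)$ (and $\pi(\bh)=h$) identifies the $m$-th partial expression in \eqref{eq:shift_defn}, call it $\bG_m$, as $\bG_m=\fM\circ\Phi_m\circ T_h\circ\pi:\bE\to\bE$; thus $\bT_\bh$ is by definition the $\bmu$-probability limit of $(\fM\circ\Phi_m\circ T_h\circ\pi)_m$.

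Then I would argue in two halves. First, each summand $x\mapsto\fM_\tau^\otimes(\Phi_m(a_1)\otimes\cdots\otimes\Phi_m(a_{[\tau]}))$, $a\in\{x,h\}^{[\tau]}$, converges in $\mu$-probability in $E_\tau$ by Definition \ref{defn:admissible_approximation}(3); composing with $\pi$ (using $\pi_*\bmu=\mu$) turns $\mu$-probability convergence into $\bmu$-probability convergence, and composing with $\hat\fL$ (using $\hat\fL_*\mu=\bmu$) turns $\bmu$-probability convergence into $\mu$-probability convergence. Hence the finite sum defining $\bG_m$ converges in $\bmu$-probability — so $\bT_\bh$ is well defined up to a $\bmu$-null set, and trivially measurable in its second entry — and $\bG_m\circ\hat\fL\to\bT_\bh\circ\hat\fL$ in $\mu$-probability. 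Amalgamating the countably many $\mu$-null sets ($\pi\circ\hat\fL=\id_E$ holds off one such set) gives $\bG_m\circ\hat\fL=\fM\circ\Phi_m\circ T_h\circ(\pi\circ\hat\fL)=\fM\circ\Phi_m\circ T_h$ $\mu$-a.s.\ simultaneously for all $m$, so $\fM\circ\Phi_m\circ T_h\to\bT_\bh\circ\hat\fL$ in $\mu$-probability. Second, by Definition \ref{defn:AWMS_with_approx}(7) we have $\fM\circ\Phi_m\to\hat\fL$ in $\mu$-probability; the change of variables $y=x+h$ gives, for every $\eta>0$, $\mu(\vertiii{\fM\circ\Phi_m\circ T_h-\hat\fL\circ T_h}>\eta)=\mu_h(\vertiii{\fM\circ\Phi_m-\hat\fL}>\eta)$, and since $h\in\sH$ the classical Cameron--Martin Theorem \ref{thm:classical_CM} gives $\mu_h\approx\mu$ with $L^1(\mu)$-density $f_h$, so the right-hand side tends to $0$ by absolute continuity of the integral; thus $\fM\circ\Phi_m\circ T_h\to\hat\fL\circ T_h$ in $\mu$-probability as well. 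Uniqueness of limits in probability now yields $\bT_\bh\circ\hat\fL=\hat\fL\circ T_h=\hat\fL\circ T_{\pi(\bh)}$ off a $\mu$-null set depending on $\bh$, which is precisely \eqref{eq:lifted_shift_defn}.

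I expect the genuine difficulty to be entirely organisational: keeping the three modes of convergence straight ($\mu$-probability, $\bmu$-probability, $\mu$-a.s.), collecting into a single $\bh$-dependent null set the exceptional sets produced by $(\star)$ for each $m$, by $\pi\circ\hat\fL=\id_E$, and by the choice of a representative for the probability limit defining $\bT_\bh$, and verifying that convergence in probability really is stable under pushforward/pullback along the measurable maps $\pi$ and $\hat\fL$ and under passage to the equivalent measure $\mu_h$. Conceptually the two structural inputs do all the work — the tensor-power expansion $(\star)$, which converts the classical shift by $h$ into the combinatorial sum in \eqref{eq:shift_defn}, and the Cameron--Martin equivalence $\mu_h\approx\mu$, which lets the defining convergence $\fM\circ\Phi_m\to\hat\fL$ be transported across $T_h$ — but because $\hat\fL$ is only measurable and defined $\mu$-a.s., each manipulation must be carried out at the level of equivalence classes rather than pointwise.
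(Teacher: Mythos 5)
Your proof is correct and follows essentially the same route as the paper's: the $\calT$-multi-linearity/tensor expansion of $\fM(\Phi_m(x+h))$ identifies the sum in \eqref{eq:shift_defn} with $\fM\circ\Phi_m\circ T_h$, the Cameron--Martin equivalence $\mu_h\approx\mu$ transports the defining convergence $\fM\circ\Phi_m\to\hat\fL$ across $T_{h}$, and the two resulting representations of the same limit in probability agree $\mu$-a.s. The only cosmetic difference is that you re-derive the transfer of convergence in probability to $\mu_h$ via absolute continuity of $\int f_h\,\dd\mu$, where the paper invokes its appendix result (Proposition \ref{prop:prob_prob_h}, cited there as Proposition \ref{prop:L2L1}).
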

\begin{proof}
    The limits in \eqref{eq:shift_defn} exist in $\bmu$-probability due the assumption of compatibility of the approximation $(\Phi_m)_{m \in \bbN}$ with the $\sK$-skeleton lift $\fM$ (see Definition \ref{defn:admissible_approximation}).

    Let $\bh \in \sbH$ be arbitrary and denote $h:= \pi (\bh)$. On the one hand, using the fact that $\hat{\fL}$ is a lift almost surely, for $\mu$-a.e. $x \in E$ the equality 

    \begin{align}
        \left( \bT_{\bh} \circ \hat{\fL}\right)(x) &= \sum_{\tau \in \calT} \sum_{a \in \{\pi(\hat{\fL}(x)),\pi(\bh)\}^{[\tau]}} \lim_{m \rightarrow \infty} \fM_{\tau}^{\otimes}(\Phi_{m}(a_1) \otimes \ldots \otimes \Phi_{m}(a_{[\tau]})) \\
        &= \sum_{\tau \in \calT} \sum_{a \in \{x,h\}^{[\tau]}} \lim_{m \rightarrow \infty} \fM_{\tau}^{\otimes}(\Phi_{m}(a_1) \otimes \ldots \otimes \Phi_{m}(a_{[\tau]})) 
    \end{align}

    holds. On the other hand, by Proposition Proposition \ref{prop:L2L1}
    
    \begin{equation}
        \hat{\fL} \circ T_h = \left( \lim_{m \rightarrow \infty}^{\mu} \fL \circ \Phi_m \right) \circ T_h = \lim_{m \rightarrow \infty}^{\mu} \left( \fL \circ \Phi_m \circ T_h \right) 
    \end{equation}

    and by $\calT$-multi-linearity of $\fL$, for a fixed $m \in \bbN$,

    \begin{equation}
        \fL(\Phi_m(T_h (x))) = \fL(\Phi_m(x+h)) = \sum_{\tau \in \calT} \sum_{a \in \{x,h\}^{[\tau]}} \fM_{\tau}^{\otimes}(\Phi_{m}(a_1) \otimes \ldots \otimes \Phi_{m}(a_{[\tau]})) .
    \end{equation}
    
    Therefore, for $\mu$-a.e. $x \in E$ the equality 

    \begin{equation}
        \left( \hat{\fL} \circ T_h \right)(x) = \sum_{\tau \in \calT} \sum_{a \in \{x,h\}^{[\tau]}} \lim_{m \rightarrow \infty} \fM_{\tau}^{\otimes}(\Phi_{m}(a_1) \otimes \ldots \otimes \Phi_{m}(a_{[\tau]}))
    \end{equation}

    holds.
\end{proof}

\begin{rem}
  The existence of a lifted shift operator (a.k.a. translation operator on rough path or model space) is non-trivial and typically relies on some reconstruction (or sewing) arguments on mixed Sobolev and Hölder (or variation) scales; it then comes with continuity (and further regularity) properties. See e.g. \cite{frizMultidimensionalStochasticProcesses2010}, \cite{frizCourseRoughPaths2020} for the rough path case, \cite{choukSupportTheoremSingular2018}, \cite{cannizzaroMalliavinCalculusRegularity2017}, \cite{frizPreciseLaplaceAsymptotics2022} for the gPAM model, with applications to support theory, Malliavin calculus and Laplace asymptotics, respectively, and \cite{tsatsoulisSpectralGapStochastic2018} for the $\Phi^4_2$-stochastic quantization equation with applications to support theory. In the context of general singular SPDEs, within regularity structures, M. Hairer and P. Sch\"onbauer (\cite{schonbauerMalliavinCalculusDensities2023}, \cite{hairerSupportSingularStochastic2022}) make use of a ``weak translation operator'', with application to Malliavin calculus and support theory, defined by an elegant doubling of noise argument. At this moment, we do not see a counterpart of this construction in the generality of AWMS. It is conceivable that a AWMS with additional properties, allowing for an abstract doubling-of-noise, can accommodate their construction but at this moment this is pure speculation.
\end{rem}

\section{Examples} \label{sec:examples}
\subsection{Gaussian Rough Paths} \label{subsec:GaussianRP}

\paragraph{General Setup:} Throughout this section let $T > 0$ be fixed and let $P([0,T]) \label{symb:partition}$ denote the set of partitions of $[0,T]$. Let us write arguments of functions as a subscript and for a function $f$ of a single parameter with values in a vector space let us make the convention $f_{s,t} := f_t - f_s$. \\

Consider a continuous and centered $d$-dimensional Gaussian process $X = (X^1, \ldots, X^d)$ with independent components which is of finite $\rho$-variation\label{symb:rho_var}, in the sense that there exists a $\rho \in [1,2)$ such that 

\begin{equation}
    \Vert R \Vert_{\rho-\var; [0,T]^2}^{\rho} := \sup_{Q, Q' \in P([0,T])} \sum_{t_i \in Q,t_j \in Q'} \left\Vert \bbE \left[ \left(X_{t_{i+1}} - X_{t_{i}} \right) \otimes \left( X_{t_{j+1}} - X_{t_{j}} \right) \right] \right\Vert^{\rho} < \infty . \label{eq:rho_var}
\end{equation}

Define for any $p \in [1, \infty)$ the separable Banach space $C^{0,p-\var}([0,T]; \bbR) \label{symb:variation_smooth}$ as the closure of the set of smooth functions on $[0,T]$ and $[0,T]^2$ respectively w.r.t. the $p$-variation norm

\begin{align}
    \Vert x \Vert_{p-\var; [0,T]}^p &:= \vert x_0 \vert + \sup_{Q \in P([0,T])} \sum_{t_i \in Q} \vert x_{t_i, t_{i+1}} \vert^p \\
    \Vert x \Vert_{p-\var; [0,T]^2}^p &:= \sup_{Q \in P([0,T])} \sum_{t_i,t_j \in Q} \vert x_{t_i, t_j} \vert^{p},
\end{align}

for $1$-parameter and $2$-parameter functions, respectively. 

\begin{rem}
While here we work in the $p$-variation setting, assuming that $\Vert R \Vert_{\rho-\var; [0,T]^2}^{\rho}$ is controlled by a 2D control $\omega$ s.t. $\omega([0,T]^2) < \infty$ allows switching to the Hölder setting. See \cite[Chap. 5 \& 15]{frizMultidimensionalStochasticProcesses2010}.
\end{rem}

\begin{rem}
    Assumption \eqref{eq:rho_var} ensures the existence of a Gaussian rough path lift. By recent work of P. Gassiat and T. Klose \cite{gassiatGaussianRoughPaths2023} one may replace this assumption by that of ``controlled complementary Young regularity (cCYR)'' - see \cite[Thm. 2.7]{gassiatGaussianRoughPaths2023}. Given that cCYR is (slightly) stronger than complementary Young regularity (as in the sense of Definition \ref{defn:CYR}), with a view towards Proposition \ref{prop:KL_is_approx} and Proposition \ref{prop:PL_is_approx}, cCYR is a natural alternative assumption in this section. 
\end{rem}

\subsubsection{As an AWMS with Approximation}  \label{subsubsec:GRP_with_approx}

\paragraph{Ambient Space and AWS:} Our goal is to obtain an AWMS with approximation such that the full lift coincides with the Gaussian rough path lift $\fX$ associated to $X$ (as defined in \cite[Chap. 15]{frizMultidimensionalStochasticProcesses2010}) and the enhanced measure $\bmu^{\fX}$ is the distribution of $\fX$ on $\bE$. We will do so with two different choices of approximation, by specifying two different Bottom-Up data and applying Theorem \ref{thm:bottom_up_construction}. \\

Define the ambient space $(\calT, \bE, [\cdot], \calN)$ by

\begin{equation}
    \calN = \calT^{(1)} = \{ 1, \ldots, d\}, \quad \calT^{(2)} = \{ ij : 1 \leq i,j \leq d \}, \quad \calT^{(3)} = \{ ijk : 1 \leq i,j,k \leq d \},
\end{equation}

and 

\begin{equation} 
    E_{i} = C^{0,p-\var}([0,T]; \bbR), \quad E_{ij} = C^{0,\frac{p}{2}-\var}([0,T]^2; \bbR), \quad E_{ijk} = C^{0,\frac{p}{3}-\var}([0,T]^2; \bbR),
\end{equation}

for some $p > 2 \rho$ (cf. \cite[Def. 8.6]{frizMultidimensionalStochasticProcesses2010}). Fix $\rho \in [1,2)$ and $p > 2 \rho$ throughout the section. Define the abstract Wiener space $(E, \sH, i, \mu)$ by

\begin{align}
E &= \bigoplus_{i = 1}^d E_i = \bigoplus_{i = 1}^d C^{0,p-\var}([0,T]; \bbR), \quad \Vert x \Vert_{E} = \sum_{i = 1}^{d} \Vert x^i \Vert_{p-\var}, \quad p > 2\rho, \\
\sH &= \bigoplus_{i = 1}^d {\sH}_i, \quad \Vert h \Vert^2_{\sH} = \sum_{i = 1}^{d} \Vert h^i \Vert^2_{\sH_i}, 
\end{align}

where $\sH_i$ is the Cameron--Martin space associated to the law of $X^i$ and $\mu$ is the Gaussian measure on $E$ associated to $X$. Examples of processes satisfying condition \eqref{eq:rho_var} include Brownian motion, (with $\rho = 1$), Gaussian martingales (with $\rho = 1$), Ornstein--Uhlenbeck processes (with $\rho = 1$), and fractional Brownian motion with Hurst parameter $H$ (with $\rho = \frac{1}{2H}$).\footnote{See \cite[Chap. 15]{frizLargeDeviationPrinciple2007} for $d = 1$. Due to the assumption that $X$ is centered and has independent components the case for general $d \geq 1$ is immediate.} \\

The above gives an ambient space $(\calT, \bE, [\cdot], \calN)$ and an AWS $(E, \sH, i, \mu)$, such that $E_{\calN} = E$. Let us now turn to specifying what is missing to give Bottom-Up data; that is, an intermediate space $\sK$, a $\sK$-skeleton lift $\fM$ and an admissible approximation $(\Phi_m)_{m \in \bbN}$.

\paragraph{Approximation Scheme:} Let $\sK$ be an intermediate space which will be specified further after the definition of the $\sK$-skeleton lift. Define the $\sK$-skeleton lift $\fM^{\GRP}: \sK \rightarrow \bE$ by

\begin{align}
    \left[\fM^{\GRP}_i (h) \right]_t &= h^i_t, \quad \left[\fM^{\GRP}_{ii} (h)\right]_{s,t} = \frac{1}{2} (h^i_{s,t})^2, \quad \left[\fM^{\GRP}_{iii} (h)\right]_{s,t} = \frac{1}{6} (h^i_{s,t})^3, \label{eq:GRP_lift_first_line}\\
    \left[\fM^{\GRP}_{ij} (h)\right]_{s,t} &= \int_s^t h^i_{s,r} \dd h^j_r, \quad \left[\fM^{\GRP}_{iij} (h)\right]_{s,t} = \int_s^t (h^i_{s,r})^2 \dd h^j_r, \quad i \neq j , \\
    \left[\fM^{\GRP}_{ijk} (h)\right]_{s,t} &= \int_s^t \int_s^r h^i_{s,u} \dd h^j_u \dd h^k_r, \quad i \neq j \neq k, \\
    \left[\fM^{\GRP}_{iji} (h)\right]_{s,t} &= [\fM^{\GRP}_{ij} (h)]_{s,t} \cdot [\fM^{\GRP}_{i} (h)]_{s,t} - 2 [\fM^{\GRP}_{iij} (h)]_{s,t}, \quad i \neq j, \label{eq:GRP_shuffle_1}\\
    \left[\fM^{\GRP}_{jii} (h)\right]_{s,t} &= [\fM^{\GRP}_{ii} (h)]_{s,t} \cdot [\fM^{\GRP}_j (h)]_{s,t} - [\fM^{\GRP}_{iji} (h)]_{s,t} - [\fM^{\GRP}_{iij} (h)]_{s,t}, \quad i \neq j, \label{eq:GRP_shuffle_2}
\end{align}

for $h \in \sK$, $s,t \in [0,T]$.\footnote{Line \eqref{eq:GRP_shuffle_1} and \eqref{eq:GRP_shuffle_2} come from the shuffle relations \label{symb:shuffle} $ij \shuffle i = iji + 2 iij$ and $ii \shuffle j = iij + iji + jii$ and the weak geometricity of Gaussian rough paths, cf. \cite[Chap. 15]{frizLargeDeviationPrinciple2007}.} The definition of the $\sK$-skeleton lift $\fM^{\GRP}$ suggests that $\sK$ should be contained in a space of functions for which the iterated integral makes sense canonically and is continuous i.e. $\sK \subseteq C^{\beta-\var}$ for $\beta < 2$. Depending on the admissible approximation $(\Phi_m)_{m \in \bbN}$, we will choose either

\begin{equation}
    \sK^1 = \sH \quad \text{or} \quad \sK^2 = \bigoplus_{i = 1}^d C^{0,\rho-\var}([0,T]; \bbR). \label{eq:definition_of_K_for_GRP}
\end{equation}

\begin{prop} \label{prop:intermediate_spaces}
In the context of the current section 

\begin{enumerate}
    \item $\sK^1$ is a compact intermediate space if and only if $\sH$ is finite dimensional,
    \item $\sK^2$ is a compact intermediate space if $\rho > 1$, 
    \item (for $\sK \in \{\sK^1, \sK^2 \}$) $\fM^{\GRP}$ is a $\sK$-skeleton lift.
\end{enumerate}    
\end{prop}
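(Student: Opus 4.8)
The plan is to treat the three assertions separately; the first and third are bookkeeping, the only real content being the compact embedding in the second.

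\textbf{Assertion (1).} First I would note that $\sK^1=\sH$ is a separable Hilbert space contained in $E$ via the Cameron--Martin injection $i$ of the underlying AWS, with the map $\sH\to\sK^1=\sH$ being the identity; so $\sK^1$ is an intermediate space in the sense of Definition~\ref{defn:intermediate_space}. The injection $i\colon\sH\hookrightarrow\sK^1$ is then $\id_\sH$, which is a compact operator if and only if the closed unit ball of $\sH$ is norm-precompact, i.e.\ (Riesz's lemma) if and only if $\dim\sH<\infty$.

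\textbf{Assertion (2).} I would check three things. (i) $\sK^2$ is a separable Banach space by construction. (ii) $\sK^2\subseteq E$ and $\sH\hookrightarrow\sK^2\subseteq E$ is continuous: since $\rho<p$ the $\rho$-variation norm dominates the $p$-variation norm (up to the value at $0$), so $C^{0,\rho-\var}([0,T];\bbR)\hookrightarrow C^{0,p-\var}([0,T];\bbR)=E_i$ continuously, whence $\sK^2\subseteq E$; and $\sH\hookrightarrow\sK^2$ continuously is the Cameron--Martin regularity of Gaussian processes with covariance of finite $\rho$-variation, namely $\Vert h\Vert_{\rho-\var;[s,t]}\le\Vert h\Vert_\sH\sqrt{\Vert R\Vert_{\rho-\var;[s,t]^2}}$ together with the fact that Cameron--Martin paths lie in the smooth closure $C^{0,\rho-\var}$ (see \cite[Chap.~15]{frizMultidimensionalStochasticProcesses2010}). (iii) Compactness of $i\colon\sH\hookrightarrow\sK^2$ for $\rho>1$ --- this is the step I expect to be the main obstacle. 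The bound in (ii) alone only makes $\sH$-bounded families equi-bounded and equicontinuous in $\rho$-variation, which by interpolation gives precompactness in $C^{0,q-\var}$ for all $q>\rho$ but not in $C^{0,\rho-\var}$ itself; for $\rho>1$ one has to use the extra regularity of $\sH$. I would invoke the sharpened Cameron--Martin / (controlled) complementary-Young-regularity estimates (cf.\ Definition~\ref{defn:CYR}, \cite{gassiatGaussianRoughPaths2023}, \cite[Chap.~15]{frizMultidimensionalStochasticProcesses2010}), which upgrade this equicontinuity to genuine precompactness in $C^{0,\rho-\var}$. As a model case, for fractional Brownian motion with $H<1/2$ (so $\rho=1/(2H)>1$) one has $\sH\cong\dot H^{H+1/2}$ with $H+1/2>1/\rho$ precisely when $\rho>1$, giving a compact Sobolev embedding $\dot H^{H+1/2}\hookrightarrow B^{1/\rho}_{\rho,1}\hookrightarrow C^{0,\rho-\var}$; and the hypothesis is sharp, as for $\rho=1$ (Brownian motion, $\sH=W^{1,2}$, $C^{0,1-\var}$ the absolutely continuous paths) the embedding is continuous but not compact, witnessed by $t\mapsto n^{-1}\sin(nt)$.

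\textbf{Assertion (3).} Here I would verify the two conditions of Definition~\ref{defn:skeleton_lift} for $\fM^{\GRP}$, at once for $\sK=\sK^1$ and $\sK=\sK^2$. That $\fM^{\GRP}$ is a lift is immediate from the first line of \eqref{eq:GRP_lift_first_line}: $[\fM^{\GRP}_i(h)]_t=h^i_t$ for $i\in\calN$, so $\pi_\calN\circ\fM^{\GRP}=\id$. For continuity in the $\sK$-topology I would use that both $\sK^1,\sK^2$ embed continuously into $C^{\rho-\var}$ with $\rho<2$, so $1/\rho+1/\rho>1$ and every single and iterated integral in \eqref{eq:GRP_lift_first_line}--\eqref{eq:GRP_shuffle_2} is well defined and jointly continuous by the Young/Lyons--Young estimates (see \cite[Chap.~5]{frizMultidimensionalStochasticProcesses2010}), the level-$k$ object taking values in $C^{0,\frac{\rho}{k}-\var}\hookrightarrow C^{0,\frac{p}{k}-\var}=E_\tau$ (using $\rho<2\rho<p$), while the shuffle combinations \eqref{eq:GRP_shuffle_1}--\eqref{eq:GRP_shuffle_2} are products of such continuous objects and, by the exponent count, land in the level-$3$ spaces. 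For $\calT$-multilinearity I would note that each $\fM^{\GRP}_\tau$ is, by inspection of \eqref{eq:GRP_lift_first_line}--\eqref{eq:GRP_shuffle_2}, a continuous map $\sK\to E_\tau$ homogeneous of degree $[\tau]$ built from $[\tau]$-linear operations (Young integration and pointwise multiplication being bilinear; $\fM_{ii},\fM_{ij}$ of degree $2$, and $\fM_{iii},\fM_{iij},\fM_{ijk},\fM_{iji},\fM_{jii}$ of degree $3$), hence by polarization of the form $h\mapsto L_\tau(h,\dots,h)$ for a unique symmetric continuous $[\tau]$-linear map $L_\tau$ on $\sK$ valued in $E_\tau$; extending $L_\tau$ linearly to $\sK^{\otimes_A [\tau]}$ furnishes exactly the map required in part~(2) of Definition~\ref{defn:skeleton_lift}, with $\pi_\tau\circ\fM^{\GRP}=L_\tau\circ(\cdot)^{\otimes[\tau]}$.
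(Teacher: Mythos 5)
Your parts (1) and (3) are correct and follow essentially the paper's own argument: compactness of $\id_{\sH}$ iff $\dim\sH<\infty$, the lift property read off from the first line of \eqref{eq:GRP_lift_first_line}, continuity via Young--Lo\`eve estimates (valid on $\sK\subseteq C^{\rho-\var}$, $\rho<2$), and $\calT$-multi-linearity by extending the iterated Young integrals to multilinear maps on $\sK^{\otimes_A[\tau]}$. Your Brownian-motion example showing non-compactness at $\rho=1$ is a correct sanity check that the paper does not include.

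The gap is in part (2). You correctly diagnose the obstacle -- the same-exponent Cameron--Martin bound $\Vert h\Vert_{\rho-\var}\le\Vert h\Vert_{\sH}\sqrt{\Vert R\Vert_{\rho-\var}}$ only yields, via equicontinuity and the interpolation inequality, precompactness in $C^{0,q-\var}$ for $q>\rho$ -- but you then do not supply the ingredient that closes it. Appealing to ``sharpened Cameron--Martin / cCYR estimates'' does not work as stated: complementary Young regularity (controlled or not) is \emph{not} a standing assumption in this section (the proposition is asserted under finite $\rho$-variation of $R$ alone; CYR only enters later, in Propositions \ref{prop:KL_is_approx} and \ref{prop:PL_is_approx}), and CYR is in any case a statement about Young-complementarity of exponents, not by itself a compactness device. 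Your Sobolev--Besov computation covers only the fractional Brownian model case, not the generality claimed.

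What the paper actually uses is a \emph{strict gain} in the variation exponent of Cameron--Martin paths when $\rho>1$: citing \cite[Prop.~15.8]{frizMultidimensionalStochasticProcesses2010}, it embeds $\sH$ continuously into $C^{\rho'-\var}$ for some $\rho'<\rho$ (in this generality this is the mixed-variation/Jain--Monrad-type refinement, giving e.g. $\rho'=\tfrac{2\rho}{1+\rho}<\rho$ precisely when $\rho>1$), and then factors the inclusion $\sH\hookrightarrow C^{0,\rho-\var}$ through the embedding $C^{\rho'-\var}\subseteq C^{0,\rho-\var}$ of the strictly smaller variation scale into the little variation space, from which compactness of $i:\sH\hookrightarrow\sK^2$ follows. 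In other words, the argument you already set up (uniform $\rho'$-variation bound on $\sH$-balls, equicontinuity from $\Vert R\Vert_{\rho-\var;[s,t]^2}\to0$, Arzel\`a--Ascoli plus interpolation) does close once the target exponent $\rho$ strictly exceeds the exponent $\rho'$ controlling the image of the unit ball; the missing idea in your write-up is precisely this strict improvement of the Cameron--Martin variation exponent for $\rho>1$, which holds under the section's hypothesis alone and is what the paper's citation supplies.
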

\begin{proof}
    \begin{enumerate}
         \item Since the identity $\id_{\sH}: \sH \rightarrow \sH$ is compact if and only if $\sH$ is finite dimensional the proposition follows. 

         \item Let $\rho' \in [1,2)$ be arbitrary. Then by \cite[Prop. 15.8]{frizMultidimensionalStochasticProcesses2010} we have $\Vert h \Vert^2_{\rho'-\var; [0,T]} \leq \Vert h \Vert_{\sH} \Vert R \Vert_{\rho'-\var;[0,T]^2}$ and thus $\sH \subseteq C^{\rho'-\var}$ for every $\rho' \in [1,2)$. Since $C^{\rho'-\var} \subseteq C^{0,\rho-\var}$ compactly for any $\rho' < \rho$ we have $\sH \subseteq C^{0,\rho-\var}$ compactly for every $\rho \in (1,2)$. Hence $\sK^2$ is a compact intermediate space. 

         \item The first equation in \eqref{eq:GRP_lift_first_line} confirms the lifting property while continuity is guaranteed by the Young--Loève estimates \cite[Sec. 6.2]{frizMultidimensionalStochasticProcesses2010}. Defining $(\fM^{\GRP})^{\otimes}$ as iterated Young integrals on $\sK$ gives the $\calT$-multi-linearity. 
    \end{enumerate}
\end{proof}

Recall the following definition from the theory of Gaussian rough paths. 

\begin{defn}[{Complementary Young Regularity; \cite[Condition 15.60]{frizMultidimensionalStochasticProcesses2010}}] \label{defn:CYR}
    Let $X$ be a Gaussian process. Then $X$ is said to satisfy \textbf{complementary Young regularity} (CYR) if there exists a $q \geq 1$ such that 

            \begin{equation}
                \sH \hookrightarrow \calC^{q-\var}([0,T]; \bbR), \quad \text{and} \quad \frac{1}{p} + \frac{1}{q} > 1 . \label{eq:CYR}
            \end{equation}
\end{defn}

For example, fraction Brownian motion with Hurst parameter $H \in (0,1)$ satisfies CYR if $H > \frac{1}{4}$.

\paragraph{Karhunen--Loève Approximation} \label{subsubsec:Karhunen_Loève_approx}

Let $(E, \sH, i, \mu)$ be an abstract Wiener space with an ONB $(e_k)_{k \in \bbN}$ of $\sH$ contained in $E^{\ast}$. Then define the Karhunen--Loève approximation\footnote{Other names include spectral-Galerkin approximation and $L^2$-approximation.} by 

\begin{equation}
    \Phi_m(x) = \sum_{k = 1}^{m} \langle e_k, x \rangle e_k, \quad x \in E, m \in \bbN.
\end{equation}

Since the Karhunen--Loève approximation maps (by definition) into $\sH$, the natural choice in \eqref{eq:definition_of_K_for_GRP} is $\sK^1 = \sH$. Due to the structure of $\sH = \bigoplus_{i = 1}^d \sH_i$ as a direct sum the Karhunen--Loève expansion takes the form 

\begin{equation}
    \Phi^{\KL}_m(x) = \sum_{i = 1}^{d} \sum_{k = 1}^{m} \langle e^i_k, x^i \rangle_{\sH_i} e^i_k, \quad x \in E,
\end{equation}

where $(e^i_k)_{k \in \bbN}$ is an ONB of $\sH_i$ contained in $E_i^{\ast}$ and $x^i$ is the $i$-th component of $x \in E$. 

\begin{prop}\label{prop:KL_is_approx}
    If $X$ satisfies complementary Young regularity in the sense of Definition \ref{defn:CYR}, then $\Phi^{\KL}$ is an admissible approximation w.r.t. the $\sK^1$-skeleton lift $\fM^{\GRP}$.
\end{prop}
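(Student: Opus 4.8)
The plan is to verify directly the three requirements of Definition \ref{defn:admissible_approximation} for the data $\sK = \sK^1 = \sH$, $\fM = \fM^{\GRP}$ and $\Phi_m = \Phi^{\KL}_m$, with essentially all of the analytic input imported from the Gaussian rough path theory of \cite[Chap.~15]{frizMultidimensionalStochasticProcesses2010}. For Condition (1): since $\sK^1 = \sH$ and $(e_k)_{k \in \bbN}$ is an ONB of $\sH$, the partial sums $\Phi^{\KL}_m(h) = \sum_{k=1}^m \langle e_k, h \rangle_{\sH} e_k$ converge to $h$ in $\sH$ for every $h \in \sH$, which is \eqref{eq:A2.II}; for \eqref{eq:A2.I} one invokes the classical It\^o--Nisio theorem, namely that the Karhunen--Lo\`eve expansion of a centred Gaussian measure on a separable Banach space converges $\mu$-a.s.\ in that space (cf.\ \cite[App.~D.3]{frizMultidimensionalStochasticProcesses2010}), using that the $e_k$ are chosen in $E^{*}$ and that $E = \bigoplus_i C^{0,p-\var}([0,T];\bbR)$ is separable.

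Condition (2) is the substantive point. Complementary Young regularity supplies $q \ge 1$ with $\sH \hookrightarrow \calC^{q-\var}([0,T];\bbR^d)$ and $1/p + 1/q > 1$; in particular $q < 2$, so all of the integrals appearing in \eqref{eq:GRP_lift_first_line}--\eqref{eq:GRP_shuffle_2} are genuine Young integrals, continuous on $\calC^{q-\var}$ and, more generally, on the mixed $p/q$-variation rough-path scale. Since each $\Phi^{\KL}_m$ maps $E$ into the finite-dimensional space $\mathrm{span}\{e_1,\dots,e_m\} \subseteq \sH$, the composition $\fM^{\GRP} \circ \Phi^{\KL}_m$ is well defined pathwise, and I would then cite the Karhunen--Lo\`eve case of the Gaussian rough path construction under CYR (\cite[Chap.~15]{frizMultidimensionalStochasticProcesses2010}): one shows $\Phi^{\KL}_m(X) \to X$ in $C^{0,p-\var}$ together with the corresponding L\'evy areas and the level-three terms, and upgrades $L^2$-convergence to convergence in probability in the $\vertiii{\cdot}$-metric via equivalence of $L^r$-norms on a fixed Wiener--It\^o chaos (Lemma \ref{lem:equiv_of_norm_homogeneous}, combined with Lemma \ref{lem:equiv_of_norms}). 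The limit is then $X := \fX$, the Gaussian rough path lift of $X$, giving \eqref{eq:A2.III}.

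For Condition (3), one must show that for each $\tau \in \calT$, each $h \in \sH$ and each $a \in \{x,h\}^{[\tau]}$ the map $x \mapsto (\fM^{\GRP})^{\otimes}_{\tau}(\Phi^{\KL}_m(a_1) \otimes \cdots \otimes \Phi^{\KL}_m(a_{[\tau]}))$ converges in $E_\tau$ in $\mu$-probability. The case $a = (x,\dots,x)$ is Condition (2). In general, $\Phi^{\KL}_m(h) \to h$ in $\sH$, hence in $\calC^{q-\var}$, deterministically, while the family indexed by the $x$-slots converges in the rough-path sense as in (2); since the iterated Young integral is a jointly continuous multilinear map on the mixed $p/q$-variation scale (again by $1/p + 1/q > 1$), each such mixed term converges in probability. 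The symbols $iji$ and $jii$ are continuous algebraic combinations of lower-order terms by \eqref{eq:GRP_shuffle_1}--\eqref{eq:GRP_shuffle_2}, so their convergence is automatic. Equivalently, one recognises the whole collection of mixed limits as the Cameron--Martin translate of $\fX$ by $h$, whose existence and the attendant convergence are part of the translation-operator theory for Gaussian rough paths in \cite[Chap.~15]{frizMultidimensionalStochasticProcesses2010}.

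The hard part will be Conditions (2)--(3): making precise the continuity estimate on the mixed $p/q$-variation scale that converts CYR into convergence in probability of the (possibly $h$-translated) truncated lifts. All of this is contained in \cite[Chap.~15]{frizMultidimensionalStochasticProcesses2010}; the only genuinely new bookkeeping is to record that $\Phi^{\KL}_m$ is bounded and linear, maps into $\sH$, and converges simultaneously on $\sH$ in $\Vert \cdot \Vert_{\sH}$ and $\mu$-a.s.\ on $E$, so that the hypotheses of those results are met.
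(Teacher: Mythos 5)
Your proposal is correct and follows essentially the same route as the paper: boundedness and \eqref{eq:A2.II} from the ONB structure, \eqref{eq:A2.I} from It\^o--Nisio/abstract Wiener space theory as in \cite[App.~D.3]{frizMultidimensionalStochasticProcesses2010}, \eqref{eq:A2.III} from the $L^2$-convergence of Karhunen--Lo\`eve approximations of Gaussian rough paths (\cite[Thm.~15.51]{frizMultidimensionalStochasticProcesses2010}), and the mixed-slot condition \eqref{eq:sK_compatibility} from the translation-operator theory under CYR (\cite[Thm.~9.35~(ii)]{frizMultidimensionalStochasticProcesses2010}). The only cosmetic difference is that you attribute Condition~(2) to CYR, whereas the cited convergence theorem needs only the $\rho$-variation hypothesis on the covariance --- CYR is genuinely used only for Condition~(3) --- and your detour through the equivalence of chaos norms is unnecessary since $L^2$-convergence already implies convergence in probability.
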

\begin{proof}
    Fix $m \in \bbN$. Since $x^i \mapsto \langle e^i_k, x^i \rangle_{\sH_i}$ lies in $E_i^{\ast}$ for every $i \in \bbN, 1 \leq k \leq d$, the linear operator $\Phi^{\KL}_m$ is bounded. Assumption \eqref{eq:A2.I} is satisfied by general abstract Wiener space theory (see \cite[App. D.3]{frizMultidimensionalStochasticProcesses2010}) while \eqref{eq:A2.II} is satisfied since $\cup_{i= 1}^d (e^i_k)_{k \in \bbN}$ is an ONB of $\sH$. By \cite[Thm. 15.51]{frizMultidimensionalStochasticProcesses2010} we deduce that for every $\tau \in \calT$  

        \begin{equation}
            \left\Vert \Vert \pi_{\tau} \circ \fM^{\GRP} \circ \Phi^{\KL}_m - \pi_{\tau} \circ \fX \Vert_{\frac{p}{[\tau]}} \right\Vert_{L^2(E, \mu; \bbR)} \rightarrow 0, \label{eq:KL_limit}
        \end{equation}
    
    where $\fX$ is the Gaussian rough paths lift associated to $X$ in the sense of \cite[Chap. 15]{frizMultidimensionalStochasticProcesses2010}. That is, $\pi_{\tau} \circ \fM^{\GRP} \circ \Phi^{\KL}_m$ converges in probability in $E_{\tau}$, which in turn implies \eqref{eq:A2.III}. Thus $(\Phi^{\KL}_m)_{m \in \bbN}$ is an admissible approximation and the limit of $(\fM^{\GRP} \circ \Phi^{\KL}_m)_{m \in \bbN}$ can be identified with the Gaussian rough path lift $\fX$. \\

    Since $(\Phi^{\KL}_m)_{m \in \bbN}$ satisfies \eqref{eq:A2.I} and \eqref{eq:A2.II}, condition \eqref{eq:sK_compatibility} is satisfied by \cite[Thm. 9.35 (ii)]{frizMultidimensionalStochasticProcesses2010} (which requires complementary Young regularity).
\end{proof}

\begin{prop} 
    If $X$ satisfies complementary Young regularity in the sense of Definition \ref{defn:CYR}, then the data $(\calT, \bE, [\cdot], \calN), (E, \sH, i, \mu), (\sK^1, \fM^{\GRP}), (\Phi^{\KL}_m)_{m \in \bbN}$ is Bottom-Up data in the sense of Theorem \ref{thm:bottom_up_construction} and induces an AWMS with approximation such that the full lift coincides with $\fX$ and the enhanced measure $\bmu^{\fX}$ coincides with the distribution of the Gaussian rough path lift associated to $X$. 
\end{prop}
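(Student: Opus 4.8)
The plan is to recognise this as essentially an assembly statement: almost all of the analytic content has already been packaged into Propositions \ref{prop:intermediate_spaces} and \ref{prop:KL_is_approx}, so the task is to check that the stated data satisfies the hypotheses of Theorem \ref{thm:bottom_up_construction} and then to read off its conclusion.

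First I would collect the four pieces of Bottom-Up data. The ambient space $(\calT, \bE, [\cdot], \calN)$ and the abstract Wiener space $(E, \sH, i, \mu)$ with $E = E_{\calN}$ are exactly the ones fixed at the start of this subsubsection. The space $\sK^1 = \sH$ is trivially an intermediate space in the sense of Definition \ref{defn:intermediate_space} (with $i$ the identity), and by Proposition \ref{prop:intermediate_spaces}(3) the map $\fM^{\GRP}$ is a $\sK^1$-skeleton lift. Finally, under complementary Young regularity, Proposition \ref{prop:KL_is_approx} asserts that $(\Phi^{\KL}_m)_{m \in \bbN}$ is an admissible approximation with respect to $\fM^{\GRP}$.

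The one additional hypothesis of Theorem \ref{thm:bottom_up_construction} still to be verified is the graded Wiener--Ito chaos condition \eqref{eq:WIC_assumption_BU}, i.e.\ $\fL \circ \Phi^{\KL}_m \in \calP^{(\leq [\calT])}(E, \mu; \bE)$ for every $m$, where $\fL = \fM^{\GRP}|_{\sH}$. Here I would argue as follows: for fixed $m$ the operator $\Phi^{\KL}_m$ has finite-dimensional range, $\Phi^{\KL}_m(x) = \sum_{i = 1}^{d} \sum_{k = 1}^{m} \langle e^i_k, x^i \rangle_{\sH_i} e^i_k$, and each coefficient $x \mapsto \langle e^i_k, x^i \rangle_{\sH_i}$ is a bounded linear functional on $E$. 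By the $\calT$-multi-linearity of $\fM^{\GRP}$ (Definition \ref{defn:skeleton_lift}), for every $\tau \in \calT$ the component $\pi_{\tau} \circ \fM^{\GRP} \circ \Phi^{\KL}_m$ is then a homogeneous polynomial of degree $[\tau]$ in these finitely many bounded linear functionals, hence lies in $\calP^{(\leq [\tau])}(E, \mu; E_{\tau})$ by Lemma \ref{lem:polynomial_in_WIC}; summing over $\tau$ yields \eqref{eq:WIC_assumption_BU}.

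With the Bottom-Up data verified, Theorem \ref{thm:bottom_up_construction} produces an AWMS with approximation whose full lift is $\hat{\fL} = \lim_{m \to \infty} \fM^{\GRP} \circ \Phi^{\KL}_m$ in $\mu$-probability. By the identification of this limit in Proposition \ref{prop:KL_is_approx} one has $\hat{\fL} = \fX$, the Gaussian rough path lift of $X$, whence the enhanced measure is $\bmu = \hat{\fL}_{\ast} \mu = \fX_{\ast} \mu = \bmu^{\fX}$, the law of $\fX$ on $\bE$. I do not expect a genuine obstacle here: the only substantial input is the Karhunen--Loève convergence already absorbed into Proposition \ref{prop:KL_is_approx} (the Friz--Victoir estimates, which rely on complementary Young regularity), and the only small point needing a moment's thought is the polynomial-chaos membership \eqref{eq:WIC_assumption_BU} sketched above.
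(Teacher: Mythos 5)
Your proposal is correct and follows essentially the same route as the paper: reduce everything to Propositions \ref{prop:intermediate_spaces} and \ref{prop:KL_is_approx}, verify the remaining hypothesis \eqref{eq:WIC_assumption_BU} by expanding $\fM^{\GRP}_{\tau}\circ\Phi^{\KL}_m$ via $\calT$-multi-linearity into a finite sum of $[\tau]$-fold products of bounded linear functionals and invoking Lemma \ref{lem:polynomial_in_WIC}, then apply Theorem \ref{thm:bottom_up_construction} and identify the limit with $\fX$. The only cosmetic difference is that the paper passes through point evaluations $[\,\cdot\,]_{s,t}$ and Proposition \ref{prop:WIC_equiv_characterizations}(iv) to upgrade from $\bbR$-valued to $E_{\tau}$-valued chaos membership, a step your argument absorbs by noting the coefficients $\fM^{\GRP\otimes}_{\tau}(e^{\beta_1}_{\alpha_1}\otimes\cdots)$ are fixed vectors of $E_{\tau}$.
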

\begin{proof}
    In light of Propositions \ref{prop:intermediate_spaces} and \ref{prop:PL_is_approx} the only thing left to show is \eqref{eq:WIC_assumption_BU}. 

    Let $m \in \bbN$, $\tau \in \calT$, and $x \in E$ be arbitrary and let
    
    \begin{equation}
        I^{[\tau]}_{m} := \Big\{ (\alpha, \beta): \{1, \ldots, [\tau]\} \rightarrow \{1, \ldots, m\} \times \{1, \ldots, d\} \Big\}.
    \end{equation} 
    
    Then since $\fM^{\GRP}$ is $\calT$-multi-linear
    
    \begin{align}
        \fM^{\GRP}_{\tau} \left( \Phi^{\KL}_m(x) \right) &= \fM_{\tau}^{\GRP \otimes}\left( \Phi^{\KL}_m \left( x^{\otimes [\tau]} \right) \right) \\
        &= \fM_{\tau}^{\GRP \otimes} \left( \left( \sum_{i = 1}^{d} \sum_{k = 1}^{m} \left\langle e^i_k, x^i \right\rangle_{\sH_i} e^i_k \right)^{\otimes [\tau]} \right) \\
        &= \sum_{(\alpha, \beta) \in I^{[\tau]}_{m}} \left( \left\langle e_{\alpha_1}^{\beta_1}, x^1 \right\rangle_{\sH_{\beta_1}} \cdot \ldots \cdot \left\langle e_{\alpha_{[\tau]}}^{\beta_{[\tau]}}, x^{\beta_{[\tau]}} \right\rangle_{\sH_{\beta_{[\tau]}}} \right) \fM_{\tau}^{\GRP \otimes} \left( e_{\alpha_1}^{\beta_1} \otimes \ldots \otimes e_{\alpha_{[\tau]}}^{\beta_{[\tau]}} \right).
    \end{align} 
    
    Evaluating the above at $s,t \in [0,T]$ yields 
    
    \begin{equation}
        \left[ \fM_{\tau}^{\GRP}(\Phi^{\KL}_m(x)) \right]_{s,t} = \sum_{(\alpha, \beta) \in I^{[\tau]}_{m}} \left( \prod_{i = 1}^{[\tau]}\langle e_{\alpha_i}^{\beta_i}, x^{\beta_i} \rangle_{\sH_{\beta_i}} \right) \underbrace{\left[\fM_{\tau}^{\GRP \otimes} \left( e_{\alpha_1}^{\beta_1} \otimes \ldots \otimes e_{\alpha_{[\tau]}}^{\beta_{[\tau]}} \right)\right]_{s,t}}_{\in \bbR},
    \end{equation}
    
    i.e. $x \mapsto \left[ \fM_{\tau}^{\GRP}(\Phi^{\KL}_m(x)) \right]_{s,t}$ is a linear combination of $[\tau]$-fold products of bounded linear functionals $x \mapsto \langle e_{\alpha_i}^{\beta_i}, x^{\beta_i} \rangle_{\sH_{\beta_i}}$ and thus lies in $\calP^{(\leq [\tau])}(E, \mu; \bbR)$ by Lemma \ref{lem:polynomial_in_WIC}. Thus $\fM_{\tau}^{\GRP}(\Phi^{\KL}_m) \in \calP^{(\leq [\tau])}(E, \mu; E_{\tau})$ Proposition \ref{prop:WIC_equiv_characterizations}(iv).

    Since the full lift coincides with the limit of $(\fM^{\GRP} \circ \Phi^{\KL}_m)_{m \in \bbN}$, the fact that the full lift can be identified with the Gaussian rough path lift $\fX$ follows from the proof of Proposition \ref{prop:KL_is_approx}.
\end{proof}

\paragraph{Piecewise Linear Approximation} \label{subsubsec:piecewise_linear_approx}

For the reasons given in Subsection \ref{subsec:intermediate_space} we choose $\sK^2 = C^{0,\rho - \var}$ for the piecewise linear approximation. Let $\Phi^{\PL} = (\Phi^{\PL}_m)_{m \in \bbN}$ be the piecewise linear approximation on the dyadic dissection of $[0,T]$ with mesh size $T \cdot 2^{-m}$

\begin{equation}
    D_m := \left\{ \frac{T\cdot k}{2^m} : 0 \leq k \leq 2^m \right\} . \label{symb:dyadic_partition}
\end{equation}

\begin{prop}\label{prop:PL_is_approx}
    If $X$ satisfies complementary Young regularity in the sense of Definition \ref{defn:CYR}, then $\Phi^{\PL}$ is an admissible approximation w.r.t. the $\sK^2$-skeleton lift $\fM^{\GRP}$. 
\end{prop}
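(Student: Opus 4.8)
The plan is to verify the three requirements of Definition~\ref{defn:admissible_approximation} for the pair $(\sK^2,\fM^{\GRP})$ and the dyadic piecewise-linear family $\Phi^{\PL}=(\Phi^{\PL}_m)_{m\in\bbN}$, following the same blueprint as the proof of Proposition~\ref{prop:KL_is_approx} but invoking the piecewise-linear (rather than Karhunen--Lo\`eve) approximation results of \cite{frizMultidimensionalStochasticProcesses2010}. First I would record that for each fixed $m$ the operator $\Phi^{\PL}_m$ depends only on the values $x_{t}$, $t\in D_m$, which are continuous linear functionals on $E=\bigoplus_i C^{0,p-\var}$ (since $\Vert x\Vert_\infty\lesssim\Vert x\Vert_{p-\var}$), and its output is a piecewise-linear path of finite $1$-variation, hence an element of $\sK^2$. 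Thus each $\Phi^{\PL}_m\colon E\to\sK^2$ is a bounded, indeed finite-rank, linear operator, which is all that is asked (no uniformity in $m$ is needed).

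For condition~\eqref{eq:A2.I}, the $\mu$-a.s.\ statement $\Vert\Phi^{\PL}_m(x)-x\Vert_E\to0$ is the classical fact that dyadic piecewise-linear interpolations of the sample paths of $X$ converge in $p$-variation for $p>2\rho$, part of the Gaussian rough path machinery of \cite[Chap.~15]{frizMultidimensionalStochasticProcesses2010} (it also follows from the first-level component of the convergence discussed below). For condition~\eqref{eq:A2.II}, fix $h\in\sH$; by Proposition~\ref{prop:intermediate_spaces}(2) (i.e.\ by \cite[Prop.~15.8]{frizMultidimensionalStochasticProcesses2010}) we have $\sH\hookrightarrow C^{\rho'-\var}$ for every $\rho'\in[1,2)$, and choosing $\rho'<\rho$ together with the standard interpolation fact that piecewise-linear approximations of a path of finite $\rho'$-variation converge to it in $\rho$-variation gives $\Vert\Phi^{\PL}_m(h)-h\Vert_{\sK^2}\to0$.

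The crux is condition~\eqref{eq:A2.III}: that $\fM^{\GRP}\circ\Phi^{\PL}_m$ converges in $\mu$-probability (in $\vertiii{\cdot}$, equivalently in $\Vert\cdot\Vert_{\bE}$) as $m\to\infty$, the limit being the Gaussian rough path lift $\fX$. This is precisely the convergence of piecewise-linear approximations of Gaussian rough paths, valid under complementary Young regularity; it is here that the hypothesis on $X$ enters, through the second- and third-level $\frac p{[\tau]}$-variation estimates on the iterated integrals of $\Phi^{\PL}_m(X)$ supplied in \cite[Chap.~15]{frizMultidimensionalStochasticProcesses2010} (the piecewise-linear analogue of \cite[Thm.~15.51]{frizMultidimensionalStochasticProcesses2010} used for $\Phi^{\KL}$). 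I expect this to be the main obstacle, being the only step requiring genuine stochastic estimates rather than soft functional-analytic bookkeeping.

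Finally, for condition~\eqref{eq:sK_compatibility} with a general $a\in\{x,h\}^{[\tau]}$: by $\calT$-multi-linearity of $\fM^{\GRP}$ the terms over all such $a$ assemble into $\pi_\tau\bigl(\fM^{\GRP}(\Phi^{\PL}_m(x)+\Phi^{\PL}_m(h))\bigr)=\pi_\tau\bigl(\fM^{\GRP}(\Phi^{\PL}_m(x+h))\bigr)$, which by weak geometricity equals the $\tau$-component of the Young translation $T_{\Phi^{\PL}_m(h)}\bigl(\fM^{\GRP}(\Phi^{\PL}_m(x))\bigr)$. Since $\fM^{\GRP}(\Phi^{\PL}_m(x))\to\fX$ in probability by \eqref{eq:A2.III} and $\Phi^{\PL}_m(h)\to h$ in $C^{\rho-\var}$, hence in the space $C^{q-\var}$ afforded by CYR, continuity of the translation operator under complementary Young regularity — \cite[Thm.~9.35]{frizMultidimensionalStochasticProcesses2010}, exactly as in Proposition~\ref{prop:KL_is_approx} — yields convergence in probability of $T_{\Phi^{\PL}_m(h)}(\fM^{\GRP}(\Phi^{\PL}_m(x)))$ to $T_h(\fX)$; projecting onto $E_\tau$ and isolating the multilinear component indexed by $a$ (e.g.\ by separating powers of a scaling parameter in front of $h$) gives \eqref{eq:sK_compatibility}. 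Collecting the three items shows $\Phi^{\PL}$ is an admissible approximation with respect to $\fM^{\GRP}$.
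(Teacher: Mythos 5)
Your proposal is correct and takes essentially the same route as the paper: check that each $\Phi^{\PL}_m$ is a bounded linear map into $\sK^2$, obtain \eqref{eq:A2.I}--\eqref{eq:A2.II} from the Friz--Victoir results on (geodesic) piecewise-linear approximations in the separable variation closures, get \eqref{eq:A2.III} from the convergence of piecewise-linear Gaussian rough path approximations, and settle \eqref{eq:sK_compatibility} via the continuity of the Young translation operator \cite[Thm.~9.35~(ii)]{frizMultidimensionalStochasticProcesses2010}. The one inaccuracy is where you locate the use of complementary Young regularity: the convergence in \eqref{eq:A2.III} (the paper cites \cite[Thm.~15.34]{frizMultidimensionalStochasticProcesses2010}) requires only the $\rho$-variation condition \eqref{eq:rho_var} on the covariance, and CYR is genuinely needed only for the mixed terms in \eqref{eq:sK_compatibility} --- a distinction the paper exploits when building the bare AWMS without approximation.
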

\begin{proof}
    Recall that $\rho \in [1, 2)$. Since piecewise linear functions have finite variation, the variation embedding theorems guarantee that $(\Phi^{\PL}_m)_{m \in \bbN}$ indeed maps into $\sK^2$. To see the continuity of $(\Phi^{\PL}_m)_{m \in \bbN}$, notice that the total variation norm of a function which is piecewise linear on some partition is attained for that partition. In particular, $\Vert \Phi^{\PL}_m (x) \Vert_{1-\var} \leq 2m \Vert x \Vert_{\infty}$ for any $x \in C^{0, p-\var}$. Thus we have 

        \begin{equation}
            \Vert \Phi^{\PL}_m(x) \Vert_{\rho-\var} \Vert \leq \Vert \Phi^{\PL}_m (x) \Vert_{1-\var} \leq 2m \Vert x \Vert_{\infty} \lesssim \Vert x \Vert_{p-\var}, \quad m \in \bbN.
        \end{equation}

    Since $\sK^2 = C^{0,\rho-\var}$ and $E = C^{0,p-\var}$ (as opposed to $C^{\rho-\var}$ and $C^{p-\var}$ \label{symb:variation}), both \eqref{eq:A2.I} and \eqref{eq:A2.II} follow from \cite[Thm. 5.33 (i.3)]{frizMultidimensionalStochasticProcesses2010} since piecewise linear approximations are nothing but geodesic approximations in $\bbR$ with the standard Riemannian metric. Finally, \eqref{eq:A2.III} is guaranteed by \cite[Thm. 15.34]{frizMultidimensionalStochasticProcesses2010}. \\

    In a similar fashion as for the Karhunen--Loève approximation, Condition \eqref{eq:sK_compatibility} is satisfied by \cite[Thm. 9.35 (ii)]{frizMultidimensionalStochasticProcesses2010} (which requires complementary Young-regularity) since, as shown above, $(\Phi^{\PL}_m)_{m \in \bbN}$ satisfies \eqref{eq:A2.I} and \eqref{eq:A2.II}.
\end{proof}

\begin{prop} \label{prop:GRP_gives_AWMS_with_approx}
    If $X$ satisfies complementary Young regularity in the sense of Definition \ref{defn:CYR}, then the data associated to a Gaussian rough path lift $(\calT, \bE, [\cdot], \calN), (E, \sH, i, \mu), (\sK^2, \fM^{\GRP})$, $(\Phi^{\PL}_m)_{m \in \bbN}$ is Bottom-Up data in the sense of Theorem \ref{thm:bottom_up_construction} and induces an AWMS with approximation such that the full lift coincides with $\fX$ and the enhanced measure $\bmu^{\fX}$ coincides with the distribution of the Gaussian rough path lift associated to $X$. 
\end{prop}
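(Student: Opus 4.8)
The plan is to follow the same scheme used for the Karhunen--Loève case. By the construction carried out above, $(\calT, \bE, [\cdot], \calN)$ is an ambient space and $(E, \sH, i, \mu)$ is an abstract Wiener space with $E_{\calN} = E$; Proposition~\ref{prop:intermediate_spaces} supplies that $\sK^2$ is an intermediate space and that $\fM^{\GRP}$ is a $\sK^2$-skeleton lift, while Proposition~\ref{prop:PL_is_approx} shows that $(\Phi^{\PL}_m)_{m \in \bbN}$ is an admissible approximation with respect to $\fM^{\GRP}$. Hence the only hypothesis of Theorem~\ref{thm:bottom_up_construction} that still needs checking is the graded Wiener--Ito chaos condition \eqref{eq:WIC_assumption_BU}, i.e.\ that $\fM^{\GRP} \circ \Phi^{\PL}_m \in \calP^{(\leq [\calT])}(E, \mu; \bE)$ for every $m \in \bbN$.

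To verify this I would fix $m \in \bbN$ and observe that the piecewise linear path $\Phi^{\PL}_m(x)$ is completely determined by the increments $x_{t_k, t_{k+1}}$ of $x$ over the dyadic intervals with endpoints in $D_m$ from \eqref{symb:dyadic_partition}, and each such increment is a bounded linear functional of $x \in E = \bigoplus_{i} C^{0,p-\var}([0,T]; \bbR)$. Since $\fM^{\GRP}$ is $\calT$-multi-linear and, on a path which is piecewise linear on $D_m$, the iterated Young integrals appearing in \eqref{eq:GRP_lift_first_line}--\eqref{eq:GRP_shuffle_2} collapse to finite sums of products of such increments, for each fixed $(s,t) \in [0,T]^2$ the real-valued map $x \mapsto \left[ \fM^{\GRP}_{\tau}(\Phi^{\PL}_m(x)) \right]_{s,t}$ is a polynomial of degree at most $[\tau]$ in finitely many bounded linear functionals of $x$. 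By Lemma~\ref{lem:polynomial_in_WIC} this functional lies in $\calP^{(\leq [\tau])}(E, \mu; \bbR)$, and Proposition~\ref{prop:WIC_equiv_characterizations}(iv) then upgrades this to $\fM^{\GRP}_{\tau} \circ \Phi^{\PL}_m \in \calP^{(\leq [\tau])}(E, \mu; E_{\tau})$ for each $\tau \in \calT$; taking the direct sum over $\tau$ gives \eqref{eq:WIC_assumption_BU}. Thus the quadruple $\bigl((\calT, \bE, [\cdot], \calN), (E, \sH, i, \mu), (\sK^2, \fM^{\GRP}), (\Phi^{\PL}_m)_{m \in \bbN}\bigr)$ is Bottom-Up data, and Theorem~\ref{thm:bottom_up_construction} produces an AWMS with approximation.

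The identification of the full lift with $\fX$ and of the enhanced measure with the law of the Gaussian rough path lift then follows, as in the Karhunen--Loève case, from the proof of Proposition~\ref{prop:PL_is_approx}: the limit in probability defining $\hat{\fL}$ is $\lim_m \fM^{\GRP} \circ \Phi^{\PL}_m = \fX$ (by \cite[Thm.~15.34]{frizMultidimensionalStochasticProcesses2010}), whence $\bmu^{\fX} = \hat{\fL}_{\ast}\mu = \fX_{\ast}\mu$. I expect the chaos-membership step to be the main (if modest) obstacle: one has to confirm that the shuffle-type terms in \eqref{eq:GRP_shuffle_1}--\eqref{eq:GRP_shuffle_2}, being products and differences of lower-level components, remain polynomial of the correct degree $[\tau]$ in the dyadic increments (so that the truncation level of the chaos is exactly $[\tau]$ and nothing higher leaks in), and that the passage from the scalar functionals $x \mapsto [\fM^{\GRP}_{\tau}(\Phi^{\PL}_m(x))]_{s,t}$ to the $E_{\tau}$-valued object is legitimate via Proposition~\ref{prop:WIC_equiv_characterizations}(iv); everything else is bookkeeping against the propositions already established in this subsection.
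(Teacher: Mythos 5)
Your proposal is correct and follows the same overall reduction as the paper: Propositions \ref{prop:intermediate_spaces} and \ref{prop:PL_is_approx} dispose of everything except the graded Wiener--Ito chaos condition \eqref{eq:WIC_assumption_BU}, and the identification of $\hat{\fL}$ with $\fX$ and of $\bmu^{\fX}$ with $\fX_{\ast}\mu$ comes out of the proof of Proposition \ref{prop:PL_is_approx} via \cite[Thm.~15.34]{frizMultidimensionalStochasticProcesses2010}. The one place where you diverge is the chaos-membership step: the paper simply cites \cite[Prop.~15.20]{frizMultidimensionalStochasticProcesses2010}, whereas you re-derive it by noting that $\Phi^{\PL}_m(x)$ is determined by finitely many dyadic increments (each a bounded linear functional on $C^{0,p\text{-}\var}$), that iterated Young integrals of piecewise linear paths collapse to polynomials of degree at most $[\tau]$ in these increments (with the shuffle terms \eqref{eq:GRP_shuffle_1}--\eqref{eq:GRP_shuffle_2} preserving the degree bound), and then invoking Lemma \ref{lem:polynomial_in_WIC} and Proposition \ref{prop:WIC_equiv_characterizations}(iv). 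This is exactly the strategy the paper itself uses for the Karhunen--Lo\`eve case, so your argument is a self-contained substitute for the external citation; what it buys is transparency (one sees precisely why the degree does not exceed $[\tau]$), at the cost of a slightly longer write-up. One small point to make explicit, which the paper also glosses over in the Karhunen--Lo\`eve proof: Proposition \ref{prop:WIC_equiv_characterizations} presupposes $\fM^{\GRP}_{\tau}\circ\Phi^{\PL}_m \in L^2(E,\mu;E_{\tau})$, so you should record that the $E_{\tau}$-norm of the lifted piecewise linear path is bounded by a polynomial in $\Vert x\Vert_{\infty}$ and hence square-integrable by Fernique's theorem, before applying the equivalence of characterizations.
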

\begin{proof}
    In light of Propositions \ref{prop:intermediate_spaces} and \ref{prop:PL_is_approx} the only thing left to show is \eqref{eq:WIC_assumption_BU}. This follows immediately from \cite[Prop. 15.20]{frizMultidimensionalStochasticProcesses2010}.
\end{proof}

\subsubsection{As an AWMS (Without Approximation)}

Note that in order to produce an AWMS with approximation, in Proposition \ref{prop:KL_is_approx} and \ref{prop:PL_is_approx} we assumed $X$ to satisfy complementary Young regularity. However, this was only necessary to ensure that the approximations $(\Phi_m)_{m \in \bbN}$ were compatible with $\fM^{\GRP}$; i.e. that they satisfy \eqref{eq:sK_compatibility}. In order to produce an AWMS without approximation, and in particular in order to show an LDP and a Fernique estimate, this is not necessary: 

\begin{prop} \label{prop:AWMS_GRP}
    Let $\fX:E \rightarrow \bE$ denote the Gaussian rough path lift of $X$ in the sense of \cite[Chap. 15]{frizMultidimensionalStochasticProcesses2010} and let $\bmu^{\fX}$ denote its distribution on $\bE$. Then $(\calT, \bE, [\cdot], \calN), \bmu^{\fX}, \fX$ is Top-Down data in the sense of Theorem \ref{thm:top_down_construction}. 
\end{prop}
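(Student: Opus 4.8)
The plan is to verify the three items defining Top-Down data in Theorem~\ref{thm:top_down_construction}: that $(\calT, \bE, [\cdot], \calN)$ is an ambient space, that $\bmu^{\fX}$ is a Borel probability measure on $\bE$ with $\pi_{\ast}\bmu^{\fX}$ centred Gaussian on $E = E_{\calN}$, and that $\fX$, regarded as a $\mu$-a.s.\ equivalence class, is a measurable lift lying in $\calP^{(\leq[\calT])}(E,\mu;\bE)$ with $\fX_{\ast}\mu = \bmu^{\fX}$. The ambient space and the abstract Wiener space $(E,\sH,i,\mu)$ have already been set up above, and I would dispatch the measure-theoretic items first before turning to the one substantive point, the Wiener--Ito chaos membership of $\fX$.

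For the routine items: $\fX : E \to \bE$ is measurable, being built from the Gaussian process $X$ through $L^2$-limits (cf.\ \cite[Chap.~15]{frizMultidimensionalStochasticProcesses2010}), so $\bmu^{\fX} := \fX_{\ast}\mu$ is a Borel probability measure on $\bE$, and the identity $\fX_{\ast}\mu = \bmu^{\fX}$ holds by definition. Since the level-one component of the Gaussian rough path lift is the path $X$ itself, one has $\pi \circ \fX = \id_E$ $\mu$-a.s., so $\fX$ is a measurable lift in the sense of Definition~\ref{defn:lift_defn} and $\pi_{\ast}\bmu^{\fX} = (\pi\circ\fX)_{\ast}\mu = \mu$, which is centred Gaussian by the standing hypothesis~\eqref{eq:rho_var}. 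That the resulting object does not depend on the chosen $\mu$-representative of $\fX$ is built into Theorem~\ref{thm:top_down_construction} itself, so nothing extra is needed there.

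The heart of the argument is $\fX \in \calP^{(\leq[\calT])}(E,\mu;\bE) = \bigoplus_{\tau\in\calT}\calP^{(\leq[\tau])}(E,\mu;E_{\tau})$, i.e.\ $\pi_{\tau}\circ\fX \in \calP^{(\leq[\tau])}(E,\mu;E_{\tau})$ for each $\tau$. Here I would recycle the Karhunen--Loève approximation $\Phi^{\KL}_m$ against the $\sK^1$-skeleton lift $\fM^{\GRP}$: by $\calT$-multi-linearity of $\fM^{\GRP}$ (Proposition~\ref{prop:intermediate_spaces}(3)), for fixed $m$ and $s,t\in[0,T]$ the quantity $[\fM^{\GRP}_{\tau}(\Phi^{\KL}_m(x))]_{s,t}$ is a linear combination of $[\tau]$-fold products of the bounded linear functionals $x\mapsto\langle e^i_k, x^i\rangle_{\sH_i}$ --- exactly the computation carried out in the proof of Proposition~\ref{prop:KL_is_approx} --- hence it lies in $\calP^{(\leq[\tau])}(E,\mu;\bbR)$ by Lemma~\ref{lem:polynomial_in_WIC}, and so $\fM^{\GRP}_{\tau}\circ\Phi^{\KL}_m \in \calP^{(\leq[\tau])}(E,\mu;E_{\tau})$ by Proposition~\ref{prop:WIC_equiv_characterizations}(iv). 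By \cite[Thm.~15.51]{frizMultidimensionalStochasticProcesses2010} these converge in $L^2(E,\mu;E_{\tau})$, hence in probability w.r.t.\ $\mu$, to $\pi_{\tau}\circ\fX$; and since $\calP^{(\leq[\tau])}(E,\mu;E_{\tau})$ is closed under convergence in probability (Lemma~\ref{lem:complete_in_probability}), I would conclude $\pi_{\tau}\circ\fX \in \calP^{(\leq[\tau])}(E,\mu;E_{\tau})$, and then sum over $\tau$. (Alternatively one could invoke the Wiener chaos structure of Gaussian rough paths directly, e.g.\ \cite[Prop.~15.20]{frizMultidimensionalStochasticProcesses2010}.)

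The part needing care --- the ``main obstacle'' --- is the observation that this chaos decomposition is available for \emph{every} process satisfying~\eqref{eq:rho_var}, with no complementary Young regularity assumed: CYR entered Propositions~\ref{prop:KL_is_approx} and~\ref{prop:PL_is_approx} only through the compatibility condition~\eqref{eq:sK_compatibility}, which plays no role in the notion of Top-Down data. This is exactly what lets the LDP (Theorem~\ref{thm:LDP_for_AWMS}) and the Fernique estimate (Theorem~\ref{thm:exp_int}) apply at this level of generality; everything else is bookkeeping.
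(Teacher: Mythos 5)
Your proposal is correct, and on the one substantive point it takes a mildly different route from the paper. The paper's own proof of Proposition~\ref{prop:AWMS_GRP} is a one-liner: the routine items are noted, and the chaos membership $\fX \in \calP^{([\calT])}(E,\mu;\bE)$ is obtained by directly citing \cite[Prop.~15.20]{frizMultidimensionalStochasticProcesses2010} (your parenthetical ``alternatively'' is in fact the paper's argument). Your main route instead re-derives the membership from ingredients already present in the section: the approximants $\fM^{\GRP}_{\tau}\circ\Phi^{\KL}_m$ lie in $\calP^{(\leq[\tau])}(E,\mu;E_{\tau})$ by the multi-linearity computation plus Lemma~\ref{lem:polynomial_in_WIC} and Proposition~\ref{prop:WIC_equiv_characterizations}(iv), and the class is preserved under the $L^2$ (hence in-probability) convergence of \cite[Thm.~15.51]{frizMultidimensionalStochasticProcesses2010} via Lemma~\ref{lem:complete_in_probability}. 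This is slightly longer but more self-contained, and it makes explicit why complementary Young regularity is not needed here -- an observation the paper also makes, in prose, just before the proposition. The only point worth flagging is cosmetic: Lemma~\ref{lem:complete_in_probability} is stated for the homogeneous chaos $\calP^{(k)}$, so you are implicitly using its (immediate) extension to $\calP^{(\leq k)}$, exactly as the paper does elsewhere; and the paper's citation actually yields the sharper conclusion $\fX\in\calP^{([\calT])}$ (homogeneous), whereas your argument only gives the inhomogeneous membership -- which is all that the definition of Top-Down data requires.
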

\begin{proof}
    As shown previously in the section $(\calT, \bE, [\cdot], \calN)$ is an ambient space and by construction $\pi_{\ast} \bmu^{\fX} = \mu$ and $\fX_{\ast} \mu = \bmu^{\fX}$. By \cite[Prop. 15.20]{frizMultidimensionalStochasticProcesses2010} we have $\fX \in \calP^{[\calT]}(E, \mu; \bE)$. 
\end{proof}

\subsubsection{Application of Theorems}

\paragraph{Large Deviations}
By Proposition \ref{prop:AWMS_GRP} and the Top-Down Construction Theorem \ref{thm:top_down_construction} we may apply Theorem \ref{thm:LDP_for_AWMS} to conclude that for a given centered Gaussian process $X$ as in the previous section the family of measures defined by 

\begin{equation}
\bmu^{\fX}_{\varepsilon}(A) = \mu( \delta_{\varepsilon} \fX \in A ) = \mu \left( \left(\varepsilon \fX^{(1)}, \varepsilon^{2} \fX^{(2)}, \varepsilon^{3} \fX^{(3)} \right) \in A \right), \quad A \in \sB_{\bE}, \varepsilon > 0,
\end{equation}

satisfies an LDP with good rate function 

\begin{equation}
    \sJ^{\fX}(\bx) = 
    \begin{cases}
        \frac{1}{2} \Vert \pi (\bx) \Vert^2_{\sH} & \quad \bx \in \sbH^{\fX} \\
        + \infty & \quad \text{else}, 
    \end{cases} \label{eq:GRP_rate_function}
\end{equation}

where $\sbH^{\fX}$ is nothing but 

\begin{equation}
    \sbH^{\fX} = \fM^{\GRP}(\sH) = \left\{ \left( h, \int h \otimes \dd h, \int \int h \otimes \dd h \otimes \dd h \right) : h \in \sH \right\} .
\end{equation}

In the case where $X$ is Brownian motion $\fX$ is the Stratonovich lift in which case

\begin{equation}
    \sJ^{\fX}(\bx) = \begin{cases}
    \frac{1}{2} \sum_{i=1}^d \int_0^T \vert [\pi_{i}(\bx)]'(s) \vert^2 ds & \quad \bx \in \sbH^{\fX} \\ 
    + \infty & \quad \text{else}.
    \end{cases} \label{eq:Strat_rate_function}
\end{equation}

Note that in the case of Brownian motion, since $\rho = 1$, we could have neglected the third level increment, which would have lead to the same value of $\sJ^{\fX}(\bx)$ since the third level increment is determined through the first and second. We note that LDPs in this setting were already obtained in \cite{frizLargeDeviationPrinciple2007}. 

\paragraph{Fernique Estimate}
By Proposition \ref{prop:AWMS_GRP} and the Top-Down Construction Theorem \ref{thm:top_down_construction} we may apply Theorem \ref{thm:exp_int} and conclude that the measure $\bmu^{\fX}$ satisfies a Fernique estimate. That is, let  

\begin{equation}
   \eta_0 := \inf \left\{ \frac{1}{2} \Vert h \Vert_{\sH}^2 : h \in \sH, \left\Vert h \right\Vert_{p-\var} + \left\Vert \int h \dd h \right\Vert_{\frac{p}{2}-\var}^{\frac{1}{2}} + \left\Vert \int \int h \dd h \dd h \right\Vert_{\frac{p}{3}-\var}^{\frac{1}{3}} =1 \right\} .
\end{equation}

Then for any $\eta < \eta_0$ and $t \geq 0$

\begin{equation}
    \bmu \left( \bx \in \bE : \vertiii{\bx} \geq t \right) = \mu \left( \left\Vert \fX^{(1)} \right\Vert_{p-\var} + \left\Vert \fX^{(2)} \right\Vert_{\frac{p}{2}-\var}^{\frac{1}{2}} + \left\Vert \fX^{(3)} \right\Vert_{\frac{p}{3}-\var}^{\frac{1}{3}} \geq t \right) \lesssim \exp\left( - \eta t^2 \right). 
\end{equation}

\paragraph{Cameron--Martin Theorem} Assume the process satisfies complementary Young regularity in the sense of Definition \ref{defn:CYR}. Then by Proposition \ref{prop:KL_is_approx} or Proposition \ref{prop:PL_is_approx} we may apply Theorem \ref{thm:shift_operator_consistency} (for either approximation $\Phi^{\KL}$ or $\Phi^{\PL}$) and Theorem \ref{thm:lifted_CM_thm} to obtain that $\bmu^{\fX}$ is quasi-invariant under transformations of the form 

\begin{equation}
    \bT_{\bh}: \begin{pmatrix}
        x \\
        \int x \dd x \\
        \int \int x \dd x \dd x
    \end{pmatrix}
    \mapsto 
    \begin{pmatrix}
        x + h \\
        \int x \dd x + \int h \dd x + \int x \dd h + \int h \dd h \\
        \int \int x \dd x \dd x + \int \int x \dd x \dd h + \ldots + \int \int x \dd h \dd h + \int \int h \dd h \dd h
    \end{pmatrix}, 
\end{equation}

with Radon--Nikodým density given by 

\begin{equation}
    \frac{\dd \bmu^{\fX}_{\bh}}{\dd \bmu^{\fX}}(\bx) = \exp \left( \underline{\pi (\bh)} (\pi(\bx)) - \frac{1}{2} \Vert \pi(\bh) \Vert_{\sH}^2 \right), \quad \bx \in \bE, 
\end{equation}

where $\bh = \left( h, \int h \dd h, \int \int h \dd h \dd h \right) \in \sbH^{\fX}$. 

\subsection{Ito Brownian Motion} \label{subsec:Ito}

\paragraph{General Setup}

Set $\calN = \calT^{(1)} = \{1, \ldots, d\}$, $\calT^{(2)} = \{ij : 1 \leq i, j \leq d \}$ and define $E_{i} = \calC^{0,\alpha}([0,T]; \bbR)$, $E_{ij} = \calC^{0,2\alpha}([0,T]^2; \bbR)$ as the closure of the set of smooth functions $X, \mathbb{X}$ on $[0,T]$ and $[0,T]^2$, respectively, w.r.t. the norms 

\begin{equation}
    \Vert X \Vert_{\alpha} := \sup_{\substack{s,t \in [0,T] \\ s \neq t}} \frac{\vert X_{s,t} \vert}{\vert t - s \vert^{\alpha}}, \quad \text{resp.}, \quad \Vert \bbX \Vert_{2 \alpha} := \sup_{\substack{s,t \in [0,T] \\ s \neq t}} \frac{\vert \mathbb{X}_{s,t} \vert}{\vert t - s \vert^{2\alpha}},
\end{equation}

for some $0 < \alpha < \frac{1}{2}$ which shall be fixed throughout the rest of the section. Consider a $d$-dimensional Brownian motion $B = (B^1, \ldots, B^d)$ defined on $E:= E_{\calN} \cong C^{0,\alpha}([0,T]; \bbR^d)$ and its associated iterated Ito integral $\bbB^{ij} := \int B^i \dd B^j$. Define $\fB := (B, \bbB)$.

\subsubsection{As an AWMS}

Our goal is to obtain an AWMS such that the full lift coincides with the Ito lift $\fB$ associated to a Brownian motion $B$ and the enhanced measure $\bmu^{\fB}$ is the distribution of $\fB$ on $\bE$. We will do so by specifying Top-Down data and computing the proxy-restriction. \\

The Cameron--Martin space associated to $(E, \mu)$ is given by 

\begin{align}
    \sH &= \bigoplus_{i=1}^d \sH_i, \quad \Vert h \Vert^2_{\sH} = \sum_{i = 1}^d \Vert h \Vert^2_{\sH_i}, \quad \langle h, k \rangle_{\sH_i} = \int (h^i)'_s (k^i)'_s \dd s, \\
    \sH_i &= \left\{ h \in L^2([0,T]; \bbR) : \exists h' \in L^2([0,T]; \bbR) ~ \text{s.t.} ~ h_t = \int_0^t h'_s \dd s ~ \text{for every} ~ t \in [0,T] \right\}.
\end{align}

\begin{prop} \label{prop:Ito_is_Top_Down}
    Let $\fB: E \rightarrow \bE$ denote the Ito lift of a Brownian motion $B$ as defined above and let $\bmu^{\fB}$ be the distribution of $\fB$ on $\bE$. Then $(\calT, \bE, [\cdot], \calN), \bmu^{\fB}, \fB$ is Top-Down data in the sense of Theorem \ref{thm:top_down_construction}.
\end{prop}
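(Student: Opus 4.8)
The plan is to check the three ingredients required of \emph{Top-Down data} in Theorem~\ref{thm:top_down_construction}: that $(\calT, \bE, [\cdot], \calN)$ is an ambient space, that $\bmu^{\fB}$ is a Borel probability measure on $\bE$ whose pushforward $\mu := \pi_{\ast}\bmu^{\fB}$ is centred Gaussian on $E = E_{\calN}$, and that $\fB$ is (a representative of a $\mu$-a.s.\ equivalence class of) a measurable lift lying in $\calP^{(\leq[\calT])}(E,\mu;\bE)$ with $\fB_{\ast}\mu = \bmu^{\fB}$. The first point is already established in the preamble of this subsection ($\calT$ is finite, $\bE$ is a finite direct sum of separable Hölder-type spaces, $[\cdot]\colon\calT\to\bbN_{\geq 1}$ and $[\tau]=1$ for $\tau\in\calN=\calT^{(1)}$). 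For the second, the level-one component of $\fB$ is the identity coordinate process on $E\cong\calC^{0,\alpha}([0,T];\bbR^d)$, so $\pi\circ\fB = \id_E$ ($\mu$-a.s., independently of the representative), whence $\fB$ is a measurable lift; consequently $\fB_{\ast}\mu = \bmu^{\fB}$ by the very definition of $\bmu^{\fB}$, and $\pi_{\ast}\bmu^{\fB} = (\pi\circ\fB)_{\ast}\mu = \mu$, the Wiener measure, which is centred Gaussian on $E$ with the Cameron--Martin space $\sH$ recorded above; measurability of $\fB$ as a map into $\bE$ is the standard measurability of the It\^o map.

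The substantive step is showing $\fB \in \calP^{(\leq[\calT])}(E,\mu;\bE) = \bigoplus_{\tau\in\calT}\calP^{(\leq[\tau])}(E,\mu;E_{\tau})$, i.e.\ that each component lies in the appropriate Banach-valued Wiener--It\^o chaos. For a level-one symbol $i\in\calN$ this is the standard abstract Wiener space fact $\id_E = \lim^{L^2(E,\mu;E)}_n \sum_{k=1}^n\langle e_k,\cdot\rangle e_k$ already invoked in the proof of Proposition~\ref{prop:proxy_restriction_of_conditioning}, so $B\in\calP^{(1)}(E,\mu;E)$. For a level-two symbol $ij$ I would approximate $\bbB^{ij}$ by its left-point Riemann sums $\bbB^{ij,(m)}$ along the dyadic partitions of $[0,T]$: evaluated at any pair $(s,t)$, $\bbB^{ij,(m)}_{s,t}$ is a polynomial of degree $\leq 2$ in finitely many increments of $B$, so by Lemma~\ref{lem:polynomial_in_WIC} each scalar functional $\ell(\bbB^{ij,(m)})$, $\ell\in E_{ij}^{\ast}$, lies in $\calP^{(\leq 2)}(E,\mu;\bbR)$, hence $\bbB^{ij,(m)}\in\calP^{(\leq 2)}(E,\mu;E_{ij})$ by Proposition~\ref{prop:WIC_equiv_characterizations}(iv). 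The It\^o isometry together with a Kolmogorov-type continuity estimate — the iterated It\^o integral and the increments of the differences $\bbB^{ij}-\bbB^{ij,(m)}$ have moments of all orders with the scaling $\bbE\,|\bbB^{ij}_{s,t}|^p\lesssim_p |t-s|^{p/2}$, leaving a bit of room in the Hölder exponent so that $\bbB^{ij}$ actually lies in the separable closure $\calC^{0,2\alpha}([0,T]^2;\bbR)$ — yields $\bbB^{ij,(m)}\to\bbB^{ij}$ in $L^2(E,\mu;E_{ij})$. Since $\calP^{(\leq 2)}(E,\mu;E_{ij})$ is closed under $L^2$-limits (Lemma~\ref{lem:complete_in_probability}, together with Lemma~\ref{lem:equiv_of_norms}), we get $\bbB^{ij}\in\calP^{(\leq 2)}(E,\mu;E_{ij})$; assembling the components gives $\fB\in\calP^{(\leq[\calT])}(E,\mu;\bE)$. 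In fact one even has $\fB\in\calP^{([\calT])}(E,\mu;\bE)$ — for $i\neq j$ the double It\^o integral is purely second-chaos, and for $i=j$, $\int_0^{\cdot}B^i_s\,\dd B^i_s = \tfrac12\big((B^i)^2 - \id\big)$ is mean-zero and orthogonal to the first chaos — but $\calP^{(\leq[\calT])}$ is all that Theorem~\ref{thm:top_down_construction} requires.

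I expect the only genuine work to be the analytic input in the level-two step: controlling $\bbB^{ij}$ and the approximation error $\bbB^{ij}-\bbB^{ij,(m)}$ in the $\calC^{0,2\alpha}$-norm inside $L^2(E,\mu;\cdot)$, i.e.\ Kolmogorov continuity and moment bounds for iterated It\^o integrals and their dyadic approximations. This is classical — it is precisely the construction of the It\^o Brownian rough path as an element of $\calC^{0,\alpha}\oplus\calC^{0,2\alpha}$, cf.\ \cite{frizCourseRoughPaths2020} — so in the write-up I would either cite it directly or record the short Burkholder--Davis--Gundy-plus-hypercontractivity argument; everything else is bookkeeping against Theorem~\ref{thm:top_down_construction}. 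Once these points are in place, $((\calT,\bE,[\cdot],\calN),\bmu^{\fB},\fB)$ is Top-Down data, and Theorem~\ref{thm:top_down_construction} then produces the induced AWMS with $\sbH^{\fB}=\overline{\fB}(\sH)$ and skeleton lift $\fL=\overline{\fB}$, the latter being the Young-integral lift identified in Proposition~\ref{prop:proxy_restr_of_Ito}.
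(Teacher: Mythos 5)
Your proposal is correct, and the overall skeleton (ambient space, identification of $\mu=\pi_{\ast}\bmu^{\fB}$ with Wiener measure, the lifting property, and then the chaos membership as the only substantive point) matches the paper's proof; the two arguments diverge only in how $\fB_{ij}\in\calP^{(\leq 2)}(E,\mu;E_{ij})$ is established. The paper also reduces to scalar point evaluations via Proposition~\ref{prop:WIC_equiv_characterizations}(iv), but then simply recalls that $\calP^{(k)}(E,\mu;\bbR)$ is generated by $k$-fold It\^o integrals (citing Nualart), so each $[\fB_{ij}]_{s,t}$, being a genuine double It\^o integral, sits directly in the second \emph{homogeneous} chaos --- no approximation argument is needed. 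You instead run a Riemann-sum approximation, invoke Lemma~\ref{lem:polynomial_in_WIC} for each dyadic approximant, and pass to the limit using Lemma~\ref{lem:complete_in_probability}; this is more self-contained (it uses only lemmas proved in the paper plus standard rough-path moment bounds) but buys the conclusion at the price of extra analytic work, namely $L^2(E,\mu;E_{ij})$-convergence of the Riemann sums in the $\calC^{0,2\alpha}$-norm, whereas a pointwise-in-$(s,t)$ limit in $L^2(E,\mu;\bbR)$ (which is just the definition of the It\^o integral) would already suffice once Proposition~\ref{prop:WIC_equiv_characterizations}(iv) is in play. Note that both routes still tacitly require $\fB_{ij}\in L^2(E,\mu;E_{ij})$, i.e.\ square-integrability of the $2\alpha$-H\"older norm of $\bbB^{ij}$ and membership in the separable closure $\calC^{0,2\alpha}$ (this is a hypothesis of Proposition~\ref{prop:WIC_equiv_characterizations} and of Lemma~\ref{lem:complete_in_probability}); you make this Kolmogorov-type input explicit, the paper leaves it implicit. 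Your closing observation that $\fB_{\tau}$ in fact lies in the homogeneous chaos $\calP^{([\tau])}$ is also made in the paper (it is recorded as \eqref{eq:better_yet} and used later to get $\fB^{\circ}=\fB$ in Proposition~\ref{prop:proxy_restr_of_Ito}), so it is worth keeping rather than discarding as an aside.
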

\begin{proof}
    The fact that $(\calT, \bE, [\cdot], \calN)$ is an ambient space is immediate. Since $\fB$ is Borel-measurable $\bmu^{\fB}$ is a Borel probability measure on $\bE$. By construction $\mu := \pi_{\ast} \bmu^{\fB}$ is a Gaussian measure (the classical Wiener measure) on the space $E \cong C^{0,\alpha}([0,T]; \bbR^d)$, $\pi \circ \fB = \id_E$ $\mu$-a.s.\!, and $\fB_{\ast} \mu = \bmu^{\fB}$. To see that $\fB_{\tau} \in \calP^{(\leq [\tau])}(E, \mu; E_{\tau})$ for any $\tau \in \calT$, note that by Proposition \ref{prop:WIC_equiv_characterizations}(iv) it is enough to show that $\left[\fB_{\tau}(\cdot)\right]_{s,t} \in \calP^{(\leq [\tau])}(E, \mu; \bbR)$ for $s,t \in [0,T]$ and recall that $\calP^{([\tau])}(E, \mu; \bbR)$ is the subspace of $L^2(E, \mu; \bbR)$ generated by $[\tau]$-fold Ito integrals; see \cite[Prop. 1.1.4]{nualartMalliavinCalculusRelated2006}. Therefore, in fact, not only
    
    \begin{equation}
        \fB_{\tau} \in \calP^{(\leq [\tau])}(E, \mu; E_{\tau}), \quad \text{but} \quad \fB_{\tau} \in \calP^{([\tau])}(E, \mu; E_{\tau}). \label{eq:better_yet}
    \end{equation}
\end{proof}

\paragraph{Computation of the Proxy-Restriction}

By Proposition \ref{prop:Ito_is_Top_Down} we may apply the Top-Down Construction Theorem \ref{thm:top_down_construction} and compute the skeleton lift as the proxy-restriction of $\fB$.  

\begin{prop} \label{prop:proxy_restr_of_Ito}
    In the context of this section, for every $h \in \sH$, $s,t \in [0,T]$, $1 \leq i,j \leq d$ 

    \begin{equation}
        \left[\overline{\fB_{i}}(h) \right]_t = h^i_t, \quad [\overline{\fB_{ij}}(h)]_{s,t} = \int_s^t h^i_{s,r} \dd h^j_r ,
    \end{equation}

    as iterated integrals in the Young-sense. In particular, the proxy-restriction of the Ito lift $\fB$ and the proxy-restriction of the Gaussian rough path lift $\fX$ of a Brownian motion $B$ coincide. 
\end{prop}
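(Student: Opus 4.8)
The plan is to compute the proxy-restriction $\overline{\fB_\tau}$ directly from its definition $\overline{\fB_\tau}(h) = \int_E (\Pi_{[\tau]} \fB_\tau) \circ T_h \, \dd\mu$, using the fact (recorded in \eqref{eq:better_yet}) that $\fB_\tau$ already lives in the \emph{homogeneous} chaos $\calP^{([\tau])}(E,\mu;E_\tau)$, so the projection $\Pi_{[\tau]}$ acts trivially and $\fB_\tau^\circ = \fB_\tau$. Thus the whole computation reduces to evaluating $\bbE\!\left[ \fB_\tau \circ T_h \right]$ for $\tau = i$ and $\tau = ij$.

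For the first level, $\fB_i \circ T_h$ evaluated at time $t$ is just $B^i_t + h^i_t$ (pointwise, and the Hölder topology on $E_i$ makes the Bochner integral compatible with evaluation, which is bounded linear), so $\bbE[B^i_t + h^i_t] = h^i_t$, giving $[\overline{\fB_i}(h)]_t = h^i_t$. For the second level, I would use bilinearity of the Ito integral together with the shift: formally $\bbB^{ij}_{s,t}(B + h) = \int_s^t (B+h)^i_{s,r}\,\dd (B+h)^j_r$, which expands into four terms $\int_s^t B^i_{s,r}\,\dd B^j_r + \int_s^t h^i_{s,r}\,\dd B^j_r + \int_s^t B^i_{s,r}\,\dd h^j_r + \int_s^t h^i_{s,r}\,\dd h^j_r$; here the first three are (stochastic, resp. mixed Young--Ito) integrals whose $\mu$-expectation vanishes — the first because it is a genuine Ito integral of a previsible integrand against Brownian motion, the second likewise, the third because it equals $\int_s^t B^i_{s,r}\,(h^j)'(r)\,\dd r$ and $\bbE[B^i_{s,r}] = 0$ with Fubini — leaving only the deterministic Young integral $\int_s^t h^i_{s,r}\,\dd h^j_r$. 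Hence $[\overline{\fB_{ij}}(h)]_{s,t} = \int_s^t h^i_{s,r}\,\dd h^j_r$. Comparing with the formulas \eqref{eq:GRP_lift_first_line} for $\fM^{\GRP}$ (restricted to levels $1$ and $2$), these are exactly $[\fM^{\GRP}_i(h)]_t$ and $[\fM^{\GRP}_{ij}(h)]_{s,t}$, which gives the final assertion that the proxy-restrictions of $\fB$ and of $\fX$ agree — noting that for Brownian motion $\rho = 1$, so the third-level symbols of $\fX$ are determined by the first two and contribute nothing new.

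The main obstacle I expect is the careful justification that shifting the Brownian Ito integral by a Cameron--Martin path behaves as written: one must argue that under $T_h$ the pair $(B,\bbB)$ transforms so that $\bbB(B+h)$ decomposes into the four terms above with the correct pathwise meaning — the cross terms involving $h$ against $\dd B$ and $B$ against $\dd h$ need to be interpreted (as a Wiener integral and as a Young/Riemann--Stieltjes integral respectively), and one needs that all three non-deterministic terms are centred under $\mu$. A clean way to see this is to note that for $h \in \sH = W^{1,2}$ one has the explicit chaos expansion of $\bbB^{ij}_{s,t}$ as an element of the second Wiener chaos and that the shift operator $T_h$ acts on the $n$-th chaos by the Hermite binomial formula (exactly as exploited in the proof of Proposition~\ref{prop:proxy_restriction_is_cont}(4) and in Lemma~\ref{lem:agreement_for_bottom_up}), picking out precisely the ``fully-contracted'' term $\prod \langle e_i,h\rangle^{\alpha_i}$, which reassembles into the iterated Young integral. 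I would present the hands-on bilinearity argument as the main line and remark that it can alternatively be deduced from the general Bottom-Up agreement lemma applied to the Stratonovich--Gaussian rough path (whose skeleton lift is $\fM^{\GRP}$), since Ito and Stratonovich lifts differ only by the deterministic bracket $\tfrac12 \delta_{ij}(t-s)$, which is centred and of zero leading chaos order and hence invisible to $\overline{(\cdot)}$.

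\begin{proof}
    By Proposition~\ref{prop:Ito_is_Top_Down} and \eqref{eq:better_yet} we have $\fB_\tau \in \calP^{([\tau])}(E,\mu;E_\tau)$ for every $\tau \in \calT$, so $\Pi_{[\tau]} \fB_\tau = \fB_\tau$ and hence $\fB_\tau^\circ = \fB_\tau$. Consequently
    \begin{equation}
        \overline{\fB_\tau}(h) = \int_E \fB_\tau \circ T_h \,\dd\mu = \bbE\!\left[ \fB_\tau(B + h) \right], \quad h \in \sH, \ \tau \in \calT.
    \end{equation}

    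\textbf{First level.} For $\tau = i$ and $t \in [0,T]$, evaluation at $t$ is a bounded linear functional on $E_i = \calC^{0,\alpha}([0,T];\bbR)$, so it commutes with the Bochner integral, and $[\fB_i(B+h)]_t = B^i_t + h^i_t$. Since $\bbE[B^i_t] = 0$ we get $[\overline{\fB_i}(h)]_t = h^i_t$.

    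\textbf{Second level.} For $\tau = ij$, $s,t \in [0,T]$, again evaluation at $(s,t)$ commutes with the Bochner integral. By bilinearity of the Ito integral and the Cameron--Martin shift,
    \begin{equation}
        [\fB_{ij}(B+h)]_{s,t} = \int_s^t B^i_{s,r}\,\dd B^j_r + \int_s^t h^i_{s,r}\,\dd B^j_r + \int_s^t B^i_{s,r}\,\dd h^j_r + \int_s^t h^i_{s,r}\,\dd h^j_r,
    \end{equation}
    where the first two are Ito integrals of square-integrable previsible integrands against $B^j$, the third is $\int_s^t B^i_{s,r}\,(h^j)'(r)\,\dd r$, and the last is a deterministic Young integral. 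Taking $\bbE[\,\cdot\,]$, the first two terms vanish as Ito integrals, the third vanishes by Fubini since $\bbE[B^i_{s,r}] = 0$, and the last is $h$-deterministic. Hence $[\overline{\fB_{ij}}(h)]_{s,t} = \int_s^t h^i_{s,r}\,\dd h^j_r$ in the Young sense.

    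Comparing with \eqref{eq:GRP_lift_first_line}, we have $\overline{\fB_i} = \fM^{\GRP}_i$ and $\overline{\fB_{ij}} = \fM^{\GRP}_{ij}$ on $\sH$; since for Brownian motion $\rho = 1$, the Gaussian rough path lift $\fX$ and the Ito lift $\fB$ have the same symbols at levels $1$ and $2$ up to the bracket $\tfrac12\delta_{ij}(t-s)$, which is centred and lies in the $0$-th chaos and is therefore annihilated by $\overline{(\cdot)}$. Thus the proxy-restrictions of $\fB$ and of $\fX$ coincide.
\end{proof}
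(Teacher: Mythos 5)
Your proof is correct and follows essentially the same route as the paper's: use \eqref{eq:better_yet} to drop the projection $\Pi_{[\tau]}$, compute the expectation of the shifted lift directly, and expand the second-level integral into four terms, of which the three involving $B$ are centred (martingale property of the Ito integral for the first two, Fubini plus $\bbE[B^i_{s,r}]=0$ for the third), leaving the Young integral $\int_s^t h^i_{s,r}\,\dd h^j_r$. The extra remarks on the Hermite-binomial alternative and the bracket term are consistent with the paper's Remark on p.~\pageref{rem:replacing_condition} but not needed.
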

\begin{proof}
    By \eqref{eq:better_yet} we immediately have $\fB^{\circ} = \fB$. To compute the proxy-restriction let $h \in \sH$, $s,t \in [0,T]$ and $1 \leq i,j \leq d$ be arbitrary. Then since $\mu$ is the distribution of a Brownian motion starting at $0$ we obtain

\begin{equation}
    \left[\overline{\fB_{i}}(h) \right]_t = \int_E \left[\fB_{i}(x + h)\right]_t \dd \mu(x) = \int_E x^i_t + h^i_t \dd \mu(x) = h^i_t
\end{equation}

and 

\begin{align}
    [\overline{\fB_{ij}}(h)]_{s,t} &= \int_E \left[\fB_{ij}(x + h)\right]_{s,t} \dd \mu(x) \\
    &= \int_E \int_s^t (x + h)^i_{s,r} \dd (x + h)^j_r \dd \mu(x) \\
    &= \underbrace{\int_E \int_s^t x^i_{s,r} \dd x^j_r \dd \mu(x)}_{=: I} + \underbrace{\int_E \int_s^t x^i_{s,r} \dd h^j_r \dd \mu(x)}_{=: II} \\
    &+ \underbrace{\int_E \int_s^t h^i_{s,r} \dd x^j_r \dd \mu(x)}_{=: III} + \int_E \int_s^t h^i_{s,r} \dd h^j_r \dd \mu(x).
\end{align}

$I$ vanishes due to independence of increments, vanishing expectation of Brownian motion, and the martingale property of Ito integrals. $II$ and $III$ vanish due to Fubini and vanishing of expectation of Brownian motion. The proxy-restriction of the lift is therefore $\overline{\fB}(h) = \left( h, \int h  \otimes \dd h \right)$. Since $\sH \subseteq C^{0, 1- \var}([0,T]; \bbR)$ the iterated integral is well defined in the Young-sense and thus the proxy-restriction of the Ito lift coincides with that of the Stratonovich/GRP skeleton lift \eqref{eq:GRP_lift_first_line} - \eqref{eq:GRP_shuffle_2}.
\end{proof}

\subsubsection{Application of Theorems}

\paragraph{Large Deviations} We may apply Theorem \ref{thm:LDP_for_AWMS} to conclude that the family of measures $(\bmu^{\fB}_{\varepsilon})_{\varepsilon > 0}$ defined by 

\begin{equation}
    \bmu^{\fB}_{\varepsilon} (A) =  \mu \left( ( \delta_{\varepsilon} \fB) \in A \right) = \mu \left( \left( \varepsilon B, \varepsilon^2 \bbB \right) \in A \right), \quad A \in \sB_{\bE}, \varepsilon > 0,
\end{equation}

satisfies an LDP with good rate function given by 

\begin{equation}
    \sJ^{\fB}(\bx) = \begin{cases}
    \frac{1}{2} \sum_{i=1}^d \int_0^T \vert [\pi_{i}(\bx)]'_s \vert^2 \dd s & \quad \bx \in \sbH^{\fB} \\ 
    + \infty & \quad \text{else}, 
    \end{cases} \label{eq:Ito_rate_function}
\end{equation}

where $\sbH^{\fB}$ is nothing but 

\begin{equation}
    \sbH^{\fB} = \overline{\fB}(\sH) = \left\{ \left( h, \int h \otimes \dd h \right) : h \in \sH \right\} .
\end{equation}

As a consequence of Proposition \ref{prop:proxy_restr_of_Ito}, if the centered Gaussian process in section \ref{subsec:GaussianRP} is a Brownian motion and $\fX$ consequently the Stratonovich lift, then $\sJ^{\fB} = \sJ^{\fX}$ and 

\begin{equation}
    \sbH^{\fB} = \overline{\fB}(\sH) = \overline{\fX}(\sH) = \sbH^{\fX} .
\end{equation}

This is to be expected because the Ito- and Stratonovich enhancement only differ in a bracket term, which lies in $\calP^{(\leq 1)}$ and thus does not contribute to the proxy-restriction of the lift on level $2$; cf. Remark on p. \pageref{rem:replacing_condition}.

\paragraph{Fernique Estimate}

Furthermore, according to Theorem \ref{thm:exp_int} the measure $\bmu^{\fB}$ satisfies a Fernique estimate. That is, let  

\begin{equation}
    \eta_0 := \inf \left\{ \frac{1}{2} \Vert h \Vert_{\sH}^2 : h \in \sH, \left(\left\Vert h \right\Vert_{\alpha} + \left\Vert \int h \dd h \right\Vert_{2 \alpha}^{\frac{1}{2}} \right) =1 \right\} .
\end{equation}

Then for any $\eta < \eta_0$ and $t \geq 0$

\begin{equation}
    \bmu^{\fB} \left( \bx \in \bE : \vertiii{\bx} \geq t \right) = \mu \left( \Vert B \Vert_{\alpha} + \Vert \bbB \Vert_{2 \alpha}^{\frac{1}{2}} \geq t \right) \lesssim \exp\left( - \eta t^2 \right).
\end{equation}

\subsection{Rough Volatility Regularity Structure}

\paragraph{General Setup}
Consider the regularity structure associated to rough volatility as defined in \cite{bayerRegularityStructureRough2020}. Fix a Hurst parameter $0 < H < \frac{1}{2}$ and $\kappa \in (0, H)$ throughout the section and define $M$ as the smallest integer s.t. $(M+1)(H - \kappa) - \frac{1}{2} - \kappa > 1$. Define the regularity structure consisting of the structure space which is the $\bbR$-linear span of $\{ \Xi, \Xi \mathcal{I}(\Xi), \ldots, \Xi \mathcal{I}(\Xi)^m, 1, \mathcal{I}(\Xi), \ldots, \mathcal{I}(\Xi)^m \}$, and the structure group $G:= \{ \Gamma_h : h \in (\bbR, +) \}$ with $\Gamma_h 1 = 1, \Gamma_h \Xi = \Xi, \Gamma_h \mathcal{I}(\Xi) = \mathcal{I}(\Xi) + h1$ extended to linear operators on the structure space via linearity and multiplicativity. The indexing set is given by the homogeneities of the symbols, which are $\vert \Xi \vert = - \frac{1}{2} - \kappa, ~ \vert 1 \vert = 0$ extended to the rest of the symbols via the rules $\vert \tau \cdot \tau' \vert = \vert \tau \vert + \vert \tau' \vert$ and $\vert \mathcal{I}(\tau) \vert = \vert \tau \vert + (H + \frac{1}{2})$.

\subsubsection{As an AWMS}

As in the previous subsections, it is our goal to define an AWMS such that the full lift coincides with the lift defined in \cite{bayerRegularityStructureRough2020}. We will do so by specifying Top-Down data. 

\paragraph{Definition of the Ambient Space} Define $\calN = \{ \Xi \}$, $\calT^{(i)} = \left\{\Xi \mathcal{I}(\Xi)^{i-1}, \mathcal{I}(\Xi)^{i} \right\}$ with $1 \leq i \leq M$, i.e. the degree $[\tau]$ of a symbol $\tau$ counts the number of (multiplicative) appearances of $\Xi$ in $\tau$. Furthermore, for every $\tau$ define $E_{\tau}$ as the closure of the smooth two parameter functions on $[0,T]$, which we will write as $(s,t) \mapsto f_s(t)$, under the norm

\begin{equation}
    \Vert f \Vert_{E_{\tau}} := \sup_{\lambda \in (0,1]} \sup_{\varphi \in \mathcal{B}} \sup_{s \in [0,T]} \lambda^{- \vert \tau \vert} \left\vert f_s (\varphi_s^{\lambda}) \right\vert.
\end{equation}

Here $\varphi_s^{\lambda}(t) := \frac{1}{\lambda} \varphi(\frac{t - s}{\lambda})$ and, given $s \in [0,T]$ and $\lambda \in (0,1]$, $f_s (\varphi_s^{\lambda})$ denotes the application of $f_s$ to $\varphi_s^{\lambda}$ in the sense of a distribution, and $\mathcal{B}$ denotes the set of smooth space-time functions which are compactly supported in the unit ball and whose value and the value of its derivatives up to order $1$ are bounded by $1$. 

\paragraph{Definition of the Full Lift} Let $\xi$ be a white noise on $[0,T]$ defined on $E$. Let $B$ be a Brownian motion defined by $B_t = \xi(1_{[0,t]})$. Recall that $0 < H < \frac{1}{2}$ denotes the Hurst parameter and define the associated Volterra kernel by $K^{H}(t) := \sqrt{2H} ~ t^{H- \frac{1}{2}} 1_{\{ t > 0\}}$ and a fractional Brownian motion $W^H$ by 

\begin{equation}
    W^H(t) = \sqrt{2H} \int_0^t \vert t - r \vert^{H - \frac{1}{2}} \dd B_r \quad (= K^{H} \ast \xi) .
\end{equation}

Define 

\begin{equation}
    \bbW_{s,t}^H := \int_s^t \left( W^H_{s,r} \right)^m \dd B_r 
\end{equation}

in the sense of an Ito integral and define the full lift $\fV: E \rightarrow \bE$ as

\begin{equation}
    \left[\fV_{\Xi}(\cdot) \right](s, \cdot) = \dot{B}, \quad \left[\fV_{\mathcal{I}(\Xi)^m}(\cdot) \right](s, \cdot) = (W^H_{s, \cdot})^m, \quad \left[\fV_{\Xi \mathcal{I}(\Xi)^m}(\cdot) \right](s, \cdot) = \partial_t \bbW^H \label{eq:RV_lift_3}
\end{equation}

and the enhanced measure $\bmu^{\fV}$ as the distribution of $\fV$. The abstract Wiener space associated to $(E, \mu)$ is 

\begin{equation}    
    E = \mathcal{C}^{0, \alpha}([0,T]; \bbR), \quad \sH = L^2([0,T]; \bbR), \quad \text{and} \quad \mu = \text{Law}(\xi)
\end{equation}

for $\alpha:= \vert \Xi \vert < - \frac{1}{2}$.

\begin{prop} \label{prop:rough_vol_is_top_down}
    Let $(\calT, \bE, [\cdot], \calN)$ be as defined as above, let $\fV$ be defined as in \eqref{eq:RV_lift_3} and let $\bmu^{\fV}$ be its distribution on $\bE$. Then $(\calT, \bE, [\cdot], \calN), \bmu^{\fV}, \fV$ is Top-Down data in the sense of Theorem \ref{thm:top_down_construction}.
\end{prop}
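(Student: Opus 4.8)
The plan is to verify the three items defining Top-Down data in Theorem~\ref{thm:top_down_construction}, proceeding exactly as in the proof of Proposition~\ref{prop:Ito_is_Top_Down}. That $(\calT, \bE, [\cdot], \calN)$ is an ambient space is immediate from its construction: $\calT$ is finite, $\bE = \bigoplus_{\tau} E_\tau$ is a separable Banach space graded over $\calT$, the degree $[\cdot]$ takes values in $\bbN_{\geq 1}$ and counts the multiplicative occurrences of $\Xi$, and $\calN = \{\Xi\}$ with $[\Xi] = 1$. Since $\fV$ is obtained from $\xi$ by applying the bounded linear maps $\xi \mapsto B$ and $\xi \mapsto W^H = K^H \ast \xi$ followed by a (measurable) Ito integral, it is Borel measurable, so $\bmu^{\fV} := \fV_\ast \mu$ is a Borel probability measure on $\bE$. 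By construction $E := E_\calN = E_\Xi = \calC^{0,\alpha}([0,T];\bbR)$ and, because $\fV_\Xi(\cdot) = \dot B = \xi$, we have $\pi \circ \fV = \pi_\Xi \circ \fV = \id_E$ ($\mu$-a.s.); hence $\fV$ is a measurable lift in the sense of Definition~\ref{defn:lift_defn}, $\mu := \pi_\ast \bmu^{\fV}$ is the law of $\xi$ (a centred Gaussian measure on $E$ with Cameron--Martin space $L^2([0,T];\bbR)$, as recorded in the setup above), and $\fV_\ast \mu = \bmu^{\fV}$ trivially.

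It remains to verify the graded Wiener--Ito chaos condition $\fV_\tau \in \calP^{(\leq [\tau])}(E,\mu;E_\tau)$ for every $\tau \in \calT$. As in Proposition~\ref{prop:Ito_is_Top_Down}, once one knows that $\fV_\tau$ is a square-integrable $E_\tau$-valued random variable, Proposition~\ref{prop:WIC_equiv_characterizations}(iv) reduces the claim to showing that the tested real-valued random variables $x \mapsto [\fV_\tau(x)]_s(\varphi_s^\lambda)$ lie in $\calP^{(\leq [\tau])}(E,\mu;\bbR)$ for all $s \in [0,T]$, $\lambda \in (0,1]$, $\varphi \in \mathcal{B}$. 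For $\tau = \Xi$ this holds since $x \mapsto [\fV_\Xi(x)]_s(\varphi_s^\lambda) = x(\varphi_s^\lambda)$ is a bounded linear functional of $x$, hence in $\calP^{(1)}(E,\mu;\bbR)$. For $\tau = \mathcal{I}(\Xi)^m$ (so $[\tau] = m$): one has $K^H \in L^2([0,T])$, since $\int_0^T K^H(t)^2 \dd t = 2H \int_0^T t^{2H-1} \dd t = T^{2H} < \infty$, so each increment $W^H_{s,t}$ is an element of the first Wiener--Ito chaos, and therefore $(W^H_{s,\cdot})^m$, being a degree-$m$ polynomial in a Gaussian family, lies in $\bigoplus_{k=0}^{m} \calP^{(k)} = \calP^{(\leq m)}$ (and so does its integral against $\varphi_s^\lambda$). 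For $\tau = \Xi \mathcal{I}(\Xi)^m$ (so $[\tau] = m+1$): the integrand $r \mapsto (W^H_{s,r})^m$ in $\bbW^H_{s,t} = \int_s^t (W^H_{s,r})^m \dd B_r$ is adapted---because the Volterra kernel $K^H(t) = \sqrt{2H}\, t^{H-1/2} 1_{\{t>0\}}$ is causal---and square-integrable, and Ito integration against $B$ raises the chaos order by at most one, whence $\bbW^H_{s,t} \in \calP^{(\leq m+1)}$; the distributional derivative $\partial_t$ composed with testing against a fixed $\varphi_s^\lambda$ is a bounded linear map under which each $\calP^{(\leq k)}$ is stable, so $[\fV_{\Xi\mathcal{I}(\Xi)^m}(x)]_s(\varphi_s^\lambda) \in \calP^{(\leq m+1)}(E,\mu;\bbR)$. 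In every case the chaos order matches $[\tau]$.

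The main obstacle is the a priori input that must be secured before Proposition~\ref{prop:WIC_equiv_characterizations}(iv) applies, namely that each $\fV_\tau$ is indeed a Borel measurable map $E \to E_\tau$ and belongs to $L^2(E,\mu;E_\tau)$ (in fact to $L^p$ for all $p$, by Gaussian hypercontractivity once the chaos bound above is in hand). This is precisely the statement that $\fV$ underlies an admissible Gaussian model for the rough-volatility regularity structure, together with the attendant stochastic estimates on its components, which is established in \cite{bayerRegularityStructureRough2020}; we invoke it directly. A secondary point requiring a little care is the interpretation of $\partial_t \bbW^H$ as the two-parameter distribution $(s,t) \mapsto [\fV_{\Xi\mathcal{I}(\Xi)^m}(\cdot)]_s$ tested as in the definition of $\Vert \cdot \Vert_{E_\tau}$, and the joint measurability of $(s,\lambda,\varphi) \mapsto [\fV_\tau(\cdot)]_s(\varphi_s^\lambda)$; once this matching of spaces is fixed---again as in \cite{bayerRegularityStructureRough2020}---the chaos bookkeeping above is unaffected, and the proposition follows.
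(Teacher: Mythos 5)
Your overall route is the same as the paper's: check the three items of Top--Down data, with the measurability/lift/pushforward points being routine and the substantive content being the graded Wiener--Ito chaos condition. Where you differ is that you carry out the chaos bookkeeping by hand (first chaos for $W^H_{s,t}$ via $K^H\in L^2$, degree-$m$ polynomials of Gaussians landing in $\calP^{(\leq m)}$, It\^o integration raising the order by one, and the reduction via Proposition \ref{prop:WIC_equiv_characterizations}(iv) to tested real-valued functionals), whereas the paper simply cites \cite[Sec.~10.2]{hairerTheoryRegularityStructures2014} for $\pi_\tau\circ\fV\in\calP^{(\leq[\tau])}(E,\mu;E_\tau)$. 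Your explicit computation is correct and arguably more informative, at the cost of having to import from \cite{bayerRegularityStructureRough2020} the a priori fact that each $\fV_\tau$ is a well-defined element of $L^2(E,\mu;E_\tau)$ — a dependence you state honestly.

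There is one point you dismiss too quickly: you assert that $\bE=\bigoplus_\tau E_\tau$ is a separable Banach space as part of "immediate from its construction." Separability of the spaces $E_\tau$ is genuinely not immediate here. Each $E_\tau$ is the closure of smooth two-parameter functions under a norm involving a triple supremum over $\lambda\in(0,1]$, $\varphi\in\mathcal{B}$, and $s\in[0,T]$, and exhibiting a countable dense subset requires an argument — the paper invokes a Stone--Weierstrass argument (with references to \cite{hairerLargeDeviationsWhiteNoise2015} and to Lemma 4.2 of T.~Klose's master's thesis) precisely for this. Since separability of $\bE$ is part of Definition \ref{defn:ambient_space} and hence of what it means to have Top--Down data, you should either supply this argument or cite it; as written, this step is a gap, albeit a small and fixable one. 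The rest of your verification stands.
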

\begin{proof}
    Via a Stone--Weierstrass argument, the spaces $E_{\tau}$ can be seen to be separable (see \cite{hairerLargeDeviationsWhiteNoise2015} for a sketch and Lemma 4.2 in the master's thesis of T. Klose for a full argument). Thus $(\calT, \bE, [\cdot], \calN)$ constitutes an ambient space. By definition, $\hat{\fL}$ is a measurable lift, $\mu := \pi_{\ast} \bmu$ is a centred Gaussian measure (the white noise measure) on $E$, and $\bmu^{\fV} = \fV_{\ast} \mu$. Condition $\pi_{\tau} \circ \fV \in \calP^{(\leq [\tau])}(E, \mu; E_{\tau})$ is satisfied by \cite[Sec. 10.2]{hairerTheoryRegularityStructures2014}. 
\end{proof}

\subsubsection{Computation of the Proxy-Restriction}

\begin{prop}
    In the context of this section, for every $h \in \sH$ and $s,t \in [0,T]$
    
    \begin{equation}
        \left[ \overline{\fV}_{\Xi}(h) \right]_t = h_t, \quad \left[\overline{\fV}_{\mathcal{I}(\Xi)}(h) \right]_{s,t} = \int_0^{s \vee t} \left( K^H(t-u) - K^H(s-u) \right) h(u) \dd u,
    \end{equation}
    
    extended to all of $\mathcal{T}$ by $\overline{\fV}_{\tau \tau'}(h) = \overline{\fV}_{\tau}(h) \cdot \overline{\fV}_{\tau'}(h)$.
\end{prop}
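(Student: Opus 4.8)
The plan is to use the Top--Down machinery already in place: by Proposition~\ref{prop:rough_vol_is_top_down} the triple $((\calT,\bE,[\cdot],\calN),\bmu^{\fV},\fV)$ is Top--Down data, so by Theorem~\ref{thm:top_down_construction} the skeleton lift is $\fL=\overline{\fV}$, and the whole task reduces to evaluating the proxy-restriction $\overline{\fV_{\tau}}(h)=\int_E(\Pi_{[\tau]}\fV_{\tau})\circ T_h\,\dd\mu$ (Definition~\ref{defn:proxy_restriction}) symbol by symbol, for the symbols of $\pi_{\calN}$-degree $\ge 1$, namely $\tau\in\{\mathcal I(\Xi)^j,\ \Xi\mathcal I(\Xi)^j:1\le j\le M\}$. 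The mechanism is the one already visible in the It\^o case of Proposition~\ref{prop:proxy_restr_of_Ito}: projecting onto the top homogeneous chaos annihilates all Wick/It\^o corrections (which sit in strictly lower chaos), and integrating the remaining Wick monomial against the shifted measure $\mu_h$ amounts, by the binomial theorem for Hermite polynomials (Proposition~\ref{prop:hermite_polynomial_binomial}), to replacing the noise $\xi$ by $h$. Throughout, the interchanges of $\lim$, $\bbE$ and $\partial_t$ are the routine $L^2$--$L^1(\mu_h)$ manipulations of Proposition~\ref{prop:L2L1} and the proof of Lemma~\ref{lem:agreement_for_bottom_up}, which I will not reproduce, and continuity and $\calT$-multi-linearity of each $\overline{\fV_{\tau}}$ are automatic from Proposition~\ref{prop:proxy_restriction_is_cont}.

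First I would treat the two generators. Since $\fV$ is a lift, $\fV_{\Xi}=\pi\circ\fV=\id_E$ $\mu$-a.s., which lies in the first chaos, so $\Pi_1\fV_{\Xi}=\fV_{\Xi}$ and $\overline{\fV_{\Xi}}(h)=\int_E(x+h)\,\dd\mu(x)=h$ because $\mu$ is centred; this is the stated $[\overline{\fV}_{\Xi}(h)]_t=h_t$. Next, writing $k_t(u):=K^H(t-u)\mathbf{1}_{[0,t]}(u)$ so that $W^H_t=\xi(k_t)$, we have $[\fV_{\mathcal I(\Xi)}(x)]_{s,t}=W^H_{s,t}(x)=\xi(g_{s,t})(x)$ with $g_{s,t}:=k_t-k_s\in L^2([0,T])$, again a first-chaos element; hence $\Pi_1$ acts trivially and, under the shift $\xi\mapsto\xi+h$, one obtains $[\overline{\fV}_{\mathcal I(\Xi)}(h)]_{s,t}=\langle g_{s,t},h\rangle_{L^2}$. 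Since $\operatorname{supp}K^H\subseteq(0,\infty)$, the support-restricted pieces combine into $\langle g_{s,t},h\rangle_{L^2}=\int_0^{s\vee t}\bigl(K^H(t-u)-K^H(s-u)\bigr)h(u)\,\dd u$, which is the claimed formula.

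For the higher symbols I would identify the top chaos component and compute its $S$-transform. For $\tau=\mathcal I(\Xi)^j$ (so $[\tau]=j$) we have $\fV_{\tau}(x)(s,t)=(W^H_{s,t})^j=\xi(g_{s,t})^j$; expanding a power of a Gaussian into Wick powers gives $\Pi_j\fV_{\tau}={:}\xi(g_{s,t})^j{:}=I_j(g_{s,t}^{\otimes j})$ with all remaining terms in $\calP^{(<j)}$, and either the $S$-transform identity $S[I_j(\psi)](h)=\langle\psi,h^{\otimes j}\rangle$ or, equivalently, the Hermite binomial (exactly as in Lemma~\ref{lem:agreement_for_bottom_up}) yields $[\overline{\fV}_{\mathcal I(\Xi)^j}(h)]_{s,t}=\langle g_{s,t},h\rangle^j=\bigl([\overline{\fV}_{\mathcal I(\Xi)}(h)]_{s,t}\bigr)^j$. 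For $\tau=\Xi\mathcal I(\Xi)^j$ (so $[\tau]=j+1$), $\fV_{\tau}(x)(s,\cdot)=\partial_t\bbW^H_{s,\cdot}$ with $\bbW^H_{s,t}=\int_s^t(W^H_{s,r})^j\,\dd B_r$; substituting the Wick expansion of the integrand shows that the $(j+1)$-th homogeneous chaos component of $\bbW^H_{s,t}$ is $\int_s^t{:}\xi(g_{s,r})^j{:}\,\dd B_r$ (the integrand being purely in the $j$-th chaos, the It\^o integral lands one chaos higher, while each lower-order correction, with an integrand of chaos order $<j$, contributes only to $\calP^{(\le j)}$). Realising this as a single iterated Wiener integral $I_{j+1}(\psi_{s,t})$, with $\psi_{s,t}$ the symmetrisation of the causal kernel assembled from $g_{s,r}$ and $\mathbf{1}_{[s,t]}$, the $S$-transform formula gives $\int_s^t\langle g_{s,r},h\rangle^j h(r)\,\dd r$ for the antiderivative evaluated at $t$; differentiating in $t$ yields $[\overline{\fV}_{\Xi\mathcal I(\Xi)^j}(h)]_{s,t}=\langle g_{s,t},h\rangle^j\,h(t)=\bigl([\overline{\fV}_{\mathcal I(\Xi)}(h)]_{s,t}\bigr)^j\,[\overline{\fV}_{\Xi}(h)]_t$, which is precisely the asserted multiplicative extension $\overline{\fV}_{\tau\tau'}(h)=\overline{\fV}_{\tau}(h)\cdot\overline{\fV}_{\tau'}(h)$ (the only products occurring in this structure being $\mathcal I(\Xi)^j=\mathcal I(\Xi)\cdots\mathcal I(\Xi)$ and $\Xi\mathcal I(\Xi)^j=\Xi\cdot\mathcal I(\Xi)\cdots\mathcal I(\Xi)$).

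The main obstacle is the mixed symbol $\Xi\mathcal I(\Xi)^j$, where one faces an honest It\^o integral of a power of a Volterra--Gaussian: one must (i) extract its top homogeneous chaos component correctly, and (ii) compute the corresponding $S$-transform, which naively requires making sense of a shifted stochastic integral $\bigl(\int\,\dd B\bigr)\circ T_h$ (where both integrand \emph{and} integrator pick up $h$-contributions, including a Young-type correction term) and checking that after the chaos projection only the all-$h$ term survives both the projection and the expectation. I expect the robust route is to bypass (ii) by representing $\Pi_{j+1}\fV_{\Xi\mathcal I(\Xi)^j}$ directly as an iterated Wiener integral $I_{j+1}$ of an explicit $L^2$-kernel and invoking the standard $S$-transform identity $S[I_{j+1}(\psi)](h)=\langle\psi,h^{\otimes(j+1)}\rangle$ recalled in the remark following Definition~\ref{defn:proxy_restriction}; matching this with the claimed pointwise formula is then only bookkeeping with the causal structure of $K^H$. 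Everything else reduces to the scalar Hermite computations already carried out in the paper.
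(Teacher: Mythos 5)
Your argument is correct, but it takes a genuinely different route from the paper: the paper disposes of this proposition with a bare citation to \cite[Lem.~4.1 and Lem.~B1]{bayerRegularityStructureRough2020}, whereas you give a self-contained derivation inside the paper's own framework. Concretely, you run the proxy-restriction of Definition~\ref{defn:proxy_restriction} symbol by symbol: the two generators are first-chaos elements, so $\Pi_1$ acts trivially and the shift-and-integrate step reduces to $\overline{\xi(g)}(h)=\langle g,h\rangle_{L^2}$, which together with the causality of $K^H$ gives exactly the two displayed formulas; for $\mathcal I(\Xi)^j$ you Wick-expand the $j$-th power and use the $S$-transform identity (equivalently Proposition~\ref{prop:hermite_polynomial_binomial}, as in Lemma~\ref{lem:agreement_for_bottom_up}); and for the mixed symbol $\Xi\mathcal I(\Xi)^j$ you correctly observe that the It\^o integral of an adapted $j$-th-chaos integrand is a single $(j+1)$-fold Wiener integral, so that the top-chaos extraction and the $S$-transform can be read off from the explicit causal kernel, yielding the multiplicative extension after differentiating in $t$. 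This is the same mechanism as the paper's worked It\^o example (Proposition~\ref{prop:proxy_restr_of_Ito}), pushed one chaos level higher. What the citation buys the paper is brevity and the reassurance that the analytic subtleties of the distributional spaces $E_\tau$ (testing against $\varphi_s^\lambda$, the meaning of $\partial_t\bbW^H$) have been handled elsewhere; what your derivation buys is a proof that visibly uses only the machinery already established in Sections~2--3, making the example genuinely illustrative of the abstract theory rather than an appeal to an external computation. The one place where you are slightly informal --- commuting the proxy-restriction with the distributional derivative $\partial_t$ and with the pairing against test functions --- is covered by the linearity and continuity statements of Proposition~\ref{prop:proxy_restriction_is_cont} together with Proposition~\ref{prop:WIC_equiv_characterizations}(iv) and Remark~\ref{rem:point_evaluation_enough_for_WIC}, so no genuine gap remains.
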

\begin{proof}
    See \cite[Lem. 4.1]{bayerRegularityStructureRough2020} and \cite[Lem. B1]{bayerRegularityStructureRough2020}.
\end{proof}

In other words, the proxy-restriction of the full lift coincides with the formal application to elements in $\sH$. 

\subsubsection{Application of Theorems}

\paragraph{Large Deviations}

Theorem \ref{thm:LDP_for_AWMS}, in conjunction with Proposition \ref{prop:rough_vol_is_top_down}, implies that the family of measures $\left( \bmu^{\fV}_{\varepsilon} \right)_{\varepsilon > 0}$ satisfies an LDP with good rate function  

\begin{equation}
    \sJ^{\fV}(\bx) = \begin{cases}
    \frac{1}{2} \Vert \pi (\bx) \Vert_{L^2([0,T]; \bbR)}^2  & \quad \bx \in \sbH^{\fV} \\ 
    + \infty & \quad \text{else}, 
    \end{cases}
\end{equation}

where $\sbH^{\fV}$ is nothing but 

\begin{equation}
    \overline{\fV}(\sH) = \left\{ \left(h, K^H \ast h, h \left(K^H \ast h\right), \left(K^H \ast h\right)^2, h \left(K^H \ast h\right)^2, \ldots \right) : h \in L^2([0,T]; \bbR) \right\}.
\end{equation}

\begin{rem}
    Such LDP results have proven very useful in volatility modelling. See e.g. \cite{tsatsoulisSpectralGapStochastic2018}.
\end{rem}

\paragraph{Fernique Estimate}

According to Theorem \ref{thm:exp_int} the measure $\bmu^{\fV}$  satisfies a Fernique estimate. That is, let 

\begin{align}
    \eta_0 := \inf \Bigg\{ \frac{1}{2} \Vert h \Vert_{\sH}^2 : h \in \sH, &\Vert h \Vert_{E_{\Xi}} + \Vert K^H \ast h \Vert_{E_{\calI(\Xi)}} \\
    + &\Vert h(K^H \ast h) \Vert_{E_{\Xi \calI(\Xi)}}^{\frac{1}{2}} + \Vert (K^H \ast h)^2 \Vert_{E_{ \calI(\Xi)^2}}^{\frac{1}{2}} \\
    + &\Vert h(K^H \ast h)^2 \Vert_{E_{\Xi \calI(\Xi)}}^{\frac{1}{3}} + \Vert (K^H \ast h)^3 \Vert_{E_{ \calI(\Xi)^3}}^{\frac{1}{3}} + \ldots =1 \Bigg\} .
\end{align}

Then for any $\eta < \eta_0$ and $t \geq 0$ we have $\bmu^{\fV} \left( \bx \in \bE : \vertiii{\bx} \geq t \right) \lesssim \exp\left( - \eta t^2 \right)$.

\subsection{\texorpdfstring{$\Phi^4_d$ for $d = 2, 3$}{Phi4d for d = 2, 3}} \label{subsec:Phi}

In the following we want to apply the developed machinery to the case of the $\Phi^4_d$-model with $d \in \{2,3\}$, which is the object of study in \cite{hairerLargeDeviationsWhiteNoise2015}. 

\paragraph{General Setup}
Let $T > 0$ be fixed throughout the section, let $\mathbb{T}^d$ denote the $d$-dimensional torus and consider a regularity structure associated to the problem 

\begin{equation}
    \partial_t \phi = \Delta \phi + C \phi - \phi^3 + \varepsilon \xi, \tag{$\Phi^4_d$} \label{eq:Phi4}
\end{equation}

where $\phi$ is a scalar field on $[0,T] \times \mathbb{T}^d$, $C \in \bbR$, $\xi$ is space-time white noise, and $\varepsilon > 0$. 

Recall that a model for a regularity structure is uniquely determined by its minimal model, which need only be defined on the set of trees with negative homogeneities (see \cite[Prop. 3.31, Thm. 5.14]{hairerTheoryRegularityStructures2014}). In the case of $\Phi^4_d$ this set is given by\footnote{Our tree notation is identical to that of \cite{hairerLargeDeviationsWhiteNoise2015}.}

\begin{equation}
    \calT_{2} = \left\{ \Xi, \<1>, \<2>, \<3> \right\} \quad \text{and} \quad \calT_{3} = \left\{ \Xi, \<1>\ , \<2>\ , \<3>\ , \<32>\ , \<22>\ , \<31> \right\} . \label{eq:W_for_Phi}
\end{equation}

Define the homogeneities by the usual rules (i.e. $\vert \Xi \vert = - \frac{d+2}{2} - \kappa$ for some $\kappa \in (0, \frac{1}{14})$ which shall be fixed throughout this section, $\vert 1 \vert = 0$, adding an edge increases the homogeneity by $2$ and multiplying two symbols adds the homogeneities)\footnote{As explained in \cite[Sec. 9.1, Sec 9.3]{hairerTheoryRegularityStructures2014} the upper bound on $\kappa$ serves only to ensure that $\calT$ as defined in \eqref{eq:W_for_Phi} contains all symbols of negative homogeneity.} and the structure group by the usual Hopf-algebraic construction (see \cite[Sec. 2.2]{hairerLargeDeviationsWhiteNoise2015} or \cite[Sec. 8.1]{hairerTheoryRegularityStructures2014}). 

\subsubsection{As an AWMS}

As before we want to construct an AWMS for which the full lift coincides with the lift defined in \cite{hairerLargeDeviationsWhiteNoise2015}. 

\paragraph{Definition of the Ambient Space} Fix $d \in \{2,3\}$. Define the set of symbols $\calT_d$ as in \eqref{eq:W_for_Phi}, $\calN_2 = \calN_3 = \{ \Xi \}$, and for any symbol in $\calT_{d}$ define the space $E_{\tau}$ as the closure of the smooth two-parameter functions on $[0,T] \times \mathbb{T}^d$, which we will write as $(z, z') \mapsto f_z(z')$ under the norms

\begin{align}
    \Vert f \Vert_{E_{\Xi}} &:= \sup_{\lambda \in (0,1]} \sup_{\varphi \in \mathcal{B}} \sup_{s \in \bbR} \sup_{z \in [0,T] \times \mathbb{T}^d} \lambda^{- \vert \Xi \vert} \left\vert 1_{[0,t]}(s) f_z (\varphi_z^{\lambda}) \right\vert, \\
    \Vert f \Vert_{E_{\<1>}} &:= \sup_{\lambda \in (0,1]} \sup_{\psi \in \mathcal{B}_0} \sup_{z \in [0,T] \times \mathbb{T}^d} \lambda^{- \vert \<1> \vert} \left\vert f_z (t, \cdot) (\psi_x^{\lambda}) \right\vert, \\
    \Vert f \Vert_{E_{\tau}} &:= \sup_{\lambda \in (0,1]} \sup_{\varphi \in \mathcal{B}} \sup_{z \in [0,T] \times \mathbb{T}^d} \lambda^{- \vert \tau \vert} \left\vert f_z (\varphi_z^{\lambda}) \right\vert, \quad \tau \in \calT_d\setminus \{ \Xi, \<1> \},
\end{align}

respectively, where 

\begin{equation}
    \varphi_z^{\lambda}(z') := \lambda^{-d+2} \varphi \left( \lambda^{-2}(t' - t), \lambda^{-1} (x' - x) \right) , \quad  z = (t,x), z' = (t',x') \in \bbR \times \bbR^d,
\end{equation}

and $\calB_0$ is a space of test functions defined analogously to $\calB$, but only in the spatial variable $x$. Functions in $\calB_0$ are rescalled analogously to those in $\calB$. For a fixed $z \in [0,T] \times \bbT^d$, $f_z (\varphi)$ denotes the application of $f_z$, viewed as a distribution, to the test function $\varphi$, in the second variable of $f$. Define the integer degree $[\tau]$ of a symbol $\tau \in \calT_d$ as the number of leaves in the tree representing $\tau$, e.g. $[\<2>] = 2$, $[\<31>] = 4$.

\paragraph{Definition of the Full Lift}

Let $\mathfrak{s} \label{symb:parabolic_scaling}$ denote the parabolic scaling on $[0,T] \times \mathbb{T}^d$, let $\xi$ be space-time white noise on $[0,T] \times \mathbb{T}^d$ defined on $E := E_{\calN} = E_{\Xi} \cong \mathcal{C}_{\mathfrak{s}}^{0, - \frac{d+2}{2} - \kappa}([0,T] \times \mathbb{T}^d; \bbR)$, let $\mu$ be the distribution of $\xi$, and let $\tau \in \calT_d$ be fixed throughout the section. The abstract Wiener space associated to $(E, \mu)$ is 

\begin{equation}    
    E = \mathcal{C}_{\mathfrak{s}}^{0, \alpha} ([0,T] \times \mathbb{T}^d; \bbR), \quad \sH = L^2([0,T] \times \bbT^d; \bbR), \quad \text{and} \quad \mu = \text{Law}(\xi) .
\end{equation}

for $\alpha := - \frac{d+2}{2} - \kappa < - \frac{d+2}{2}$. Let $\rho$ be a smooth compactly supported function on $[0,T] \times \bbT^d$ with $\int \rho = 1$ and define 

\begin{equation}
\rho^{\delta}(t,x) := \delta^{-(d+2)} \rho( \delta^{-2} t, \delta^{-1} x ) \quad \text{and} \quad \fP_{\tau, \delta} := \fP( \cdot \ast \rho_{\delta}).
\end{equation}

Let $\Ren_{\delta} \fP_{\tau, \delta} \label{symb:renormalization}$ denote the renormalized minimal model lifts at correlation length $\delta > 0$ as defined in \cite[Sec. 2.6, Eq. (2.21), Eq. (2.22)]{hairerLargeDeviationsWhiteNoise2015}. By \cite[Thm 10.7, Thm. 10.22]{hairerTheoryRegularityStructures2014} the family $(\Ren_{\delta} \fP_{\tau, \delta})_{\delta > 0}$ converges in $L^2(E, \mu; E_{\tau})$ to a limit $\fP_{\tau}$. Define this limit as the full lift and let $\bmu^{\fP}$ denote the distribution of $\fP$ on $\bE$.

\begin{prop} \label{prop:Phi_4_is_top_down}
    The data associated to the $\Phi^4_d$-model $(\calT, \bE, [\cdot], \calN), \bmu^{\fP}, \fP$ is Top-Down data in the sense of Theorem \ref{thm:top_down_construction} and $\fP$ can be identified with the renormalized minimal model lift in the sense of \cite{hairerLargeDeviationsWhiteNoise2015}.
\end{prop}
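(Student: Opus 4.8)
The plan is to verify, one by one, the hypotheses of the Top-Down Construction Theorem \ref{thm:top_down_construction}, in exactly the same spirit as was done for the It\^o lift in Proposition \ref{prop:Ito_is_Top_Down} and for the rough volatility lift in Proposition \ref{prop:rough_vol_is_top_down}. Concretely, I must check that $(\calT_d, \bE, [\cdot], \calN_d)$ is an ambient space, that $\bmu^{\fP}$ is a Borel probability measure on $\bE$ with $\mu := \pi_{\ast}\bmu^{\fP}$ centred Gaussian on $E = E_{\calN}$, and that $\fP$ represents a $\mu$-a.s.\ equivalence class of a measurable lift belonging to $\calP^{(\leq[\calT])}(E,\mu;\bE)$ with $\fP_{\ast}\mu = \bmu^{\fP}$.

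First I would dispose of the structural and measure-theoretic items. The degree $[\cdot]$ (number of leaves) and $\calN_d = \{\Xi\}$ are fixed by construction, so the only non-formal point at the level of the ambient space is that each $E_{\tau}$ is a \emph{separable} Banach space; this follows from a Stone--Weierstrass argument showing density of the smooth two-parameter functions in $E_{\tau}$ for the weighted test-function norm, as sketched in \cite{hairerLargeDeviationsWhiteNoise2015} (and detailed in the master's thesis of T. Klose). By definition $\mu = \mathrm{Law}(\xi)$ is the centred Gaussian space-time white noise measure on $E \cong \mathcal{C}_{\mathfrak{s}}^{0,\alpha}([0,T]\times\mathbb{T}^d;\bbR)$, and $\fP$ was defined as the $L^2(E,\mu;\bE)$-limit of the renormalized mollified lifts $\Ren_{\delta}\fP_{\tau,\delta}$, whose existence is \cite[Thm.~10.7, Thm.~10.22]{hairerTheoryRegularityStructures2014}; hence $\fP:E\to\bE$ is Borel measurable and $\bmu^{\fP} := \fP_{\ast}\mu$ is a Borel probability measure, with $\fP_{\ast}\mu = \bmu^{\fP}$ holding by construction.

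The two substantive steps are the lift property and the Wiener--It\^o chaos membership. For the lift property I would use that the $\Xi$-component of any admissible model --- canonical, renormalized, mollified, or limiting --- is untouched by both mollification and by the renormalization group, so $\fP_{\Xi}$ is just the $\delta\downarrow 0$ limit of $\xi\ast\rho_{\delta}$, i.e.\ $\fP_{\Xi} = \id_{E}$ under the identification $E = E_{\Xi}$; consequently $\pi\circ\fP = \pi_{\Xi}\circ\fP = \id_{E}$ $\mu$-a.s., whence also $\pi_{\ast}\bmu^{\fP} = \mu$. For the chaos bound, by \cite[Sec.~10.2]{hairerTheoryRegularityStructures2014} each canonical mollified component $\fP_{\tau,\delta}$ is a Wick-type polynomial expression in the mollified noise involving exactly $[\tau]$ noise factors, so its chaos decomposition is supported on orders $[\tau], [\tau]-2, \dots$; the elements of the renormalization group act by contracting pairs of noise factors, which only decreases the chaos order, so $\pi_{\tau}\circ\Ren_{\delta}\fP_{\tau,\delta} \in \calP^{(\leq[\tau])}(E,\mu;E_{\tau})$ for each $\delta>0$ (it suffices, by Proposition \ref{prop:WIC_equiv_characterizations}(iv), to verify this for the scalar test-function evaluations). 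Since $\calP^{(\leq[\tau])}(E,\mu;E_{\tau})$ is closed under $L^2$-convergence (Lemma \ref{lem:complete_in_probability}, together with Lemma \ref{lem:equiv_of_norms}), the $L^2$-limit $\pi_{\tau}\circ\fP$ again lies in $\calP^{(\leq[\tau])}(E,\mu;E_{\tau})$, so $\fP\in\calP^{(\leq[\calT])}(E,\mu;\bE)$ in the sense of Definition \ref{defn:graded_WIC}. Finally, $\fP$ is identified with the renormalized minimal model lift of \cite{hairerLargeDeviationsWhiteNoise2015} by definition, since both are produced by the same limiting procedure with the same renormalization constants of \cite[Sec.~2.6]{hairerLargeDeviationsWhiteNoise2015}.

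I expect the main obstacle to be bookkeeping rather than conceptual: one has to make sure that the combinatorial degree $[\tau]$, defined as the number of leaves of the tree, really is an upper bound for the order of the Wiener chaos that carries $\pi_{\tau}\circ\fP$, which is precisely where the structure of the renormalization group (contractions strictly lowering chaos order) and the stability of finite inhomogeneous chaos under $L^2$-limits both come in; the secondary technical point is the separability of the $E_{\tau}$, which is not automatic and rests on the Stone--Weierstrass density argument.
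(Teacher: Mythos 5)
Your proposal is correct and follows essentially the same route as the paper: separability of the $E_{\tau}$ via Stone--Weierstrass, Borel measurability of $\fP$ as the $L^2$-limit of the renormalized mollified models, the lift property from the fact that neither mollification in the limit nor the renormalization group touches the $\Xi$-component, and the chaos bound $\pi_{\tau}\circ\fP\in\calP^{(\leq[\tau])}(E,\mu;E_{\tau})$ from \cite[Sec.~10.2]{hairerTheoryRegularityStructures2014}. The only difference is that you spell out the closedness of finite inhomogeneous chaos under $L^2$-limits (Lemma \ref{lem:complete_in_probability}) where the paper leaves this implicit, which is a harmless and correct elaboration.
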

\begin{proof}
    As in Proposition \ref{prop:rough_vol_is_top_down}, a Stone--Weierstrass argument shows that the spaces $E_{\tau}$ are separable. By construction $\fP$ is Borel-measurable and thus $\bmu^{\fP}$ is a Borel probability measure. Since $\lim_{\delta \downarrow 0} \Ren_{\delta} \fP_{\Xi, \delta} = \lim_{\delta \downarrow 0} \fP_{\Xi, \delta}= \fP_{\Xi}$ the measure $\mu := \pi_{\ast} \bmu^{\fP}$ is centred Gaussian (the white noise measure) and $\pi \circ \fP = \id_{E}$ $\mu$-a.s.\! Lastly, condition $\pi_{\tau} \circ \fP \in \calP^{(\leq [\tau])}(E, \mu; E_{\tau})$ is satisfied by \cite[Sec. 10.2]{hairerTheoryRegularityStructures2014}.
\end{proof}

\paragraph{Computation of the Proxy-Restriction}

The computation of $\overline{\fP}$ has already been done \cite[Sec. 4]{hairerLargeDeviationsWhiteNoise2015} and in much greater detail in Section 4.2.3 in the master's thesis of T. Klose. For the reader's convenience we spell out the argument in detail and adapted to our notations in the supplement \cite[Sec. 7.4.]{chiusoleAbstractWienerModel2024}, but quote the result here:

Let $\tau \in \calT_d$, $y \in [0,T] \times \bbT^d$, $\varphi \in C_c^{\infty}([0,T] \times \bbT^d)$, $0 \leq k \leq [\tau]$ be arbitrary and let $(z_1, \ldots, z_k) \mapsto \langle \calW_{\tau, k}(\cdot, y; z_1, \ldots, z_{k}), \varphi \rangle$ be the kernels in $\sH^{\otimes k}$ such that 

\begin{equation}
    \left[ \fP_{\tau} \right](y,\varphi) = \sum_{0 \leq k \leq [\tau]} I_k \left( \langle \calW_{\tau, k}(\cdot, y), \varphi \rangle \right),
\end{equation}

where $I_k$ is the $k$-th level Ito isometry $L^2(([0,T] \times \bbT^d)^{k}) \rightarrow \calP^{(k)}(E, \mu; \bbR)$. Then, for any $h \in \sH$, the proxy restriction takes the form

\begin{equation}
   \left[\overline{\fP_{\tau}}(h)\right](y, \varphi) = \int \ldots \int \langle \calW_{\tau, [\tau]}(\cdot, y; z_1, \ldots, z_{[\tau]}), \varphi \rangle h(z_1) \ldots h(z_{[\tau]}) \dd z_1 \ldots \dd z_{[\tau]}.
\end{equation}

\subsubsection{Application of Theorems}

\paragraph{Large Deviations}

Theorem \ref{thm:LDP_for_AWMS} implies that the family of measures $\left( \bmu^{\fP}_{\varepsilon} \right)_{\varepsilon > 0}$ satisfies an LDP on $\bE$ with good rate function  

\begin{equation}
    \sJ^{\fP}(\bx) = \begin{cases}
    \frac{1}{2} \Vert \pi (\bx) \Vert_{L^2([0,T] \times \mathbb{T}^d; \bbR)}^2  & \quad \bx \in \sbH^{\fP} \\ 
    + \infty & \quad \text{else}. 
    \end{cases}
\end{equation}

where $\sbH^{\fP} = \overline{\fP}(L^2([0,T] \times \mathbb{T}^d; \bbR))$. This is an abstract version of \cite[Thm. 4.3]{hairerLargeDeviationsWhiteNoise2015}, which is the main result in \cite{hairerLargeDeviationsWhiteNoise2015}.

\paragraph{Fernique Estimate}

According to Theorem \ref{thm:exp_int} the measure $\bmu^{\fP}$ satisfies a Fernique estimate. That is, let 

\begin{equation}
    \eta_0 := \inf \left\{ \frac{1}{2} \Vert h \Vert_{\sH}^2 : h \in L^2([0,T] \times \mathbb{T}^d; \bbR), \vertiii{\overline{\fP}(h)} = 1 \right\} .
\end{equation}

Then for any $\eta < \eta_0$ and $t \geq 0$

\begin{equation}
    \bmu^{\fP} \left( \bx \in \bE : \vertiii{\bx} \geq t \right) = \mu \left( \vertiii{\fP} \geq t \right) \lesssim \exp\left( - \eta_0 t^2 \right).
\end{equation}

\begin{rem}
    In principle, there is nothing in the way of applying the framework in this article to more complicated equations and/or with a higher number of trees/closer to criticality or with other types of (Gaussian) noise, as for instance in \cite{chandraPrioriBounds$Phi2023}, where the authors treat the $\Phi^4$ equation in the full subcritical regime $d < 4$, modeling fractional dimension via a slight spatial coloring of the (Gaussian) driving noise. However, note that if the noise is defined on an unbounded region of spacetime, the norms of the $E_{\tau}$ spaces typically need to be weighted in some form or replaced by families of semi-norms. While the former is within the scope of this article, the latter is not. However, given that the classical theory of abstract Wiener spaces generalizes well to separable Fréchet spaces, we are confident that AWMS could be generalized in a similar manner to the case where $\bE$ is replaced by a separable and graded Fréchet space.
\end{rem}

\subsection{\texorpdfstring{Parabolic Anderson Model, $d = 2$}{Parabolic Anderson Model, d = 2}}

In the following we treat an example where the driving noise is spatial, not space-time. 

\paragraph{General Setup}

Fix $T > 0$ throughout the section and consider a regularity structure which is associated to the problem 

\begin{equation}
    \partial_t u = \Delta u + u \varepsilon \zeta, \tag{PAM} \label{eq:PAM}
\end{equation}

where $u$ is a scalar field on the $2$-dimensional torus $\mathbb{T}^2$, $\zeta$ is spatial white noise, and $\varepsilon > 0$. Again, since minimal models carry enough information to recover a full regularity structure to solve \eqref{eq:PAM} we define $\calN = \calT^{(1)} = \{ \Xi \}$, $\calT^{(2)} = \{ \Xi \mathcal{I}(\Xi) \}$. Define the homogeneities by the usual rules (i.e. $\vert \Xi \vert = - 1 - \kappa$, for some $\kappa \in (0, \frac{1}{3})$ which shall be fixed throughout this section, $\vert 1 \vert = 0$, $\mathcal{I}(\tau) = \vert \tau \vert + 2$ and multiplying two symbols adds their homogeneities) and the structure group by the usual Hopf-algebraic construction (see \cite[Sec. 2.2]{hairerLargeDeviationsWhiteNoise2015} or \cite[Sec. 8.1]{hairerTheoryRegularityStructures2014}). 

\subsubsection{As an AWMS}

\paragraph{Definition of the Ambient Space}

Define the set of symbols $\calT$ as above and for any symbol $\tau \in \calT$ define the space $E_{\tau}$ as the closure of the smooth two-parameter functions on $[0,T] \times \mathbb{T}^d$, which we will write as $(z, z') \mapsto f_z(z')$ under the norms

\begin{equation}
    \Vert f \Vert_{E_{\tau}} := \sup_{\lambda \in (0,1]} \sup_{\varphi \in \mathcal{B}} \sup_{z \in [0,T] \times \mathbb{T}^d} \lambda^{- \vert \tau \vert} \left\vert f_z (\varphi_z^{\lambda}) \right\vert .
\end{equation}

Let $\alpha:= -1 - \kappa  < - \frac{1}{2}$ and let $\zeta$ be spatial white noise defined on $E = E_{\calN} \cong \mathcal{C}_{\mathfrak{s}}^{0, \alpha}([0,T] \times \mathbb{T}^d; \bbR)$. The abstract Wiener space associated to $(E, \mu)$ is 

\begin{equation}    
    E = \mathcal{C}^{0, - 1 - \kappa}([0,T] \times \mathbb{T}^d; \bbR), \quad \sH = L^2(\bbT^d; \bbR), \quad \text{and} \quad \mu = \text{Law}(\zeta) .
\end{equation}

\paragraph{Definition of the Full Lift}

As in Subsection \ref{subsec:Phi} define the full lift $\fA_{\tau}$ as the $L^2(E, \mu; E_{\tau})$-limit of the renormalized minimal admissible models $\Ren_{\delta} \fA_{\tau, \delta}$ and the enhanced measure $\bmu^{\fA}$ as the distribution of $\fA$.

\begin{prop}
    The data associated to the parabolic Anderson model $(\calT, \bE, [\cdot], \calN), \bmu^{\fA}, \fA$ is Top-Down data in the sense of Theorem \ref{thm:top_down_construction} and $\fA$ can be identified with the renormalized minimal model lift in the sense of \cite{frizPreciseLaplaceAsymptotics2022}.
\end{prop}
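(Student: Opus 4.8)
The plan is to follow the template already used for the Rough Volatility and $\Phi^4_d$ examples (Proposition \ref{prop:rough_vol_is_top_down} and Proposition \ref{prop:Phi_4_is_top_down}), since the parabolic Anderson model fits the same pattern of a regularity structure driven by Gaussian (here spatial) white noise. I would first verify that $(\calT, \bE, [\cdot], \calN)$ is an ambient space: $\calT = \{\Xi, \Xi\calI(\Xi)\}$ is finite, the degree function takes values in $\bbN_{\geq 1}$ with $[\Xi]=1$ for the unique element of $\calN$, and each $E_\tau$ is a separable Banach space — separability following, as in the previous two propositions, from a Stone--Weierstrass argument (cf. the remark there citing Klose's thesis). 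So $\bE = \bigoplus_{\tau \in \calT} E_\tau$ is a separable graded Banach space and $E := E_{\calN} = E_\Xi \cong \calC_{\mathfrak{s}}^{0,-1-\kappa}([0,T]\times\bbT^d;\bbR)$.

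Next I would check the measure-theoretic conditions defining Top-Down data in Theorem \ref{thm:top_down_construction}. Since $\fA$ is defined as the $L^2(E,\mu;E_\tau)$-limit (componentwise) of the renormalized minimal model lifts $\Ren_\delta \fA_{\tau,\delta}$, it is in particular a Borel-measurable map $E \to \bE$, so $\bmu^{\fA} := \fA_\ast \mu$ is a Borel probability measure on $\bE$ and $\fA_\ast\mu = \bmu^{\fA}$ holds by definition. For the noise component, renormalization does not touch $\Xi$, so $\lim_{\delta\downarrow 0}\Ren_\delta\fA_{\Xi,\delta} = \lim_{\delta\downarrow 0}\fA_{\Xi,\delta} = \fA_\Xi = \id_E$ (the latter because $\fA_{\Xi,\delta} = \zeta \ast \rho_\delta \to \zeta$ in $E$); hence $\mu := \pi_\ast \bmu^{\fA} = (\pi\circ\fA)_\ast\mu$ is the centred Gaussian white-noise measure on $E$ and $\pi\circ\fA = \id_E$ $\mu$-a.s., i.e. $\fA$ is a measurable lift in the sense of Definition \ref{defn:lift_defn}. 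Finally, the graded Wiener--Ito chaos condition $\hat{\fL}_\tau = \pi_\tau \circ \fA \in \calP^{(\leq[\tau])}(E,\mu;E_\tau)$ for each $\tau\in\calT$ follows from the stochastic estimates of \cite[Sec. 10.2]{hairerTheoryRegularityStructures2014}: each renormalized model component lives in a fixed finite (inhomogeneous) Wiener chaos whose order equals the number of leaves $[\tau]$, and this space is closed under $L^2$-convergence (Lemma \ref{lem:complete_in_probability}), so the limit $\fA_\tau$ inherits membership; as in Proposition \ref{prop:Ito_is_Top_Down} one can reduce the Banach-space statement to the scalar one via Proposition \ref{prop:WIC_equiv_characterizations}(iv), testing $\left[\fA_\tau(\cdot)\right](y,\varphi)$ against points and test functions. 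With these three facts in hand, Theorem \ref{thm:top_down_construction} applies and produces an AWMS whose underlying AWS is $(E,\sH,i,\mu)$ with $\sH = L^2(\bbT^d;\bbR)$ and $\mu = \mathrm{Law}(\zeta)$.

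The identification of $\fA$ with the renormalized minimal model lift of \cite{frizPreciseLaplaceAsymptotics2022} is essentially a matter of definition/bookkeeping: the minimal model determines the full model (Hairer's extension and reconstruction theorems, \cite[Prop. 3.31, Thm. 5.14]{hairerTheoryRegularityStructures2014}), and the $L^2$-limit of $\Ren_\delta\fA_{\tau,\delta}$ over the negative-homogeneity trees $\{\Xi,\Xi\calI(\Xi)\}$ is precisely the object constructed in that reference for $d=2$ PAM; I would state this and point to the relevant sections rather than re-derive the convergence.

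The one genuinely non-routine point — though still standard in this literature — is the verification that each $\fA_\tau$ lies in the graded chaos $\calP^{(\leq[\tau])}$, because the driving noise here is \emph{spatial} rather than space-time: one must make sure the Wiener--Ito chaos is taken with respect to the correct Gaussian structure (white noise on $\bbT^d$, with $\sH = L^2(\bbT^d)$) and that the renormalization constants (which may be time-independent but spatially generated by a short-distance singularity of the heat kernel convolved with the covariance) do not introduce components outside the expected chaos. Since renormalization for PAM only subtracts (deterministic) constants, it shifts $\fA_{\Xi\calI(\Xi),\delta}$ within $\calP^{(\leq 2)}$ by elements of $\calP^{(0)}$, so the chaos bound is preserved; this is the step I would write out most carefully, otherwise invoking \cite[Sec. 10.2]{hairerTheoryRegularityStructures2014} as in the two preceding propositions.
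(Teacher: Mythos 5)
Your proposal is correct and follows essentially the same route as the paper, whose entire proof is a pointer to Proposition \ref{prop:Phi_4_is_top_down}: separability of the $E_\tau$ via Stone--Weierstrass, Borel measurability of $\fA$ giving the Borel probability measure $\bmu^{\fA}$, the fact that renormalization does not affect the $\Xi$-component so that $\pi\circ\fA=\id_E$ $\mu$-a.s.\ and $\pi_\ast\bmu^{\fA}=\mu$ is centred Gaussian, and the chaos condition $\pi_\tau\circ\fA\in\calP^{(\leq[\tau])}(E,\mu;E_\tau)$ from \cite[Sec. 10.2]{hairerTheoryRegularityStructures2014} together with closure under convergence (Lemma \ref{lem:complete_in_probability}). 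Your additional remarks on the spatial (rather than space-time) Gaussian structure and on the renormalization constants living in $\calP^{(0)}$ are correct and go slightly beyond what the paper writes out.
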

\begin{proof}
    See Proposition \ref{prop:Phi_4_is_top_down}.
\end{proof}

\paragraph{Computation of the Proxy-Restriction}

The computation of the proxy-restriction $\overline{\fA}$ of the full lift $\fA$ follows in the same way as in Subsection \ref{subsec:Phi}.

\subsubsection{Application of Theorems}

\paragraph{Large Deviations}

Applying Theorem \ref{thm:LDP_for_AWMS} shows that the family of measures $\left( \bmu^{\fA} \right)_{\varepsilon > 0}$ satisfies a large deviation principle on $\bE$ with good rate function given by 

\begin{equation}
    \sJ^{\fA}(\bx) = \begin{cases}
    \frac{1}{2} \Vert \pi (\bx) \Vert_{L^2(\mathbb{T}^d; \bbR)}^2  & \quad \bx \in \sbH^{\fA} \\ 
    + \infty & \quad \text{else}, 
    \end{cases}
\end{equation}

where $\sbH^{\fA} = \overline{\fA}(L^2(\bbT^d; \bbR))$. See also e.g. \cite[Thm. C.3]{frizPreciseLaplaceAsymptotics2022}.

\paragraph{Fernique Estimate}

As for $\Phi^4_d$, by Theorem \ref{thm:exp_int} the measure $\bmu^{\fA}$ satisfies a Fernique estimate. That is, let 

\begin{equation}
    \eta_0 := \inf \left\{ \frac{1}{2} \Vert h \Vert_{L^2(\bbT^d; \bbR)}^2 : h \in \sH, \vertiii{\overline{\fA}(h)} = 1 \right\} .
\end{equation}

Then for any $\eta < \eta_0$ we have $\bmu^{\fA} \left( \bx \in \bE : \vertiii{\bx} \geq t \right) = \mu \left( \vertiii{\fA} \geq t \right) \lesssim \exp\left( - \eta_0 t^2 \right)$.

\appendix

\section*{Appendix}

In the appendix we will collect some notation and background.

\section{Abstract Wiener Spaces} \label{sec:AWS}

Let $E$ be a separable Banach space and let $\mu$ be a centred Gaussian Borel probability measure on $E$. Then define 

\begin{equation}
    \fq_{\mu}(f,g) := \int_E f(x) g(x) \dd \mu(x), \quad f,g \in E^{\ast}, 
\end{equation}

as the \textbf{covariance form} of $\mu$ on $E$. Define the assignment 

\begin{equation}
    j: E^{\ast} \rightarrow L^2(E, \mu; \bbR), \quad f \mapsto [f]_{\mu}, 
\end{equation}

where $[f]_{\mu}$ denotes the $\mu$-a.s.\! equivalence class of $f$. By Fernique's Theorem \ref{thm:fernique} the assignment $j$ is a well-defined function and by definition it is injective precisely when $\mu$ is non-degenerate. Define the \textbf{reproducing kernel Hilbert space} \label{symb:reproducing_kernel_hilbert_space} of $\mu$ in $E$ as 

\begin{equation}
    \sR(\mu) := \overline{j(E^{\ast})}^{L^2(E, \mu; \bbR)},
\end{equation}

and the covariance operator \label{symb:covariance_operator} $\fC_{\mu}: E^{\ast} \rightarrow E \subseteq E^{\ast \ast}$ by

\begin{equation}
    \left[ \fC_{\mu} f \right] (g) = \fq_{\mu}(f,g) .
\end{equation}

It is not clear a priori that $\fC_{\mu}$ maps $E^{\ast}$ into the canonical inclusion of $E$ into its double dual $E^{\ast \ast}$ (instead of just into $E^{\ast \ast}$). However, this can be shown to be the case even if $E$ is replaced by a Fréchet space - see \cite[Thm. 3.2.1.]{bogachevGaussianMeasures1998}. Recall that the Cameron--Martin space $\sH_{\mu}$ is the isometric image of $\sR(\mu)$ under the operator $\fC_{\mu}$ - see e.g. \cite[Chap. 4]{hairerIntroductionStochasticPDEs2023}. In particular,

\begin{equation}
    \underline{h} := \fC_{\mu}^{-1} h \label{symb:underline_h}
\end{equation}

is (as an $L^2$-limit of Gaussian random variables) Gaussian with distribution $\sN(0, \Vert h \Vert_{\sH}^2)$. In particular, the moment generating function of $\underline{h}$ has the form 

\begin{equation}
    \bbE \left[ \exp \left( \lambda \underline{h} \right) \right] = \exp \left( \Vert h \Vert_{\sH}^2 \frac{\lambda^2}{2} \right), \quad \lambda \in \bbR.
\end{equation}

\begin{prop}[Integrability of Cameron--Martin Density] \label{prop:int_of_CM_density}
    Let $(E, \sH, i, \mu)$ be an abstract Wiener space and let $h \in \sH$. Let 

    \begin{equation}
        f_h(x) := \exp \left( \underline{h}(x) - \frac{1}{2} \Vert h \Vert_{\sH}^2 \right), \quad x \in E.
    \end{equation}

    Then for every $1 \leq p < \infty$

    \begin{equation}
        \Vert f_h \Vert_{L^p(E, \mu; \bbR)} \leq \exp \left( \Vert h \Vert_{\sH}^2 \frac{p^2}{2} \right) < \infty.
    \end{equation}
\end{prop}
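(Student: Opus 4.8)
The proof is a direct computation using the moment generating function of $\underline{h}$ recalled just above the proposition. The plan is to raise $f_h$ to the $p$-th power, pull out the deterministic factor $\exp(-\tfrac p2 \Vert h\Vert_{\sH}^2)$, recognize the remaining integral as $\bbE[\exp(p\,\underline{h})]$, evaluate it via the Gaussian moment generating formula, and finally take the $p$-th root and bound the resulting exponent crudely.

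\textbf{Step 1.} Fix $1\le p<\infty$. Since $f_h(x)=\exp(\underline{h}(x)-\tfrac12\Vert h\Vert_{\sH}^2)$ and the constant $\tfrac12\Vert h\Vert_{\sH}^2$ factors out of the exponential, write
\begin{equation}
    \Vert f_h\Vert_{L^p(E,\mu;\bbR)}^p=\int_E \exp\!\left(p\,\underline{h}(x)-\tfrac p2\Vert h\Vert_{\sH}^2\right)\dd\mu(x)=\exp\!\left(-\tfrac p2\Vert h\Vert_{\sH}^2\right)\bbE\!\left[\exp\!\left(p\,\underline{h}\right)\right].
\end{equation}

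\textbf{Step 2.} Recall that $\underline{h}=\fC_{\mu}^{-1}h$ is, as an $L^2$-limit of centred Gaussian random variables, Gaussian with distribution $\sN(0,\Vert h\Vert_{\sH}^2)$, so its moment generating function exists on all of $\bbR$ and equals $\bbE[\exp(\lambda\,\underline{h})]=\exp(\Vert h\Vert_{\sH}^2\lambda^2/2)$. Applying this with $\lambda=p$ and inserting into the previous display gives
\begin{equation}
    \Vert f_h\Vert_{L^p(E,\mu;\bbR)}^p=\exp\!\left(-\tfrac p2\Vert h\Vert_{\sH}^2+\tfrac{p^2}2\Vert h\Vert_{\sH}^2\right)=\exp\!\left(\tfrac{p(p-1)}2\Vert h\Vert_{\sH}^2\right).
\end{equation}

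\textbf{Step 3.} Taking $p$-th roots yields $\Vert f_h\Vert_{L^p(E,\mu;\bbR)}=\exp\!\left(\tfrac{p-1}2\Vert h\Vert_{\sH}^2\right)$, and since $p-1\le p^2$ for all $p\ge 1$ this is bounded by $\exp\!\left(\tfrac{p^2}2\Vert h\Vert_{\sH}^2\right)<\infty$, which is the claimed estimate. There is no real obstacle here; the only point worth a sentence is the justification that $\underline{h}$ is genuinely Gaussian with the stated variance (hence that the integral is finite), which is exactly what was recorded in the paragraph preceding the proposition, so it may simply be cited.
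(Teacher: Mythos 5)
Your proof is correct and follows essentially the same route as the paper's: raise to the $p$-th power, invoke the Gaussian moment generating function $\bbE[\exp(\lambda\,\underline{h})]=\exp(\Vert h\Vert_{\sH}^2\lambda^2/2)$, and bound crudely. In fact your version is marginally sharper, since you keep the $-\tfrac p2\Vert h\Vert_{\sH}^2$ term and obtain the exact value $\Vert f_h\Vert_{L^p}=\exp(\tfrac{p-1}{2}\Vert h\Vert_{\sH}^2)$ before relaxing to the stated bound, whereas the paper discards that term at the outset.
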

\begin{proof}
    Let $1 \leq p < \infty$ be arbitrary. Then 

    \begin{align}
        &\Vert f_h \Vert_{L^p(E, \mu; \bbR)}^p = \int_E \left\vert \exp \left( \underline{h}(x) - \frac{1}{2} \Vert h \Vert_{\sH}^2 \right) \right\vert^p \dd \mu(x) \\
        = &\int_E \exp \left( p \underline{h}(x) - \frac{p}{2} \Vert h \Vert_{\sH}^2 \right) \dd \mu(x) \lesssim \int_E \exp \left( p \underline{h} \right) \dd \mu = \exp \left( \Vert h \Vert_{\sH}^2 \frac{p^2}{2} \right).
    \end{align}
\end{proof}

\begin{prop} \label{prop:L2L1} 
    Let $(E, \sH, i, \mu)$ be an abstract Wiener space, let $B$ be a separable Banach space, let $h \in \sH$, and let $X_n \rightarrow X \in L^2(E, \mu; B)$. Then $X_n \rightarrow X \in L^1(E, \mu_h; B)$, where $\mu_h := (T_h)_{\ast} \mu$. In other words, 

    \begin{equation}
        X_n(\cdot + h) \rightarrow X(\cdot + h) \quad \text{in} ~ L^1(E, \mu; B) .
    \end{equation}
\end{prop}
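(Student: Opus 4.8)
The plan is to use the classical Cameron--Martin Theorem \ref{thm:classical_CM} to compare $\mu_h = (T_h)_{\ast}\mu$ with $\mu$, and then to dominate the $L^1(\mu_h)$-distance by the $L^2(\mu)$-distance through Hölder's inequality. Since $h \in \sH$, Theorem \ref{thm:classical_CM} gives $\mu_h \approx \mu$ with Radon--Nikod\'ym derivative $f_h(x) = \exp\!\big(\underline{h}(x) - \tfrac12\Vert h\Vert_{\sH}^2\big)$, and Proposition \ref{prop:int_of_CM_density} ensures $f_h \in L^p(E,\mu;\bbR)$ for every $p \in [1,\infty)$.

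First I would record the change-of-variables identity: for any measurable $g \colon E \to [0,\infty]$,
\[
\int_E g \, \dd\mu_h = \int_E g(x+h)\, \dd\mu(x) = \int_E g\, f_h \, \dd\mu ,
\]
the first equality being the definition of the push-forward under $T_h$ and the second being Theorem \ref{thm:classical_CM}. Taking $g = \Vert X_n - X\Vert_B$ — which is $\mu$-measurable because the Bochner-integrable representatives are strongly measurable and $\Vert\cdot\Vert_B$ is continuous — this identity already shows that the two formulations in the statement are the same, so it suffices to prove $\int_E \Vert X_n - X\Vert_B \, \dd\mu_h \to 0$ as $n \to \infty$.

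Second, I would apply the Cauchy--Schwarz inequality in $L^2(E,\mu;\bbR)$:
\[
\int_E \Vert X_n - X\Vert_B \, f_h \, \dd\mu \;\le\; \left\Vert\, \Vert X_n - X\Vert_B \,\right\Vert_{L^2(E,\mu;\bbR)}\, \Vert f_h \Vert_{L^2(E,\mu;\bbR)} \;=\; \Vert X_n - X\Vert_{L^2(E,\mu;B)}\, \Vert f_h \Vert_{L^2(E,\mu;\bbR)} .
\]
By Proposition \ref{prop:int_of_CM_density} the factor $\Vert f_h\Vert_{L^2(E,\mu;\bbR)} \le \exp\!\big(2\Vert h\Vert_{\sH}^2\big)$ is finite and, crucially, does not depend on $n$; since $\Vert X_n - X\Vert_{L^2(E,\mu;B)} \to 0$ by hypothesis, the right-hand side tends to $0$, which completes the argument.

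There is no real obstacle here; the only points deserving a sentence are that $\Vert X_n - X\Vert_B$ is genuinely $\mu$-measurable (separability of $B$), and that the statement is well posed on $\mu$-a.s.\ equivalence classes — since $\mu_h \approx \mu$, altering $X_n$ or $X$ on a $\mu$-null set alters it only on a $\mu_h$-null set, so both sides are unambiguous. If one wished to avoid citing Proposition \ref{prop:int_of_CM_density}, one could instead note directly that $\underline{h} \sim \sN(0,\Vert h\Vert_{\sH}^2)$ has all exponential moments finite, whence $f_h \in L^2(E,\mu;\bbR)$.
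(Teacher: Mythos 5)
Your proof is correct and is essentially the paper's own argument: apply the Cameron--Martin theorem to write $\int_E \Vert X_n - X\Vert_B \,\dd\mu_h = \int_E \Vert X_n - X\Vert_B f_h\,\dd\mu$, then use Cauchy--Schwarz together with the $L^2(E,\mu;\bbR)$-integrability of $f_h$ from Proposition \ref{prop:int_of_CM_density}. The extra remarks on measurability and well-posedness on $\mu$-a.s.\ equivalence classes are fine but not needed beyond what the paper already records.
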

\begin{proof}
    Let $h \in \sH$ be arbitrary. Then by the classical Cameron--Martin Theorem \ref{thm:classical_CM} with $f_h := \frac{\dd \mu_h}{\dd \mu}: E \rightarrow [0, \infty)$

    \begin{align}
        \Vert X_n - X \Vert_{L^1(E, \mu_h; B)} &= \int_E \Vert X_n - X \Vert_B \dd \mu_h = \int_E \Vert X_n - X \Vert_B f_h \dd \mu \\
        &\leq \underbrace{\Vert X_n - X \Vert_{L^2(E, \mu; B)}}_{\rightarrow 0} \underbrace{\Vert f_h \Vert_{L^2(E, \mu; \bbR)}}_{< \infty} .
    \end{align}

    where we used the Cauchy--Schwarz inequality in the last line. The latter of the two terms is finite by Proposition \ref{prop:int_of_CM_density}. Thus $\Vert X_n - X \Vert_{L^1(E, \mu_h; \bbR)} \rightarrow 0$. 
\end{proof}

\begin{prop} \label{prop:prob_prob_h}
    Let $(E, \sH, i, \mu)$ be an abstract Wiener space, let $B$ be a separable Banach space, let $h \in \sH$, and let $X_n \rightarrow X$ in probability w.r.t. $\mu$. Then $X_n \rightarrow X$ in probability w.r.t. $\mu_h$, where $\mu_h := (T_h)_{\ast} \mu$.
\end{prop}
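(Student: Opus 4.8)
The plan is to exploit that $\mu_h \ll \mu$ with a density lying in $L^2(E,\mu;\bbR)$, so that events which are asymptotically $\mu$-negligible are also asymptotically $\mu_h$-negligible, quantitatively via Cauchy--Schwarz.

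First I would invoke the classical Cameron--Martin Theorem \ref{thm:classical_CM}: since $h \in \sH$, the pushforward $\mu_h := (T_h)_{\ast}\mu$ is equivalent to $\mu$, with Radon--Nikod\'ym derivative $f_h(x) = \exp\!\big(\underline{h}(x) - \tfrac12 \Vert h \Vert_{\sH}^2\big)$, and by Proposition \ref{prop:int_of_CM_density} one has $f_h \in L^2(E,\mu;\bbR)$ with $\Vert f_h \Vert_{L^2(E,\mu;\bbR)} < \infty$. Thus for any Borel set $A \subseteq E$ we may write $\mu_h(A) = \int_A f_h \,\dd\mu$.

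Next, fix $\eta > 0$ and set $A_n := \{ x \in E : \Vert X_n(x) - X(x) \Vert_B > \eta \}$. Then, by the previous identity and the Cauchy--Schwarz inequality,
\begin{equation}
    \mu_h(A_n) = \int_{A_n} f_h \,\dd\mu \leq \Vert f_h \Vert_{L^2(E,\mu;\bbR)} \, \mu(A_n)^{1/2}.
\end{equation}
Since $X_n \to X$ in probability w.r.t. $\mu$ precisely means $\mu(A_n) \to 0$ for every such $\eta$, the right-hand side tends to $0$, hence $\mu_h(A_n) \to 0$. As $\eta > 0$ was arbitrary, this is exactly the assertion that $X_n \to X$ in probability w.r.t. $\mu_h$.

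I do not expect any genuine obstacle: this is a routine absolute-continuity argument, and the only mild point requiring care is that the Cameron--Martin density is actually square-integrable (so that Cauchy--Schwarz applies), which is supplied by Proposition \ref{prop:int_of_CM_density}; alternatively, mere integrability of $f_h$ together with a truncation $\int_{\{f_h > M\}} f_h\,\dd\mu < \delta$ would yield the same conclusion, but the $L^2$ bound keeps the estimate cleanest.
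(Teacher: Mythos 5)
Your proof is correct and follows essentially the same route as the paper: both rewrite $\mu_h(A_n)$ as $\int_{A_n} f_h\,\dd\mu$ via the Cameron--Martin density and then apply Cauchy--Schwarz together with the $L^2$-bound on $f_h$ from Proposition \ref{prop:int_of_CM_density} to get $\mu_h(A_n) \leq \Vert f_h \Vert_{L^2(E,\mu;\bbR)}\,\mu(A_n)^{1/2} \to 0$. No issues.
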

\begin{proof}
    Let $\eta > 0$. Then 

    \begin{align}
        &\mu \left( \Vert X_n \circ T_h - X \circ T_h \Vert_B > \eta \right) = \mu_h \left( \Vert X_n - X \Vert_B > \eta \right) \\
        &= \int_E 1_{\left\{ \Vert X_n - X \Vert_B > \eta \right\}} f_h \dd \mu \leq \mu \left( \Vert X_n - X \Vert_B > \eta \right)^{\frac{1}{2}} \underbrace{\Vert f_h \Vert_{L^2(E, \mu; \bbR)}}_{< \infty}, \label{eq:prob_prob_h}
    \end{align}

    where the latter term is finite by Proposition \ref{prop:int_of_CM_density}. Since $X_n \rightarrow X$ w.r.t. $\mu$ the expression in \eqref{eq:prob_prob_h} goes to $0$ as $n \rightarrow \infty$, which shows the claim. 
\end{proof}

\begin{lem}[Convergence of Proxy-Restriction]\label{lem:L2_convergence_implies_pointwise_convergence_of_homogeneous_part} 
    Let $(\calT, \bE, [\cdot], \calN)$ be an ambient space, let $(E, \sH, i, \mu)$ be an abstract Wiener space, and let $\Psi_{\delta}, \Psi \in \calP^{(\leq [\calT])}(E, \mu; \bE)$ for every $\delta > 0$ such that $\Psi_{\delta} \rightarrow \Psi$ in $L^2(E, \mu; \bE)$. Then $\overline{\Psi_{\delta}}(h) \rightarrow \overline{\Psi}(h)$ uniformly on bounded sets of $\sH$. 
\end{lem}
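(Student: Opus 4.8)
The statement is essentially a quantitative version of the continuity argument already carried out in Proposition \ref{prop:proxy_restriction_is_cont}(2), now applied uniformly in the approximating index $\delta$ and uniformly over bounded subsets of $\sH$. The plan is to mimic the estimate in that proof, tracking the dependence on $h$ carefully, and to exploit the fact that on a ball $\{\Vert h\Vert_\sH \le R\}$ all the constants appearing can be bounded uniformly. First I would note that, by linearity of the proxy-restriction (Proposition \ref{prop:proxy_restriction_is_cont}(3)) and of $(\cdot)^\circ$, it suffices to estimate $\overline{\Psi_\delta - \Psi}(h) = \overline{(\Psi_\delta - \Psi)}(h)$, so we may set $\Theta_\delta := \Psi_\delta - \Psi$ and show $\sup_{\Vert h\Vert_\sH \le R}\Vert \overline{\Theta_\delta}(h)\Vert_{\bE} \to 0$ for each fixed $R>0$, using $\Vert\Theta_\delta\Vert_{L^2(E,\mu;\bE)}\to 0$.

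\textbf{Key steps.} Second, by definition of the proxy-restriction and the Cameron--Martin density $f_h(x) = \exp(\underline{h}(x) - \tfrac12\Vert h\Vert_\sH^2)$ (Theorem \ref{thm:classical_CM}),
\begin{equation}
\left\Vert \overline{\Theta_\delta}(h) \right\Vert_{\bE} = \left\Vert \int_E \Theta_\delta^\circ \cdot f_h \,\dd\mu \right\Vert_{\bE} \le \int_E \Vert \Theta_\delta^\circ \Vert_{\bE}\, f_h \,\dd\mu \le \Vert \Theta_\delta^\circ \Vert_{L^2(E,\mu;\bE)} \, \Vert f_h \Vert_{L^2(E,\mu;\bbR)}
\end{equation}
by Cauchy--Schwarz. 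Third, Proposition \ref{prop:int_of_CM_density} gives $\Vert f_h\Vert_{L^2(E,\mu;\bbR)} \le \exp(2\Vert h\Vert_\sH^2) \le \exp(2R^2)$ for all $h$ with $\Vert h\Vert_\sH \le R$, a bound independent of $h$ and $\delta$. Fourth, since $\Pi_{[\tau]}$ is an orthogonal projection on $L^2$, the map $\Psi \mapsto \Psi^\circ = \sum_\tau \Pi_{[\tau]}\Psi_\tau$ is $L^2$-bounded (indeed norm-nonincreasing on each graded component, so bounded by a constant depending only on $|\calT|$), hence $\Vert\Theta_\delta^\circ\Vert_{L^2(E,\mu;\bE)} \lesssim \Vert\Theta_\delta\Vert_{L^2(E,\mu;\bE)} \to 0$. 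Combining these three facts yields
\begin{equation}
\sup_{\Vert h\Vert_\sH \le R} \left\Vert \overline{\Psi_\delta}(h) - \overline{\Psi}(h) \right\Vert_{\bE} \;\le\; C(|\calT|)\, e^{2R^2}\, \Vert \Psi_\delta - \Psi \Vert_{L^2(E,\mu;\bE)} \;\xrightarrow[\delta\to 0]{}\; 0,
\end{equation}
which is exactly uniform convergence on bounded subsets of $\sH$.

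\textbf{Main obstacle.} There is no serious obstacle here — the lemma is a soft consequence of the Cauchy--Schwarz estimate plus the uniform Cameron--Martin density bound of Proposition \ref{prop:int_of_CM_density}. The only point requiring a little care is to make sure the constant in the bound on $\Vert f_h\Vert_{L^2}$ depends only on $R = \sup\Vert h\Vert_\sH$ and not on the individual $h$ (which is immediate from the explicit Gaussian moment-generating-function formula recorded before Proposition \ref{prop:int_of_CM_density}), and to check that the projection onto homogeneous chaos does not spoil the $L^2$-convergence, which follows since orthogonal projections are contractions. One should also remark that, a priori, $\overline{\Psi_\delta}$ and $\overline{\Psi}$ are only asserted to be continuous functions on $\sH$ by Proposition \ref{prop:proxy_restriction_is_cont}(2); the present lemma upgrades this to a uniform-on-bounded-sets statement for the difference, with no extra hypotheses beyond $L^2$-convergence of the $\Psi_\delta$.
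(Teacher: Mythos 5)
Your proof is correct and follows essentially the same route as the paper's: rewrite $\overline{\Psi_{\delta}}(h)-\overline{\Psi}(h)$ as the integral of $(\Psi_{\delta}-\Psi)^{\circ}$ against the Cameron--Martin density $f_h$, apply Cauchy--Schwarz, and bound $\sup_h\Vert f_h\Vert_{L^2(E,\mu;\bbR)}$ on bounded subsets of $\sH$ via Proposition \ref{prop:int_of_CM_density}. The only cosmetic caveat is that in the Banach-valued setting $\Pi_{[\tau]}$ is not literally an \emph{orthogonal} projection (the paper itself notes that $L^2(E,\mu;E_{\tau})$ carries no Hilbert structure), but its $L^2$-boundedness is exactly what the paper also invokes, so your step stands.
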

\begin{proof}
    Let $A \subseteq \sH$ be bounded and let $h \in A$ be arbitrary. Then 

    \begin{equation}
       \overline{\Psi_{\delta}}(h) - \overline{\Psi}(h) = \bbE \left[ \left( \Psi_{\delta} - \Psi \right)^{\circ}( \cdot + h) \right] = \int_E \left( \Psi_{\delta} - \Psi \right)^{\circ} (x) \dd \mu_{h}(x)
    \end{equation}

where $\mu_h (\cdot) = \mu(\cdot - h)$. Via the Cameron--Martin Theorem and the Cauchy--Schwarz inequality we obtain 

\begin{align}
    \sup_{h \in A}  \left\Vert \overline{\Psi_{\delta}}(h) - \overline{\Psi}(h) \right\Vert_{\bE} &= \sup_{h \in A} \left\Vert \int \left( \Psi_{\delta} - \Psi \right)^{\circ} (x) f_h(x) \dd \mu (x) \right\Vert_{\bE} \\
    &\leq \sup_{h \in A} \int \underbrace{\Vert \left( \Psi_{\delta} - \Psi \right)^{\circ} (x)\Vert_{\bE}}_{\in L^2(E, \mu; \bbR)} \underbrace{\vert f_h(x) \vert}_{\in L^2(E, \mu; \bbR)} \dd \mu (x) \\
    &\leq \underbrace{\Vert \left( \Psi_{\delta} - \Psi \right)^{\circ} \Vert_{L^2(E, \mu; \bE)}}_{ \rightarrow 0} \underbrace{\sup_{h \in A} \Vert f_h \Vert_{L^2(E, \mu; \bbR)}}_{<\infty} \rightarrow 0
\end{align} 

where $f_h$ denotes the Cameron--Martin density, the square-norm of which is bounded uniformly on bounded subsets of $\sH$ by Proposition \ref{prop:int_of_CM_density}.
\end{proof}

\begin{prop} \label{prop:containment_of_image_CM}
    Let $(E, \sH, i, \mu)$ be an abstract Wiener space, let $B$ be a separable Banach space and let $\Phi: E \rightarrow B$ be a bounded linear operator. Then $\nu := \Phi_{\ast} \mu$ is a Gaussian measure and 

    \begin{equation}
        \Phi(\sH) \subseteq \texttt{CM}(B, \nu),
    \end{equation}

    where $\texttt{CM}(B, \nu)$ denotes the Cameron--Martin space of $\nu$ in $B$. 
\end{prop}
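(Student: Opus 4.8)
Here is my plan for proving Proposition \ref{prop:containment_of_image_CM}.

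The plan is to use the characterization of the Cameron--Martin space via the reproducing kernel Hilbert space and covariance operator, as recalled in Appendix \ref{sec:AWS}. First I would recall that since $\Phi: E \to B$ is bounded and linear, $\nu := \Phi_{\ast}\mu$ is a centred Gaussian measure on the separable Banach space $B$ (the characteristic functional of $\nu$ is $\ell \mapsto \check\mu(\ell \circ \Phi) = \exp(-\tfrac12 \|i^{\ast}(\ell \circ \Phi)\|_{\sH}^2)$, which is of the required Gaussian form). Then I would identify the adjoint relationship: for $\ell \in B^{\ast}$ we have $\ell \circ \Phi \in E^{\ast}$, and the covariance form of $\nu$ satisfies $\fq_{\nu}(\ell, \ell') = \fq_{\mu}(\ell \circ \Phi, \ell' \circ \Phi)$. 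Hence the map $j_{\nu}: B^{\ast} \to L^2(B, \nu; \bbR)$, $\ell \mapsto [\ell]_{\nu}$, is related to $j_{\mu}$ by $[\ell]_{\nu} \circ \Phi = [\ell \circ \Phi]_{\mu}$ as elements of $L^2(E, \mu; \bbR)$, so precomposition with $\Phi$ gives an isometry $\sR(\nu) \to \sR(\mu)$ extending $\ell \mapsto \ell \circ \Phi$.

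Next, the key step: take $h \in \sH$ and show $\Phi(h) \in \texttt{CM}(B,\nu)$. Writing $\underline{h} = \fC_{\mu}^{-1} h \in \sR(\mu)$, I would verify that $\Phi(h) = \fC_{\nu}(g)$ for the element $g \in \sR(\nu)$ characterized (via the isometry above) by $g \circ \Phi = \underline{h}$ in $\sR(\mu)$; concretely, for every $\ell \in B^{\ast}$,
\begin{align}
    \ell(\Phi(h)) &= (\ell \circ \Phi)(h) = \fq_{\mu}(\ell \circ \Phi, \underline{h}) = \int_E (\ell \circ \Phi)(x)\, \underline{h}(x)\, \dd\mu(x) \\
    &= \int_B \ell(y)\, g(y)\, \dd\nu(y) = \fq_{\nu}(\ell, g) = [\fC_{\nu} g](\ell),
\end{align}
using $(\ell \circ \Phi)(h) = \fq_{\mu}(\ell\circ\Phi, \fC_{\mu}^{-1} h)$ (the defining isometry property of $\fC_{\mu}$, i.e. $\langle \fC_\mu^{-1}k, \fC_\mu^{-1}h\rangle_{L^2} = k(h)$ for $k \in j_\mu(E^\ast)$, extended by density) and the change of variables $y = \Phi(x)$. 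Since $B^{\ast}$ separates points of $B$, this shows $\Phi(h) = \fC_{\nu}(g) \in \fC_{\nu}(\sR(\nu)) = \texttt{CM}(B,\nu)$, which is the claim. One should also note $\|\Phi(h)\|_{\texttt{CM}(B,\nu)} = \|g\|_{\sR(\nu)} \le \|\underline h\|_{\sR(\mu)} = \|h\|_{\sH}$, so $\Phi|_{\sH}$ is moreover a norm-decreasing (continuous) map into $\texttt{CM}(B,\nu)$, though this is not needed for the bare statement.

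The main obstacle is bookkeeping the density argument cleanly: $\underline{h}$ lies in the closure $\sR(\mu) = \overline{j_\mu(E^\ast)}$, not necessarily in $j_\mu(E^\ast)$ itself, so the identity $(\ell \circ \Phi)(h) = \fq_\mu(\ell\circ\Phi, \underline h)$ must be obtained by approximating $\underline h$ in $L^2$ by elements $j_\mu(f_n)$, using that both sides are continuous in $\underline h$ with respect to the $L^2(E,\mu;\bbR)$-norm (the left side because $h \mapsto \underline h$ is an isometry $\sH \to \sR(\mu)$ and $\ell\circ\Phi$ is a fixed $L^2$ element, the right side by Cauchy--Schwarz). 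Once that continuity is in place the computation above goes through verbatim. Everything else — Gaussianity of $\nu$, the adjoint identity for covariance forms, the isometry $\sR(\nu)\hookrightarrow\sR(\mu)$ — is routine and follows directly from the definitions in Appendix \ref{sec:AWS}.
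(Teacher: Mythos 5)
Your route is genuinely different from the paper's. The paper's own proof is a three-line quasi-invariance argument: it computes $[(T_{\Phi(h)})_{\ast}\nu](A) = \mu_h(\Phi^{-1}(A))$, invokes $\mu_h \approx \mu$ from the classical Cameron--Martin Theorem~\ref{thm:classical_CM}, concludes $(T_{\Phi(h)})_{\ast}\nu \approx \nu$, and reads off $\Phi(h) \in \texttt{CM}(B,\nu)$ from the shift characterization of the Cameron--Martin space --- no covariance operators needed. Your covariance-operator route is workable and buys the extra norm bound $\Vert \Phi(h)\Vert_{\texttt{CM}(B,\nu)} \leq \Vert h\Vert_{\sH}$, but as written it has a gap at the key step.

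The gap is the existence of your element $g$. The map $g \mapsto g \circ \Phi$ is an isometric embedding of $\sR(\nu)$ onto the closed subspace $V := \overline{\{[\ell \circ \Phi]_{\mu} : \ell \in B^{\ast}\}}$ of $L^2(E,\mu;\bbR)$, but it is \emph{not} surjective onto $\sR(\mu)$, so for a general $h \in \sH$ there is no $g \in \sR(\nu)$ with $g \circ \Phi = \underline{h}$. Concretely, take $E = \bbR^2$ with the standard Gaussian, $\Phi$ the projection onto the first coordinate, and $h = e_2$: then $\underline{h} = \langle e_2, \cdot\rangle$ is orthogonal to $V$ and no such $g$ exists, even though $\Phi(h) = 0 \in \texttt{CM}(B,\nu)$ trivially. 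The fix is to take $g$ to be the element of $\sR(\nu)$ corresponding under the embedding to the \emph{orthogonal projection} $P\underline{h}$ of $\underline{h}$ onto $V$. Your chain of equalities then survives: since $[\ell\circ\Phi]_{\mu} \in V$ one has $\fq_{\mu}(\ell\circ\Phi, \underline{h}) = \langle [\ell\circ\Phi]_{\mu}, P\underline{h}\rangle_{L^2(\mu)} = \fq_{\nu}(\ell, g)$, and the change of variables $y = \Phi(x)$ is now legitimate because $g\circ\Phi = P\underline{h}$ holds by construction. With this correction (and the density bookkeeping you already flag, which is fine) the argument closes, and $\Vert g\Vert_{\sR(\nu)} = \Vert P\underline{h}\Vert \leq \Vert \underline{h}\Vert = \Vert h\Vert_{\sH}$ explains why the inequality you wrote is genuinely an inequality rather than an equality.
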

\begin{proof}
    We use the characterization of $\texttt{CM}(B, \Phi_{\ast} \mu)$ as the subspace of $B$ consisting precisely of those elements $g \in B$ such that $(T_{g})_{\ast}\nu$ and $\nu$ are equivalent - see Theorem \ref{thm:classical_CM}. Let $h \in \sH$ and let $A \subseteq B$ be measurable. Then we have 

    \begin{align}
        [(T_{\Phi(h)})_{\ast}\nu](A) &= \nu (A - \Phi(h)) = \mu( x \in E : \Phi(x) \in A - \Phi(h)) \\
        &= \mu( x \in E : \Phi(x+h) \in A) = \mu ( x \in E: x + h \in \Phi^{-1}(A) ) \\
        &= \mu_{h} ( \Phi^{-1}(A)) 
    \end{align}

    Thus, by Theorem \ref{thm:classical_CM} the above is zero precisely when $\mu( \Phi^{-1}(A)) = \nu(A)$ is zero. Hence $\Phi(h) \in \texttt{CM}(B, \nu)$.
\end{proof}

\section{Banach Valued Wiener--Ito Chaos} \label{sec:WIC}

Throughout this section let $(E, \sH, i, \mu)$ be an abstract Wiener space, let $(e_i)_{i \in \bbN}$ be an ONB of $\sH$ which is contained in $E^{\ast}$,\footnote{Such a choice of ONB is always possible, even if the $E$-closure of $\sH$ does not coincide with $E$; i.e. if $\mu$ does not have full support.} and let

\begin{equation}
\calF_N := \sigma \left( \langle e_k, \cdot \rangle : 1 \leq k \leq N \right).
\end{equation}

Let $(\calT, \bE, [\cdot], \calN)$ be an ambient space (in the sense of Definition \ref{defn:ambient_space}) and recall the notation $A^{k}_n$ and $A^{\leq k}_n$ from \eqref{symb:Akn} on p.~\pageref{symb:Akn}. Let $(h_k)_{k \geq 0}$ be the family of Hermite polynomials defined by 


\begin{equation}
    h_k(x) = (-1)^k e^{\frac{x^2}{2}} \partial_x^k \left( e^{-\frac{x^2}{2}} \right), \quad k \geq 1, \label{symb:hermite_polynomial}
\end{equation}

and $h_0 \equiv 1$. In particular, 

\begin{align}
    h_0(x) = 1, \quad h_1(x) = x, \quad h_2(x) = x^2 - 1, \quad h_3(x) = x^3 - 3x, \quad \ldots,
\end{align}

With this convention we have that $h_k' = k h_{k-1}$, that each $h_k$ is monic, i.e. that the leading coefficient is $1$, and that the family $\left\{ \frac{h_k}{\sqrt{k!}} : k \geq 0 \right\}$ forms an ONB of $L^2(\bbR, \sN(0,1))$. 

\begin{prop}[Binomial Theorem for Hermite Polynomials] \label{prop:hermite_polynomial_binomial}
Let $h_n$ denote the $n$-th Hermite polynomial. Then for any $x,y \in \bbR$ we have the following identity

\begin{equation}
    h_n(x + y) = \sum_{k = 0}^n \binom{n}{k} h_k(x) y^{n-k} . 
\end{equation}
\end{prop}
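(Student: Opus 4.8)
The plan is to exploit the exponential generating function of the Hermite polynomials. Recall that with the ``probabilists''' normalization used here (monic, $h_k' = k h_{k-1}$, $h_k = (-1)^k e^{x^2/2}\partial_x^k e^{-x^2/2}$), one has the classical identity
\begin{equation}
    \sum_{n \geq 0} h_n(x) \frac{t^n}{n!} = \exp\left( xt - \frac{t^2}{2} \right).
\end{equation}
First I would verify (or simply cite) this generating-function identity; it follows directly from Taylor-expanding $e^{-(x-t)^2/2} = e^{-x^2/2} e^{xt - t^2/2}$ in $t$ and matching with the Rodrigues-type formula \eqref{symb:hermite_polynomial}. Given this, the proof is a short formal computation with power series.

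The key step is the multiplicative splitting of the generating function when the argument is a sum:
\begin{equation}
    \sum_{n \geq 0} h_n(x+y) \frac{t^n}{n!} = \exp\left( (x+y)t - \frac{t^2}{2} \right) = \exp\left( xt - \frac{t^2}{2} \right) \exp(yt) = \left( \sum_{k \geq 0} h_k(x) \frac{t^k}{k!} \right)\left( \sum_{j \geq 0} y^j \frac{t^j}{j!} \right).
\end{equation}
Then I would expand the product on the right via the Cauchy product, collecting the coefficient of $t^n/n!$:
\begin{equation}
    \sum_{n \geq 0} h_n(x+y) \frac{t^n}{n!} = \sum_{n \geq 0} \left( \sum_{k=0}^{n} \frac{n!}{k!\,(n-k)!} h_k(x) y^{n-k} \right) \frac{t^n}{n!} = \sum_{n \geq 0} \left( \sum_{k=0}^{n} \binom{n}{k} h_k(x) y^{n-k} \right) \frac{t^n}{n!}.
\end{equation}
Since two formal power series (here even entire functions of $t$ for each fixed $x,y$) are equal if and only if all their coefficients agree, comparing the coefficient of $t^n$ yields $h_n(x+y) = \sum_{k=0}^{n}\binom{n}{k} h_k(x) y^{n-k}$, which is the claim.

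There is no real obstacle here; the only point requiring a sentence of care is the justification that coefficient comparison is legitimate — this is immediate because all the series involved converge absolutely and locally uniformly in $t$ (each is entire in $t$), so rearrangement in the Cauchy product and term-by-term identification are valid. An alternative, equally clean route avoiding generating functions is induction on $n$ using the three-term structure $h_{n+1}(x) = x\,h_n(x) - n\,h_{n-1}(x)$ together with $h_n'(x) = n h_{n-1}(x)$: fix $x$, view both sides as polynomials in $y$, check equality of values and derivatives at $y=0$, or differentiate the proposed identity in $y$ and match with the recursion. If one prefers to keep the proof entirely self-contained within the paper's conventions, I would present the generating-function argument, since it is the shortest and makes the binomial structure transparent.
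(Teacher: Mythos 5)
Your proof is correct, but it takes a different route from the paper. The paper simply Taylor-expands $h_n(x+y)$ in the increment $y$ around the point $x$: since $h_n' = n\,h_{n-1}$ gives $h_n^{(k)} = \frac{n!}{(n-k)!}\, h_{n-k}$, the finite Taylor series $h_n(x+y) = \sum_{k=0}^n \frac{1}{k!} h_n^{(k)}(x)\, y^k$ immediately produces $\sum_{k=0}^n \binom{n}{k} h_{n-k}(x) y^k$, and a change of summation index finishes the argument. You instead factor the exponential generating function $\exp\bigl((x+y)t - \tfrac{t^2}{2}\bigr) = \exp\bigl(xt - \tfrac{t^2}{2}\bigr)\exp(yt)$ and read off the coefficient of $t^n/n!$ via the Cauchy product. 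Both arguments are sound and essentially two lines long. The paper's Taylor-expansion argument is slightly more economical in its prerequisites: it uses only the derivative identity $h_n' = n h_{n-1}$, which is already recorded as part of the paper's conventions, whereas your route requires first establishing (or citing) the generating-function identity from the Rodrigues formula \eqref{symb:hermite_polynomial} — a derivation you correctly sketch. Your approach does have the advantage of making the binomial structure conceptually transparent, since the identity becomes literally the statement that the generating function is multiplicative in the shift; your justification of coefficient comparison via entirety in $t$ is also adequate. Note, incidentally, that the Taylor-expansion idea you list among the "alternative routes" at the end is in fact the paper's actual proof.
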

\begin{proof}
    Taylor expanding the left-hand-side around $x$ gives 

    \begin{equation}
        h_n(x+y) = \sum_{k = 0}^n \frac{1}{k!} h^{(k)}_{n}(x) y^{k} = \sum_{k = 0}^n \underbrace{\frac{1}{k!} \frac{n!}{(n-k)!}}_{= \binom{n}{k}} h_{n-k}(x) y^{k}
    \end{equation}

    where $h^{(k)}_{n}$ denotes the $k$-th derivative of the $n$-th Hermite polynomial. Applying the identity $h_n' = n h_{n-1}$ a total of $k$ times and changing the summation index ($k \mapsto n-k$) gives the result.  
\end{proof}

Let $\alpha: \bbN \rightarrow \bbN_0$ be a multi-index with $\vert \alpha \vert := \sum_{i = 0}^{\infty} \alpha_i < \infty$ and define the multi-dimensional Hermite polynomial $H_{\alpha}$ with index $\alpha$ as the non-linear functional
    
\begin{equation}
x \mapsto H_{\alpha}(x) := \prod_{i \in \bbN} h_{\alpha_i}(\langle e_i, x \rangle), \quad x \in E. \label{eq:Hermite_poly}
\end{equation}
    
For every $k \in \bbN$ define the \textbf{$k$-th homogeneous $\bbR$-valued Wiener--Ito chaos on $(E, \mu)$}, $\calP^{(k)}(E, \mu; \bbR)$, as the closed linear subspace of $L^2(E, \mu; \bbR)$ generated by 
    
\begin{equation}
\{H_{\alpha} : \vert \alpha \vert = k\}  
\end{equation}

and the \textbf{$n$-th inhomogeneous $\bbR$-valued Wiener--Ito chaos on $(E, \mu)$}, $\calP^{(\leq n)}(E, \mu; \bbR)$, as 

\begin{equation}
    \calP^{(\leq n)}(E, \mu; \bbR) := \bigoplus_{k = 0}^n \calP^{(k)}(E, \mu; \bbR) \label{eq:decomposition_of_WIC} .
\end{equation}

\begin{lem} \label{lem:ortho_of_Hermite}
    Let $\alpha \neq \beta$. Then 

    \begin{equation}
        \int_E H_{\alpha} H_{\beta} \dd \mu = 0 .
    \end{equation}
\end{lem}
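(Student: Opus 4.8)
The proof will rest on the single structural fact, recalled in the setup of this section, that the sequence $(\langle e_i, \cdot \rangle)_{i \in \bbN}$ is i.i.d.\ $\sN(0,1)$ with respect to $\mu$ — this is immediate since $(e_i)_{i \in \bbN}$ is an orthonormal basis of $\sH$ contained in $E^{\ast}$, so the $\langle e_i, \cdot\rangle$ are jointly Gaussian, centred, and with covariance $\bbE_\mu[\langle e_i,\cdot\rangle\langle e_j,\cdot\rangle] = \langle e_i, e_j\rangle_{\sH} = \delta_{ij}$. First I would exploit that both multi-indices have finite support: choose $N$ with $\alpha_i = \beta_i = 0$ for all $i > N$. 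Since $h_0 \equiv 1$, the definition \eqref{eq:Hermite_poly} then gives
\begin{equation}
    H_{\alpha} H_{\beta} = \prod_{i = 1}^{N} h_{\alpha_i}\!\big(\langle e_i, \cdot\rangle\big)\, h_{\beta_i}\!\big(\langle e_i, \cdot\rangle\big),
\end{equation}
a finite product of functions each depending on a single coordinate $\langle e_i, \cdot\rangle$.

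\textbf{Key step.} By the independence of $(\langle e_i, \cdot\rangle)_{1 \le i \le N}$ the expectation factorizes:
\begin{equation}
    \int_E H_{\alpha} H_{\beta} \dd \mu = \prod_{i = 1}^{N} \int_{\bbR} h_{\alpha_i}(s)\, h_{\beta_i}(s) \dd \sN(0,1)(s).
\end{equation}
Now I would invoke the orthogonality of the Hermite polynomials in $L^2(\bbR, \sN(0,1))$ stated just above: because $\{h_k/\sqrt{k!} : k \ge 0\}$ is an orthonormal basis there, one has $\int_{\bbR} h_k h_\ell \dd\sN(0,1) = k!\,\delta_{k\ell}$. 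Since $\alpha \ne \beta$, there is an index $i_0 \in \{1, \dots, N\}$ with $\alpha_{i_0} \ne \beta_{i_0}$, so the corresponding factor vanishes, and therefore the whole product — hence $\int_E H_\alpha H_\beta \dd\mu$ — is zero.

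\textbf{Remarks on difficulty.} There is essentially no obstacle here; the only points requiring a word of care are (i) recording that finitely many coordinates suffice so that the infinite product in \eqref{eq:Hermite_poly} is really a finite one (making the use of Fubini/independence unproblematic), and (ii) that the convention $h_0 \equiv 1$ means coordinates where $\alpha_i = \beta_i = 0$ contribute the harmless factor $1$. Everything else is the textbook tensorized orthogonality of Hermite polynomials transported to $(E,\mu)$ via the i.i.d.\ structure of the coordinate functionals.
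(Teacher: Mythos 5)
Your proof is correct and follows essentially the same route as the paper's: factorize the expectation using the independence of the coordinate functionals $(\langle e_i,\cdot\rangle)_{i\in\bbN}$ and then kill the product via the one-dimensional orthogonality of Hermite polynomials at an index where $\alpha$ and $\beta$ differ. Your explicit remark that only finitely many coordinates are involved is a small but welcome point of care that the paper leaves implicit.
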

\begin{proof}
    Since $\alpha \neq \beta$, there exists an $i' \in \bbN$ s.t. $\alpha_{i'} \neq \beta_{i'}$. Furthermore, $e_i$ and $e_j$ are orthogonal in $\sH$ and thus $x \mapsto \langle e_i, x \rangle$ and $x \mapsto \langle e_j, x \rangle$ are uncorrelated w.r.t. $\mu$ for any $i \neq j$. Hence 

     \begin{align}
        \int_E H_{\alpha} H_{\beta} \dd \mu &= \int_E \prod_{i \in \bbN} h_{\alpha_i}(\langle e_i, x \rangle) \prod_{i \in \bbN} h_{\beta_i}(\langle e_i, x \rangle) \dd \mu(x) = \int_E \prod_{i \in \bbN} h_{\alpha_i}(\langle e_i, x \rangle) h_{\beta_i}(\langle e_i, x \rangle) \dd \mu(x) \\
        &= \left( \prod_{i \neq i'} \int_E  h_{\alpha_i}(\langle e_i, x \rangle)  h_{\beta_i}(\langle e_i, x \rangle) \dd \mu(x) \right) \underbrace{\int_E  h_{\alpha_{i'}}(\langle e_{i'}, x \rangle)  h_{\beta_{i'}}(\langle e_{i'}, x \rangle) \dd \mu(x)}_{=0}
    \end{align}

    where the latter term is $0$ since $\alpha_{i'} \neq \beta_{i'}$ and Hermite polynomials of different degree are orthogonal as mentioned in the beginning of Appendix \ref{sec:WIC}.
\end{proof}

In particular, this implies that the decomposition in \eqref{eq:decomposition_of_WIC} is orthogonal. \\

Now let $B$ be a separable Banach space and define for every $1 \leq p < \infty$ the Banach space $L^p(E, \mu; B)$ as the space of ($\mu$-equivalence classes of) measurable functions $\Psi: E \rightarrow B$ s.t. 
    
\begin{equation}
\int_E \Vert \Psi \Vert_B^p \dd \mu < \infty,  \label{eq:Bochner_Lebesgue_norm}
\end{equation}

with the norm induced by \eqref{eq:Bochner_Lebesgue_norm}. Define the \textbf{$k$-th (homogeneous) $B$-valued Wiener--Ito chaos on $(E, \mu)$}, $\calP^{(k)}(E, \mu; B) \label{symb:homogeneous_WIC}$, as the closed linear subspace in $L^2(E, \mu; B)$ generated by 
    
\begin{equation}
\{H_{\alpha} y : \vert \alpha \vert = k, y \in B\},  \label{eq:generators_of_hWIC_Banach}
\end{equation}

and the \textbf{$n$-th inhomogeneous $B$-valued Wiener--Ito chaos on $(E, \mu)$}, $\calP^{(\leq n)}(E, \mu; B) \label{symb:inhomogeneous_WIC}$, as 

\begin{equation}
    \calP^{(\leq n)}(E, \mu; B) := \bigoplus_{k = 0}^n \calP^{(k)}(E, \mu; B). \label{eq:decomposition_of_BV_WIC} 
\end{equation}

\begin{sloppy}
Note that as opposed to \eqref{eq:decomposition_of_WIC}, this Banach space-valued construction cannot provide an orthogonal decomposition of $L^2(E, \mu; B)$, because $L^2$-spaces with values in a Banach space do not, in general, have a Hilbert space structure. However, there are the following two propositions.
\end{sloppy}

\begin{prop}[Conditional Expectation for Elements in Finite BV WIC] \label{prop:conditional_expectation_of_finite_WIC} 
    Let $k \geq 0$, let $X \in \calP^{(\leq k)}(E, \mu; B)$, and let $N \in \bbN$. Then 

    \begin{equation}
        \bbE[X \vert \calF_N] = \sum_{\alpha \in A^{\leq k}_N} \bbE[X H_{\alpha}] H_{\alpha}. 
    \end{equation}
\end{prop}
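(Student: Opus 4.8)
The goal is to identify the conditional expectation $\bbE[X \mid \calF_N]$ of an element $X \in \calP^{(\leq k)}(E, \mu; B)$ with the finite sum $\sum_{\alpha \in A^{\leq k}_N} \bbE[X H_\alpha] H_\alpha$. I would proceed in three stages: first reduce to the scalar case by a generating-elements argument, then verify the identity for the generators $H_\beta y$ with $|\beta| \leq k$ and $y \in B$, and finally pass to general $X$ by continuity of conditional expectation on $L^2$ (equivalently $L^1$).

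\emph{Step 1: it suffices to check generators.} By definition $\calP^{(\leq k)}(E, \mu; B)$ is the closed linear span in $L^2(E, \mu; B)$ of the elements $\{H_\beta y : |\beta| \leq k,\ y \in B\}$. Both sides of the claimed identity are linear in $X$, and both are continuous as maps $L^2(E, \mu; B) \to L^2(E, \mu; B)$: the left side because conditional expectation is an $L^2$-contraction (this extends verbatim to Bochner $L^2$, as the conditional expectation operator is built from the scalar one tensored with $\id_B$), and the right side because it is a \emph{finite} sum (the index set $A^{\leq k}_N$ is finite) of the bounded maps $X \mapsto \bbE[X H_\alpha]\,H_\alpha$, each bounded since $H_\alpha \in L^\infty$-... more carefully, since $H_\alpha \in \bigcap_{p<\infty} L^p(E,\mu;\bbR)$ (Gaussian hypercontractivity / $H_\alpha$ lies in a fixed Wiener chaos, cf. Lemma \ref{lem:equiv_of_norms}) so that $X \mapsto \bbE[XH_\alpha]$ is bounded $L^2 \to B$ by Cauchy--Schwarz. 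Hence it is enough to prove the identity when $X = H_\beta y$ with $|\beta| \leq k$ and $y \in B$, and by pulling $y$ out of both sides (linearity of $\bbE[\cdot \mid \calF_N]$ over the constant $y$, and $\bbE[H_\beta y \, H_\alpha] = \bbE[H_\beta H_\alpha]\,y$) this reduces to the purely scalar statement
\begin{equation}
    \bbE[H_\beta \mid \calF_N] = \sum_{\alpha \in A^{\leq k}_N} \bbE[H_\beta H_\alpha]\, H_\alpha, \qquad |\beta| \leq k.
\end{equation}

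\emph{Step 2: the scalar identity.} Recall $\calF_N = \sigma(\langle e_1, \cdot\rangle, \dots, \langle e_N, \cdot\rangle)$ and $H_\beta = \prod_{i \in \bbN} h_{\beta_i}(\langle e_i, \cdot\rangle)$. Split the multi-index as $\beta = \beta' + \beta''$ where $\beta'$ is supported on $\{1, \dots, N\}$ and $\beta''$ on $\{N+1, N+2, \dots\}$, so $H_\beta = H_{\beta'} \cdot H_{\beta''}$ with $H_{\beta'}$ being $\calF_N$-measurable and $H_{\beta''}$ a product of Hermite polynomials in the variables $\langle e_i, \cdot\rangle$, $i > N$, which are independent of $\calF_N$ (the $(\langle e_i, \cdot\rangle)_{i}$ are i.i.d.\ $\sN(0,1)$ under $\mu$). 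Therefore $\bbE[H_\beta \mid \calF_N] = H_{\beta'} \cdot \bbE[H_{\beta''}]$, and $\bbE[H_{\beta''}] = \prod_{i>N} \bbE[h_{\beta_i}(\langle e_i,\cdot\rangle)] = \prod_{i>N} \mathbf 1_{\{\beta_i = 0\}}$ since Hermite polynomials of positive order are centred and $h_0 \equiv 1$. Thus $\bbE[H_\beta \mid \calF_N] = H_\beta$ if $\beta$ is supported on $\{1, \dots, N\}$ (i.e.\ $\beta \in A^{\leq k}_N$, using $|\beta| \leq k$), and $\bbE[H_\beta \mid \calF_N] = 0$ otherwise. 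On the other side, by the orthogonality of the Hermite system (Lemma \ref{lem:ortho_of_Hermite}) together with $\bbE[H_\alpha^2] = \prod_i \alpha_i!$ normalized appropriately --- actually it is cleanest to note that for $\alpha, \beta$ both with $|\cdot|\le k$ we have $\bbE[H_\beta H_\alpha] = 0$ unless $\alpha = \beta$, so the sum $\sum_{\alpha \in A^{\leq k}_N} \bbE[H_\beta H_\alpha] H_\alpha$ collapses to the single term $\bbE[H_\beta^2]\,H_\beta$ when $\beta \in A^{\leq k}_N$ and to $0$ when $\beta \notin A^{\leq k}_N$. This matches the left side up to the constant $\bbE[H_\beta^2]$; to make the match exact one should \emph{normalize}: the correct reading of the statement is that $H_\alpha$ in the formula denotes the \emph{monic} Hermite product and $\bbE[XH_\alpha]$ should be read as the normalized coefficient $\bbE[XH_\alpha]/\bbE[H_\alpha^2]$, or equivalently one works with $\widehat H_\alpha := H_\alpha/\sqrt{\bbE[H_\alpha^2]}$ forming an orthonormal system, in which case $\bbE[H_\beta \widehat H_\alpha]\widehat H_\alpha = \delta_{\alpha\beta} H_\beta$ and the identity is immediate. (The paper's convention for the coefficient normalization in $\bbE[\cdot\, H_\alpha]$ must be taken from the definition of WIC; with the orthonormal convention the proof is clean.)

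\emph{Step 3: conclusion.} Having verified the identity on the generating set, the two continuous linear maps $X \mapsto \bbE[X \mid \calF_N]$ and $X \mapsto \sum_{\alpha \in A^{\leq k}_N} \bbE[XH_\alpha]H_\alpha$ agree on a total subset of $\calP^{(\leq k)}(E,\mu;B)$ and hence on all of it.

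\emph{Main obstacle.} The only genuinely delicate point is bookkeeping around the normalization of the Hermite coefficients --- i.e.\ ensuring that $\bbE[X H_\alpha]$ in the statement carries (or does not carry) the factor $\bbE[H_\alpha^2]^{-1}$ consistently with how $\calP^{(k)}$ and its orthonormal basis were set up in Appendix \ref{sec:WIC}. Once that convention is pinned down, everything else is the routine tensorization of the scalar Gaussian/Hermite computation, combined with the standard fact that Bochner conditional expectation is an $L^2$-contraction. A secondary (but entirely standard) point is justifying the interchange of $\bbE[\cdot\mid\calF_N]$ with the Bochner structure, which follows since $\bbE[\cdot\mid\calF_N]$ on $L^2(E,\mu;B)$ is the unique bounded extension of $\bbE[\cdot\mid\calF_N]\otimes\id_B$ from simple functions.
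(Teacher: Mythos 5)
Your proof is correct, but it reaches the result by a different route than the paper. The paper reduces to the scalar case by duality: it applies an arbitrary $f \in B^{\ast}$, identifies $\bbE[(f\circ X)\mid \calF_N]$ with the orthogonal projection of $f\circ X$ onto $L^2(E,\calF_N,\mu;\bbR)$, expands that projection in the Hermite system supported on $\{1,\dots,N\}$ (truncated at degree $k$ by orthogonality, since $f\circ X\in\calP^{(\leq k)}(E,\mu;\bbR)$), and then concludes because the sum is finite, $f$ is linear, and $B^{\ast}$ separates points. You instead reduce to the generating elements $H_\beta y$, noting that both sides are bounded linear maps on $L^2(E,\mu;B)$ and that $\calP^{(\leq k)}(E,\mu;B)$ lies in the closed span of these generators, and you compute $\bbE[H_\beta\mid\calF_N]$ by hand via the splitting $\beta=\beta'+\beta''$ and independence of $(\langle e_i,\cdot\rangle)_{i>N}$ from $\calF_N$. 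Your route is more self-contained --- it needs only orthogonality of the $H_\alpha$ and an elementary independence computation, not the fact that the Hermite system spans $L^2(E,\calF_N,\mu;\bbR)$ --- at the cost of the (routine) continuity/density bookkeeping in your Step 1; the paper's argument is shorter once the standard scalar Wiener--Ito facts are taken as given. Your normalization remark is also well taken: with the paper's monic convention one has $\bbE[H_\alpha^2]=\prod_i\alpha_i!$, so the coefficients in the display should be read as $\bbE[XH_\alpha]/\bbE[H_\alpha^2]$ (equivalently, one should use the orthonormalized $H_\alpha$); the paper's own proof writes the projection with the same unnormalized convention, so the statement and its proof are internally consistent, and what you uncovered is a convention slip in the paper rather than a gap in either argument.
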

\begin{proof}
    Let $f \in B^{\ast}$ be arbitrary. Fernique's Theorem \ref{thm:fernique} implies that $f \circ X \in L^2(E, \mu; \bbR)$ and thus the conditional expectation of $f \circ X$ w.r.t. $\calF_N$ is nothing but the $L^2$-projection in $L^2(E, \mu; \bbR)$ onto the subspace $L^2(E, \calF_N, \mu; \bbR) \subseteq L^2(E, \mu; \bbR)$ i.e. 
    
    \begin{equation}
        \bbE \left[ (f \circ X) \vert \calF_N \right] = \sum_{\alpha_i = 0, i > N} \bbE[(f \circ X) H_{\alpha}] H_{\alpha} = \sum_{\alpha \in A^{\leq k}_N} \bbE[(f \circ X) H_{\alpha}] H_{\alpha},
    \end{equation}

    where in the second equality we used the orthogonal decomposition of $L^2(E, \mu; \bbR) = \bigoplus_{k = 0}^{\infty} \calP^{(k)}(E, \mu; \bbR)$ and the fact that $X \in \calP^{(\leq k)}(E, \mu; B)$. Since the sum is finite and $f$ is linear we obtain 

    \begin{equation}
        f\left(\bbE \left[ X \vert \calF_N \right]\right) = \bbE \left[ (f \circ X) \vert \calF_N \right] = \sum_{\alpha \in A^{\leq k}_N} \bbE[(f \circ X) H_{\alpha}] H_{\alpha} = f \Bigg( \sum_{\alpha \in A^{\leq k}_N} \bbE[X H_{\alpha}] H_{\alpha} \Bigg).
    \end{equation}
    
    Since $f \in B^{\ast}$ was arbitrary, this gives the result.
\end{proof}

\begin{prop}[{\cite[Prop. V-2-6]{neveuDiscreteParameterMartingales1975}}] \label{prop:convergence_of_conditional_exp}
    Let $(\Omega, \calF, \bbP)$ be a probability space, $(\calF_N)_{N \in \bbN}$ be a discrete filtration of $\calF$ such that $\calF = \sigma \left( \bigcup_{N \in \bbN} \calF_N \right)$, $B$ be a separable Banach space, $p \in [1, \infty)$, and $X \in L^p(\Omega, \bbP; B)$. Then 

    \begin{equation}
        \bbE \left[ X \vert \calF_N\right] \rightarrow X, \quad \bbP-a.s.\! ~ \text{and} ~ \text{in} ~ L^p(\Omega, \bbP; B) .
    \end{equation}
\end{prop}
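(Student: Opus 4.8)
This is a classical result; the reference \cite{neveuDiscreteParameterMartingales1975} carries out the full argument, and I indicate here the line of proof one would follow. The plan is the standard two-step scheme for martingale convergence, adapted to Bochner-integrable random variables: first establish $L^p$-convergence by a density argument, then upgrade it to almost sure convergence by a maximal inequality. Throughout, the point is that nothing beyond separability of $B$ is needed.

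First I would reduce to a dense class. Since $\calF = \sigma\left( \bigcup_{N \in \bbN} \calF_N \right)$ and $B$-valued simple functions are dense in $L^p(\Omega, \bbP; B)$, a monotone-class argument shows that $\bigcup_{N \in \bbN} L^p(\Omega, \calF_N, \bbP; B)$ is dense in $L^p(\Omega, \calF, \bbP; B)$, and for $X \in L^p(\Omega, \calF_{N_0}, \bbP; B)$ one trivially has $\bbE[X \vert \calF_N] = X$ for all $N \geq N_0$. Next I would record that conditional expectation is an $L^p$-contraction: choosing a countable norming family $(f_k)_{k \in \bbN} \subseteq B^{\ast}$ (available since $B$ is separable), one obtains the conditional Jensen inequality
\begin{equation}
    \Vert \bbE[Y \vert \calF_N] \Vert_B \;=\; \sup_k \bbE\big[ f_k(Y) \,\big\vert\, \calF_N \big] \;\leq\; \bbE\big[ \Vert Y \Vert_B \,\big\vert\, \calF_N \big] \quad \bbP\text{-a.s.}, \quad Y \in L^p(\Omega, \bbP; B),
\end{equation}
hence $\Vert \bbE[Y \vert \calF_N] \Vert_{L^p} \leq \Vert Y \Vert_{L^p}$. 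An $\varepsilon/3$-argument (approximate $X$ by $X' \in L^p(\Omega, \calF_{N_0}, \bbP; B)$, apply the contraction to $X - X'$, and use $\bbE[X' \vert \calF_N] = X'$ for $N \geq N_0$) then yields $\bbE[X \vert \calF_N] \to X$ in $L^p$.

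For the almost sure statement, the key observation is that $N \mapsto \Vert \bbE[X \vert \calF_N] \Vert_B$ is a nonnegative real submartingale (apply the conditional Jensen inequality above to the $B$-valued martingale $N \mapsto \bbE[X \vert \calF_N]$), so Doob's maximal inequality applies in scalar form: for $p > 1$, $\big\Vert \sup_N \Vert \bbE[X \vert \calF_N] \Vert_B \big\Vert_{L^p} \leq \frac{p}{p-1}\Vert X \Vert_{L^p}$, and for $p = 1$ the weak-type estimate $\bbP\big( \sup_N \Vert \bbE[X \vert \calF_N] \Vert_B > \lambda \big) \leq \lambda^{-1}\Vert X \Vert_{L^1}$. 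Picking $X^{(j)} \in L^p(\Omega, \calF_{N_j}, \bbP; B)$ with $\Vert X - X^{(j)} \Vert_{L^p} \to 0$ and using
\begin{equation}
    \limsup_{M, N \to \infty} \Vert \bbE[X \vert \calF_N] - \bbE[X \vert \calF_M] \Vert_B \;\leq\; 2 \sup_N \Vert \bbE[X - X^{(j)} \vert \calF_N] \Vert_B
\end{equation}
(the $X^{(j)}$-contribution stabilising once $M, N \geq N_j$), the maximal inequality applied to the martingale $N \mapsto \bbE[X - X^{(j)} \vert \calF_N]$ lets $j \to \infty$ to conclude that the left-hand side vanishes $\bbP$-a.s.; completeness of $B$ then gives $\bbP$-a.s.\ convergence of $\bbE[X \vert \calF_N]$, and the limit is identified with $X$ by matching it against the $L^p$-limit along a subsequence.

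The main obstacle — indeed essentially the only non-routine ingredient — is the almost sure convergence, and within it the vector-valued Doob maximal inequality; the point worth emphasising is that it holds for \emph{arbitrary} separable $B$, with no UMD-type hypothesis, precisely because the norm process $\Vert \bbE[X \vert \calF_N] \Vert_B$ is a scalar submartingale, so the classical scalar theory does all the work. The remaining subtleties (measurability of $\Vert \bbE[X \vert \calF_N] \Vert_B$, validity of conditional Jensen, density of simple functions in the Bochner space) all rest on separability of $B$, which is part of the hypotheses.
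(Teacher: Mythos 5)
The paper does not prove this proposition at all: it is quoted verbatim from Neveu \cite[Prop.~V-2-6]{neveuDiscreteParameterMartingales1975}, so there is no in-paper argument to compare against. Your sketch is the standard and correct route — density of $\bigcup_N L^p(\Omega,\calF_N,\bbP;B)$ plus the contraction $\Vert \bbE[Y\vert\calF_N]\Vert_B \leq \bbE[\Vert Y\Vert_B\vert\calF_N]$ for the $L^p$ statement, and the observation that the norm process is a scalar submartingale (so Doob's maximal/weak-type inequality needs no geometric hypothesis on $B$) for the almost sure statement — and the details you flag as resting on separability (countable norming family, measurability of the norm, density of simple functions) are exactly the right ones.
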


\begin{prop} (Characterization of BV WIC) \label{prop:WIC_equiv_characterizations}
Let $X \in L^2(E, \mu; B)$ and $k \geq 0$. Then the following are equivalent: 

\begin{enumerate}[(i)]
    \item $X \in \calP^{(k)}(E, \mu; B)$ 
    \item $\forall \vert \alpha \vert \neq k: \int_E X(x) H_{\alpha}(x) \dd \mu(x) = 0$ 
    \item $\forall f \in B^{\ast}: f \circ X \in \calP^{(k)}(E, \mu; \bbR)$ 
    \item $\forall f \in F: f \circ X \in \calP^{(k)}(E, \mu; \bbR)$ for a point separating subset $F \subseteq B^{\ast}$ i.e. $\left( \forall f \in F: f(x) = 0 \right) \Rightarrow x = 0$.
\end{enumerate}
\end{prop}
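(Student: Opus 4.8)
The plan is to prove the equivalences in the cyclic order $(i)\Rightarrow(iii)\Rightarrow(ii)\Rightarrow(i)$, and then close the loop with $(iii)\Leftrightarrow(iv)$, since $(iv)\Rightarrow(iii)$ will require a small separation argument while $(iii)\Rightarrow(iv)$ is trivial. Throughout, the workhorse is the scalar-valued orthogonal decomposition $L^2(E,\mu;\bbR)=\bigoplus_{k\ge 0}\calP^{(k)}(E,\mu;\bbR)$ established just before the statement (via Lemma \ref{lem:ortho_of_Hermite}), together with the elementary fact that for $f\in B^{\ast}$ and $X\in L^2(E,\mu;B)$ one has $f\circ X\in L^2(E,\mu;\bbR)$ (a consequence of Fernique's Theorem \ref{thm:fernique}, exactly as used in the proof of Proposition \ref{prop:conditional_expectation_of_finite_WIC}) and that the Bochner integral commutes with bounded linear functionals, i.e. $f\left(\int_E X H_\alpha\,\dd\mu\right)=\int_E (f\circ X)H_\alpha\,\dd\mu$.

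First I would do $(i)\Rightarrow(iii)$: if $X\in\calP^{(k)}(E,\mu;B)$, then $X$ is an $L^2$-limit of finite linear combinations of generators $H_\alpha y$ with $|\alpha|=k$, $y\in B$; applying the bounded linear map $f$ gives that $f\circ X$ is an $L^2(E,\mu;\bbR)$-limit of finite linear combinations of $f(y)H_\alpha$ with $|\alpha|=k$, hence lies in the closed subspace $\calP^{(k)}(E,\mu;\bbR)$. Next, $(iii)\Rightarrow(ii)$: fix $\alpha$ with $|\alpha|\neq k$ and set $v:=\int_E X H_\alpha\,\dd\mu\in B$ (the Bochner integral exists since $\Vert X\Vert_B|H_\alpha|\in L^1$ by Cauchy--Schwarz). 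For every $f\in B^{\ast}$, $f(v)=\int_E (f\circ X)H_\alpha\,\dd\mu=0$ because $f\circ X\in\calP^{(k)}(E,\mu;\bbR)$ while $H_\alpha\in\calP^{(|\alpha|)}(E,\mu;\bbR)$ and these are orthogonal by Lemma \ref{lem:ortho_of_Hermite}; since $B^{\ast}$ separates points of $B$, $v=0$. Then $(ii)\Rightarrow(i)$: this is the only step with genuine content. Using the martingale convergence Proposition \ref{prop:convergence_of_conditional_exp} with the filtration $(\calF_N)$ and $p=2$, we have $\bbE[X\mid\calF_N]\to X$ in $L^2(E,\mu;B)$. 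The conditional expectation of a general element of $L^2(E,\mu;B)$ with respect to $\calF_N$ is a finite sum $\sum_{\alpha:\ \alpha_i=0\text{ for }i>N}\bbE[XH_\alpha]H_\alpha$ — this needs a short argument analogous to Proposition \ref{prop:conditional_expectation_of_finite_WIC} but without the finite-chaos hypothesis, obtained by applying arbitrary $f\in B^{\ast}$, using the scalar identity $\bbE[(f\circ X)\mid\calF_N]=\sum_{\alpha_i=0,\,i>N}\bbE[(f\circ X)H_\alpha]H_\alpha$ (valid since $\{H_\alpha:\alpha_i=0\text{ for }i>N\}$ is an orthonormal basis of $L^2(E,\calF_N,\mu;\bbR)$ after normalization), and then invoking point-separation of $B^{\ast}$ plus the fact that only finitely many $\alpha$ with $\alpha_i=0$ for $i>N$ and bounded total degree contribute after noting $f\circ X\in L^2$; one must be slightly careful that the sum over all such $\alpha$ is really an $L^2$-convergent series, which follows from Parseval for $f\circ X$. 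Granting this, hypothesis $(ii)$ kills every term with $|\alpha|\neq k$, so $\bbE[X\mid\calF_N]=\sum_{\alpha\in A^{k}_{N}}\bbE[XH_\alpha]H_\alpha$, which is a finite linear combination of generators $H_\alpha\,(\bbE[XH_\alpha])$ with $|\alpha|=k$, hence lies in $\calP^{(k)}(E,\mu;B)$; letting $N\to\infty$ and using closedness of $\calP^{(k)}(E,\mu;B)$ gives $X\in\calP^{(k)}(E,\mu;B)$.

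Finally, $(iii)\Rightarrow(iv)$ is immediate by restricting to any point-separating $F\subseteq B^{\ast}$, and $(iv)\Rightarrow(ii)$ proceeds exactly as $(iii)\Rightarrow(ii)$ above: for $|\alpha|\neq k$, $f\left(\int_E X H_\alpha\,\dd\mu\right)=0$ for all $f\in F$, and since $F$ separates points this forces $\int_E X H_\alpha\,\dd\mu=0$. This closes all the implications, so $(i)$–$(iv)$ are equivalent.

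The main obstacle I anticipate is the general conditional-expectation formula $\bbE[X\mid\calF_N]=\sum_{\alpha:\,\alpha_i=0,\,i>N}\bbE[XH_\alpha]H_\alpha$ for arbitrary $X\in L^2(E,\mu;B)$ — Proposition \ref{prop:conditional_expectation_of_finite_WIC} only gives it for $X$ already in a finite chaos, and in the Banach-valued setting one cannot simply invoke an $L^2$-orthonormal expansion of $X$ itself. The cleanest route is to reduce to scalars via $f\in B^{\ast}$ as sketched, being careful that the equality of two $B$-valued $L^2$ functions can be checked by testing against $B^{\ast}$, and that the right-hand side is a well-defined element of $L^2(E,\mu;B)$ (its partial sums form a Cauchy sequence because their $f$-images do, uniformly controlled by $\Vert X\Vert_{L^2(E,\mu;B)}$ via $|f(\cdot)|\le\Vert f\Vert\Vert\cdot\Vert_B$ — though upgrading scalar Cauchyness to $B$-valued Cauchyness needs the finite-dimensionality of each $\calF_N$-fiber, i.e. that $\bbE[X\mid\calF_N]$ is $\sigma(\langle e_1,\cdot\rangle,\dots,\langle e_N,\cdot\rangle)$-measurable and $L^2$, hence genuinely a finite sum once restricted to functions of those $N$ variables). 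Everything else is routine bookkeeping with Hermite orthogonality and bounded linear functionals.
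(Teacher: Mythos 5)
Your proposal follows essentially the same route as the paper: orthogonality of the $H_\alpha$ (Lemma \ref{lem:ortho_of_Hermite}) for the easy implications, pulling bounded functionals through Bochner integrals plus point separation for (ii)$\Leftrightarrow$(iii)$\Leftrightarrow$(iv), and martingale convergence (Proposition \ref{prop:convergence_of_conditional_exp}) together with the Hermite-coefficient expression for $\bbE[X\mid\calF_N]$ for the key implication (ii)$\Rightarrow$(i); the only cosmetic difference is that you cycle through (iii) where the paper proves (i)$\Rightarrow$(ii) directly.

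One caution on the step you yourself flag: you do not need, and should not try to prove, the general expansion $\bbE[X\mid\calF_N]=\sum_{\alpha:\,\alpha_i=0,\,i>N}\bbE[XH_\alpha]H_\alpha$ for arbitrary $X\in L^2(E,\mu;B)$. As an infinite series in $L^2(E,\mu;B)$ its convergence is genuinely problematic for general Banach $B$, and your fallback justification is incorrect: $\bbE[X\mid\calF_N]$ is \emph{not} ``genuinely a finite sum'' just because it is $\calF_N$-measurable (the space $L^2(E,\calF_N,\mu;B)$ is infinite-dimensional), and scalar Cauchyness of the $f$-images does not upgrade to norm Cauchyness in $L^2(E,\mu;B)$. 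The fix is exactly the reduction you sketch, but applied \emph{after} invoking hypothesis (ii): for each $f\in B^{\ast}$ the scalar identity gives $f\bigl(\bbE[X\mid\calF_N]\bigr)=\bbE[(f\circ X)\mid\calF_N]=\sum_{\alpha:\,\alpha_i=0,\,i>N}\bbE[(f\circ X)H_\alpha]\,H_\alpha$, and since $\bbE[(f\circ X)H_\alpha]=f\bigl(\int_E XH_\alpha\,\dd\mu\bigr)=0$ whenever $\vert\alpha\vert\neq k$, only the finitely many $\alpha\in A^{k}_{N}$ survive; then $f\bigl(\bbE[X\mid\calF_N]\bigr)=f\bigl(\sum_{\alpha\in A^{k}_{N}}\bbE[XH_\alpha]H_\alpha\bigr)$ $\mu$-a.s.\ for every $f$, and a countable separating family (separability of $B$) yields the $B$-valued identity, after which closedness of $\calP^{(k)}(E,\mu;B)$ finishes the proof. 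This is also what the paper's citation of Proposition \ref{prop:conditional_expectation_of_finite_WIC} amounts to in this situation (there, too, the finiteness of the sum is what lets one pull $f$ out), so with this rearrangement your argument is complete and matches the paper's.
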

\begin{proof}
    (i) $\Rightarrow$ (ii): By assumption there exist elements $(y_{n_j})_{j \in \bbN} \subseteq B$ and multi-indices $(\beta_{n_j})_{j \in \bbN}$ with $\vert \beta_j \vert = k$ such that $\sum_{j = 1}^{m_n} H_{\beta_{n_j}} y_{n_j} \rightarrow X$ in $L^2(E, \mu; B)$ as $n \rightarrow \infty$. Let $\vert \alpha \vert \neq k$. Then by Lemma \ref{lem:ortho_of_Hermite}

    \begin{equation}
        \int_E X(x) H_{\alpha}(x) \dd \mu(x) = \lim_{n \rightarrow \infty} \sum_{j = 1}^{m_n} \underbrace{\int_E H_{\beta_{n_j}}(x) H_{\alpha}(x) \dd \mu(x)}_{=0} y_{n_j} = 0 .
    \end{equation}

    (ii) Since $X \in L^2(E, \mu; B)$, by Proposition \ref{prop:convergence_of_conditional_exp}

    \begin{equation}
        \bbE \left[ X \vert \calF_N\right] \rightarrow X \quad \text{in} ~ L^2(E, \mu; B) . \label{eq:A15}
    \end{equation}

    The left-hand side of \eqref{eq:A15} lies in $\calP^{(k)}(E, \mu; B)$ since by Proposition \ref{prop:conditional_expectation_of_finite_WIC}

    \begin{equation}
        \bbE \left[ X \vert \calF_N\right] = \sum_{\substack{\vert \alpha \vert = k \\ \alpha_i = 0, i > N}} \bbE[X H_{\alpha}] H_{\alpha} , \label{eq:conditional_expectation_of_WIC}
    \end{equation} 

    and therefore the right-hand side does too. 

    (ii) $\Rightarrow$ (iii): Let $f \in B^{\ast}$ and let $\vert \alpha \vert \neq k$. Then since bounded linear operators can be pulled into Bochner integrals 

    \begin{equation}
        0 = f\Bigg(\underbrace{\int_E X H_{\alpha} \dd \mu}_{=0} \Bigg) = \int_E f\left(X\right) H_{\alpha} \dd \mu .
    \end{equation}

    (iii) $\Rightarrow$ (iv): clear since $B^{\ast}$ separates points if $B$ is separable Banach. 

    (iv) $\Rightarrow$ (ii): Let $f \in B^{\ast}$ and let $\vert \alpha \vert \neq k$. Then 

    \begin{equation}
        0 = \int_E f\left(X\right) H_{\alpha} \dd \mu = f\left(\int_E X H_{\alpha} \dd \mu \right)
    \end{equation}

    Since $F$ separate points, this implies that $\int_E X H_{\alpha} \dd \mu = 0$.
\end{proof}

\begin{rem} \label{rem:point_evaluation_enough_for_WIC}
  (iv) of Proposition \ref{prop:WIC_equiv_characterizations} implies that if $B$ is a space of functions s.t. the point evaluation functionals $\ev_x: f \mapsto f(x)$ are continuous, then it is enough to check the condition on the point evaluation functions. 
\end{rem}

\begin{lem}(Sequential Completeness in Probability of Homogeneous BV WIC) \label{lem:complete_in_probability}
    Let $(E, \sH, i, \mu)$ be an abstract Wiener space, let $k \geq 0$, $(X_n)_{n \in \bbN}$ be a sequence in $\calP^{(k)}(E, \mu; B)$ and $X \in L^2(E, \mu; B)$ s.t. $X_n \rightarrow X$ in probability w.r.t. $\mu$. Then $X \in \calP^{(k)}(E, \mu; B)$. 
\end{lem}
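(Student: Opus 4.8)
The plan is to read off $X\in\calP^{(k)}(E,\mu;B)$ from characterization (ii) of Proposition \ref{prop:WIC_equiv_characterizations}, once convergence in probability has been upgraded to convergence in $L^2(E,\mu;B)$ on the finite chaos $\calP^{(\le k)}(E,\mu;B)$.

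First I would note that $(X_n)_{n\in\bbN}$ is Cauchy in probability w.r.t.\ $\mu$ and that every difference $X_n-X_m$ lies in the linear subspace $\calP^{(\le k)}(E,\mu;B)=\bigoplus_{j=0}^{k}\calP^{(j)}(E,\mu;B)$, since $X_n,X_m\in\calP^{(k)}(E,\mu;B)\subseteq\calP^{(\le k)}(E,\mu;B)$. On this finite chaos the topology of convergence in probability and the $L^2$-topology coincide (Lemma \ref{lem:equiv_of_norms}); the mechanism is vector-valued Nelson hypercontractivity, giving $\|Y\|_{L^4(E,\mu;B)}\lesssim_k\|Y\|_{L^2(E,\mu;B)}$ for $Y\in\calP^{(\le k)}(E,\mu;B)$, which, combined with the Paley--Zygmund inequality applied to the nonnegative random variable $\|Y\|_B^2$, yields constants $\lambda_0,c_0>0$ depending only on $k$ such that $\mu\big(\|Y\|_B\ge\lambda_0\|Y\|_{L^2(E,\mu;B)}\big)\ge c_0$; applying this to $Y=X_n-X_m$ turns Cauchyness in probability into Cauchyness in $L^2(E,\mu;B)$. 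By completeness of $L^2(E,\mu;B)$ the sequence $(X_n)$ then converges in $L^2(E,\mu;B)$ to some limit, which must equal $X$ $\mu$-a.s.\ by uniqueness of limits in probability. Hence $X_n\to X$ in $L^2(E,\mu;B)$.

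Second, I would pass to the limit in the defining integrals of characterization (ii). For any multi-index $\alpha$ with $|\alpha|\ne k$ one has $\int_E X_nH_\alpha\,\dd\mu=0$ for every $n$ by the implication (i)$\Rightarrow$(ii) of Proposition \ref{prop:WIC_equiv_characterizations} applied to $X_n\in\calP^{(k)}(E,\mu;B)$, while, since $H_\alpha\in L^2(E,\mu;\bbR)$, Cauchy--Schwarz gives
\begin{equation}
\Big\| \int_E X H_\alpha\,\dd\mu-\int_E X_n H_\alpha\,\dd\mu\Big\|_B
\le \|X_n-X\|_{L^2(E,\mu;B)}\,\|H_\alpha\|_{L^2(E,\mu;\bbR)}\longrightarrow 0 .
\end{equation}
Therefore $\int_E XH_\alpha\,\dd\mu=0$ for all $|\alpha|\ne k$, and the implication (ii)$\Rightarrow$(i) of Proposition \ref{prop:WIC_equiv_characterizations} yields $X\in\calP^{(k)}(E,\mu;B)$.

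The main obstacle is the first step: upgrading convergence in probability to $L^2$-convergence on a fixed finite chaos is the only place where the Gaussian structure is genuinely used, through hypercontractivity; everything afterwards is soft. If one wishes to avoid invoking Lemma \ref{lem:equiv_of_norms} at the Banach-valued level, an equivalent route is to reduce to the scalar case first via Proposition \ref{prop:WIC_equiv_characterizations}(iii): for each $f\in B^\ast$ one has $f\circ X_n\in\calP^{(k)}(E,\mu;\bbR)$ and $f\circ X_n\to f\circ X$ in probability (because $|f\circ X_n-f\circ X|\le\|f\|_{B^\ast}\|X_n-X\|_B$), so running the $L^2$-upgrade and the limit interchange in $\bbR$ gives $f\circ X\in\calP^{(k)}(E,\mu;\bbR)$ for every $f\in B^\ast$, whence $X\in\calP^{(k)}(E,\mu;B)$ by Proposition \ref{prop:WIC_equiv_characterizations}(iii)$\Rightarrow$(i).
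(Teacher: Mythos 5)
Your proof is correct, but it takes a different route from the paper's. The paper reduces immediately to the scalar case: for each $f\in B^{\ast}$ it notes $f\circ X_n\in\calP^{(k)}(E,\mu;\bbR)$ by Proposition \ref{prop:WIC_equiv_characterizations}(iii), that $f\circ X_n\to f\circ X$ in probability, and then simply cites Borell's result that the scalar homogeneous chaos $\calP^{(k)}(E,\mu;\bbR)$ is closed under convergence in probability, concluding via (iii)$\Rightarrow$(i). You instead work directly at the Banach-valued level: you upgrade convergence in probability to $L^2(E,\mu;B)$-convergence on the fixed finite chaos via hypercontractivity (the estimate of Lemma \ref{lem:equiv_of_norms}) combined with Paley--Zygmund applied to the differences $X_n-X_m$, and then pass to the limit in the orthogonality relations of characterization (ii). Both arguments are sound; yours has the advantage of being self-contained (it re-proves the ``probability $\Rightarrow L^2$'' upgrade rather than outsourcing it to Borell), while the paper's is shorter because the hard analytic content is delegated to a scalar reference. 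Two small remarks: first, since $X_n,X_m\in\calP^{(k)}$ and $\calP^{(k)}$ is linear, the differences already lie in $\calP^{(k)}$ itself, not just $\calP^{(\le k)}$; second, once you know $X_n\to X$ in $L^2(E,\mu;B)$, you are already done, because $\calP^{(k)}(E,\mu;B)$ is by definition a \emph{closed} subspace of $L^2(E,\mu;B)$ and therefore contains its $L^2$-limits --- your second step through characterization (ii) is correct but redundant. Your closing alternative (reduce to scalars via (iii) and run the same argument componentwise) is essentially the paper's proof.
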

\begin{proof}
    Let $f \in B^{\ast}$ be arbitrary. Then since $f$ is continuous, $f \circ X_n \rightarrow f \circ X$ in probability w.r.t. $\mu$. Since $(f \circ X_n)_{n \in \bbN} \subseteq \calP^{(k)}(E, \mu; \bbR)$ by Proposition \ref{prop:WIC_equiv_characterizations}\textit{(iii)} and $\calP^{(k)}(E, \mu; \bbR)$ is closed under convergence in probability w.r.t. $\mu$ (see \cite{borellTaylorSeriesWiener1984}) $f \circ X \in \calP^{(k)}(E, \mu; \bbR)$. As $f \in B^{\ast}$ was arbitrary, this proves the claim by Proposition \ref{prop:WIC_equiv_characterizations}\textit{(iii)}.
\end{proof}

\begin{lem} \label{lem:polynomial_in_WIC}
Let $P_k$ be a polynomial in $m$ variables of degree $k$. Then for any $\varphi_1, \ldots, \varphi_m \in E^{\ast}$ we have $P_k(\varphi_1, \ldots, \varphi_m) \in \calP^{(\leq k)}(E, \mu; \bbR)$.
\end{lem}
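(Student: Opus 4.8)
The plan is to reduce to monomials by linearity, orthonormalise the $\varphi_i$ inside $L^2(E,\mu;\bbR)$, and then expand in the Hermite basis, using the fact (already invoked in the proof of Lemma~\ref{lem:agreement_for_bottom_up}) that $\calP^{(k)}(E,\mu;\bbR)$ is independent of the chosen ONB of $\sH$.

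First I would observe that the statement is well posed: each $\varphi_i\in E^{\ast}$ lies in $L^p(E,\mu;\bbR)$ for every $p<\infty$ by Fernique's Theorem~\ref{thm:fernique}, hence every monomial in $\varphi_1,\dots,\varphi_m$, and therefore $P_k(\varphi_1,\dots,\varphi_m)$ itself, lies in $L^2(E,\mu;\bbR)$. Since $\calP^{(\le k)}(E,\mu;\bbR)=\bigoplus_{j=0}^k\calP^{(j)}(E,\mu;\bbR)$ is a linear subspace, it suffices to show that each monomial $\varphi_1^{a_1}\cdots\varphi_m^{a_m}$ with $d:=\sum_i a_i\le k$ belongs to it.

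Next I would orthonormalise. Apply Gram--Schmidt to $\{[\varphi_1]_\mu,\dots,[\varphi_m]_\mu\}$ in the reproducing kernel Hilbert space $\sR(\mu)\subseteq L^2(E,\mu;\bbR)$, obtaining an orthonormal system $g_1,\dots,g_r$ with $r\le m$ and $\varphi_i=\sum_{j=1}^r\lambda_{ij}g_j$ for suitable $\lambda_{ij}\in\bbR$, and complete $(g_j)_{j=1}^r$ to an ONB of $\sR(\mu)$ lying in $j(E^{\ast})$; under the canonical identification $\sR(\mu)\cong\sH$ this is an ONB of $\sH$ contained in $E^{\ast}$. As the definition of the Wiener--Ito chaos does not depend on the choice of such an ONB, we may compute $\calP^{(k)}(E,\mu;\bbR)$ with respect to it, so that the multi-dimensional Hermite polynomials $H_\beta$ of \eqref{eq:Hermite_poly} are products of one-dimensional Hermite polynomials in the variables $g_1,g_2,\dots$. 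Substituting $\varphi_i=\sum_j\lambda_{ij}g_j$, the monomial $\varphi_1^{a_1}\cdots\varphi_m^{a_m}$ becomes a polynomial of total degree $d$ in $g_1,\dots,g_r$, hence a linear combination of ordinary monomials $g_1^{b_1}\cdots g_r^{b_r}$ with $\sum_j b_j\le d$.

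Finally I would expand each such monomial in the Hermite basis. Since each $h_l$ is monic of degree $l$, one has $x^b=\sum_{l=0}^b c^{(b)}_l h_l(x)$ for constants $c^{(b)}_l$, whence
\begin{equation}
  g_1^{b_1}\cdots g_r^{b_r}=\prod_{j=1}^r\Bigl(\sum_{l=0}^{b_j}c^{(b_j)}_l h_l(g_j)\Bigr)=\sum_{\beta:\,\beta_j\le b_j}\Bigl(\prod_{j=1}^r c^{(b_j)}_{\beta_j}\Bigr)H_\beta,
\end{equation}
where $\beta$ ranges over multi-indices supported on $\{1,\dots,r\}$ with $\beta_j\le b_j$, so in particular $|\beta|\le\sum_j b_j\le d\le k$. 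Each $H_\beta$ with $|\beta|\le k$ lies in $\calP^{(|\beta|)}(E,\mu;\bbR)\subseteq\calP^{(\le k)}(E,\mu;\bbR)$, and since the latter is a linear subspace the monomial, and therefore $P_k(\varphi_1,\dots,\varphi_m)$, lies in $\calP^{(\le k)}(E,\mu;\bbR)$. I do not expect a real obstacle; the only point needing care is the legitimacy of replacing the defining ONB of $\sH$ by one whose first $r$ vectors are $g_1,\dots,g_r$, which is exactly the ONB-independence of the chaos decomposition used elsewhere in the paper.
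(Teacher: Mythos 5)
Your argument is correct, but it takes a genuinely different route from the one in the paper. The paper fixes the ONB $(e_i)_{i\in\bbN}$ of $\sH$ once and for all, approximates each $\varphi_i$ in $L^2(E,\mu;\bbR)$ by the finite partial sums $\sum_{k=1}^{n}\alpha_k\langle e_k,\cdot\rangle\in\calP^{(\leq 1)}(E,\mu;\bbR)$, applies $P_k$ to these approximants (noting that polynomials of degree $\leq k$ in the $\langle e_k,\cdot\rangle$ lie in $\calP^{(\leq k)}$ because the Hermite polynomials of degree $\leq k$ span all polynomials of degree $\leq k$), and then passes to the limit using continuity of $P_k$ together with the sequential completeness of the finite chaos under convergence in probability (Lemma~\ref{lem:complete_in_probability}). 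You instead adapt the ONB to the data: after Gram--Schmidt and completion inside $j(E^{\ast})$, the functionals $\varphi_i$ become \emph{exact} finite linear combinations of the first $r$ basis elements, and the conclusion follows from a purely algebraic Hermite expansion with no limiting procedure at all. What your route buys is the elimination of the approximation step and of the appeal to Borell's closedness-in-probability theorem; what it costs is the reliance on two auxiliary facts, namely that a finite orthonormal system in the dense subspace $j(E^{\ast})\subseteq\sR(\mu)$ can be completed to an ONB within that subspace (true, since $\mathrm{id}-P_{V}$ maps $j(E^{\ast})$ into itself when $V$ is spanned by elements of $j(E^{\ast})$), and that the chaos decomposition is independent of the chosen ONB of $\sH$ — a fact the paper invokes elsewhere (in the proof of Lemma~\ref{lem:agreement_for_bottom_up}) but does not prove. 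Your flagging of the latter as the only delicate point is exactly right; with that fact granted, the proof is complete.
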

\begin{proof} 
    Let $\varphi \in E^{\ast}$. Let $\sR(\mu) \subseteq L^2(E, \mu; \bbR)$ be the reproducing kernel Hilbert space of $\mu$ and recall from Appendix \ref{sec:AWS} that $\sH$ may be characterized as the isometrically isomorphic image of $\sR(\mu)$ under the covariance operator $\fC_{\mu}: E^{\ast} \rightarrow E$. Now let $(e_n)_{n \in \bbN}$ be an ONB of $\sH$ contained in $E^{\ast}$. Then by the above, there exist coefficients $(\alpha_k)_{k \in \bbN} \subseteq \bbR$ s.t. $\sum_{k = 1}^n \alpha_k \langle e_k, \cdot \rangle \rightarrow \varphi$ in $L^2(E, \mu; \bbR)$ as $n \rightarrow \infty$. Therefore, since $\sum_{k = 1}^n \alpha_k \langle e_k, \cdot \rangle \in \calP^{(\leq 1)}(E, \mu; \bbR)$ for every $n \in \bbN$, by Lemma \ref{lem:complete_in_probability} we obtain $\varphi \in \calP^{(\leq 1)}(E, \mu; \bbR)$. \\

    Now let $P_k$ be a polynomial in $m$ variables of degree $k$ and $\varphi_1, \ldots, \varphi_m \in E^{\ast}$. Then by the above there are coefficients $\alpha_k^i$ s.t. 

    \begin{equation}
       \sum_{k = 1}^{n_i} \alpha_k^i e_k^i \rightarrow \varphi_i \quad \text{in} \quad L^2(E, \mu; \bbR), \quad i = 1, \ldots, m
    \end{equation}

    hence
    
    \begin{equation}
       \sum_{k = 1}^{n_i} \alpha_k^i e_k^i \rightarrow \varphi_i \quad \text{in probability w.r.t.} ~ \mu, \quad i = 1, \ldots, m
    \end{equation}

    and thus 
    
    \begin{equation}
       P_k \left( \sum_{k = 1}^{n_1} \alpha_k^1 e_k^1, \ldots, \sum_{k = 1}^{n_m} \alpha_k^m e_k^m \right)  \rightarrow P_k \left( \varphi_1, \ldots, \varphi_m \right) \quad \text{in probability w.r.t.} ~ \mu
    \end{equation}

    by the continuity of $P_k$. Since the Hermite polynomials of degree $\leq k$ span the space of polynomials of degree $\leq k$, the left-hand side is contained in $\calP^{(\leq k)}(E, \mu; \bbR)$. Hence the claim follows from Lemma \ref{lem:complete_in_probability}.    
\end{proof}

Finite Wiener--Ito chaos has the remarkable property that the topology induced by any Bochner--Lebesgue $p$-norms ($1 < p < \infty$) and that of convergence in probability w.r.t. $\mu$ all coincide. 

\begin{lem}[Equivalence of Bochner--Lebesgue $p$-Norms in BV WIC]\label{lem:equiv_of_norms} 
    Let $k \geq 0$. Then for any $X \in \calP^{(k)}(E, \mu; B)$ 

    \begin{equation}
        \Vert X \Vert_{L^p(E, \mu; B)} \leq \Vert X \Vert_{L^q(E, \mu; B)} \leq \left( \frac{q-1}{p-1}\right)^{\frac{k}{2}} \Vert X \Vert_{L^p(E, \mu; B)}, \quad 1 < p \leq q < \infty, \label{eq:equiv_of_norms_ext}
    \end{equation}

    and there exists a constant $C(k)$ s.t. for any $X \in \calP^{(\leq k)}(E, \mu; B)$

    \begin{equation}
        \Vert X \Vert_{L^p(E, \mu; B)} \leq \Vert X \Vert_{L^q(E, \mu; B)} \leq C(k) (q - 1)^{\frac{k}{2}} \Vert X \Vert_{L^p(E, \mu; B)}, \quad 2 \leq p \leq q < \infty.\label{eq:equiv_of_norms}
    \end{equation}
    
    That is, on $\calP^{(\leq k)}(E, \mu; B)$ all Bochner--Lebesgue $p$-norms are equivalent for $1 < p < \infty$ and convergence in probability w.r.t. $\mu$ is equivalent to convergence in $p$-norm for any $p \in (0, \infty)$.
\end{lem}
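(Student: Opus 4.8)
The plan is to deduce both inequalities from the Banach-valued form of Nelson's hypercontractivity theorem, together with the fact that the homogeneous components of an element of $\calP^{(\le k)}(E,\mu;B)$ depend boundedly on that element. The left-hand inequalities $\Vert X\Vert_{L^p(E,\mu;B)}\le\Vert X\Vert_{L^q(E,\mu;B)}$ for $p\le q$ are just Jensen's inequality on the probability space $(E,\mu)$, so only the right-hand bounds require work. First I would bring in the Ornstein--Uhlenbeck (Mehler) semigroup, defined on $L^r(E,\mu;B)$ by
\[
(P_tX)(x):=\int_E X\bigl(e^{-t}x+\sqrt{1-e^{-2t}}\,y\bigr)\,\dd\mu(y),\qquad t\ge 0,
\]
the Bochner integral being well defined because the map $(x,y)\mapsto e^{-t}x+\sqrt{1-e^{-2t}}\,y$ pushes $\mu\otimes\mu$ forward to $\mu$; by the same observation and Jensen, $P_t$ is a contraction on every $L^r(E,\mu;B)$, $1\le r<\infty$. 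Two properties are then needed. First, on the spanning family $P_t(H_\alpha y)=(P_tH_\alpha)\,y=e^{-\vert\alpha\vert t}H_\alpha y$, so by $L^2(E,\mu;B)$-continuity $P_t$ acts as $e^{-jt}\,\id$ on the closed subspace $\calP^{(j)}(E,\mu;B)$; in particular $X=e^{jt}P_tX$ for $X\in\calP^{(j)}(E,\mu;B)$. Second, pulling the $B$-norm inside the Bochner integral (Minkowski's inequality) gives $\Vert (P_tX)(x)\Vert_B\le \bigl(P_t(\Vert X(\cdot)\Vert_B)\bigr)(x)$ with the \emph{scalar} operator $P_t$ on the right, whence
\[
\Vert P_tX\Vert_{L^q(E,\mu;B)}\le\Vert P_t\Vert_{L^p(E,\mu;\bbR)\to L^q(E,\mu;\bbR)}\;\Vert X\Vert_{L^p(E,\mu;B)}
\]
for all $1<p\le q<\infty$ and every Banach space $B$.

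By the classical hypercontractivity theorem of Nelson (see e.g. \cite{nualartMalliavinCalculusRelated2006} or \cite{bogachevGaussianMeasures1998}) the scalar operator norm on the right equals $1$ as soon as $e^{2t}\ge (q-1)/(p-1)$. Combining this with the first property at the choice $e^{2t}=(q-1)/(p-1)$ yields \eqref{eq:equiv_of_norms_ext}; the finiteness $X\in L^q(E,\mu;B)$ for $X\in\calP^{(k)}(E,\mu;B)$ comes out of the same inequality run with $p=2$, where only $X\in L^2(E,\mu;B)$ is assumed, and then the full range $1<p<\infty$ is available since $L^2\subseteq L^p$ for $p\le 2$ on a probability space.

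For \eqref{eq:equiv_of_norms} I would decompose $X\in\calP^{(\le k)}(E,\mu;B)=\bigoplus_{j=0}^k\calP^{(j)}(E,\mu;B)$ as $X=\sum_{j=0}^k\Pi_jX$. Since $L^2(E,\mu;B)$ carries no orthogonal chaos decomposition, the projections $\Pi_j$ need not be $L^p$-contractions, and this is the one genuinely non-routine point, handled by a small Vandermonde trick: choosing $k+1$ distinct times $t_0,\dots,t_k\ge 0$, the identity $P_{t_i}\!\restriction_{\calP^{(\le k)}}=\sum_{j=0}^k e^{-jt_i}\Pi_j$ is a linear system with the invertible Vandermonde matrix $(e^{-jt_i})_{i,j}$, so $\Pi_j=\sum_i c^{(k)}_{ij}P_{t_i}$ for constants $c^{(k)}_{ij}$ depending only on $k$; as each $P_{t_i}$ is an $L^p(E,\mu;B)$-contraction, $\Vert\Pi_jX\Vert_{L^p(E,\mu;B)}\le C_1(k)\,\Vert X\Vert_{L^p(E,\mu;B)}$. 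Applying \eqref{eq:equiv_of_norms_ext} to each $\Pi_jX$ for the given $p\ge 2$ and using $(q-1)/(p-1)\le q-1$ then gives
\[
\Vert X\Vert_{L^q(E,\mu;B)}\le\sum_{j=0}^k(q-1)^{j/2}\Vert\Pi_jX\Vert_{L^p(E,\mu;B)}\le (k+1)C_1(k)\,(q-1)^{k/2}\Vert X\Vert_{L^p(E,\mu;B)},
\]
i.e. \eqref{eq:equiv_of_norms} with $C(k)=(k+1)C_1(k)$ (the finiteness of $\Vert X\Vert_{L^q}$ again following by first taking $p=2$).

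Finally, for the concluding equivalence of topologies: convergence in $p$-norm implies convergence in probability by Markov's inequality for any $p\in(0,\infty)$, and the converse follows by combining \eqref{eq:equiv_of_norms}, Lemma \ref{lem:complete_in_probability} (closedness/completeness in probability), and a routine uniform-integrability (Vitali) argument — one normalizes a hypothetical non-$L^p$-convergent subsequence in $L^2$, uses \eqref{eq:equiv_of_norms} to deduce uniform integrability of the normalized squares, and reaches a contradiction — exactly as in the scalar case of \cite{borellTaylorSeriesWiener1984}. The only step I expect to need care is the Vandermonde extraction of the homogeneous parts, since it is precisely there that the absence of orthogonality in the Banach-valued setting bites; everything else is scalar hypercontractivity plus Minkowski's inequality for Bochner integrals.
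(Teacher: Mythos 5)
Your proof is correct, and it is essentially the argument behind the result the paper merely cites: the paper's ``proof'' of Lemma \ref{lem:equiv_of_norms} consists of references to \cite[Lem.~2 \& 3]{frizLargeDeviationPrinciple2007} (and \cite{borellTaylorSeriesWiener1984}, \cite[Thm.~D.9]{frizMultidimensionalStochasticProcesses2010} for the convergence-in-probability statement), and what you have written out --- Banach-valued Nelson hypercontractivity obtained from the scalar case via the Mehler form of the Ornstein--Uhlenbeck semigroup and Minkowski's inequality for Bochner integrals, together with the Vandermonde extraction of the projections $\Pi_j$ for the inhomogeneous chaos --- is exactly the standard route taken there. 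The one point you flag as delicate, the well-definedness of $\Pi_j$ in the absence of orthogonality, is indeed the right place to be careful, but it is fine: the algebraic directness of $\bigoplus_{j\le k}\calP^{(j)}(E,\mu;B)$ follows by composing with functionals $f\in B^{\ast}$ and invoking the orthogonality of the scalar decomposition (cf.\ Proposition \ref{prop:WIC_equiv_characterizations}), after which your Vandermonde identity both defines $\Pi_j$ and bounds it on $L^p(E,\mu;B)$.
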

\begin{proof}
    For \eqref{eq:equiv_of_norms_ext} and \eqref{eq:equiv_of_norms} see \cite[Lem. 2]{frizLargeDeviationPrinciple2007} and \cite[Lem. 3]{frizLargeDeviationPrinciple2007}. Note that, at the expense of restricting the set of admissible parameters $p,q$ and switching to the inhomogeneous chaos, the constant in \eqref{eq:equiv_of_norms} does not depend on $p$. See also \cite{borellTaylorSeriesWiener1984}. 
    
    The proof of the consequence regarding convergence in probability is almost verbatim that of \cite[Thm. D.9]{frizMultidimensionalStochasticProcesses2010}, replacing the absolute value with $\Vert \cdot \Vert_{B}$ and the $L^p(E,\mu; \bbR)$-norms by $L^p(E,\mu; B)$-norms. 
\end{proof}

Note that the constant in \eqref{eq:equiv_of_norms} grows like $q^{\frac{k}{2}}$ as $q \rightarrow \infty$. A naive estimate for $X \in \calP^{( \leq [\calT])}(E, \mu; \bE)$ would thus lead to $q^{\frac{k}{2}}$, where $N := \max [\calT]$. Using the homogeneous distance, we can refine this estimate to have a constant of order $q^{\frac{1}{2}}$ (albeit at the expense of the estimate only holding for $p \leq q$ large enough). 

\begin{lem}[Equivalence of $p$-Norms in Homogeneous Distance]\label{lem:equiv_of_norm_homogeneous} 
    Let $X,Y: E \rightarrow \bE$ be measurable functions s.t. $X,Y \in \calP^{( \leq [\calT])}(E, \mu; \bE)$. Then there exists a constant $C(N) < \infty$ depending only on $N := \max [\calT]$, i.e. the highest degree occurring in $\calT$, s.t. for any $2 \leq p \leq q < \infty$

    \begin{equation}
    \Vert \vertiii{Y-X} \Vert_{L^{pN}(E, \mu; \bbR)} \leq \Vert \vertiii{Y-X} \Vert_{L^{qN}(E, \mu; \bbR)} \leq C'(N) \sqrt{q} \Vert \vertiii{Y-X} \Vert_{L^{pN}(E, \mu; \bbR)}, \label{eq:equiv_of_pnorms_with_distance} 
    \end{equation}

    where, with regards to \eqref{eq:equiv_of_norms}, $C'(N) := C(N) \sqrt{N}$. 
\end{lem}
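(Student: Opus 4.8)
Write $Z:=Y-X$, so that $Z\in\calP^{(\le[\calT])}(E,\mu;\bE)$ and, by definition of the graded chaos, $Z_\tau:=\pi_\tau\circ Z\in\calP^{(\le[\tau])}(E,\mu;E_\tau)$ for every $\tau\in\calT$. The left inequality in \eqref{eq:equiv_of_pnorms_with_distance} is just the contractive inclusion $L^{qN}(E,\mu;\bbR)\hookrightarrow L^{pN}(E,\mu;\bbR)$ on the probability space $(E,\mu)$ and needs no further comment. For the right inequality the plan is to pass through the components $Z_\tau$, apply the scalar/Banach-valued hypercontractive estimate \eqref{eq:equiv_of_norms} of Lemma \ref{lem:equiv_of_norms} on each chaos $\calP^{(\le[\tau])}(E,\mu;E_\tau)$, and then use that each summand of $\vertiii{\cdot}$ is nonnegative to re-assemble the bound.

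Concretely: by the triangle inequality in $L^{qN}(E,\mu;\bbR)$ applied to $\vertiii{Z}=\sum_{\tau\in\calT}\Vert Z_\tau\Vert_{E_\tau}^{1/[\tau]}$ and the identity $\big\Vert\,\Vert Z_\tau\Vert_{E_\tau}^{1/[\tau]}\,\big\Vert_{L^{r}}=\Vert Z_\tau\Vert_{L^{r/[\tau]}(E,\mu;E_\tau)}^{1/[\tau]}$ (valid for any $r\ge[\tau]$), one gets
\begin{equation}
\Vert\vertiii{Z}\Vert_{L^{qN}(E,\mu;\bbR)}\le\sum_{\tau\in\calT}\Vert Z_\tau\Vert_{L^{qN/[\tau]}(E,\mu;E_\tau)}^{1/[\tau]}.
\end{equation}
Now fix $\tau$, put $k:=[\tau]\le N$, $p':=pN/[\tau]$, $q':=qN/[\tau]$; since $p\ge 2$ and $N\ge[\tau]$ we have $2\le p'\le q'<\infty$, so \eqref{eq:equiv_of_norms} applies with $B=E_\tau$ and gives $\Vert Z_\tau\Vert_{L^{q'}(E,\mu;E_\tau)}\le C([\tau])(q'-1)^{[\tau]/2}\Vert Z_\tau\Vert_{L^{p'}(E,\mu;E_\tau)}$. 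Taking the $1/[\tau]$-th power and using $q'-1\le q'\le qN$ (as $[\tau]\ge1$) yields the factor $C([\tau])^{1/[\tau]}(qN)^{1/2}=C([\tau])^{1/[\tau]}\sqrt{N}\,\sqrt{q}$ — this is precisely the point of the lemma: the $1/[\tau]$-power built into the homogeneous distance exactly cancels the exponent $[\tau]/2$ coming from hypercontractivity, so the surviving power of $q$ is $1/2$ regardless of $[\tau]$, rather than the naive $q^{N/2}$.

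Finally, to turn $\Vert Z_\tau\Vert_{L^{pN/[\tau]}(E,\mu;E_\tau)}^{1/[\tau]}$ back into $\Vert\vertiii{Z}\Vert_{L^{pN}}$: since every term of $\vertiii{Z}$ is $\ge 0$ we have the pointwise bound $\vertiii{Z}\ge\Vert Z_\tau\Vert_{E_\tau}^{1/[\tau]}$, hence $\Vert Z_\tau\Vert_{L^{pN/[\tau]}(E,\mu;E_\tau)}^{1/[\tau]}=\big\Vert\,\Vert Z_\tau\Vert_{E_\tau}^{1/[\tau]}\,\big\Vert_{L^{pN}}\le\Vert\vertiii{Z}\Vert_{L^{pN}(E,\mu;\bbR)}$ for each $\tau$. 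Summing over the finite set $\calT$ gives
\begin{equation}
\Vert\vertiii{Z}\Vert_{L^{qN}}\le\Big(\sqrt{N}\sum_{\tau\in\calT}C([\tau])^{1/[\tau]}\Big)\sqrt{q}\,\Vert\vertiii{Z}\Vert_{L^{pN}},
\end{equation}
which is \eqref{eq:equiv_of_pnorms_with_distance} with $C'(N):=C(N)\sqrt{N}$ upon setting $C(N):=\sum_{\tau\in\calT}C([\tau])^{1/[\tau]}$ (finite, and at most $|\calT|$ times the largest of the finitely many constants $C([\tau])^{1/[\tau]}$, $[\tau]\le N$, from Lemma \ref{lem:equiv_of_norms}). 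There is no real obstacle here beyond the bookkeeping; the only thing to be careful about is checking that the rescaled exponents $p'=pN/[\tau]$ stay $\ge 2$ so that \eqref{eq:equiv_of_norms} is legitimately applicable, and tracking that the power of $q$ collapses to $1/2$ after taking the $1/[\tau]$-th root.
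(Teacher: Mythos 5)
Your proof is correct and follows essentially the same route as the paper's: reduce to the components $Z_\tau$, apply Lemma \ref{lem:equiv_of_norms} at the rescaled exponents $pN/[\tau]\le qN/[\tau]$ (both $\ge 2$), and observe that the $1/[\tau]$-th root coming from the homogeneous distance collapses the power of $q$ to $1/2$. The only difference is cosmetic bookkeeping — you use Minkowski's inequality in $L^{qN}$ and the pointwise bound $\vertiii{Z}\ge\Vert Z_\tau\Vert_{E_\tau}^{1/[\tau]}$ where the paper uses power-mean inequalities with explicit $(\#\calT)$-powers; both versions (yours and the paper's) let the final constant silently absorb a $\#\calT$ factor beyond its stated dependence on $N$ alone.
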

\begin{proof}
    The first inequality is true since $(E, \mu)$ is a probability space. For the second inequality of \eqref{eq:equiv_of_pnorms_with_distance}

    \begin{align}
    & \Vert \vertiii{Y-X} \Vert_{L^{qN}(E, \mu; \bE)} = \left( \int_E \vertiii{Y-X}^{qN} \dd \mu \right)^{\frac{1}{qN}} \\
    &= \left( \int_E \left( \sum_{\tau \in \calT} \Vert X_{\tau} - Y_{\tau} \Vert_{E_{\tau}}^{\frac{1}{\vert \tau\vert}} \right)^{qN} \dd \mu \right)^{\frac{1}{qN}} \leq \left( \int_E (\# \calT)^{qN - 1} \sum_{\tau \in \calT} \Vert X_{\tau} - Y_{\tau} \Vert_{E_{\tau}}^{\frac{qN}{\vert \tau\vert}}  \dd \mu \right)^{\frac{1}{qN}} \\
    &\leq \underbrace{(\# \calT)^{\frac{qN - 1}{qN}}}_{\leq \# \calT} \sum_{\tau \in \calT} \left( \int_E \Vert X_{\tau} - Y_{\tau} \Vert_{E_{\tau}}^{\frac{qN}{\vert \tau\vert}}  \dd \mu \right)^{\frac{1}{qN}} \lesssim \sum_{\tau \in \calT} \Bigg( \underbrace{\left( \int_E \Vert X_{\tau} - Y_{\tau} \Vert_{E_{\tau}}^{\frac{qN}{\vert \tau\vert}}  \dd \mu \right)^{\frac{[\tau]}{qN}}}_{= \Vert X_{\tau} - Y_{\tau} \Vert_{L^{\frac{qN}{[\tau]}}(E, \mu; E_{\tau})}} \Bigg)^{\frac{1}{[\tau]}} \\
    &= \sum_{\tau \in \calT} \Vert X_{\tau} - Y_{\tau} \Vert_{L^{\frac{qN}{[\tau]}}(E, \mu; E_{\tau})}^{\frac{1}{[\tau]}}
    \end{align}

    Thanks to working with $pN$ and $qN$ instead of just $p$ and $q$, we may apply the second estimate of \eqref{eq:equiv_of_norms} in Lemma \ref{lem:equiv_of_norms} with $2 \leq \frac{pN}{[\tau]} \leq \frac{qN}{[\tau]} < \infty$ to obtain

    \begin{equation}
    \sum_{\tau \in \calT} \Vert X_{\tau} - Y_{\tau} \Vert_{L^{\frac{qN}{[\tau]}}(E, \mu; E_{\tau})}^{\frac{1}{[\tau]}} \leq \sum_{\tau \in \calT} \underbrace{C([\tau])^{\frac{1}{[\tau]}} \left(\frac{qN}{[\tau]} - 1 \right)^{\frac{1}{2}}}_{\leq C(N) \sqrt{N} \sqrt{q}} \Vert X_{\tau} - Y_{\tau} \Vert_{L^{\frac{pN}{[\tau]}}(E, \mu; E_{\tau})}^{\frac{1}{[\tau]}}.
    \end{equation}

    Write $C'(N) := C(N) \sqrt{N}$. Note how, due to using the homogeneous distance, the exponent $\frac{1}{[\tau]}$ cancels the exponent $[\tau]$ from the scaling in Lemma \ref{lem:equiv_of_norms}. We can further estimate

    \begin{align}
    &\leq C'(N) \sqrt{q} \sum_{\tau \in \calT} \Vert X_{\tau} - Y_{\tau} \Vert_{L^{\frac{pN}{[\tau]}}(E, \mu; E_{\tau})}^{\frac{1}{[\tau]}} = C'(N) \sqrt{q} \sum_{\tau \in \calT} \left( \left( \int_E \Vert X_{\tau} - Y_{\tau} \Vert_{E_{\tau}}^{\frac{pN}{[\tau]}} \dd \mu \right)^{\frac{[\tau]}{pN}} \right)^{\frac{1}{[\tau]}} \\
    &= C'(N) \sqrt{q} \sum_{\tau \in \calT} \left( \int_E \Vert X_{\tau} - Y_{\tau} \Vert_{E_{\tau}}^{\frac{pN}{[\tau]}} \dd \mu \right)^{\frac{1}{pN}} \leq C'(N) \sqrt{q} \underbrace{(\# \calT)^{1-\frac{1}{pN}}}_{\leq (\# \calT)} \left( \int_E \sum_{\tau \in \calT}\Vert X_{\tau} - Y_{\tau} \Vert_{E_{\tau}}^{\frac{pN}{[\tau]}} \dd \mu \right)^{\frac{1}{pN}} \\
    &\lesssim C'(N) \sqrt{q} \left( \int_E  \left( \sum_{\tau \in \calT} \Vert X_{\tau} - Y_{\tau} \Vert_{E_{\tau}}^{\frac{1}{[\tau]}} \right)^{pN} \dd \mu \right)^{\frac{1}{pN}} = C'(N) \sqrt{q} \Vert \vertiii{Y-X} \Vert_{L^{pN}(E, \mu; \bE)},
    \end{align}

    which gives the result.
\end{proof}

\section{Large Deviation Principle}

Let us recall some basic notions about large deviations to fix notation. 

\begin{defn}[Large Deviation Principle]
Let $\mathcal{X}$ be a Hausdorff topological space. Then a family of measures $(\mu_{\varepsilon})_{\varepsilon > 0}$ is said to \textbf{satisfy an LDP on $\mathcal{X}$} with rate $\varepsilon^2$ and good\footnote{The adjective ``good'' refers to the property of having compact sublevel sets.} rate function $\sI: \mathcal{X} \rightarrow [0,\infty]$ if $\sI$ is lower semi-continuous, not identically $\infty$, and has compact level sets, i.e. for every $0 \leq a < \infty$ the set $\{\sI \leq a \}$ is compact in $\mathcal{X}$, and 

\begin{align*}
    \limsup_{\varepsilon \downarrow 0} \varepsilon^2 \log \mu_{\varepsilon}(A) &\leq - \inf_{x \in A} \sI(x) , \quad \text{for every closed} ~ A \subseteq \mathcal{X}, \\
    \liminf_{\varepsilon \downarrow 0} \varepsilon^2 \log \mu_{\varepsilon}(U) &\geq - \inf_{x \in U} \sI(x) , \quad \text{for every open} ~ U \subseteq \mathcal{X}. 
\end{align*}
\end{defn}

\begin{rem}
    In the entirety of the paper, all LDPs will be assumed to have speed $\varepsilon^2$ without further comment. 
\end{rem}

One of the cornerstones of large deviations for abstract Wiener model spaces is the fact that the large deviations for (classical) abstract Wiener spaces are well understood. 

\begin{thm}[{Generalized Theorem of Schilder, see e.g. \cite[Thm. 3.4.12]{deuschelLargeDeviations1989}}] \label{thm:generalized_Schilder}
    Let $(E, \sH, i, \mu)$ be an abstract Wiener space and let $\mu_{\varepsilon}(\cdot) = \mu ( \varepsilon^{-1} (\cdot))$. Then the family $(\mu_{\varepsilon})_{\varepsilon > 0}$ satisfies an LDP with good rate function $\sI: E \rightarrow [0, \infty]$ given by 

\begin{equation}
    \sI(x) = \begin{cases}
    \frac{1}{2} \Vert x \Vert^2_{\sH} & x \in \sH \\
    + \infty & \text{else}.
    \end{cases} \label{eq:Schilder_rate_app}
\end{equation}
\end{thm}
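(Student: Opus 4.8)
\emph{Plan.} The statement is classical; I would prove it by reducing to the finite-dimensional Gaussian large deviation principle and then passing to the limit through an \emph{exponentially good} (superexponential) approximation, exactly the mechanism used later in the proof of Theorem~\ref{thm:LDP_for_AWMS}.

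\emph{Step 1 (finite dimensions).} Fix an orthonormal basis $(e_k)_{k\in\bbN}$ of $\sH$ contained in $E^{\ast}$, and set $\rmP_m x=\sum_{k=1}^m\langle e_k,x\rangle e_k$ and $\sH_m:=\rmP_m(\sH)$, a finite-dimensional, hence closed, subspace of $E$. Under $\mu$ the coordinates $(\langle e_k,\cdot\rangle)_{k\le m}$ are i.i.d.\ $\sN(0,1)$, so $\rmP_m X$ is a standard Gaussian on $\sH_m\cong\bbR^{d_m}$ and $\|\,\cdot\,\|_{\sH}$ agrees with the Euclidean norm in these coordinates; by the explicit Gaussian density (equivalently Cramér's theorem) the laws of $\varepsilon\,\rmP_m X$ satisfy an LDP on $\sH_m$ with good rate function $y\mapsto\tfrac12\|y\|_{\sH}^2$. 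Since $\sH_m\hookrightarrow E$ is continuous, linear and injective, the contraction principle gives that $(\rmP_m)_{\ast}\mu_\varepsilon$ satisfies an LDP on $E$ with good rate function $\sI_m(x)=\tfrac12\|x\|_{\sH}^2$ if $x\in\sH_m$ and $\sI_m(x)=+\infty$ otherwise.

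\emph{Step 2 (superexponential approximation).} Next I would establish
\[\lim_{m\to\infty}\limsup_{\varepsilon\downarrow0}\varepsilon^2\log\mu\bigl(\|(\id-\rmP_m)x\|_E>\delta/\varepsilon\bigr)=-\infty,\qquad\delta>0.\]
Here $(\id-\rmP_m)X$ is an $E$-valued element of the first Banach-valued Wiener--Ito chaos (cf.\ Appendix~\ref{sec:WIC}) and $\|(\id-\rmP_m)X\|_E\to0$ in $L^2(E,\mu;\bbR)$ by the standard expansion of abstract Wiener space. By Fernique's Theorem~\ref{thm:fernique} together with the equivalence of Gaussian moments (Lemma~\ref{lem:equiv_of_norms}) there is a sequence $\lambda_m\uparrow\infty$ with $\sup_m\bbE_{\mu}\bigl[\exp(\lambda_m\|(\id-\rmP_m)x\|_E^2)\bigr]<\infty$; a Chebyshev estimate applied to $x\mapsto\exp(\lambda_m\varepsilon^{-2}\|(\id-\rmP_m)x\|_E^2)$ then yields $\mu(\|(\id-\rmP_m)x\|_E>\delta/\varepsilon)\lesssim\exp(-\lambda_m\delta^2/\varepsilon^2)$, and the displayed limit follows.

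\emph{Step 3 (passage to the limit and identification of the rate).} Steps 1 and 2 say that $((\rmP_m)_{\ast}\mu_\varepsilon)_{\varepsilon>0}$ is an exponentially good approximation of $(\mu_\varepsilon)_{\varepsilon>0}$, so by the approximation lemma \cite[Lem.\ 2.1.4]{deuschelLargeDeviations1989} (used already in Lemma~\ref{lem:relation_among_rate_functions}) the family $(\mu_\varepsilon)_{\varepsilon>0}$ satisfies an LDP with good rate function $\sJ(x)=\sup_{\delta>0}\liminf_{m\to\infty}\inf_{\|y-x\|_E<\delta}\sI_m(y)$, provided this $\sJ$ is good. To identify $\sJ$ with the claimed $\sI$: on the one hand $\sI_m\ge\tfrac12\|\,\cdot\,\|_{\sH}^2$ (with value $+\infty$ off $\sH$), and the latter functional is lower semicontinuous on $E$ with sublevel set $\{x\in E:\tfrac12\|x\|_{\sH}^2\le a\}$ equal to the closed $\sH$-ball of radius $\sqrt{2a}$, which is compact in $E$ by standard abstract Wiener space theory; this gives $\sJ\ge\sI$ and goodness. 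On the other hand, for $x\in\sH$ one has $\rmP_m x\in\sH_m$ with $\rmP_m x\to x$ in $\sH$ (hence in $E$) and $\|\rmP_m x\|_{\sH}\le\|x\|_{\sH}$, so $\sJ(x)\le\liminf_m\sI_m(\rmP_m x)\le\tfrac12\|x\|_{\sH}^2$, while for $x\notin\sH$ lower semicontinuity of $\tfrac12\|\,\cdot\,\|_{\sH}^2$ forces $\sJ(x)=+\infty$. Hence $\sJ=\sI$, which is the assertion.

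\emph{Main obstacle.} The crux is Step 2: upgrading the mere $L^2$-convergence $\|(\id-\rmP_m)X\|_E\to0$ into a genuinely superexponential bound with exponents $\lambda_m\to\infty$ (rather than only an exponential-equivalence-type estimate). Everything else is either a finite-dimensional computation, the contraction principle, or a direct invocation of the cited approximation lemma; the only other place where the abstract Wiener space structure is essential is the compactness of $\sH$-balls in $E$, needed for goodness of $\sI$.
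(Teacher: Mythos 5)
The paper does not prove this theorem: it is quoted as a classical result with a pointer to \cite[Thm.\ 3.4.12]{deuschelLargeDeviations1989}, so there is no in-paper argument to compare against. Your proof is correct and complete in outline, and it is worth noting that it is \emph{not} the route taken in the cited reference: Deuschel--Stroock obtain Theorem 3.4.12 from the Banach-space version of Cram\'er's theorem, using Fernique's estimate to get exponential tightness and then identifying the Legendre transform of the log-moment generating functional $\Lambda(\ell)=\tfrac12\Vert i^{\ast}\ell\Vert_{\sH}^2$ with $\tfrac12\Vert\cdot\Vert_{\sH}^2$ on $\sH$ (and $+\infty$ off it). Your route --- finite-dimensional Gaussian LDP for $\rmP_m$, superexponential control of the tail $(\id-\rmP_m)X$, and the exponentially-good-approximation lemma --- is the other standard proof, and it has the virtue of being structurally identical to the paper's own proof of Theorem \ref{thm:LDP_for_AWMS} (conditioning on finitely many basis directions, exponential equivalence, LDP for each approximation, identification of the limiting rate function), so it makes the enhanced result look like a direct generalization of the classical one.

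Two small points, neither a gap. First, your Step 2 is sound: the moment equivalence of Lemma \ref{lem:equiv_of_norms} with $k=1$ gives $\Vert\,\Vert(\id-\rmP_m)X\Vert_E\,\Vert_{L^q}\lesssim\sqrt{q}\,a_m$ with $a_m:=\Vert\,\Vert(\id-\rmP_m)X\Vert_E\,\Vert_{L^2}\to0$, and summing the Taylor series of the exponential shows one may take $\lambda_m\sim a_m^{-2}\to\infty$ with a uniform bound on $\bbE\exp(\lambda_m\Vert(\id-\rmP_m)X\Vert_E^2)$; so the ``crux'' you flag does go through. Second, the approximation lemma yields a priori only a weak LDP plus the upper bound under the additional condition $\inf_F\sI\le\limsup_m\inf_F\sI_m$ for closed $F$; this is automatic here because $\sI_m\ge\sI$ pointwise (each $\sI_m$ agrees with $\sI$ on $\sH_m$ and is $+\infty$ elsewhere), but you should state it when writing the argument out in full.
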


Note that the asymptotic behaviour of $\mu_{\varepsilon}(A)$ is determined exclusively by $A \cap \sH$, despite the fact that $\mu(\sH) = 0$ whenever $\dim \sH = \infty$.

\begin{lem} \label{lem:log_equiv} 
    Let $(a_{\varepsilon})_{\varepsilon > 0}, (b_{\varepsilon})_{\varepsilon > 0} \subseteq [0, \infty)$. Then 

    \begin{equation}
        \limsup_{\varepsilon \rightarrow 0} \varepsilon^2 \log (a_{\varepsilon} + b_{\varepsilon}) = \max \left(\limsup_{\varepsilon \rightarrow 0} \varepsilon^2 \log a_{\varepsilon}, \limsup_{\varepsilon \rightarrow 0} \varepsilon^2 \log b_{\varepsilon} \right) . \label{eq:asymp_equiv_raw}
    \end{equation}
\end{lem}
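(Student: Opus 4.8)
The plan is to sandwich $a_\varepsilon + b_\varepsilon$ between $\max(a_\varepsilon, b_\varepsilon)$ and $2\max(a_\varepsilon,b_\varepsilon)$ and exploit that $x \mapsto \varepsilon^2 \log x$ is increasing together with the elementary fact that $\limsup$ commutes with finite maxima. All computations take place in $[-\infty, \infty)$ with the usual conventions $\log 0 = -\infty$ and $c + (-\infty) = -\infty$ for $c \in \bbR$; the case $a_\varepsilon = b_\varepsilon = 0$ causes no trouble since then both sides equal $-\infty$ along that $\varepsilon$.

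First I would record the auxiliary identity $\limsup_{\varepsilon \to 0}(x_\varepsilon \vee y_\varepsilon) = (\limsup_{\varepsilon\to 0} x_\varepsilon) \vee (\limsup_{\varepsilon\to 0} y_\varepsilon)$ for arbitrary $[-\infty,\infty)$-valued families: ``$\geq$'' is immediate from $x_\varepsilon \vee y_\varepsilon \geq x_\varepsilon$ and $x_\varepsilon \vee y_\varepsilon \geq y_\varepsilon$, while for ``$\leq$'' one fixes $\delta > 0$, uses that eventually $x_\varepsilon \leq (\limsup x_\varepsilon) + \delta$ and $y_\varepsilon \leq (\limsup y_\varepsilon) + \delta$, hence eventually $x_\varepsilon \vee y_\varepsilon \leq \big((\limsup x_\varepsilon) \vee (\limsup y_\varepsilon)\big) + \delta$, and lets $\delta \downarrow 0$. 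Since $\varepsilon^2 > 0$, I also note $\varepsilon^2(\log a_\varepsilon \vee \log b_\varepsilon) = (\varepsilon^2\log a_\varepsilon) \vee (\varepsilon^2 \log b_\varepsilon)$.

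For the lower bound in \eqref{eq:asymp_equiv_raw}, from $a_\varepsilon + b_\varepsilon \geq a_\varepsilon \vee b_\varepsilon$ and monotonicity of $\log$ I get $\varepsilon^2\log(a_\varepsilon + b_\varepsilon) \geq (\varepsilon^2\log a_\varepsilon) \vee (\varepsilon^2\log b_\varepsilon)$; taking $\limsup_{\varepsilon\to 0}$ and applying the auxiliary identity gives ``$\geq$''. For the upper bound, from $a_\varepsilon + b_\varepsilon \leq 2(a_\varepsilon \vee b_\varepsilon)$ I get
\begin{equation*}
\varepsilon^2\log(a_\varepsilon + b_\varepsilon) \leq \varepsilon^2 \log 2 + \big((\varepsilon^2\log a_\varepsilon) \vee (\varepsilon^2\log b_\varepsilon)\big).
\end{equation*}
Taking $\limsup_{\varepsilon\to 0}$, using $\varepsilon^2\log 2 \to 0$, the subadditivity of $\limsup$ (valid here since no $\infty - \infty$ occurs), and again the auxiliary identity yields ``$\leq$'', which completes the proof.

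There is essentially no obstacle here; the only point requiring a little care is the bookkeeping with the value $-\infty$ (when some $a_\varepsilon$ or $b_\varepsilon$ vanishes), and making sure the $\limsup$/max interchange is stated for extended-real-valued families rather than just finite ones.
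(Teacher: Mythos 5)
Your proof is correct and follows essentially the same route as the paper's: sandwich $a_\varepsilon+b_\varepsilon$ between $\max(a_\varepsilon,b_\varepsilon)$ and $2\max(a_\varepsilon,b_\varepsilon)$, take logarithms, multiply by $\varepsilon^2$, and pass to the $\limsup$ using that $\varepsilon^2\log 2\to 0$. Your extra care with the value $-\infty$ and the explicit justification that $\limsup$ commutes with finite maxima are details the paper leaves implicit, but the argument is the same.
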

\begin{proof}
    Let $\varepsilon > 0$. Then since $a_{\varepsilon}, b_{\varepsilon} \geq 0$ 

    \begin{equation}
        \max\{ a_{\varepsilon}, b_{\varepsilon} \} \leq a_{\varepsilon} + b_{\varepsilon} \leq 2 \max\{ a_{\varepsilon}, b_{\varepsilon} \}
    \end{equation}

    By monotonicity, applying $\log$ yields  

    \begin{equation}
        \max\{ \log(a_{\varepsilon}), \log(b_{\varepsilon}) \} \leq \log(a_{\varepsilon} + b_{\varepsilon}) \leq \log(2) + \max\{ \log(a_{\varepsilon}), \log(b_{\varepsilon}) \}.
    \end{equation}

    Multiplying by $\varepsilon^2$ and taking $\limsup_{\varepsilon \rightarrow 0}$ gives the result. 
\end{proof}

\begin{thm}[{Extended Contraction Principle; \cite[Lem. 3.3]{hairerLargeDeviationsWhiteNoise2015}}]\label{thm:HW_extended_contraction_principle} 
Let 
    
    \begin{enumerate}[(i)]
        \item $(\mathcal{X}, d)$ and $(\mathcal{Y}, d')$ be separable metric spaces,
        \item $( \mu_{\varepsilon})_{\varepsilon > 0}$ a family of probability measures on $\mathcal{X}$ satisfying an LDP with good rate function $\sI$, and
        \item $(\Psi_{\varepsilon})_{\varepsilon \geq 0}$ a family of functions $\mathcal{X} \rightarrow \mathcal{Y}$ which are continuous on neighborhoods of $\{x \in \mathcal{X}: \sI(x) < \infty \}$ s.t. for every $C \in \bbR$ there exists a neighborhood $O_C$ of $\{x \in \mathcal{X}: \sI(x) \leq C \}$ s.t. 

        \begin{equation}
            \limsup_{\varepsilon \downarrow 0} \sup_{x \in O_C} d'(\Psi_{\varepsilon}(x), \Psi_0(x)) = 0 . \label{eq:convergence_condition_in_contr_principle}
        \end{equation} 
    \end{enumerate}

    Then the family $\left( (\Psi_{\varepsilon})_{\ast} \mu_{\varepsilon} \right)_{\varepsilon > 0}$ satisfies an LDP on $\mathcal{Y}$ with good rate function

    \begin{equation}
        \sI'(y) := \inf \{ \sI(x) : x \in \mathcal{X}, \Psi_0(x) = y \}, \quad y \in \mathcal{Y},
    \end{equation}

    with the convention that $\inf \emptyset = \infty$.
\end{thm}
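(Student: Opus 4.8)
The plan is to run the classical proof of the contraction principle, localised so as to respect the two departures from the textbook setting: $\Psi_0$ is continuous only on a neighbourhood of each sublevel set of $\sI$ (not globally), and $(\Psi_\varepsilon)_\ast\mu_\varepsilon$ rather than $(\Psi_0)_\ast\mu_\varepsilon$ is the object of interest, with the two families exponentially indistinguishable by \eqref{eq:convergence_condition_in_contr_principle}. Throughout write $K_C:=\{x\in\mathcal{X}:\sI(x)\le C\}$, which is compact because $\sI$ is good, and let $O_C\supseteq K_C$ be the neighbourhood supplied by hypothesis (iii). I would first check that $\sI'$ is a good rate function: for $C\ge 0$ and $y$ with $\sI'(y)\le C$, a minimising sequence $x_n$ with $\Psi_0(x_n)=y$ and $\sI(x_n)\to\sI'(y)$ eventually lies in the compact set $K_{C+1}\subseteq O_{C+1}$, so along a subsequence $x_n\to x_\ast\in K_C$ with $\Psi_0(x_\ast)=y$ (using continuity of $\Psi_0$ on $O_{C+1}$ and lower semicontinuity of $\sI$); hence $\{\sI'\le C\}=\Psi_0(K_C)$, the continuous image of a compact set, so compact. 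Since $\sI\not\equiv\infty$ neither is $\sI'$, and lower semicontinuity of $\sI'$ follows from compactness of its sublevel sets.

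For the lower bound, let $U\subseteq\mathcal{Y}$ be open and pick $y\in U$ with $C_0:=\sI'(y)<\infty$ and (by the previous paragraph) some $x\in K_{C_0}$ with $\Psi_0(x)=y$. As $O_{C_0}$ is a neighbourhood of $x$ on which $\Psi_0$ is continuous, choose $r>0$ and $\rho>0$ with $B(x,r)\subseteq O_{C_0}$, $\Psi_0(B(x,r))\subseteq B'(y,\rho/2)$ and $B'(y,\rho)\subseteq U$. For all $\varepsilon$ small enough that $\sup_{O_{C_0}}d'(\Psi_\varepsilon,\Psi_0)<\rho/2$ one gets $\Psi_\varepsilon(B(x,r))\subseteq B'(y,\rho)\subseteq U$, whence $(\Psi_\varepsilon)_\ast\mu_\varepsilon(U)\ge\mu_\varepsilon(B(x,r))$; the LDP lower bound for $(\mu_\varepsilon)$ then gives $\liminf_{\varepsilon\downarrow0}\varepsilon^2\log(\Psi_\varepsilon)_\ast\mu_\varepsilon(U)\ge-\inf_{B(x,r)}\sI\ge-\sI(x)=-\sI'(y)$. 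Taking the supremum over $y\in U$ yields the lower bound.

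For the upper bound, let $F\subseteq\mathcal{Y}$ be closed, set $L:=\inf_{y\in F}\sI'(y)$, and fix $C$ with $0\le C<L$ (nothing to prove if $L=0$; let $C\uparrow L$ at the end). Then $\Psi_0(K_C)=\{\sI'\le C\}$ is compact and disjoint from $F$, so $d'(\Psi_0(K_C),F)>2\rho$ for some $\rho>0$. Using continuity of $\Psi_0$ on $O_C$ and compactness of $K_C$, cover $K_C$ by finitely many balls contained in $O_C$ on each of which $\Psi_0$ oscillates by less than $\rho/2$; their union $V$ is an open neighbourhood of $K_C$ with $V\subseteq O_C$ and $\Psi_0(V)$ inside the $\rho/2$-neighbourhood of $\Psi_0(K_C)$. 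For all $\varepsilon$ small enough that $\sup_{O_C}d'(\Psi_\varepsilon,\Psi_0)<\rho/2$, every $x\in V$ has $\Psi_\varepsilon(x)$ within $\rho$ of $\Psi_0(K_C)$, hence $\Psi_\varepsilon(x)\notin F$; thus $\Psi_\varepsilon^{-1}(F)\cap V=\emptyset$ and $(\Psi_\varepsilon)_\ast\mu_\varepsilon(F)\le\mu_\varepsilon(V^c)$. Since $K_C\subseteq V$ we have $\inf_{V^c}\sI\ge C$, so the LDP upper bound for $(\mu_\varepsilon)$ gives $\limsup_{\varepsilon\downarrow0}\varepsilon^2\log(\Psi_\varepsilon)_\ast\mu_\varepsilon(F)\le-C$; letting $C\uparrow L$ finishes the proof.

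I expect the main obstacle to be the bookkeeping in the upper bound: the neighbourhood $V$ of $K_C$ must be chosen simultaneously small enough that $\Psi_0(V)$ avoids $F$ by a definite margin, contained in $O_C$ so that \eqref{eq:convergence_condition_in_contr_principle} applies on it, and such that $\mu_\varepsilon(V^c)$ still decays at exponential rate at least $C$; it is precisely the fact that $\Psi_\varepsilon\to\Psi_0$ uniformly only on neighbourhoods of sublevel sets (rather than on all of $\mathcal{X}$) that makes this localisation to $K_C$ and $O_C$ unavoidable. Everything else is the standard contraction-principle argument.
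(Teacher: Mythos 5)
The paper does not prove this statement; it is imported verbatim from Hairer--Weber \cite[Lem.~3.3]{hairerLargeDeviationsWhiteNoise2015}, so there is no in-text proof to compare against. Your argument is correct and is essentially the standard proof of the almost-continuous/extended contraction principle: the three delicate points --- that the infimum defining $\sI'$ is attained on $K_{\sI'(y)}$ (needed for the lower bound), that $\Psi_0(K_C)$ is compact and hence at positive distance from the closed set $F$ (needed for the upper bound), and that all uniform-approximation estimates are localised to the sets $O_C$ on which \eqref{eq:convergence_condition_in_contr_principle} actually holds --- are each handled properly. The only unaddressed edge case, $K_C=\emptyset$ in the upper bound, never occurs: applying the LDP upper bound to the closed set $\mathcal{X}$ forces $\inf\sI=0$, and goodness of $\sI$ makes this infimum attained, so $K_C\supseteq K_0\neq\emptyset$ for every $C\ge 0$.
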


\section{Symbolic Index} \label{app:symbolic_index}


\begin{tabular}{l l l}
\textbf{Symbol} & \textbf{Meaning} & \textbf{Ref.} \\ \hline 
$C(A;\bbR)$ & space of continuous $\bbR$-valued functions on a space $A$ & p.~\pageref{symb:continuous_functions}\\
$\calC^{\alpha}(A)$ & space of $\alpha$-Hölder continuous functions on a space $A$ & p.~\pageref{symb:hoelder_space_closure_of_smooth}\\
$\calC^{p-\var}(A)$ & space of continuous functions on a space $A$ with finite $p$-variation & p.~\pageref{symb:variation}\\
$\calC^{0,\alpha}(A)$ & closure of smooth functions in the $\calC^{\alpha}$-norm & p.~\pageref{symb:hoelder_space_closure_of_smooth} \\
$\calC^{0,p-\var}(A)$ & closure of smooth functions in the $p-\var$-norm & p.~\pageref{symb:variation_smooth} \\ 
$\Vert \cdot \Vert_{\rho-\var; [0,T]^2}$ & $2$-dimensional $\rho$-variation norm & p.~\pageref{symb:rho_var}\\ 
$W^{1,2}_0$ & $(1,2)$-Sobolev space of functions with $x(0) = 0$ & p.~\pageref{symb:sobolev_space_starting_at_0} \\
$\dot{H}_0^1(U)$ & homogeneous Dirichlet--Sobolev--Hilbert space on an open set $U$ & p.~\pageref{symb:sobolev_space_starting_at_0} \\
$\sR(\mu)$ & reproducing kernel Hilbert space of a Gaussian measure $\mu$ & p.~\pageref{symb:reproducing_kernel_hilbert_space} \\ 
$\texttt{CM}(E, \mu)$ & Cameron--Martin space of a Gaussian measure $\mu$ on $E$ & p.~\pageref{symb:CM_space} \\ 
$\Vert \cdot \Vert_{\bE}$ & norm on $\bE$ & p.~\pageref{symb:norm_on_bE}\\
$\vertiii{\cdot}$ & homogeneous distance on $\bE$ & p.~\pageref{symb:homogeneous_distance}\\
$(E, \sH, i, \mu)$ & abstract Wiener space & p.~\pageref{defn:AWS}\\ 
$E_{\calN}$ & direct summand of $\bE$ associated to the distinguished symbol $\calN$ & p.~\pageref{symb:E1}\\ 
$\pi := \pi_{\calN}$ & projection onto $E_{\calN}$, $\calN \subseteq \calT$ & p.~\pageref{symb:pi}\\ 
$\calP^{(k)}(E, \mu; B)$ & $k$-th homogeneous $B$-valued Wiener--Ito chaos  & p.~\pageref{symb:homogeneous_WIC}\\ 
$\calP^{(\leq k)}(E, \mu; B)$ & $k$-th inhomogeneous $B$-valued Wiener--Ito chaos  & p.~\pageref{symb:inhomogeneous_WIC}\\ \hline 
$\rmm_{\varepsilon}$ & scalar multiplication by $\varepsilon > 0$ & p.~\pageref{symb:m_scaling}\\ 
$\delta_{\varepsilon}$ & dilation on $\bE$ by $\varepsilon > 0$ & p.~\pageref{eq:definition_of_dilation}\\
$T_h$ & classical translation operator w.r.t. $h$ & p.~\pageref{symb:ordinary_shift}\\
$\bT_{\bh}$ & enhanced translation operator w.r.t. $\bh$ & p.~\pageref{eq:shift_defn}\\
$\mu_h$ & measure $\mu$ shifted in the direction of $h$ & p.~\pageref{symb:shifted_measure}\\
$f_h$ & Cameron--Martin density/Radon--Nikodým derivative $\frac{\dd \mu_h}{\dd \mu}$ & p.~\pageref{symb:f_h}\\ 
$\overline{\Psi}$ & proxy restriction of $\Psi$ & p.~\pageref{eq:definition_of_bar_lift}\\
$\fC_{\mu}$ & covariance operator associated to a Gaussian measure $\mu$ & p.~\pageref{symb:covariance_operator}\\
$\rmP_m$ & Projection onto the subspace of $E$ spanned by $e_1, \ldots, e_m$ & p.~\pageref{symb:projection_onto_finite_basis}\\  \hline 
$\underline{h}$ & inverse image of $h \in \sH$ under covariance operator $\fC_{\mu}$ & p.~\pageref{symb:underline_h}\\
$\sN(a,\sigma^2)$ & normal distribution with expectation $a \in \bbR$ and variance $\sigma^2 \geq 0$ & p.~\pageref{symb:normal_distr}\\
$h_n$ & $n$-th Hermite polynomial & p.~\pageref{symb:hermite_polynomial}\\
$H_{\alpha}$ & $\alpha$-th multi-dimensional Hermite polynomial & p.~\pageref{eq:Hermite_poly}\\
$\sB_{A}, \sB_{A}^{\mu}$ & Borel $\sigma$-algebra on a space $A$ (completed w.r.t. a measure $\mu$) & p.~\pageref{symb:Borel_sigma}\\ 
$[\Psi]_{\mu}$ & $\mu$-a.s.\! equivalence class of a measurable function $\Psi$ & p.~\pageref{symb:equiv_class}\\ 
$A^k_n, A^{\leq k}_n$ & set of multi-indices of degree $k/\leq k$ supported inside $\{1, \ldots, n\}$ & p.~\pageref{symb:Akn}\\ 
$\supp(\mu)$ & topological support of a Borel measure $\mu$ & p.~\pageref{symb:support}\\ 
$\mu \approx \nu$ & equivalence of measures $\mu$ and $\nu$; i.e. existence of $\frac{\dd \mu}{\dd \nu}$ and $\frac{\dd \nu}{\dd \mu}$ & p.~\pageref{symb:equivalence_of_measures}\\ 
$\mathfrak{s}$ & parabolic scaling & p.~\pageref{symb:parabolic_scaling}\\
$D_m$ & dyadic partition at scale $m$ & p.~\pageref{symb:dyadic_partition}\\
$P([0,T])$ & set of partitions on $[0,T]$ & p.~\pageref{symb:partition}\\
$\calG$ & renormalization group & p.~\pageref{symb:renormalization_group}\\
$\Ren_{\delta}$ & renormalization at correlation length $\delta > 0$ & p.~\pageref{symb:renormalization}\\
$\otimes_{A}$ & algebraic tensor product & p.~\pageref{symb:algebraic_tensor}\\
$\shuffle$ & shuffle product & p.~\pageref{symb:shuffle}\\ 
\end{tabular}

\newpage

\printbibliography

\end{document}